\numberwithin{equation}{section}
\newtheorem{df}{Definition}[section]
\newtheorem{lem}[df]{Lemma}
\newtheorem{prop}[df]{Proposition}
\theoremstyle{definition}\newtheorem{rmk}[df]{Remark}}
\newtheorem{thm}[df]{Theorem}
\def\B{\mathbb{B}}
\def\N{\mathbb{N}}
\def\R{\mathbb{R}}
\def\I{\mathbb{I}}
\def\A{\mathscr{A}}
\def\D{\mathscr{D}}
\def\O{\mathscr{O}}
\DeclareMathOperator*{\essinf}{ess\,inf}
\DeclareMathOperator*{\esssup}{ess\,sup}
\def\div{{\rm{div}}\,}
\def\rot{{\rm{rot}}\,}
\begin{document}

\title{\bf Anisotropically weighted $L^q$-$L^r$ estimates of 
the Oseen semigroup in exterior domains, with applications to the 
Navier-Stokes flow past a rigid body}
\author{Tomoki Takahashi}
\date{}
\maketitle
\noindent{\bf Abstract.} We consider 
the spatial-temporal behavior of 
the Navier-Stokes flow past a rigid body in $\R^3$. 
The present paper develops analysis 
in Lebesgue spaces with anisotropic weights 
$(1+|x|)^\alpha(1+|x|-x_1)^\beta$, which naturally arise in 
the asymptotic structure of fluid 
when the translational velocity of the body is parallel to the 
$x_1$-direction. We derive 
anisotropically weighted $L^q$-$L^r$ estimates for the 
Oseen semigroup in exterior domains. 
As applications of those estimates, 
we study the stability/attainability of the Navier-Stokes flow 
in anisotropically weighted $L^q$ spaces 
to get the spatial-temporal behavior of nonstationary solutions. 
\bigskip\\
\noindent{\bf Mathematics Subject Classification.} 35Q30, 76D05 
\bigskip\\
\noindent{\bf Keywords.} Navier-Stokes flow, Oseen flow, 
anisotropically weighted $L^q$ space, stability, attainability, 
starting problem 
\section{Introduction}\label{intro}
\quad 
We consider a viscous incompressible flow past a rigid body 
$\O\subset \R^3$. We suppose that 
$\O$ is translating 
with a velocity $\eta(t)$. Then by taking frame 
attached to the body, the fluid motion which occupies the outside 
of $\O$ obeys 
\begin{align}\label{NS0}
\left\{
\begin{array}{r@{}c@{}ll}
\partial_t u+u\cdot \nabla u&{}={}&
\Delta u+\eta(t)\cdot\nabla u
-\nabla p,&\quad x\in D,~t>0,\\
\nabla\cdot u&{}={}&0,&\quad x\in D,t\geq 0,\\
u|_{\partial D}&{}={}&\eta(t),&\quad t>0,\\
u&{}\rightarrow{}&0&\quad {\rm{as}}~|x|\rightarrow \infty,\\
u(x,0)&{}={}&u_0,&\quad x\in D,
\end{array}\right.
\end{align}
where $D=\R^3\setminus\O$ is the exterior domain with $C^2$ smooth 
boundary $\partial D$ and the origin of coordinate is assumed to be 
contained in the interior of $\O$. The functions 
$u=(u_1(x,t),u_2(x,t),u_3(x,t))^\top$ and $p=p(x,t)$ denote unknown 
velocity and pressure of the fluid, respectively, while 
$u_0$ is a given initial velocity. 
Here and hereafter, $(\cdot)^\top$ denotes the transpose. 
This paper is devoted to the case $\eta(t)=-ae_1$ 
and the case $\eta(t)=-\psi(t)ae_1$ 
(we assume $u_0=0$ in the latter case as mentioned below), 
where $a>0$, 
$e_1=(1,0,\cdots,0)^\top$ and 
$\psi$ is a function on $\R$ describing the transition 
of the translational velocity in such a way that 
\begin{align}\label{psidef}
\psi\in C^1(\R;\R),\quad
|\psi(t)|\leq 1\quad{\rm{for}}~\,t\in\R,\quad
\psi(t)=0\quad{\rm{for}}\,~t\leq 0,\quad 
\psi(t)=1\quad {\rm{for}} \,~t\geq 1. 
\end{align}
Namely, we consider the problem 
\begin{align}\label{NS22}
\left\{
\begin{array}{r@{}c@{}ll}
\partial_t u+u\cdot \nabla u&{}={}&
\Delta u-a\partial_{x_1}u
-\nabla p,&\quad x\in D,~t>0,\\
\nabla\cdot u&{}={}&0,&\quad x\in D,t\geq 0,\\
u|_{\partial D}&{}={}&-ae_1,&\quad t>0,\\
u&{}\rightarrow{}&0&\quad {\rm{as}}~|x|\rightarrow \infty,\\
u(x,0)&{}={}&u_0,&\quad x\in D
\end{array}\right.
\end{align}
and the starting problem 
\begin{align}\label{NS2}
\left\{
\begin{array}{r@{}c@{}ll}
\partial_t u+u\cdot \nabla u&{}={}&
\Delta u-\psi(t)a\,\partial_{x_1}u
-\nabla p,&\quad x\in D,~t>0,\\
\nabla\cdot u&{}={}&0,&\quad x\in D,t\geq 0,\\
u|_{\partial D}&{}={}&-\psi(t)ae_1,&\quad t>0,\\
u&{}\rightarrow{}&0&\quad {\rm{as}}~|x|\rightarrow \infty,\\
u(x,0)&{}={}&0,&\quad  x\in D.
\end{array}\right. 
\end{align}
\par Since $\psi(t)=1$ for $t\geq 1$, 
the large time behavior of solutions 
to (\ref{NS2}) as well as (\ref{NS22}) 
is related to the stationary problem 
\begin{align}\label{sta}
\left\{
\begin{array}{r@{}c@{}ll}
u_s\cdot \nabla u_s&{}={}&\Delta u_s-a\,
\partial_{x_1}u_s-\nabla p_s,
&\quad x\in D,\\
\nabla\cdot u_s&{}={}&0,&\quad x\in D,\\
u_s|_{\partial D}&{}={}&-ae_1,\\
u_s&{}\rightarrow{}& 0&\quad {\rm{as}}~|x|\rightarrow \infty.
\end{array}\right.
\end{align}
The pioneer work due to Leray \cite{leray1933} 
provided the existence theorem for 
weak solution to (\ref{sta}), 
what is called $D$-solution, 
having finite Dirichlet integral 
without smallness of data, however, 
his solution had little information about 
the anisotropic decay structure 
caused by the translation. Later on, 
Finn \cite{finn1959,finn1965s,finn1965o,finn1973} 
succeeded in constructing a stationary solution, 
termed by him physically reasonable solution, exhibiting 
a paraboloidal wake region behind the body, that is, 
\begin{align}\label{wake}
u_s(x)=O((1+|x|)^{-1}(1+|x|-x_1)^{-1})
\end{align} 
if $a$ is small enough. 
The problem (\ref{sta}) have been extensively studied, 
and we also refer to \cite{finn1960,babenko1973,galdi1992,
farwig1992t,faso1998,shibata1999,krnopo2001,galdi2011}. 
\par In \cite{finn1965s}, Finn conjectured that 
(\ref{NS2}) admits a solution 
which tends to a stationary solution 
as $t\rightarrow\infty$ if $a$ is small enough. 
This is called Finn's starting problem and if 
the conjecture is affirmatively solved, then 
the stationary solution is said to be attainable 
by following Heywood \cite{heywood1972} who 
first studied Finn's starting problem, 
but gave a partial answer in a 
special situation that the net force exerted by 
the fluid on the body is identically zero, yielding 
the square summability of stationary solutions. Indeed, 
by the decay structure (\ref{wake}), we have $u_s\in L^q(D)$ for 
$q>2$, but $u_s\notin L^2(D)$ in general 
due to Finn \cite{finn1960}, see also Galdi \cite{galdi2011}. 
From this together with 
$u(t)\in L^2(D)$ for each $t$, which is expected since 
the initial value of (\ref{NS2}) is zero, 
if we seek a solution to (\ref{NS2}) of the form 
\begin{align}\label{perturb1} 
u(x,t)=w(x,t)+\psi(t)u_s,\quad p(x,t)=\theta(x,t)+\psi(t)p_s, 
\end{align} 
then $w(t)\notin L^2(D)$ for each $t$. Thus 
energy methods are far from enough to analyze the starting problem 
and the problem had remained open until Kobayashi and Shibata 
\cite{kosh1998} developed 
the $L^q$ theory of the linearized problem, 
which is called the Oseen problem. 
By applying the $L^q$-$L^r$ 
estimates of the Oseen semigroup established in \cite{kosh1998} 
(see also Enomoto and Shibata \cite{ensh2004,ensh2005}), 
via Kato's approach \cite{kato1984} 
(see also Fujita and Kato \cite{fuka1964}), 
Finn's starting problem was affirmatively solved by 
Galdi, Heywood and Shibata \cite{gahesh1997}. 
The $L^2$ stability of $u_s$, 
that is the problem (\ref{NS22}) with $u_0=u_s+b$ 
($b\in L^2(D):$ perturbation), was proved by 
Heywood \cite{heywood1970,heywood1972} 
(see also \cite{masuda1975,heywood1980,miso1988,bomi1992} 
for the improvement of \cite{heywood1970,heywood1972}) and 
the $L^q$ stability was also proved in Shibata \cite{shibata1999} 
and Enomoto and Shibata \cite{ensh2005} 
by the similar approach to \cite{gahesh1997}. 
Convergence rates in attainability analysis (\cite{gahesh1997}) 
were the same as those in stability analysis (\cite{shibata1999}), 
but the present author \cite{takahashi2021} 
derived convergence rates determined by the summability of $u_s$, 
which are improvement of those in \cite{gahesh1997}. 
\par As in the stationary problem (\ref{sta}), 
we expect that nonstationary solutions 
to (\ref{NS22}) or to (\ref{NS2}) 
exhibit the paraboloidal wake region, but 
the literature for concerning this issue is 
Knightly \cite{knightly1979}, Mizumachi \cite{mizumachi1984} 
and Deuring \cite{deuring2013,deuringtime} only. 
Deuring \cite{deuring2013} used a representation formula for 
the solution to the Oseen system, 
see \cite{deuring2009,deuring2013p,deuring2014}, 
to deduce 
\begin{align*} 
\nabla^iu(x,t)=O((1+|x|)^{-1-\frac{i}{2}}
(1+|x|-x_1)^{-1-\frac{i}{2}}) 
\end{align*}
for $i=0,1$ uniformly in $t$ 
under some assumptions on 
the initial perturbation from the stationary solution 
and on the solution $u$. 
In \cite{deuringtime}, 
by employing another integral representation, 
see \cite{deuring2021,deuringlq}, he also established the estimate 
\begin{align*} 
\nabla^i\big(u(x,t)-u_s\big)=O((1+|x|)^{-\frac{5}{4}-\frac{i}{2}} 
(1+|x|-x_1)^{-\frac{5}{4}-\frac{i}{2}}) 
\end{align*} 
for $t>0$ 
without the boundary condition except the zero-flux condition.  
\par As mentioned above, 
the wake structure uniform with respect to time 
has been investigated by 
\cite{knightly1979,mizumachi1984,deuring2013,deuringtime}, 
while the purpose of the present paper is 
to derive temporal decay rate with the wake structure captured. 
To accomplish our purpose, we develop the theory of 
the Oseen semigroup in $L^q$ spaces 
with the weight $(1+|x|)^\alpha(1+|x|-x_1)^\beta$, in particular, 
derive the anisotropically weighted 
$L^q$-$L^r$ estimates of the Oseen semigroup. 
We then apply those estimates to construct a 
nonstationary solution to 
\begin{align}\label{NS32}\left\{
\begin{array}{r@{}c@{}l}
\partial_t v&{}={}&
\Delta v-a\partial_{x_1}v
-v\cdot\nabla v-v\cdot\nabla u_s
-u_s\cdot\nabla v
-\nabla \phi,
\quad x\in D,\,t>0,\\
\nabla\cdot v&{}={}&0,\quad x\in D,\,t\geq 0,\\
v|_{\partial D}&{}={}&0,\quad t>0,\\
v&{}\rightarrow{}& 0~\quad {\rm{as}}~|x|\rightarrow \infty,\\
v(x,0)&{}={}&b:=u_0-u_s,\quad x\in D
\end{array}\right. 
\end{align}
or to 
\begin{align}\label{NS3}\left\{
\begin{array}{r@{}c@{}l}
\partial_tw&{}={}&
\Delta w-a\partial_{x_1}w
-w\cdot\nabla w-\psi(t)w\cdot\nabla u_s
-\psi(t)u_s\cdot\nabla w
+(1-\psi(t))a\partial_{x_1}w\\
&&\hspace{5cm}+f_1(x,t)+f_2(x,t)
-\nabla \theta,
\quad x\in D,\,t>0,\\
\nabla\cdot w&{}={}&0,\quad x\in D,\,t\geq 0,\\
w|_{\partial D}&{}={}&0,\quad t>0,\\
w&{}\rightarrow{}& 0~\quad {\rm{as}}~|x|\rightarrow \infty,\\
w(x,0)&{}={}&0,\quad x\in D 
\end{array}\right. 
\end{align} 
in the anisotropically weighted $L^q$ framework, 
where $(v,\phi)$ is defined by 
\begin{align*} 
u(x,t)=v(x,t)+u_s,\quad p(x,t)=\phi(x,t)+p_s, 
\end{align*} 
$(w,\theta)$ is given by (\ref{perturb1}) and 
\begin{align} 
&f_1(x,t)=-\psi'(t)u_s,\label{f1}\\ 
&f_2(x,t)=\psi(t)(1-\psi(t)) 
(u_s\cdot\nabla u_s 
+a\partial_{x_1}u_s).\label{f2} 
\end{align} 
Here, we reveal in Theorem \ref{muckenthm} that the condition 
\begin{align}\label{alphabeta} 
-1<\beta<q-1,\quad -3<\alpha+\beta<3(q-1) 
\end{align} 
is the necessary and 
sufficient condition on $\alpha,\beta$ so that 
$(1+|x|)^\alpha(1+|x|-x_1)^\beta$ belongs to the 
Muckenhoupt class $\A_q(\R^3)$, which ensures 
the weighted $L^q$ boundedness of singular integral operators, 
see, for instance, Galc\'{\i}a-Cuerva and Rubio de Francia 
\cite[Chapter IV]{garu1985}, 
Torchinsky \cite[Chapter IX]{torchinsky1986} and 
Stein \cite[Chapter V]{stein1993}. 
We then employ such weights to 
apply the weighted $L^q$ theory for the Stokes resolvent problem 
and the Helmholtz decomposition in weighted $L^q$ spaces 
developed by Farwig and Sohr \cite{faso1997}. 
Theorem \ref{muckenthm} with $q=2$ was already known 
from Farwig \cite{farwig1992t} and 
the sufficiency of (\ref{alphabeta}) was proved by 
Kra\v{c}mer, Novotn\'{y} and Pokorn\'{y} \cite{krnopo2001}, 
but it has not known whether the condition is 
also necessary condition. 
\par To establish the anisotropically weighted 
$L^q$-$L^r$ estimates of the Oseen semigroup $e^{-tA_a}$ 
in the exterior domain $D$, it is important to derive  
the estimates in $\R^3$. In Section 5, 
given $q\leq r\leq\infty~(q\ne\infty)$ and 
$\alpha,\beta>0$ satisfying $\beta<1-1/q,\alpha+\beta<3(1-1/q)$ 
(which ensures 
$(1+|x|)^{\alpha q}(1+|x|-x_1)^{\beta q}\in\A_q(\R^3)$), 
we deduce 
\begin{align}
\|&(1+|x|)^\alpha(1+|x|-x_1)^\beta 
\nabla^j S_{a}(t)f\|_{L^r(\R^3)}
\notag\\
&\leq 
Ct^{-\frac{3}{2}(\frac{1}{q}-\frac{1}{r})-\frac{i}{2}
+\frac{\alpha}{4}+\max\{\frac{\alpha}{4},\frac{\beta}{2}\}
+\varepsilon}\|(1+|x|)^\alpha(1+|x|-x_1)^\beta f\|_{L^q(\R^3)}
\label{lqlrr3large30}
\end{align} 
for $t\geq 1,j=0,1$, where $S_a(t)~(a>0)$ is the Oseen semigroup 
in $\R^3$ and $\varepsilon>0$ is a given positive constant. 
But, it seems difficult to apply (\ref{lqlrr3large30}) to construct 
a solution in the nonlinear problems. 
Therefore, we derive the other estimate 
\begin{align}
\|&(1+|x|)^\alpha(1+|x|-x_1)^\beta 
\nabla^jS_a(t) f\|_{L^r(\R^3)}\notag\\
&\leq C
t^{-\frac{3}{2}(\frac{1}{q_1}-\frac{1}{r})-\frac{j}{2}}
\|(1+|x|)^{\alpha}(1+|x|-x_1)^{\beta} f\|_{L^{q_1}(\R^3)}+C
t^{-\frac{3}{2}(\frac{1}{q_2}-\frac{1}{r})-\frac{j}{2}+\alpha}
\|(1+|x|-x_1)^{\beta} f\|_{L^{q_2}(\R^3)}\notag\\
&\qquad+Ct^{-\frac{3}{2}(\frac{1}{q_3}-\frac{1}{r})-\frac{j}{2}
+\frac{\beta}{2}}
\|(1+|x|)^{\alpha}f\|_{L^{q_3}(\R^3)}
+Ct^{-\frac{3}{2}(\frac{1}{q_4}-\frac{1}{r})-\frac{j}{2}
+\alpha+\frac{\beta}{2}}
\|f\|_{L^{q_4}(\R^3)} 
\label{lqlrr3large300}
\end{align} 
for $t\geq 1,j=0,1$ and $1<q_i\leq r\leq\infty~(q_i\ne\infty)$. 
Note that the exponents $q_i~(i=1,2,3,4)$ 
may not coincide with each other and 
this remark plays an important role when we treat the terms 
$f_1$ and $f_2$ (see (\ref{f1})--(\ref{f2})) 
to analyze the starting problem, which is described in Section 8. 
The estimate (\ref{lqlrr3large30}) is not employed 
in the nonlinear problems, 
but it is seen that the rate in (\ref{lqlrr3large30}) is better than 
the one in (\ref{lqlrr3large300}) with $q_i=q~(i=1,2,3,4)$, 
that is $t^{-3(1/q-1/r)/2-j/2+\alpha+\beta/2}$. We thus use 
(\ref{lqlrr3large30}) in the study of the 
Oseen semigroup $e^{-tA_a}$ in $D$ 
for better understanding of $e^{-tA_a}$. 
With (\ref{lqlrr3large30})--(\ref{lqlrr3large300}) at hand, 
we derive the following, 
which is one of the main results of this paper, 
see Theorem \ref{lqlrd} and Theorem \ref{lqlrd2} 
for the precise statement. 
\begin{thm}\label{thm1}
\begin{enumerate}
\item Let $\varepsilon>0,j=0,1$ and let 
$1<q\leq r\leq\infty~(q\ne\infty)$ and small 
$\alpha,\beta>0$ satisfy $\beta<1-1/q,\alpha+\beta<3(1-1/q)$. 
We suppose 
$q\leq r<r_0$ if $j=1$, where $r_0(>3)$ depends on 
$\alpha,\beta$. Then we have 
\begin{align} 
\|&(1+|x|)^\alpha(1+|x|-x_1)^\beta \nabla^j e^{-tA_a}f\|_{L^r(D)}
\notag\\ 
&\leq 
Ct^{-\frac{3}{2}(\frac{1}{q}-\frac{1}{r})-\frac{i}{2}
+\frac{\alpha}{4}+\max\{\frac{\alpha}{4},\frac{\beta}{2}\}
+\varepsilon}\|(1+|x|)^\alpha(1+|x|-x_1)^\beta f\|_{L^q(D)}
\notag
\end{align} 
for $t\geq 1$. 
\item 
Let $j=0,1$ and let 
$1<q_i\leq r\leq\infty~(q_i\ne\infty,i=1,2,3,4)$ and small 
$\alpha,\beta>0$ satisfy 
$\alpha<3(1-1/q_3),\beta<\min\{1-1/q_1,1-1/q_2\},
\alpha+\beta<3(1-1/q_1)$. 
We suppose $q\leq r<r_1$ if $j=1$, where $r_1(>3)$ depends on 
$\alpha,\beta$. Then we have 
\begin{align}
\|&(1+|x|)^\alpha(1+|x|-x_1)^\beta 
\nabla^je^{-tA_a}f\|_{L^r(D)}\notag\\
&\leq C
t^{-\frac{3}{2}(\frac{1}{q_1}-\frac{1}{r})-\frac{j}{2}}
\|(1+|x|)^{\alpha}(1+|x|-x_1)^{\beta} f\|_{L^{q_1}(D)}+C
t^{-\frac{3}{2}(\frac{1}{q_2}-\frac{1}{r})-\frac{j}{2}+\alpha}
\|(1+|x|-x_1)^{\beta} f\|_{L^{q_2}(D)}\notag\\
&\qquad+Ct^{-\frac{3}{2}(\frac{1}{q_3}-\frac{1}{r})-\frac{j}{2}
+\frac{\beta}{2}}
\|(1+|x|)^{\alpha}f\|_{L^{q_3}(D)}
+Ct^{-\frac{3}{2}(\frac{1}{q_4}-\frac{1}{r})-\frac{j}{2}
+\alpha+\frac{\beta}{2}}
\|f\|_{L^{q_4}(D)}
\label{lqlrdlarge00}
\end{align} 
for $t\geq 1$. 
\end{enumerate}
\end{thm}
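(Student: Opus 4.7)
The plan is to combine the whole-space weighted estimates (\ref{lqlrr3large30}) and (\ref{lqlrr3large300}) established in Section 5 with a standard cut-off localization that reduces the exterior-domain problem in $D$ to the Cauchy problem on $\R^3$, in the spirit of Iwashita and Kobayashi--Shibata. Fix $R$ so that $\O\subset B_R$, pick $\phi\in C^\infty(\R^3)$ with $\phi\equiv 0$ on $B_R$ and $\phi\equiv 1$ on $\R^3\setminus B_{R+1}$, and let $\mathbb{B}$ denote Bogovskii's operator on the annulus $\Lambda:=B_{R+1}\setminus B_R$, extended by zero to $\R^3$. Given $f$ in the stated weighted solenoidal space, write $u(t):=e^{-tA_a}f$ and set
$$\tilde u(t):=\phi\, u(t)-\mathbb{B}\bigl[(\nabla\phi)\cdot u(t)\bigr],\qquad \tilde f:=\phi f-\mathbb{B}\bigl[(\nabla\phi)\cdot f\bigr],$$
both divergence-free on $\R^3$. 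A direct computation shows that $\tilde u$ solves the Oseen Cauchy problem on $\R^3$ with a forcing $G(u(t))$ built from the commutators $[\Delta,\phi]$ and $[a\partial_{x_1},\phi]$ together with the time derivative of the Bogovskii correction; crucially, $G(u(s))$ is supported in the fixed compact annulus $\Lambda$. Duhamel's formula then yields
$$\tilde u(t)=S_a(t)\tilde f+\int_0^t S_a(t-s)\,\mathbb{P}G(u(s))\,ds,$$
where $\mathbb{P}$ is the Helmholtz projection on $\R^3$.

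The first term $S_a(t)\tilde f$ is estimated directly by (\ref{lqlrr3large30}) or (\ref{lqlrr3large300}): $\tilde f$ coincides with $f$ outside $B_{R+1}$, while on $B_{R+1}$ the weight is bounded above and below, and both multiplication by $\phi$ and the Bogovskii operator are bounded on each weighted $L^q$ space that appears, thanks to Theorem \ref{muckenthm} combined with the Farwig--Sohr theory \cite{faso1997}. For the Duhamel integral the decisive point is that $G(u(s))$ lives in the bounded annulus $\Lambda$, so every weighted $L^q$ norm of $G(u(s))$ collapses to the unweighted local norm $\|u(s)\|_{W^{1,q}(\Lambda)}$. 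I control this local norm by the classical unweighted $L^q$-$L^r$ and gradient $L^q$-$L^r$ estimates of $e^{-tA_a}$ in $D$ due to Kobayashi--Shibata and Enomoto--Shibata, producing $\|u(s)\|_{W^{1,q}(\Lambda)}$ in terms of $\|f\|_{L^q(D)}$ (for the first assertion) or of the four weighted norms of $f$ on the right-hand side of (\ref{lqlrdlarge00}) (for the second assertion), times a suitable negative power of $s$. I then split $\int_0^t=\int_0^{t/2}+\int_{t/2}^t$: on the first subinterval, apply the $\R^3$ estimates (\ref{lqlrr3large30})/(\ref{lqlrr3large300}) with $t-s\geq t/2$; on the second, use the short-time weighted $L^q$-$L^q$ estimate of $S_a(t-s)$ combined with the local decay of $\|u(s)\|_{W^{1,q}(\Lambda)}$. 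Finally, the interior piece $(1-\phi)u(t)$ is supported in $B_{R+1}$, so its weighted $L^r$ norm is comparable to its unweighted $L^r$ norm on a bounded set and is absorbed by the unweighted Oseen semigroup estimate in $D$.

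The main difficulty is arranging the exponents so that the Duhamel integral converges and does not spoil the target rate. In the first assertion, the $+\varepsilon$ in the whole-space exponent provides just enough slack to absorb the loss coming from integrating against the local decay; in the second, no such $\varepsilon$ is available, so one must bound $\|u(s)\|_{W^{1,q}(\Lambda)}$ separately against each of the four weighted norms of $f$ by applying (\ref{lqlrr3large300}) to $G(u(s))$ term by term, and then match the exponents carefully to recover the four contributions on the right-hand side of (\ref{lqlrdlarge00}). The derivative case $j=1$ is responsible for the thresholds $r_0$ and $r_1$: for $\nabla\tilde u(t)$ to lie in the weighted $L^r$ space one needs the combination $\nabla S_a(t-s)\mathbb{P}G(u(s))$ to land there, which forces the weight $(1+|x|)^{\alpha r}(1+|x|-x_1)^{\beta r}$ to belong to $\A_r(\R^3)$; reading off the upper constraints from (\ref{alphabeta}) in Theorem \ref{muckenthm} yields the explicit values of $r_0$ and $r_1$ in terms of $\alpha$ and $\beta$.
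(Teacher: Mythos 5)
Your overall architecture (cut-off near the obstacle, Bogovskii correction, Duhamel on $\R^3$, whole-space weighted estimates for the free Oseen semigroup, splitting of the Duhamel integral) is the same as the paper's. But there are two genuine gaps. First, for the decay of the compactly supported forcing you propose to use only the \emph{classical unweighted} local energy decay of Kobayashi--Shibata/Enomoto--Shibata, which for $f\in L^q(D)$ gives the rate $t^{-3/(2q)}$ near the boundary. That is enough for the $j=0$ case of the second assertion, but it is not enough for the first assertion nor for $j=1$: the proof requires the \emph{improved} weighted local decay of Proposition \ref{local3}, namely $\|e^{-tA_a}P_Df\|_{W^{2,q}(D_R)}\leq Ct^{-3/(2s)}\|(1+|x|)^\alpha(1+|x|-x_1)^\beta f\|_{q,D}$ with $s<q$ chosen via Lemma \ref{holder} (the weighted data lies in $L^s(D)$ for some $s<q$, hence decays faster locally). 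Without this new ingredient the interior piece and the forcing decay only like $t^{-3/(2r)}$ for the gradient estimate at exponent $r$, which cannot match the target rate $t^{-1/2}$ once $r>3$, so you would be stuck with the classical restriction $r\leq 3$ and could not reach $r<r_0$, $r<r_1$ with $r_0,r_1>3$.

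Second, your explanation of where the thresholds $r_0,r_1$ come from is incorrect. The Muckenhoupt condition (\ref{alphabeta}) for $(1+|x|)^{\alpha r}(1+|x|-x_1)^{\beta r}\in\A_r(\R^3)$ reads $\beta<1-1/r$ and $\alpha+\beta<3(1-1/r)$; for $\alpha,\beta>0$ these are \emph{lower} bounds on $r$ and cannot produce an upper threshold. The actual source of $r_0,r_1$ is the exponent matching just described: near the boundary one can only achieve $t^{-3/(2s_0)}$ with $s_0$ slightly larger than $\max\{3r/(3+\alpha r+\beta r),\,2r/(2+\alpha r)\}$, and requiring $3/(2s_0)\geq 1/2$ (so as not to spoil the gradient rate) forces $s_0\leq 3$, i.e. $r<\min\{3/(1-\alpha-\beta),\,3/(1-\tfrac{3}{2}\alpha)\}$ when $\alpha<2/3$. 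This is why the admissible range enlarges beyond $3$ precisely because of the spatial decay of $f$, and it is the mechanism behind the optimality statement in Theorem \ref{thmoptimal}. A smaller omission: your forcing $G$ must also contain the pressure term $(\nabla\phi)p$, whose local $L^{q}$ norm is controlled through the local energy decay of $\partial_t u$ and $\nabla^2 u$ together with the Poincar\'e inequality on the annulus; this term cannot be dropped.
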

The proof of Theorem \ref{thm1} consists of two steps: 
one is the decay estimate near the 
boundary of $D$; the other is the decay estimate near infinity. 
This procedure is employed by Iwashita \cite{iwashita1989}, 
Kobayashi and Kubo \cite{koku2015} for the Stokes semigroup 
and by Kobayashi and Shibata \cite{kosh1998}, 
Enomoto and Shibata \cite{ensh2004,ensh2005} and 
Hishida \cite{hishida2021} for the Oseen semigroup. 
In those papers, they derived the decay rate $t^{-3/2q}$ for 
given $f\in L^q(D)$ 
in the first step by carrying out a cut-off procedure based on 
the $L^q$-$L^r$ estimates of $S_a(t)$ and on the decay estimate 
near the boundary of $D$ for initial velocity with compact support, 
called the local energy decay estimate, see Proposition \ref{local} 
in Section 6. However, since the temporal decay rate 
should be affected by the spatial decay structure of $f$, 
we expect that the decay rate is better than $t^{-3/(2q)}$ 
if $f$ decays faster than $L^q(D)$, for instance, 
$(1+|x|)^\alpha(1+|x|-x_1)^\beta 
f\in L^q(D)~(\alpha>0,\beta\geq 0)$. 
In fact, we adapt the same procedure as in those papers, 
but deduce better decay rate $t^{-3/(2q)-\eta}$ if 
$(1+|x|)^\alpha(1+|x|-x_1)^\beta 
f\in L^q(D)~(\alpha>0,\beta\geq 0)$, 
where $\eta$ is a positive constant dependent on $\alpha,\beta$, 
see the assertion 1 and 2 of Proposition \ref{local3}. 
We in particular find the rate $t^{-3/(2q)-\alpha/2}$  
in the case $\beta=0$. This rate was not found by \cite{koku2015}, 
in which isotropically weighted $L^q$-$L^r$ estimates of   
the Stokes semigroup and its application 
to the Navier-Stokes equations are studied, 
see also \cite{koku2013}. 
\par To get better convergence rates as in \cite{takahashi2021} 
than those in stability analysis, we also derive 
anisotropically weighted $L^q$-$L^r$ estimates of 
the composite operator $e^{-tA_a}P_D\div$ in 
Theorem \ref{thmdual0}, where 
$P_D$ denotes the Fujita-Kato projection on $D$, 
see Subsection \ref{notation}. 
By the duality argument, our main task is reduced to 
deducation of estimates of $\nabla e^{-tA_{-a}}$ in $L^q$ space 
with the weight $(1+|x|)^{-\alpha}(1+|x|-x_1)^{-\beta}$, 
and we perform two steps 
(estimate near the boundary of $D$ and near infinity) 
again to accomplish this task. It is remarkable that 
the decay rate in the first step is slower than $t^{-3/(2q)}$ 
obtained by Kobayashi and Shibata \cite{kosh1998} because the vector 
field $f$ fulfilling $(1+|x|)^{-\alpha}(1+|x|-x_1)^{-\beta}
f\in L^q(D)~(\alpha>0,\beta\geq 0)$ decays slower than $f\in L^q(D)$, 
see the assertion 3 of Proposition \ref{local3}. 
\par Although our main purpose is the Oseen semigroup 
in anisotropically weighted $L^q$ spaces, 
our argument provides new results on the Stokes semigroup 
in isotropically weighted $L^q$ spaces. We prove that 
\begin{align}\label{gradlqlrdlarge3} 
\|(1+|x|)^\alpha \nabla e^{-tA}P_Df\|_{L^r(D)}\leq 
Ct^{-\frac{3}{2}(\frac{1}{q}-\frac{1}{r})-\frac{1}{2}} 
\|(1+|x|)^\alpha f\|_{L^q(D)} 
\end{align} 
for $t>0$ provided $0<\alpha<1,1<q\leq r\leq 3/(1-\alpha)$ 
and that 
\begin{align}\label{lqlrddual2} 
\|(1+|x|)^{-\alpha}\nabla e^{-tA}P_Df\|_{L^r(D)} 
\leq Ct^{-\frac{3}{2}(\frac{1}{q}-\frac{1}{r}) 
-\frac{1}{2}}\|(1+|x|)^{-\alpha}f\|_{L^q(D)} 
\end{align} 
for $t>0$ provided $0\leq \alpha<1,1<q\leq r\leq 3/(1+\alpha)$, 
where $e^{-tA}$ is the Stokes semigroup. 
As for the case $\alpha=0$, we know 
\begin{align}\label{lqlrdstokes} 
\|\nabla e^{-tA}P_Df\|_{L^r(D)} 
\leq Ct^{-\frac{3}{2}(\frac{1}{q}-\frac{1}{r}) 
-\frac{1}{2}}\|f\|_{L^q(D)} 
\end{align} 
for $t>0,1<q\leq r\leq 3$, which  
was proved by Iwashita \cite{iwashita1989} and 
Maremonti and Solonnikov \cite{maso1997}. We note that 
the restriction $1<q\leq r\leq 3$ is optimal in the sense that 
it is impossible to have (\ref{lqlrdstokes}) for $1<q\leq r<q_0$ 
with $q_0>3$. This optimality was first pointed out 
by \cite{maso1997} and another proof 
was provided by Hishida \cite{hishida2011}, in which 
the key is the relation between the restriction of exponent and 
the summability of the steady Stokes flow at spatial infinity. 
As for the case $\alpha>0$, the estimate (\ref{gradlqlrdlarge3}) 
for $q\leq r\leq 3$ 
was obtained by Kobayashi and Kubo \cite{koku2015}, 
see also \cite{koku2013}, 
but the optimality has not been known until now. 
In the present paper, we derive (\ref{gradlqlrdlarge3}) for 
$q\leq r\leq 3/(1-\alpha)$ and prove 
the optimality of the upper bound $3/(1-\alpha)$ by 
adapting the argument due to \cite{hishida2011} 
to weighted $L^q$ spaces, see Theorem \ref{thmoptimal}. 
The present paper also provides 
(\ref{lqlrddual2}) for $q\leq r\leq 3/(1+\alpha)$ as well as 
the optimality of $3/(1+\alpha)$ in Theorem \ref{thmoptimal}. 
The optimality of the gradient estimates for the Oseen semigroup 
in the same sense above has not known and it will be discussed in 
Remark \ref{rmkoseen}. 
\par As applications of anisotropically 
weighted $L^q$-$L^r$ estimates 
of the Oseen semigroup, we construct a solution to (\ref{NS32}) 
or to (\ref{NS3}) in anisotropically weighted $L^q$ spaces to deduce  
the spatial-temporal behavior of the solution. 
The main result on the stability of stationary solutions 
is the following. The precise statement on this result is 
stated in Theorem \ref{stabilitythm}.
\begin{thm}\label{thm2} 
Let $\alpha\geq 0,0\leq \beta<1/3$ satisfy $\alpha+\beta<1$. 
If the velocity $a$ and the $L^3$ norm of initial perturbation $b$, 
which is of class $(1+|x|)^{\alpha}(1+|x|-x_1)^{\beta}b\in L^3(D)$, 
are small enough, 
then the problem $($\ref{NS32}$)$ admits a solution $v$ which enjoys 
\begin{align}
&\|(1+|x|)^{\alpha}(1+|x|-x_1)^{\beta} v(t)\|_{r,D}
=o(t^{-\frac{1}{2}+\frac{3}{2r}+\alpha+\frac{\beta}{2}}), 
\label{anisov0}\\
&\|(1+|x|)^{\alpha}(1+|x|-x_1)^{\beta}\nabla v(t)\|_{3,D}
=o(t^{-\frac{1}{2}+\alpha+\frac{\beta}{2}})\label{anisov20}
\end{align} 
as $t\rightarrow\infty$ for all $r\in[3,\infty]$.
\end{thm}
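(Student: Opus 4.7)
The plan is to reformulate (\ref{NS32}) via Duhamel's principle as the integral equation
\begin{align*}
v(t) = e^{-tA_a}b - \int_0^t e^{-(t-s)A_a}P_D\,\div\bigl(v\otimes v + v\otimes u_s + u_s\otimes v\bigr)(s)\,ds,
\end{align*}
and to solve it by a Kato-type contraction mapping argument in a solution space whose norm encodes the expected decay rates (\ref{anisov0})--(\ref{anisov20}). The linear part is controlled by Theorem \ref{thm1}(ii): taking $q_1=3$ yields
\begin{align*}
\|(1+|x|)^\alpha(1+|x|-x_1)^\beta e^{-tA_a}b\|_{r,D} \leq C t^{-\frac{1}{2}+\frac{3}{2r}}\|(1+|x|)^\alpha(1+|x|-x_1)^\beta b\|_{3,D},
\end{align*}
together with three lower-order terms, each of which decays strictly faster than the target rate; the corresponding estimate for $\nabla e^{-tA_a}b$ in $L^3$ follows from the $j=1$ version of the same theorem.

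For the nonlinear convolution I would invoke the anisotropically weighted $L^q$-$L^r$ estimates for $e^{-tA_a}P_D\,\div$ asserted in Theorem \ref{thmdual0}, whose four-term splitting allows the weight $(1+|x|)^\alpha(1+|x|-x_1)^\beta$ to be distributed between the two factors of the quadratic term via H\"older's inequality: one factor is estimated with the full anisotropic weight and the other in an unweighted $L^p$ norm. The coupling terms $v\otimes u_s$ and $u_s\otimes v$ are handled by exploiting the pointwise bound $u_s(x)=O((1+|x|)^{-1}(1+|x|-x_1)^{-1})$ from (\ref{wake}); this supplies the extra spatial decay needed to close the estimate, at the cost of a factor proportional to $a$ through the size of $u_s$ guaranteed by Finn's theorem under the smallness of $a$. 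Defining the complete metric space as the set of divergence-free vector fields $v$ bounded with respect to the time-weighted norms $t^{1/2-3/(2r)-\alpha-\beta/2}\|(1+|x|)^\alpha(1+|x|-x_1)^\beta v\|_{r,D}$ for $r\in\{3,\infty\}$ and $t^{1/2-\alpha-\beta/2}\|(1+|x|)^\alpha(1+|x|-x_1)^\beta \nabla v\|_{3,D}$, a standard bilinear estimate together with the smallness of $a$ and of $\|(1+|x|)^\alpha(1+|x|-x_1)^\beta b\|_{3,D}$ yields the fixed point.

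The main obstacle is to verify, for each of the four terms arising from Theorem \ref{thmdual0}, that the time integral $\int_0^t (t-s)^{-\gamma_1}s^{-\gamma_2}\,ds$ is finite with $\gamma_1,\gamma_2<1$ and that the resulting exponent of $t$ matches the target decay. The hypotheses $0\leq\beta<1/3$ and $\alpha+\beta<1$ emerge precisely as the constraints that keep every $\gamma_i$ in $(0,1)$ once the weights have been distributed among the factors; they are also consistent with the Muckenhoupt condition (\ref{alphabeta}) from Theorem \ref{muckenthm} required to apply the underlying weighted resolvent theory. The sharp $o(\cdot)$-rates (rather than merely $O(\cdot)$) are then obtained by a standard approximation argument: approximating $b$ in the weighted $L^3$-norm by smooth compactly supported solenoidal fields gives strictly faster decay of the linear term for the approximants, while the bilinear contribution already decays faster than the linear one, so that the $\limsup$ in time of the scaled norm on the left-hand side vanishes.
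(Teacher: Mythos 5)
Your overall architecture (Duhamel formula plus time-weighted norms that build in the loss $t^{\alpha+\beta/2}$) is in the spirit of the paper, but the central device you propose for the nonlinearity does not close. Theorem \ref{thmdual0} is not a four-term splitting -- that structure belongs to the estimates for $e^{-tA_a}$ itself (assertion 2 of Theorem \ref{thm1}, Theorem \ref{lqlrd}) -- and its estimate (\ref{lqlrddiv0}) charges the \emph{full} loss $(1+t)^{\alpha+\frac{\beta}{2}}$ on the semigroup factor in every instance. Since your solution space already allows $\|(1+|x|)^{\alpha}(1+|x|-x_1)^{\beta}v(s)\|_{3,D}$ to grow like $(1+s)^{\alpha+\frac{\beta}{2}}$, the weighted norm of $v\otimes v$ inherits this growth, and the quadratic Duhamel term comes out of order $t^{2\alpha+\beta}$: for $r=3$, $q\in(3/2,3)$, $1/p=1/q-1/3$, one gets $\int_0^{t/2}(t-s)^{-\frac12-\frac{3}{2p}}(1+t-s)^{\alpha+\frac{\beta}{2}}(1+s)^{\alpha+\frac{\beta}{2}}s^{-\frac12+\frac{3}{2p}}\,ds\sim t^{2\alpha+\beta}$, which strictly exceeds the target $t^{\alpha+\frac{\beta}{2}}$ whenever $\alpha+\beta/2>0$; the loss is double-counted and the contraction does not close. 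Moreover (\ref{lqlrddiv0}) is only stated for $3/2\le q\le r<\infty$, so it gives neither the $r=\infty$ case of (\ref{anisov0}) nor the weighted gradient bound (\ref{anisov20}) (there is no estimate for $\nabla e^{-tA_a}P_D\div$ in the paper). The paper's proof of Theorem \ref{stabilitythm} avoids the composite operator entirely in the stability problem: it keeps the nonlinearity as $v\cdot\nabla v+v\cdot\nabla u_s+u_s\cdot\nabla v$ and applies the four-term estimates of Proposition \ref{proplqlr} through Lemma \ref{closelem}, where the factor carrying the partial weight $(\widetilde{\gamma}_i,\widetilde{\delta}_i)$ is paired with the semigroup loss $\widetilde{\eta}_i$ satisfying $\widetilde{\eta}_i+\widetilde{\gamma}_i+\widetilde{\delta}_i/2=\alpha+\beta/2$, while the other factor ($\nabla v$ or $v$) is taken unweighted, so the loss appears exactly once; the $L^\infty$ and gradient rates are then extracted from the representation on $[T,t]$ with $T=t/2$ (see (\ref{Tt})), not from Theorem \ref{thmdual0}.

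Two further gaps. First, the theorem does not assume smallness of $\|(1+|x|)^{\alpha}(1+|x|-x_1)^{\beta}b\|_{3,D}$, only of $\|b\|_{3,D}$ and $a$; your fixed-point scheme needs the weighted norm small. The paper gets the stronger statement by a two-step argument: the contraction is run in the unweighted Kato class $Y_0$, and the weighted bounds for the iterates then follow from an inequality that is \emph{linear} in $[[v_m]]_{\alpha,\beta,\infty}$ with coefficient controlled by $\|b\|_{3,D}$ and $a^{1/4}$. Second, your route to the $o$-rates is too quick: the terms $u_s\cdot\nabla v$, $v\cdot\nabla u_s$, and even $E_1(v,v)$ decay at the improved rate only if $v$ itself already does, so one needs the self-consistent absorption argument of the paper (the extra factor $t^{-\mu/2}$ for $b\in C^\infty_{0,\sigma}(D)$, then density in the weighted $L^3$ space combined with the continuous-dependence bound $[[\widetilde{v}_m-v]]_{\alpha,\beta,\infty}\lesssim\|(1+|x|)^{\alpha}(1+|x|-x_1)^{\beta}(b_m-b)\|_{3,D}$), not merely the observation that the nonlinear contribution is quadratic.
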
 
The proof of this theorem is accomplished by 
adapting the argument due to Enomoto and Shibata \cite{ensh2005} 
to analyze four norms appeared in the RHS of (\ref{lqlrdlarge00}). 
We note that the smallness of 
$\|(1+|x|)^{\alpha}(1+|x|-x_1)^{\beta}b\|_{L^3(D)}$ is not assumed 
in this theorem. 
The rate in (\ref{anisov0}) with $\beta=0$ is 
$-1/2+3/(2r)=-3(1/3-1/r)/2$, which is same as the one of 
the usual $L^3$-$L^r$ estimate of the Oseen semigroup,  
and the loss $\alpha$. This loss is less than 
the one in Bae and Roh \cite{baro2012} who derived 
\begin{align*}
\|(1+|x|)^\alpha v(t)\|_{L^r(D)}=O(t^{-\frac{3}{2} 
(\frac{1}{q}-\frac{1}{r})+\frac{1+\alpha}{2}+\varepsilon}) 
\end{align*} 
as $t\rightarrow\infty$ for $\varepsilon>0$, where 
$\alpha\in(0,1/2),q\in(2,3),r(\geq q)$ are given some constants. 
Since the proof of Theorem \ref{thm2} works well for 
the problem (\ref{NS3}), we can get 
(\ref{anisov0})--(\ref{anisov20}), 
in which $v$ is replaced by $w$. 
However, because the fluid is initially at 
rest and because the stationary solution $u_s$ belongs to $L^q(D)$ 
with $q<3$, to be precise, $u_s\in L^q(D)$ for all $q>2$, 
we can expect better temporal rate, that is, 
\begin{align}
&\|(1+|x|)^{\alpha}(1+|x|-x_1)^{\beta}w(t)\|_{L^r(D)}
=O(t^{-\frac{1}{2}+\frac{3}{2r}-\frac{1}{4}+\varepsilon
+\alpha+\frac{\beta}{2}}), 
\label{anisow0}\\
&\|(1+|x|)^{\alpha}(1+|x|-x_1)^{\beta} \nabla w(t)\|_{L^3(D)}
=O(t^{-\frac{1}{2}-\frac{1}{4}+\varepsilon
+\alpha+\frac{\beta}{2}})\label{anisow20}
\end{align} 
as $t\rightarrow\infty$ for any $\varepsilon>0$. 
In fact, we prove the following main result on the starting problem,  
which is stated precisely in Theorem \ref{attainthm}. 
\begin{thm}\label{thm3}
Let $0<\alpha,\beta<1/3$. 
Given $\psi$ satisfying $(\ref{psidef})$, 
we set $M=\max_{t\in\R}|\psi'(t)|$. 
If $(M+1)a$ is small enough, then 
the problem $($\ref{NS3}$)$ admits a solution $w$ 
which enjoys $($\ref{anisow0}$)$--$($\ref{anisow20}$)$ 
as $t\rightarrow\infty$ for all $r\in[3,\infty]$ and 
$\varepsilon>0$. 
\end{thm}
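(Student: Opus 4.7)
\textbf{Proof proposal for Theorem \ref{thm3}.}
The plan is to recast \eqref{NS3} as the integral equation
\begin{align*}
w(t) = \int_0^t e^{-(t-s)A_a} P_D \bigl[f_1(s)+f_2(s)\bigr]\,ds
- \int_0^t e^{-(t-s)A_a} P_D \bigl[w\cdot\nabla w + \psi(s)(w\cdot\nabla u_s + u_s\cdot\nabla w) - (1-\psi(s))a\partial_{x_1}w\bigr](s)\,ds
\end{align*}
and construct $w$ by a contraction argument in a complete metric space $Y_T$ of space-time functions whose norm simultaneously controls the four quantities appearing in the target rates, namely (for small $\varepsilon>0$) the weighted sup-in-time norms
\begin{align*}
&\sup_{0<t<T} t^{\frac{1}{2}-\frac{3}{2r}+\frac{1}{4}-\varepsilon-\alpha-\frac{\beta}{2}}\|(1+|x|)^\alpha(1+|x|-x_1)^\beta w(t)\|_{r,D}\ (r=3,\infty), \\
&\sup_{0<t<T} t^{\frac{1}{2}+\frac{1}{4}-\varepsilon-\alpha-\frac{\beta}{2}}\|(1+|x|)^\alpha(1+|x|-x_1)^\beta\nabla w(t)\|_{3,D},
\end{align*}
together with one unweighted auxiliary norm (typically a weighted $L^3$–$L^\infty$ estimate for $w$ and an unweighted $\nabla w$ norm in $L^3$) tailored to close the nonlinearity. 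The decay profile is the one expected from the fact that the effective forcing $f_1+f_2$ is supported in $s\in[0,1]$ and has the spatial summability of $u_s$, which lies in $L^q(D)$ for every $q>2$.

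The linear contribution is where the $-\tfrac14+\varepsilon$ gain over Theorem \ref{thm2} originates. Since $\psi'$ and $\psi(1-\psi)$ are supported in $[0,1]$, while $u_s\in L^{q}(D)$ and $u_s\cdot\nabla u_s,\ \partial_{x_1}u_s\in L^{q}(D)$ for every $q$ slightly larger than $2$ (thanks to the wake structure \eqref{wake} and its gradient analogue), applying Theorem \ref{thm1}(2) term by term with four different exponents $q_1,q_2,q_3,q_4$ chosen just above $2$ yields the four estimates
\begin{align*}
&\bigl\|(1+|x|)^\alpha(1+|x|-x_1)^\beta \nabla^j e^{-(t-s)A_a}P_D(f_1+f_2)(s)\bigr\|_{r,D}\\
&\qquad\le C(t-s)^{-\frac{3}{2}(\frac{1}{2}-\frac{1}{r})-\frac{j}{2}-\frac{1}{4}+\varepsilon+\alpha+\frac{\beta}{2}}\cdot\bigl(\text{norm of $u_s$-type data}\bigr),
\end{align*}
where the loss exponents $\alpha$ and $\beta/2$ are absorbed from the corresponding four terms of \eqref{lqlrdlarge00}. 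Integrating against $s\in[0,1]$ and using $(t-s)\asymp t$ for $t\ge 2$ gives precisely the rates \eqref{anisow0}–\eqref{anisow20}. Smallness of $a$ is needed here to absorb the constant into the iteration ball.

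For the nonlinear part and the stray term $(1-\psi(s))a\partial_{x_1}w$, I would import wholesale the scheme of Theorem \ref{thm2} (i.e.\ Theorem \ref{stabilitythm}), which is designed to close under exactly these four anisotropic norms. The quadratic term $w\cdot\nabla w$ and the cross terms $w\cdot\nabla u_s,\ u_s\cdot\nabla w$ are rewritten in divergence form so that the composite-operator bound of Theorem \ref{thmdual0} can be used, and each contribution is estimated by splitting the time integral at $t/2$. Because $(1-\psi(s))a\partial_{x_1}w(s)=0$ for $s\ge 1$, this additional linear perturbation only affects a bounded initial time slab and can be handled by the same semigroup estimates used for $f_1+f_2$, provided $Ma$ is small (this is the role of the constant $M=\max|\psi'|$, which also controls the size of $f_1$).

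The main obstacle, as in the stability result, is to make the four norms propagate consistently: the nonlinearity $w\cdot\nabla w$ forces one to track $\nabla w$ in a weighted $L^3$ space whose time rate must still beat $1/t$ so that the Duhamel integral converges. This dictates the restrictions $\alpha,\beta<1/3$, mirroring the sharp range in Theorem \ref{stabilitythm}, and requires the $\varepsilon$-slack to pay for choosing $q_i$ strictly above the critical value $2$ arising from the summability of $u_s$. Once the global-in-time existence and the four weighted rates are obtained for a fixed small $\varepsilon$, the conclusion of Theorem \ref{thm3} for all $r\in[3,\infty]$ follows by interpolation between $r=3$ and $r=\infty$ in the $L^r$ rate of $w$.
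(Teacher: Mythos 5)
Your identification of the source of the $-\tfrac14+\varepsilon$ gain is correct and matches the paper: $f_1,f_2$ are supported in $s\in[0,1]$, and because $(1+|x|)^{\gamma_i}(1+|x|-x_1)^{\delta_i}u_s$ and the corresponding weighted versions of $\nabla u_s$ lie in $L^{q_i}(D)$ only for $q_i$ strictly above $2/(1-\gamma_i)$ (resp.\ above $4/(3-2\gamma_i)$), one must exploit the freedom to choose \emph{four distinct} exponents in \eqref{lqlrdlarge00}; this is exactly how the paper treats $F_1,F_2$ via the assertion 3 of Proposition \ref{proplqlr}. The gap is in the nonlinear part. ``Importing wholesale the scheme of Theorem \ref{thm2}'' cannot give \eqref{anisow0}--\eqref{anisow20}: that scheme only closes at the stability rates, i.e.\ it yields $\|(1+|x|)^{\gamma_i}(1+|x|-x_1)^{\delta_i}w(t)\|_{3,D}=O(t^{\gamma_i+\delta_i/2})$ with no extra decay, and the whole difficulty of Theorem \ref{thm3} is to upgrade the Duhamel contributions of $w\cdot\nabla u_s$ and $u_s\cdot\nabla w$ (not just of $f_1+f_2$) to the rate $t^{-1/4+\varepsilon}$. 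Because $u_s\in L^{3/(1+\mu_1)}(D)$ and $\nabla u_s\in L^{3/(2+\mu_3)}(D)$ only for $\mu_1<1/2$, $\mu_3<1/4$, a single application of the semigroup estimates to these cross terms gains only a factor $t^{-\mu/2}$ with $\mu=\min\{\mu_1,\mu_3\}<1/4$ over whatever rate one already knows for $w$; the paper therefore first constructs $w$ by contraction in the unweighted Kato class, establishes the stability-type weighted bounds a posteriori, and then runs an \emph{iterative bootstrap} \eqref{sharpl3} improving the weighted $L^3$ rate by $\mu/2$ per step until it saturates at $1/4-\varepsilon$, interleaved with the ``restart at $T=t/2$'' device \eqref{inftygrad3alphabetaw1}--\eqref{inftygrad3alphabetaw3} to transfer each improved $L^3$ rate to the $L^\infty$ and gradient norms before the next step. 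A one-shot contraction in a space that already encodes the final rates might be made self-consistent, but you have not verified this, and it is precisely the step that cannot be borrowed from Theorem \ref{thm2}.

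A second concrete problem is your proposal to rewrite $w\cdot\nabla w$ and the cross terms in divergence form and invoke Theorem \ref{thmdual0}: for the natural pairing $F=w\otimes w\in L^{3/2}$ against the target $L^3$ one has $1/q-1/r=1/3$, which is excluded by the hypothesis $1/q-1/r<1/3$ of Theorem \ref{thmdual0}, and the composite-operator bound carries an extra loss $(1+t-\tau)^{\alpha+\beta/2}$ together with a nonintegrable singularity $(t-\tau)^{-1}$ near $\tau=t$ at that endpoint. The paper uses the composite operator only for the term $(1-\psi(\tau))a\,\partial_{x_1}w=a\,(1-\psi(\tau))\,\mathrm{div}(e_1\otimes w)$ (see \eqref{div}, via \eqref{lqlrddiv02}), where only the $L^3$ norm of $w$ is available at the required rate, and handles all the genuinely nonlinear terms with the direct four-weight estimates of Proposition \ref{proplqlr} and Lemma \ref{closelem}.
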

We prove Theorem \ref{thm3} 
by employing the idea in the previous study \cite{takahashi2021} 
and by applying (\ref{lqlrdlarge00}) as well as the estimates of 
$e^{-tA_a}P_D\div.$ The key step is to prove the weighted 
$L^3$ estimate of the solution, that is, 
\begin{align}\label{attainrate3}
\|(1+|x|)^{\alpha}(1+|x|-x_1)^{\beta}w(t)\|_{L^3(D)}
=O(t^{-\frac{1}{4}+\varepsilon+\alpha+\frac{\beta}{2}}). 
\end{align} 
Once we have (\ref{attainrate3}), 
the other properties can be derived by the similar 
argument to Enomoto and Shibata \cite{ensh2005}. 
To prove (\ref{attainrate3}), 
we first derive slower rate 
$\|(1+|x|)^{\alpha}(1+|x|-x_1)^{\beta}w(t)\|_{L^3(D)} 
=O(t^{-\mu/2+\alpha+\beta/2})$ 
with some $\mu \in (0,1/4)$, which yields 
better properties of the other norms of the solution. 
With them at hand, we repeat improvement of the estimate 
of weighted $L^3$ norm step by step to find (\ref{attainrate3}). 
\par The paper is organized as follows. In Section 2 we introduce 
the notation and give the main theorems. 
Section 3 is devoted to the proof of Theorem \ref{muckenthm}, 
which provides the necessary and 
sufficiently condition on $\alpha,\beta$ so that 
$(1+|x|)^\alpha(1+|x|-x_1)^\beta\in\A_q(\R^3).$ 
We prove in Section 4 
the anisotropically weighted $L^q$-$L^r$ smoothing action near $t=0$ 
of the Oseen semigroup (Theorem \ref{lqlrd0}). 
To derive the large time behavior of the Oseen semigroup, 
we need the decay estimates in the whole space 
and near the boundary of $D$ in the weighted $L^q$ framework. 
They are discussed in Section 5 and 6, respectively. 
In Section 7, we derive the anisotropically weighted 
$L^q$-$L^r$ estimates of the Oseen semigroup 
(Theorem \ref{lqlrd}, \ref{thmdual0} and \ref{lqlrd2}).   
The optimality of the restriction 
for the gradient estimate of the Stokes semigroup 
(Theorem \ref{thmoptimal}) is also proved. 
The final section is devoted to the proof of 
Theorem \ref{stabilitythm} and Theorem \ref{attainthm}, in which 
we construct nonstationary solutions 
in anisotropically weighted $L^q$ spaces to 
capture the spatial-temporal behavior of solutions.

\section{Main theorems}\label{maintheorem}
\quad In this section, we first introduce some notation 
and after that, we give our main theorems.
\subsection{Notation}\label{notation}
\quad We start with the class of weights. 
The Muckenhoupt class $\A_q(\R^3)$ is defined as follows. 
\begin{df}\label{muckendef}
Let $1\leq q<\infty$. 
A nonnegative function $\rho\in L^1_{\rm loc}(\R^3)$ belongs to 
the Muckenhoupt class $\A_q(\R^3)$ 
if there is a constant $C>0$ such that 
\begin{align*}
\left(\frac{1}{|B_r(x)|}\int_{B_r(x)} \rho(y)\,dy\right)
\left(\frac{1}{|B_{r}(x)|}\int_{B_r(x)}
\rho(y)^{-\frac{1}{q-1}}\,dy\right)^{q-1}\leq C,
\qquad &1<q<\infty,\\
\frac{1}{|B_r(x)|}\int_{B_r(x)} 
\rho(y)\,dy\leq C\essinf_{y\in B_r(x)}\rho(y),\qquad &q=1
\end{align*}
for all balls $B_r(x):=\{y\in\R^3\mid |y-x|<r\}\subset \R^3$, 
where $|B_r(x)|$ denotes the Lebesgue measure of $B_r(x)$. 
\end{df}
\par Following Farwig and Sohr 
\cite[Definition 2.5]{faso1997}, 
we introduce a restricted class of $\A_q(\R^3)$ on exterior 
domain $D$.
\begin{df}\label{defaqd}
Let $1\leq q<\infty$. A nonnegative function 
$\rho\in L^1_{\rm loc}(D)$ belongs to $\A_q(D)$ if 
$\rho=\widetilde{\rho}|_{D}$ with some  
$\widetilde{\rho}\in\A_q(\R^3)$ and if 
there exist a bounded domain $G\subset D$ 
and a constant $\varepsilon>0$ such that 
$\{x\in D \,;\,{\rm dist}(x,\partial D)<\varepsilon\}\subset G$ 
and $\rho\in C(\overline{G}),~\rho|_{\overline{G}}>0$.
\end{df}
\par We define some function spaces.  
Let $\Omega\subset \R^3$ be a domain. 
By $C^{\infty}_0(\Omega)$, we denote the set of all 
$C^{\infty}$ functions 
with compact support in $\Omega$. For $1\leq q\leq \infty$,  
nonnegative integer $m$ and nonnegative function 
$\rho\in L_{\rm loc}^1(\Omega)$, 
weighted Lebesgue and Sobolev spaces are 
defied by
\begin{align*}
&L_{\rho}^q(\Omega):=\left\{u\in L^1_{\rm loc}(\Omega)~\middle|~
\int_{\Omega}|u(x)|^q\rho\,dx<\infty\right\},\quad 1\leq q<\infty,\\
&L_{\rho}^\infty(\Omega):=\{u\in L^1_{\rm loc}(\Omega)\mid
\esssup_{x\in\Omega}|\rho(x)u(x)|<\infty\},\\
&W^{m,q}_{\rho}(\Omega):=\{u\in L^1_{\rm loc}(\Omega)
\mid u,\nabla u,
\cdots, \nabla^mu\in L^q_{\rho}(\Omega)\}
\end{align*}
with norms  
\begin{align*}
&\|u\|_{L^q_\rho(\Omega)}=\left(\int_{\Omega}|u(x)|^q\rho\,dx
\right)^{\frac{1}{q}},\quad
1\leq q<\infty,\qquad \|u\|_{L^\infty_\rho(\Omega)}=
\esssup_{x\in\Omega}|\rho(x)u(x)|,\\ 
&\|u\|_{W^{m,q}_{\rho}(\Omega)}=\left(\sum_{k=0}^m
\|\nabla^k u\|^q_{L^q_\rho(\Omega)}\right)^{\frac{1}{q}},
\end{align*}
respectively. If $\rho\equiv 1$, then we simply write
\begin{align*} 
L^q(\Omega)=L^q_{1}(\Omega),\quad
\|\cdot\|_{q,\Omega}=\|\cdot\|_{L^q_1(\Omega)},\quad  
W^{m,q}(\Omega)=W^{m,q}_{1}(\Omega),\quad\|\cdot\|_{W^{m,q}
(\Omega)}
=\|\cdot\|_{W^{m,q}_1(\Omega)},
\end{align*}
from which it follows that 
\begin{align*}
\|\rho u\|_{q,\Omega}=\|u\|_{L^q_{\rho^q}(\Omega)},
\quad 1\leq q<\infty,\qquad 
\|\rho u\|_{\infty,\Omega}=\|u\|_{L^\infty_{\rho}(\Omega)}.
\end{align*}
For $1\leq q<\infty$ and nonnegative function 
$\rho\in L_{\rm loc}^1(\Omega)$, 
the dual of the space $L^q_\rho(\Omega)$ is 
$L^{q'}_{\rho'}(\Omega)$, where $1/q+1/q'=1$ and 
$\rho'=\rho^{-1/(q-1)}$ if $1<q<\infty$ and $\rho'=\rho^{-1}$ if 
$q=1$. Furthermore, 
$C_0^\infty(\Omega)$ is dense in $L^q_{\rho}(\Omega)$ for 
$1\leq q<\infty$ and 
nonnegative function $\rho\in L^1_{\rm loc}(\Omega)$. 
To conclude this, 
it is enough to prove that given characteristic function 
$\chi_E$, there exists $\{\varphi_n\}_{n=1}^\infty
\subset C_0^\infty(\Omega)$ such that 
\begin{align}\label{phin}
\|\varphi_n-\chi_E\|_{L^q_{\rho}(\Omega)}\rightarrow 0
\end{align}
as $n\rightarrow\infty,$ where $E\subset \Omega$ is a measurable set 
with the finite Lebesgue measure. 
Let $\widetilde{\chi}_E$ be the function on $\R^3$ such that  
$\widetilde{\chi}_E(x)=\chi_E(x)$ for $x\in \Omega$ and 
$\widetilde{\chi}_E(x)=0$ for $x\in\R^3\setminus \Omega$. 
Let $0\leq h\in C_0^\infty(\R^3)$ with 
${\rm supp}~h\subset B_1(0)$ and $\int_{\R^3}h(x)\,dx=1$ and 
set $h_n(x):=n^3h(nx)$ for $n\in\N$. 
Since $\|h_n*\widetilde{\chi}_E-\widetilde{\chi}_E\|_{q,\R^3}
\rightarrow 0$ as $n\rightarrow \infty$, there exists a 
subsequence, again denoted by $h_n*\widetilde{\chi}_E$, such that 
$h_n*\widetilde{\chi}_E(x)-\widetilde{\chi}_E(x)\rightarrow 0$
as $n\rightarrow \infty$ a.e. $x\in\R^3$ and that 
$h_{n}*\widetilde{\chi}_E\in C_0^\infty (\widetilde{E})$ 
for $n\in\N$, where $*$ denotes the convolution,  
$\widetilde{E}(\supset E)$ is a 
finite measure set and the closure of $\widetilde{E}$ 
is contained in $\Omega$. Moreover, we have 
$|h_n*\widetilde{\chi}_E(x)-\chi_E(x)|^q\rho(x)\leq 
2^q\rho(x)$ for $x\in\widetilde{E}$ and 
$2^q\rho\in L^1(\widetilde{E})$ 
because $\rho\in L^1_{\rm loc}(\Omega)$. 
We thus apply Lebesgue's convergence theorem to get 
\begin{align*}
\int_\Omega|h_n*\widetilde{\chi}_E(x)
-\chi_E(x)|^q\rho(x)\,dx=\int_{\widetilde{E}}
|h_n*\widetilde{\chi}_E(x)-\chi_E(x)|^q
\rho(x)\,dx\rightarrow 0
\end{align*}
as $n\rightarrow\infty,$ which yields (\ref{phin}) with 
$\varphi_n=h_n*\widetilde{\chi}_E$.  
The completion of $C_0^\infty(\Omega)$ in $W^{m,q}(\Omega)$ is 
denoted by $W_0^{m,q}(\Omega)$. 
\par We next introduce some solenoidal function spaces. 
Let $\Omega=\R^3$ or exterior domain $D$. 
By $C_{0,\sigma}^\infty(\Omega)$, we denote the 
set of all $C_0^{\infty}$-vector fields $f$ which satisfy $\div f=0$ 
in $\Omega$. For $1<q<\infty$ and $\rho\in \A_q(\Omega)$, 
$L^{q}_{\rho,\sigma}(\Omega)$ denote the 
completion of $C_{0,\sigma}^\infty(\Omega)$ in $L_{\rho}^q(\Omega)$. 
Due to Farwig and Sohr \cite{faso1997}, 
we have the Helmholtz decomposition:
\begin{align}\label{helmholtz}
L_{\rho}^q(\Omega)=L^q_{\rho,\sigma}(\Omega)
\oplus\{\nabla p\in L_{\rho}^q(\Omega) 
\mid p\in L^q_{\rm{loc}}(\overline{\Omega})\}
\end{align}
and the duality relation: 
$L^q_{\rho,\sigma}(\Omega)^*=L^{q'}_{\rho',\sigma}(\Omega)$ 
for $1<q<\infty$ and $\rho\in\A_q(\Omega)$, see also 
Fujiwara and Morimoto \cite{fumo1977}, 
Miyakawa \cite{miyakawa1982} and Simader and Sohr \cite{siso1992} 
for the case $\rho\equiv 1$. 
For simplicity, we write $L^q_\sigma(\Omega)=L^q_{1,\sigma}(\Omega).$ 
Let $P_{q,\rho,\Omega}$ denote the Fujita-Kato projection 
from $L_{\rho}^q(\Omega)$ onto $L^q_{\rho,\sigma}(\Omega)$ 
associated with the decomposition. 
Since $P_{q_1,\rho_1,\Omega}u=P_{q_2,\rho_2,\Omega}u$ 
for $u\in L^{q_1}_{\rho_1}(\Omega)\cap L^{q_2}_{\rho_2}(\Omega)$ 
with $1<q_1,q_2<\infty,\rho_1\in\A_{q_1}(\Omega),
\rho_2\in\A_{q_2}(\Omega)$, 
we simply write $P_\Omega=P_{q,\rho,\Omega}$\,. 
When $\Omega=\R^3$, the Fujita-Kato projection is described as 
$P_{\R^3}=I+\mathcal{R}\otimes\mathcal{R}$, where 
$I$ is the identity map and 
$\mathcal{R}=\nabla (-\Delta)^{-1/2}$ is the Riesz transform. 
We denote various constants by $C$ 
and they may change from line to line. 
The constant dependent on $A,B,\cdots$ is 
denoted by $C(A,B,\cdots)$. 
Finally, if there is no confusion, 
we use the same symbols for denoting spaces of 
scalar-valued functions and those of vector-valued ones.

\subsection{Main theorems}
\quad We are now in a position to give our main results. 
Our first theorem provides the 
necessary and sufficient condition on $\alpha,\beta$ so that 
$(1+|x|)^{\alpha}(1+|x|-x_1)^\beta\in \A_q(\R^3)$. We already know 
the following theorem with $q=2$ from 
Farwig \cite[Theorem 4.1 and Remark 4.7]{farwig1992t}. 
Furthermore, the sufficiency of (\ref{alphabeta}) was proved by 
Kra\v{c}mer, Novotn\'{y} and Pokorn\'{y} 
\cite[Theorem 5.2]{krnopo2001}, 
but it has not known whether the condition is 
also necessary condition.

\begin{thm}\label{muckenthm}
Let $1<q<\infty$. Then 
$\rho_{\alpha,\beta}(x):=(1+|x|)^{\alpha}(1+|x|-x_1)^\beta
\in \A_q(\R^3)$ 
if and only if $($\ref{alphabeta}$)$ is fulfilled.
\end{thm}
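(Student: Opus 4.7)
The sufficiency half of the equivalence is already known from \cite{krnopo2001}, so the plan is to prove the converse: if $\rho_{\alpha,\beta}\in\A_q(\R^3)$, then (\ref{alphabeta}) must hold. The strategy is to evaluate the $\A_q$-quotient
\[
Q(B):=\left(\frac{1}{|B|}\int_B\rho_{\alpha,\beta}\right)\left(\frac{1}{|B|}\int_B\rho_{\alpha,\beta}^{-1/(q-1)}\right)^{q-1}
\]
against two families of balls: one adapted to the parabolic degeneracy of $1+|x|-x_1$ along the positive $x_1$-axis (which will force $-1<\beta<q-1$), and an isotropic family centered at the origin (which will force $-3<\alpha+\beta<3(q-1)$). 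Since the asymptotic computations for the dual weight $\rho_{\alpha,\beta}^{-1/(q-1)}$ are obtained from those for $\rho_{\alpha,\beta}$ by replacing $(\alpha,\beta)$ by $(-\alpha/(q-1),-\beta/(q-1))$, only the two primary computations have to be carried out.

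For the parabolic family, I test on $B_{a/2}(ae_1)$ as $a\to\infty$. Rescaling $x=ae_1+az$, one obtains $1+|x|-x_1=1+ag(z)$ with
\[
g(z):=|e_1+z|-(1+z_1)=\frac{z_2^2+z_3^2}{|e_1+z|+(1+z_1)}\geq 0,
\]
which vanishes precisely on the segment $\{z_2=z_3=0\}\cap B_{1/2}(0)$ with the quadratic transverse profile $g(z)\asymp(z_2^2+z_3^2)/(1+z_1)$ there. Polar coordinates in $(z_2,z_3)$ followed by the substitution $u=ag(z)$ separate the thin axial tube $\{ag(z)\lesssim 1\}$ (transverse area $\sim 1/a$, weight $\sim 1$) from the bulk region (weight $\sim(ag)^\beta$), and a direct computation produces the asymptotic
\[
\frac{1}{|B_{a/2}(ae_1)|}\int_{B_{a/2}(ae_1)}\rho_{\alpha,\beta}\,dx\asymp
\begin{cases}
a^{\alpha+\beta},&\beta>-1,\\
a^{\alpha-1}\log a,&\beta=-1,\\
a^{\alpha-1},&\beta<-1.
\end{cases}
\]
Combining with the analogous expression for $\rho_{\alpha,\beta}^{-1/(q-1)}$, the product $Q(B_{a/2}(ae_1))$ is $O(1)$ exactly in the regime $-1<\beta<q-1$, while any other case produces a positive power of $a$ (or a logarithmic factor at the boundary), which contradicts $\rho_{\alpha,\beta}\in\A_q(\R^3)$.

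For the isotropic family, assume $-1<\beta<q-1$ and test on $B_R(0)$ as $R\to\infty$. Spherical coordinates with polar axis $e_1$ and the substitution $u=1+r(1-\cos\theta)$ reduce the angular integral to $\frac{1}{r(\beta+1)}\bigl[(1+2r)^{\beta+1}-1\bigr]$ for $\beta\neq -1$, whence integration in $r$ against $(1+r)^\alpha r^2$ yields
\[
\frac{1}{|B_R|}\int_{B_R(0)}\rho_{\alpha,\beta}\,dx\asymp
\begin{cases}
R^{\alpha+\beta},&\alpha+\beta>-3,\\
R^{-3}\log R,&\alpha+\beta=-3,\\
R^{-3},&\alpha+\beta<-3,
\end{cases}
\]
together with the parallel asymptotic for $\rho_{\alpha,\beta}^{-1/(q-1)}$ (whose angular integral converges precisely because of the bound $\beta<q-1$ from the previous step). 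Demanding $Q(B_R(0))=O(1)$ as $R\to\infty$ then forces $-3<\alpha+\beta<3(q-1)$, which completes the necessity.

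The main obstacle is the bookkeeping in the parabolic family: after the substitution $u=ag(z)$ the upper limit of the $u$-integral is $\sim a/(1+z_1)$, and whether $(1+u)^{\beta+1}-1$ contributes as a power of $a$ or stays bounded is governed by the sign of $\beta+1$; moreover the dominant contribution switches between the axial tube and the bulk as $\beta$ crosses $-1$. This is the parabolic analogue of the standard Muckenhoupt calculation for $(x_2^2+x_3^2)^\beta$ on $\R^3$, dressed with the axial scale $(1+z_1)^{-1}$, and careful asymptotics in the three regimes $\beta\gtrless -1$ are what make the argument sharp enough to close off the boundary cases.
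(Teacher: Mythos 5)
Your argument is correct, and the isotropic half (testing on $B_R(0)$ with spherical coordinates and the substitution $t=1-\cos\theta$ to force $-3<\alpha+\beta<3(q-1)$) coincides with what the paper does. The genuine difference is in how the constraint $-1<\beta<q-1$ is extracted. You introduce a second, anisotropic family of test balls $B_{a/2}(ae_1)$ riding out along the wake axis, rescale, and resolve the quadratic degeneracy of $1+|x|-x_1$ transverse to the axis; this directly exhibits the weight's parabolic singular set and is conceptually transparent, but it costs you the tube/bulk decomposition and the three-regime bookkeeping in $u=ag(z)$. The paper instead never leaves the origin-centered family: it observes that the angular integral $\int_0^2(1+st)^\delta\,dt$ already changes regime at $\delta=-1$ (behaving like $s^{\delta}$, $s^{-1}\log s$, or $s^{-1}$), so the lower bounds (\ref{gammadelta3})--(\ref{gammadelta42}) for $\int_{B_r(0)}(1+|y|)^\gamma(1+|y|-y_1)^\delta\,dy$ detect all four failure modes of (\ref{alphabeta}) with a single family of balls and a single polar-coordinate computation. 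Both routes are sound; yours localizes the obstruction geometrically, the paper's is more economical. Two smaller remarks: (i) you delegate sufficiency to \cite{krnopo2001}, which is legitimate, whereas the paper reproves it via the case split $r\le|x|/2$, $r+|x|\le 1$, $r+|x|\ge 1$ together with Lemma \ref{betaaq} and the upper bound (\ref{gammadelta1}); (ii) for the necessity direction only lower bounds on the two averages are needed, so your phrase ``$Q$ is $O(1)$ exactly in the regime $-1<\beta<q-1$'' claims slightly more than the argument requires --- what matters, and what your asymptotics do deliver, is that $Q(B_{a/2}(ae_1))\to\infty$ whenever $\beta\le-1$ or $\beta\ge q-1$.
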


Let $\Omega$ be the whole space $\R^3$ or exterior domain $D$.
Given $1<q<\infty$ and $\alpha,\beta$ satisfying 
\begin{align}\label{alphabetaq}
-\frac{1}{q}<\beta<1-\frac{1}{q},
\quad -\frac{3}{q}<\alpha+\beta<3\left(1-\frac{1}{q}\right),
\end{align}
we set 
\begin{align}\label{rho}
\rho(x)=(1+|x|)^{\alpha q}(1+|x|-x_1)^{\beta q}.
\end{align}
By Definition \ref{defaqd} and Theorem \ref{muckenthm}, 
we find that the weight $\rho$ belongs to 
$\A_q(D)$ as well as $\A_q(\R^3)$. 
Therefore, due to Farwig and Sohr \cite{faso1997}, we have 
the Helmholtz decomposition (\ref{helmholtz}) and the bounded 
projection operator 
$P_\Omega:L^q_{\rho}(\Omega)\rightarrow L_{\rho,\sigma}^q(\Omega)$, 
then define the Oseen operator 
$A_a:L^q_{\rho,\sigma}(\Omega)
\rightarrow L^q_{\rho,\sigma}(\Omega)~(a\in\R)$ by
\begin{align*}
\D(A_a)=W^{2,q}_{\rho}(\Omega)
\cap W^{1,q}_0(\Omega)\cap L^{q}_{\rho,\sigma}(\Omega),\quad 
A_au=-P_\Omega[\Delta u-a\partial_{x_1}u].
\end{align*} 
We simply write the Stokes operator $A=A_0$. 
We state in the following theorem that 
$-A_a$ generates an analytic semigroup 
in $L^{q}_{\rho,\sigma}(D)$ possessing 
the $L^q$-$L^r$ smoothing action near the initial time.

\begin{thm}\label{lqlrd0}
Given $a_0>0$ arbitrarily, we assume $|a|\leq a_0$. 
\begin{enumerate}
\item Let $1<q<\infty$ and let $\alpha,\beta$ 
satisfy $($\ref{alphabetaq}$)$.
Then $-A_a$ generates an
analytic $C_0$-semigroup $\{e^{-tA_a}\}_{t\geq 0}$
in $L^{q}_{\rho,\sigma}(D)$, 
where $\rho$ is given by $($\ref{rho}$)$. 
If in particular $a=0$, then 
the Stokes semigroup $\{e^{-tA}\}_{t\geq 0}$ 
is a bounded analytic $C_0$-semigroup in $L^{q}_{\rho,\sigma}(D)$.    
\item Let $1<q\leq r\leq \infty ~(q\ne\infty)$ 
and let $\alpha,\beta$ satisfy $($\ref{alphabetaq}$)$. 
For every multi-index $k~(|k|\leq 1)$, there exists a constant 
$C=C(D,a_0,q,r,\alpha,\beta,k)$, independent of $a$, such that 
\begin{align}\label{lqlrdsmall}
\|(1+|x|)^\alpha(1+|x|-x_1)^\beta\partial^k_{x}e^{-tA_a}P_Df\|_{r,D}
\leq 
Ct^{-\frac{3}{2}(\frac{1}{q}-\frac{1}{r})-\frac{|k|}{2}}
\|(1+|x|)^\alpha(1+|x|-x_1)^\beta f\|_{q,D}
\end{align}
for all $t\leq 3$, $f\in L^{q}_{\rho}(D)$.
\end{enumerate}
\end{thm}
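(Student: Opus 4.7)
My plan is to derive both statements from the weighted $L^q$ Stokes theory of Farwig--Sohr \cite{faso1997}, treating the Oseen drift as a relatively bounded perturbation, and then lifting analyticity to $L^q$-$L^r$ smoothing via weighted elliptic regularity combined with a two-weight Sobolev embedding.

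For part (1), I would first verify that the weight $\rho$ in (\ref{rho}) belongs to $\A_q(D)$: condition (\ref{alphabetaq}) together with Theorem \ref{muckenthm} gives $\rho\in\A_q(\R^3)$, and the smoothness and strict positivity of $\rho$ in a neighborhood of $\partial D$ (which lies away from the origin) upgrade this to $\rho\in\A_q(D)$ in the sense of Definition \ref{defaqd}. Farwig--Sohr then provide the Helmholtz decomposition (\ref{helmholtz}) as well as sectorial resolvent estimates for the Stokes operator, so $-A$ generates a bounded analytic $C_0$-semigroup on $L^q_{\rho,\sigma}(D)$. For general $|a|\leq a_0$, I would write $A_a=A+aP_D\partial_{x_1}$ and estimate
\[
\|aP_D\partial_{x_1}u\|_{L^q_\rho}\leq C|a|\,\|\partial_{x_1}u\|_{L^q_\rho}\leq\varepsilon\|Au\|_{L^q_\rho}+C_\varepsilon(a_0)\|u\|_{L^q_\rho},
\]
using boundedness of $P_D$ and of the Riesz transforms on $L^q_\rho$ (both consequences of $\rho\in\A_q$ via Muckenhoupt's theorem for weighted singular integrals) combined with the weighted Stokes maximal regularity $\|\nabla^2u\|_{L^q_\rho}\leq C(\|Au\|_{L^q_\rho}+\|u\|_{L^q_\rho})$. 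Choosing $\varepsilon$ small enough, depending only on $a_0$, and invoking Kato's perturbation theorem for generators of analytic semigroups yields that $-A_a$ generates an analytic $C_0$-semigroup on $L^q_{\rho,\sigma}(D)$; boundedness of the semigroup survives when $a=0$ but is lost for $a\neq 0$.

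For part (2), set $w(x)=(1+|x|)^\alpha(1+|x|-x_1)^\beta$ so that $\|w\,\cdot\,\|_p=\|\cdot\|_{L^p_{\rho_p}}$ with $\rho_p=w^p$, noting that $\rho_q\in\A_q(D)$ by (\ref{alphabetaq}). From the analyticity established in part (1), for any $t\leq 3$ I obtain
\[
\|A_a^m e^{-tA_a}f\|_{L^q_{\rho_q}}\leq C\,t^{-m}\|f\|_{L^q_{\rho_q}},\qquad m=0,1,
\]
where the $e^{\gamma t}$ factor is absorbed by the bounded time interval. Combined with the weighted elliptic regularity recalled above (with the drift term absorbed by the same interpolation), this promotes to $\|w\nabla^k e^{-tA_a}P_Df\|_q\leq Ct^{-k/2}\|wf\|_q$ for $k=0,1,2$ and $t\leq 3$. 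To pass from $L^q$ to $L^r$ with $r>q$, I would use a two-weight Sobolev inequality
\[
\|wg\|_{q^*}\leq C\bigl(\|w\nabla g\|_q+\|wg\|_q\bigr),\qquad\frac{1}{q^*}=\frac{1}{q}-\frac{1}{3},
\]
which follows from applying the standard Sobolev embedding to $wg$ and controlling $\nabla(wg)=w\nabla g+(\nabla w)g$ via $|\nabla w|\lesssim w\bigl[(1+|x|)^{-1}+(1+|x|-x_1)^{-1}\bigr]$ together with weighted Hardy-type inequalities available under $\rho_q\in\A_q$. Applied to $g=\nabla^k e^{-tA_a}P_Df$, this raises the target exponent from $q$ to $q^*$; iterating the semigroup identity $e^{-tA_a}=(e^{-(t/n)A_a})^n$ with a carefully chosen sequence of intermediate exponents then lifts the target exponent all the way up to $r=\infty$, producing the full temporal factor $t^{-3(1/q-1/r)/2-k/2}$.

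The main obstacle I anticipate is the two-weight Sobolev embedding: differentiating $w$ produces a singular factor $(1+|x|-x_1)^{-1}$ concentrated on the wake axis, and absorbing it cleanly requires the full strength of the upper bound $\beta<1-1/q$ in (\ref{alphabetaq}), used essentially through the Muckenhoupt condition on $\rho_q$ rather than treating $w$ as a passive multiplier. A secondary point is ensuring that the relative bound $\varepsilon|a|C$ from the perturbation argument is smaller than $1$ uniformly in $|a|\leq a_0$, which forces the constants in part (1) to depend on $a_0$ but not on $a$ itself, matching the statement of the theorem.
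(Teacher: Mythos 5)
Your outline follows essentially the same route as the paper: relative boundedness of the drift term with respect to the weighted Stokes operator (via the Farwig--Sohr interpolation inequality and the weighted estimate $\|\rho^{1/q}\nabla^2u\|_q\le C\|\rho^{1/q}(1+A)u\|_q$) gives the resolvent estimate for large $|\lambda|$ and hence analyticity; analyticity on $0<t\le 3$ gives the weighted $L^q$ bound and the $t^{-1}$ bound for two derivatives, interpolation gives $t^{-1/2}$ for the gradient, a Leibniz-rule commutation of the weight with $\nabla$ plus Sobolev/Gagliardo--Nirenberg raises the integrability under $1/q-1/r<1/3$, and the semigroup property iterates up to $r=\infty$. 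However, the obstacle you single out as the crux is misdiagnosed: the wake factor satisfies $1+|x|-x_1\ge 1$ everywhere, so $(1+|x|-x_1)^{-1}\le 1$ is not singular on the wake axis, and together with $(1+|x|)^{-1}\le 1$ (and $|x|$ bounded away from $0$ on $D$, which the paper uses for the second derivatives) one gets the pointwise bound $|\nabla w|\le Cw$, the paper's (\ref{bound}). Consequently the term $(\nabla w)g$ is absorbed by $\|wg\|_q$ directly; no weighted Hardy inequality and no use of the Muckenhoupt condition is needed at that step. The restriction $\beta<1-1/q$ in (\ref{alphabetaq}) enters only through $\rho\in\A_q(D)$, i.e.\ through the Helmholtz decomposition and the weighted Stokes resolvent theory, not through the two-weight Sobolev embedding.

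There is one genuine (though localized) gap, in the second half of part 1: the claim that for $a=0$ the semigroup is \emph{bounded} analytic does not follow from the sectorial resolvent estimates you invoke. The resolvent estimate available in this weighted setting (the paper's Lemma \ref{lambdalarge}, i.e.\ Farwig--Sohr's Theorem 5.5(i) plus the perturbation argument) holds only for $|\lambda|\ge R$, which yields analyticity with a possible exponential growth factor; that is all one needs for the smoothing estimate (\ref{lqlrdsmall}) on $t\le 3$, but not for uniform-in-$t$ boundedness. The paper obtains boundedness for the Stokes semigroup by a separate argument: it factorizes $\rho=\rho_0^{1-\theta}\rho_1^\theta$ with auxiliary weights satisfying $\rho_0,\rho_1,\rho_0(1+|x|)^{-2q},\rho_1(1+|x|)^{2q}\in\A_q(D)$ and then appeals to Farwig--Sohr's Theorem 1.5(ii), which requires exactly these extra conditions. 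Your proposal would need to supply this (or an equivalent global resolvent estimate down to $\lambda=0$) to justify the word ``bounded''; as written, that statement is asserted rather than proved. The rest of the argument, including the iteration to $r=\infty$ and the gradient case, is sound in outline and matches the paper's proof.
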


The next theorem asserts the 
large time behavior of the Oseen semigroup. 
We note that the exponents $q_i~(i=1,2,3,4)$ 
in the next theorem may not coincide with each other. 
This remark is important 
when we treat the terms $f_1$ and $f_2$ (see (\ref{f1})--(\ref{f2})) 
to study the starting problem (Theorem \ref{attainthm} below),  
which is performed in Section 8. 
\begin{thm}\label{lqlrd}
Given $a_0>0$ arbitrarily, we assume $a\in[0,a_0]$. 
\begin{enumerate}
\item Let $1<q_i<\infty~(i=1,2,3,4)$, 
$1<r\leq\infty$ and $\alpha,\beta\geq 0$ satisfy 
\begin{align}
&1<q_4\leq q_i\leq q_1\leq r\leq\infty\quad(i=2,3),\label{qr0}\\
&\alpha<\min\left\{3\left(1-\frac{1}{q_3}\right),1\right\},
\quad \beta<\min\left\{1-\frac{1}{q_2},
\frac{1}{3}\right\},\quad 
\alpha+\beta<\min\left\{3\left(1-\frac{1}{q_1}\right),1\right\}.
\label{alphabeta2}
\end{align} 
We set
\begin{align}\label{rhodef}
\rho_1(x)=(1+|x|)^{\alpha q_1}(1+|x|-x_1)^{\beta q_1},\quad 
\rho_2(x)=(1+|x|-x_1)^{\beta q_2},\quad 
\rho_3(x)=(1+|x|)^{\alpha q_3}.
\end{align}
Then there exists a constant 
$C(D,a_0,q_1,q_2,q_3,q_4,r,\alpha,\beta)$, 
independent of $a$, such that 
\begin{align}
\|(1+|x|)^\alpha(1+|x|-x_1)^\beta e^{-tA_a}P_Df\|_{r,D}
\leq 
C\sum_{i=1}^4
t^{-\frac{3}{2}(\frac{1}{q_i}-\frac{1}{r})+\eta_i}
\|(1+|x|)^{\gamma_i}(1+|x|-x_1)^{\delta_i} f\|_{q_i,D}
\label{lqlrdlarge}
\end{align}
for all $t\geq 3$, $f\in \displaystyle\bigcap^3_{i=1}
L^{q_i}_{\rho_i}(D)\cap L^{q_4}(D)$, 
where
$\gamma_i,\delta_i,\eta_i$ are defined by
\begin{align}\label{gammadeltaeta}
&(\gamma_1,\gamma_2,\gamma_3,\gamma_4)=
(\alpha,0,\alpha,0),
\quad
(\delta_1,\delta_2,\delta_3,\delta_4)=(\beta,\beta,0,0),\notag
\\
&(\eta_1,\eta_2,\eta_3,\eta_4)=
\begin{cases}
\left(0,\alpha,
\displaystyle\frac{\beta}{2},
\alpha+\frac{\beta}{2}\right)&\quad {\rm if}~a>0,\\[13pt]
\left(0,\displaystyle\frac{\alpha}{2},
\frac{\beta}{2},
\frac{\alpha}{2}+\frac{\beta}{2}\right)&\quad {\rm if}~a=0.
\end{cases}.
\end{align}
\item Let $1<r<\infty$, $1<q_i<\infty~(i=1,2,3,4),\alpha,\beta>0$ 
satisfy $($\ref{alphabeta2}$)$. 
If $\alpha<2/3$ $($resp. $\alpha\geq 2/3)$, we suppose 
\begin{align*}
&1<q_4\leq q_i\leq q_1\leq r<\min\left\{\frac{3}{1-\alpha-\beta},
\frac{3}{1-\frac{3\alpha}{2}}\right\}\quad(i=2,3)\\
&\left({\rm resp.}~1<q_4\leq q_i\leq q_1\leq r
<\frac{3}{1-\alpha-\beta}\quad (i=2,3)\right).
\end{align*}
Then there exists a constant
$C(D,a_0,q_1,q_2,q_3,q_4,r,\alpha,\beta)$, independent of $a$,  
such that 
\begin{align}
\|&(1+|x|)^\alpha(1+|x|-x_1)^\beta 
\nabla e^{-tA_a}P_Df\|_{r,D}\notag\\
&\leq C\sum_{i=1}^4
t^{-\frac{3}{2}(\frac{1}{q_i}-\frac{1}{r})-\frac{1}{2}+\eta_i}
\|(1+|x|)^{\gamma_i}(1+|x|-x_1)^{\delta_i} f\|_{q_i,D}
\label{gradlqlrdlarge}
\end{align} 
for all $t\geq 3$ and $f\in \displaystyle\bigcap^3_{i=1}
L^{q_i}_{\rho_i}(D)\cap L^{q_4}(D)$. 
\item Let $1<q_i<\infty~(i=1,2),1<r<\infty$ and 
$\beta\geq 0$ satisfy $\beta<\min\{1-1/q_1,1/3\}$ and 
$1<q_2\leq q_1\leq r\leq 3$.
Then there exists a constant 
$C(D,a_0,q_1,q_2,r,\alpha,\beta)$ such that 
\begin{align}
\|(1+|x|-x_1)^\beta\nabla e^{-tA_a}P_Df\|_{r,D}
\leq C&t^{-\frac{3}{2}(\frac{1}{q_1}-\frac{1}{r})-\frac{1}{2}}
\|(1+|x|-x_1)^{\beta}f\|_{q_1,D}\notag\\
&+Ct^{-\frac{3}{2}(\frac{1}{q_2}-\frac{1}{r})-\frac{1}{2}
+\frac{\beta}{2}}
\|f\|_{q_2,D}
\label{gradlqlrdlarge42}
\end{align}
for all $t\geq 3$ 
and $f\in L^{q_1}_{(1+|x|-x_1)^{\beta q_1}}(D)\cap L^{q_2}(D)$.
\item Let $1<q_i<\infty~(i=1,2),1<r<\infty$ and 
$\alpha\geq 0$ satisfy $\alpha<\min\{3(1-1/q_1),1\}$ and 
\begin{align*}
1<q_2\leq q_1\leq r\leq \frac{3}{1-\alpha}.
\end{align*}
Then there exists a constant 
$C(D,a_0,q_1,q_2,r,\alpha)$ such that 
\begin{align}
\|(1+|x|)^\alpha\nabla e^{-tA_a}P_Df\|_{r,D}
\leq Ct^{-\frac{3}{2}(\frac{1}{q_1}-\frac{1}{r})-\frac{1}{2}}
\|(1+|x|)^{\alpha}f\|_{q_1,D}+
Ct^{-\frac{3}{2}(\frac{1}{q_2}-\frac{1}{r})-\frac{1}{2}+\eta_2}
\|f\|_{q_2,D}
\label{gradlqlrdlarge4}
\end{align}
for all $t\geq 3$ 
and $f\in L^{q_1}_{(1+|x|)^{\alpha q_1}}(D)\cap L^{q_2}(D)$, where 
$\eta_2$ is given by $($\ref{gammadeltaeta}$)$.
\end{enumerate}
\end{thm}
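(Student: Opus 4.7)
The plan is to adapt the two-step cut-off method pioneered for the Stokes semigroup by Iwashita \cite{iwashita1989} and extended to the Oseen semigroup by Kobayashi-Shibata \cite{kosh1998}, Enomoto-Shibata \cite{ensh2005} and Hishida \cite{hishida2021}, with the new feature that the anisotropic weights $(1+|x|)^\alpha (1+|x|-x_1)^\beta$ must be carried through every step. Fix a smooth cut-off $\phi\in C_0^\infty(\R^3)$ equal to $1$ on a bounded neighborhood of $\partial D$ and supported in a slightly larger ball. For $t\geq 3$, I decompose $u(t):=e^{-tA_a}P_D f$ into a near-boundary piece, estimated by weighted local energy decay, and a far-field piece, transplanted to $\R^3$ via a Bogovskii correction and estimated by the whole-space bounds derived in Section 5.

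The near-boundary piece $\phi u$, together with the Bogovskii term $\mathbb{B}[u\cdot\nabla\phi]$ which is also compactly supported, is controlled by the weighted local energy decay of Proposition \ref{local3}: assertions~1 and~2 of that proposition enhance the classical $t^{-3/(2q)}$ rate by an additional power coming from the weighted summability of $f$, producing exactly the family of exponents $\eta_i$ that appear in (\ref{gammadeltaeta}); the case split $a>0$ versus $a=0$ there reflects the different structure of the Oseen and Stokes resolvents. For the far-field piece $\widetilde u:=(1-\phi)u + \mathbb{B}[u\cdot\nabla\phi]$, which is divergence-free on $\R^3$ and satisfies an Oseen system whose inhomogeneities are supported in $\mathrm{supp}\,\nabla\phi$, I apply a Duhamel formula with the whole-space semigroup $S_a$: the homogeneous term is bounded by (\ref{lqlrr3large300}), while the integrand involves only locally supported functions that can be re-estimated by local energy decay. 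Because each of the four summands in (\ref{lqlrr3large300}) carries its own independent exponent $q_i$, the four exponents $q_1,\dots,q_4$ of (\ref{lqlrdlarge}) appear independently in the final bound, and Theorem \ref{lqlrd0} is used to patch over the compact-in-time interval $[0,3]$.

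For the gradient version (\ref{gradlqlrdlarge}) I would invoke the $j=1$ case of (\ref{lqlrr3large300}), which brings in the upper restrictions on $r$: the threshold $r<3/(1-\alpha-\beta)$ arises from keeping the full anisotropic weight in the Muckenhoupt class of $L^r(\R^3)$ so that the Helmholtz projection $P_{\R^3}$ acts boundedly, while the secondary threshold $r<3/(1-3\alpha/2)$ (active only for $\alpha<2/3$) ensures the same admissibility for the isotropic-only weight attached to the third summand. Parts~3 and~4 follow by specializing to $\alpha=0$ and $\beta=0$ respectively; in both cases only two of the four terms survive and the thresholds relax to the classical $r\leq 3$ and to $r\leq 3/(1-\alpha)$, matching the bound (\ref{gradlqlrdlarge3}) advertised in the introduction.

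The main obstacle will be the bookkeeping of weights and commutators in the far-field Duhamel integral: the integrand includes commutator terms $[\Delta,\phi]u$, $[\partial_{x_1},\phi]u$, a pressure contribution and the Bogovskii correction, all supported in $\mathrm{supp}\,\nabla\phi$ but to be measured simultaneously in the four weighted norms $\|(1+|x|)^{\gamma_i}(1+|x|-x_1)^{\delta_i}\cdot\|_{q_i,D}$ with independent $q_i$. Closing the estimate requires a bootstrap in which a coarser decay of $u$ drives the desired decay in finer weighted norms, and at every stage one must verify via Theorem \ref{muckenthm} that the weights in play belong to the Muckenhoupt class of the ambient space --- which is precisely why the restrictions (\ref{qr0})--(\ref{alphabeta2}) on the exponents are imposed.
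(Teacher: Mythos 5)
Your overall skeleton---cut-off near $\partial D$, Bogovski\u{\i} correction, Duhamel formula with the whole-space Oseen semigroup $S_a(t)$, local energy decay for the compactly supported forcing, and Theorem \ref{lqlrd0} to cover short times---is exactly the paper's. However, you misidentify where the decisive exponents come from, and as described the argument would not produce the statement. The loss exponents $\eta_i$ in (\ref{gammadeltaeta}) and the dichotomy $a>0$ versus $a=0$ do \emph{not} originate in the near-boundary analysis: Proposition \ref{local3} (assertions 1 and 2) yields a \emph{faster} decay than $t^{-3/(2q)}$ (a gain, valid uniformly for $a\in[0,a_0]$, with no case split in $a$), so it cannot generate the growth factors $\eta_i\geq 0$. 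In the paper these come from the whole-space estimates of Proposition \ref{lqlrr3}: the weight $(1+|x|)^{\alpha}$ evaluated along the Gaussian kernel drifted to $ate_1$ produces $(1+at)^{\alpha}$ when $a>0$ and $t^{\alpha/2}$ when $a=0$ (the estimates of $G_{2,k}$ and $G_{4,k}$), and (\ref{lqlrr3large}) is then applied both to $S_a(t)w(0)$ and to $S_a(t-\tau)P_{\R^3}L(\tau)$, while the forcing $L$ (commutators, Bogovski\u{\i} term, pressure $(\nabla\zeta)p$) is measured through the \emph{unweighted} local energy decay of Proposition \ref{local2}, the weights being bounded on its support. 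The near-boundary piece of assertion 1 likewise needs only Proposition \ref{local2} with exponent $q_1$, and no bootstrap is required---only the semigroup property at the end to remove the restriction $1/q_1-1/r<1/3$.

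Your explanation of the upper bounds on $r$ in assertion 2 is also not the correct mechanism: they are not Muckenhoupt admissibility conditions for $P_{\R^3}$ on $L^r(\R^3)$. They arise because the near-boundary gradient and the pressure term must decay at least like $t^{-1/2}$, which the paper obtains from the weighted local decay (\ref{loc20}) applied with $q=r$ and an auxiliary exponent $s_0<\min\{3,r\}$ satisfying $s_0>\max\{3r/(3+\alpha r+\beta r),\,2r/(2+\alpha r)\}$, the latter coming from the weighted H\"older inequality (Lemma \ref{holder}, hence Lemma \ref{finite}). The existence of such an $s_0$ is precisely the condition $r<\min\{3/(1-\alpha-\beta),\,3/(1-\tfrac{3\alpha}{2})\}$ when $\alpha<2/3$, the second constraint being vacuous for $\alpha\geq 2/3$; this is where the thresholds in assertion 2 come from, and Proposition \ref{local3} (assertion 1 for part 2, assertion 2 for part 4) enters only at this point, not to create the $\eta_i$. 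With these two corrections---weights handled in the whole space via Proposition \ref{lqlrr3}, and Proposition \ref{local3}/Lemma \ref{holder} used solely to control the near-boundary gradient and the pressure---your outline coincides with the paper's proof.
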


\begin{rmk}
We suppose that $1<q_i<\infty~(i=1,2,3,4)$, 
$1<r\leq\infty$ and $\alpha,\beta\geq 0$ satisfy the assumption of 
the assertion 1 of Theorem \ref{lqlrd}. 
By the proof of Theorem \ref{lqlrd}, we can find  
\begin{align}
\|&(1+|x|)^\alpha(1+|x|-x_1)^\beta 
e^{-tA_a}P_Df\|_{r,D}\notag\\
&\leq C
t^{-\frac{3}{2}(\frac{1}{q_1}-\frac{1}{r})}
\|(1+|x|)^{\alpha}(1+|x|-x_1)^{\beta} f\|_{q_1,D}+C
t^{-\frac{3}{2}(\frac{1}{q_2}-\frac{1}{r})}\{(1+at)^\alpha
+t^{\frac{\alpha}{2}}\}\|(1+|x|-x_1)^{\beta} f\|_{q_2,D}\notag\\
&\qquad+Ct^{-\frac{3}{2}(\frac{1}{q_3}-\frac{1}{r})+\frac{\beta}{2}}
\|(1+|x|)^{\alpha}f\|_{q_3,D}
+Ct^{-\frac{3}{2}(\frac{1}{q_4}-\frac{1}{r})}
\{(1+at)^\alpha 
t^{\frac{\beta}{2}}+t^{\frac{\alpha}{2}+\frac{\beta}{2}}\}
\|f\|_{q_4,D}
\label{lqlrdlarge4}
\end{align}
for $t\geq 3$ and $a>0$. 
Therefore, the estimate (\ref{lqlrdlarge}) for $a=0$ is recovered 
by passing to the limit $a\rightarrow 0$ in (\ref{lqlrdlarge4}). 
Similarly, the case $a=0$ is recovered 
in the assertion 2 and 4 of Theorem \ref{lqlrd}.
\end{rmk}

\begin{rmk}\label{rmkstokes}
It is not known whether we can obtain  
homogeneous $L^q$-$L^r$ estimates 
if $\beta>0$ or if $a>0$, while 
Kobayashi and Kubo \cite{koku2015} derived 
\begin{align*}
\|(1+|x|)^\alpha e^{-tA}P_Df\|_{r,D}\leq 
Ct^{-\frac{3}{2}(\frac{1}{q}-\frac{1}{r})}
\|(1+|x|)^\alpha f\|_{q,D}
\end{align*}
for $t>0,f\in L^q_{(1+|x|)^{\alpha q}}(D)$
provided $1<q\leq r<\infty$ and (\ref{gradlqlrdlarge3})
for $t>0,f\in L^q_{(1+|x|)^{\alpha q}}(D)$ provided
$1<q\leq r\leq 3$.
\end{rmk}

Our next aim is to derive the estimate of 
$e^{-tA_a}P_D\div$ in 
$L^q_{\rho}(D)$, 
which will be used in the starting problem.
To this end, we employ the duality argument based on the 
estimate of $\nabla e^{-tA_{-a}}$ in
$L^q_{\rho_-}(D)$, where $1<q<\infty,\alpha,\beta\geq 0$ fulfill   
\begin{align}\label{alphabeta4}
\beta<\frac{1}{q},\quad \alpha+\beta<\frac{3}{q} 
\end{align}
and $\rho_-$ is defined by 
\begin{align}\label{rhominus}
\rho_{-}(x)=(1+|x|)^{-\alpha q}(1+|x|-x_1)^{-\beta q}.
\end{align}
\begin{thm}\label{thmdual0}
Given $a_0>0$ arbitrarily, we assume $a\in(0,a_0]$.
\begin{enumerate}
\item Let $1<q\leq r\leq 3$ and $\alpha,\beta\geq 0$ 
satisfy $1/q-1/r<1/3$ and $($\ref{alphabeta4}$)$. 
Then there exists a constant 
$C=C(D,a_0,q,r,\alpha,\beta)$, independent of $a$, such that 
\begin{align}
\|&(1+|x|)^{-\alpha}(1+|x|-x_1)^{-\beta}\nabla 
e^{-tA_{-a}}P_Df\|_{r,D}\nonumber\\
&\leq 
Ct^{-\frac{3}{2}(\frac{1}{q}-\frac{1}{r})-\frac{1}{2}}
(1+t)^{\alpha+\frac{\beta}{2}}
\|(1+|x|)^{-\alpha}(1+|x|-x_1)^{-\beta} f\|_{q,D}
\label{lqlrdgraddual0}
\end{align}
for all $t>0$ and $f\in L^{q}_{\rho_{-}}(D)$, where $\rho_-$ is 
given by $($\ref{rhominus}$)$.
\item Let $3/2\leq q\leq r<\infty$ and $\alpha,\beta\geq 0$ 
satisfy $1/q-1/r<1/3$ and 
\begin{align*}
\beta<1-\frac{1}{r},\quad 
\alpha+\beta<3\left(1-\frac{1}{r}\right). 
\end{align*}  
Then there exists a constant 
$C=C(D,a_0,q,r,\alpha,\beta)$, 
independent of $a$, such that 
\begin{align}
\|&(1+|x|)^{\alpha}(1+|x|-x_1)^{\beta}
e^{-tA_a}P_D\div F\|_{r,D}\nonumber\\
&\leq Ct^{-\frac{3}{2}(\frac{1}{q}-\frac{1}{r})-\frac{1}{2}}
(1+t)^{\alpha+\frac{\beta}{2}}
\|(1+|x|)^{\alpha}(1+|x|-x_1)^{\beta} F\|_{q,D}
\label{lqlrddiv0}
\end{align}
for all $t>0$ and $F\in L^{q}_{\rho}(D),$ where 
$\rho$ is given $($\ref{rho}$)$. 
\end{enumerate}
\end{thm}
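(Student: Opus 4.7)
The plan is to establish part 1 first, which is the substantive estimate, and then obtain part 2 by a standard duality argument. For part 1, I would split the time interval into a short-time regime $0 < t \leq 3$ and a long-time regime $t \geq 3$, handling each by a different mechanism. For part 2, I would represent $e^{-tA_a}P_D\div$ as the adjoint of $-\nabla e^{-tA_{-a}}P_D$ acting between appropriately dualized weighted spaces and then invoke part 1.

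For short-time in part 1, I would apply Theorem~\ref{lqlrd0}(2) with weight parameters $(-\alpha,-\beta)$ in place of $(\alpha,\beta)$ and with $-a$ in place of $a$. The admissibility condition (\ref{alphabetaq}) for the pair $(-\alpha,-\beta)$ becomes $\beta<1/q$ and $\alpha+\beta<3/q$ (the lower bounds being automatic since $\alpha,\beta\geq 0$), which is exactly (\ref{alphabeta4}). Hence the short-time smoothing estimate gives the target bound without the growth factor $(1+t)^{\alpha+\beta/2}$, which is absorbed because $(1+t)\leq 4$ on $t\leq 3$. For long-time, I would carry out the cut-off procedure used in Section~7 for Theorem~\ref{lqlrd}: split the initial datum $f$ into a part supported near $\partial D$ and a part supported away from $\partial D$; treat the former via the local energy decay of Proposition~\ref{local3}, whose assertion~3 is tailored precisely to negative weights and yields a rate that is slower than $t^{-3/(2q)}$ by exactly the factor $(1+t)^{\alpha+\beta/2}$ mentioned in the introduction; and treat the latter by transferring the problem to $\R^3$ through a Bogovski\u{\i}-type corrector and applying the whole-space Oseen estimates of Section~5, whose negative-weight version follows from (\ref{lqlrr3large30})--(\ref{lqlrr3large300}) after the analogous sign swap. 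The main obstacle is the bookkeeping of the weight factor through the local energy decay and the matching of time-decay exponents at $t=3$; the growth factor $(1+t)^{\alpha+\beta/2}$ arises intrinsically from the enlarged function class $L^q_{\rho_-}(D)\supsetneq L^q(D)$ rather than from a genuine blow-up, and must be carried cleanly through the cut-off argument.

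For part 2, I would pair $e^{-tA_a}P_D\div F$ against an arbitrary $g\in C_{0,\sigma}^\infty(D)$ in the duality $(L^r_{\rho^r})^*=L^{r'}_{\rho^{-r'}}$, whose norm is $\|\rho^{-1}g\|_{r'}$. Using that $P_Dg=g$, that boundary integrals vanish for compactly supported $F$, and that the $L^2_\sigma$-adjoint of $A_a$ restricted to the solenoidal scale is $A_{-a}$, I would obtain
\begin{align*}
\int_D (e^{-tA_a}P_D\div F)\cdot g\,dx
=-\int_D F:\nabla\bigl(e^{-tA_{-a}}P_D g\bigr)\,dx.
\end{align*}
Hölder's inequality and part~1 applied with $(q,r)$ replaced by $(r',q')$ then yield the stated bound, since $1/r'-1/q'=1/q-1/r$ and the hypotheses of part~1 with these swapped exponents read $\beta<1/r'=1-1/r$ and $\alpha+\beta<3/r'=3(1-1/r)$ together with $3/2\leq q\leq r$, which match the hypotheses of part~2. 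The density of $C_{0,\sigma}^\infty(D)$ in $L^{r'}_{\rho^{-r'},\sigma}(D)$ together with the Helmholtz decomposition (\ref{helmholtz}) in the weighted setting (which is available because $\rho^{-r'}\in \A_{r'}(D)$ by Theorem~\ref{muckenthm} and (\ref{alphabeta4}) for the dual exponent) justifies passing back from solenoidal test functions to a full $L^{r'}_{\rho^{-r'}}$ supremum, completing the proof.
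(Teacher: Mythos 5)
Your overall architecture is the same as the paper's: part 1 by the two-step cut-off procedure (near-boundary decay via Proposition \ref{local3}(3), exterior part via a Bogovski\u{\i}-corrected transfer to $\R^3$ plus Duhamel), short times via Theorem \ref{lqlrd0}(2) with the sign-reversed weight exponents (your verification that (\ref{alphabetaq}) for $(-\alpha,-\beta)$ reduces to (\ref{alphabeta4}) is correct), and part 2 by duality from part 1 with $(q,r)\mapsto(r',q')$; your exponent bookkeeping for the duality step checks out. One point needs correction, though, because it affects the stated rate. You claim the negative-weight whole-space Oseen estimates are obtained from (\ref{lqlrr3large30})--(\ref{lqlrr3large300}) ``after the analogous sign swap.'' That mechanism does not work: the convolution argument for positive weights rests on splitting $(1+|x|)^{\alpha}(1+|x|-x_1)^{\beta}$ into pieces in $y$ and $x-y$, and for negative exponents the reverse inequality for the anisotropic factor only gives $(1+|y|-y_1)^{\beta}\leq C(1+|x|-x_1)^{\beta}(1+|x-y|)^{\beta}$ rather than a bound by the wake variable $1+|x-y|-(x_1-y_1)$; running the kernel estimates with the isotropic factor $(1+|x-y|)^{\beta}$ produces a growth $(1+at)^{\alpha+\beta}$ instead of $(1+at)^{\alpha}t^{\beta/2}$, i.e.\ you lose the claimed exponent $\alpha+\beta/2$ and end up with $\alpha+\beta$. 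The paper avoids this by deriving the negative-weight whole-space estimates (Proposition \ref{proplqlrr3dual}, used both for the free term $\nabla S_{-a}(t)w_-(0)$ and inside the proof of Proposition \ref{local3}(3)) by duality from the positive-weight $L^1$--$L^r$ estimates of Lemma \ref{lemlosslessr3}; if you cite that proposition instead of the ``sign swap,'' your argument goes through as the paper's does. A smaller quibble: the cut-off decomposes the solution $e^{-(t+1)A_{-a}}P_Df$ in space (with the commutator forcing $L_-$ supported in an annulus), not the initial datum $f$, though your subsequent description of the corrector and Duhamel step shows you intend the standard procedure.
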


In order to obtain the main results on the nonlinear problems, 
it suffices to use Theorem \ref{lqlrd} and Theorem \ref{thmdual0}, 
but the rate in those theorems is improved in the following theorem, 
which might be of independent interest. 
If we take $q_i=q~(i=1,2,3,4)$ in the assertion 1 of 
Theorem \ref{lqlrd}, then 
we find that the rate is $-3(1/q-1/r)/2-i/2+\alpha+\beta/2$ 
if $a>0$. The assertion 1 and 2 below yield that 
this rate is slightly improved 
as long as $1/q-1/r<1/3$ is fulfilled. 
Our approach recovers 
the homogeneous estimates of the Stokes semigroup in 
Remark \ref{rmkstokes} as well as extends the 
restriction $1<q\leq r\leq 3$ for the estimate 
(\ref{gradlqlrdlarge3}) to 
$1<q\leq r\leq 3/(1-\alpha)$, see the assertion 3. 
In the assertion 4--6, 
we derive better rate than the one in (\ref{lqlrdgraddual0}) and 
also deduce the estimates of the Stokes semigroup 
when parameters $r,\alpha,\beta$ are more restricted than those in 
the assertion 1 of Theorem \ref{thmdual0}, namely 
$r$ is smaller than $3$ and 
$\alpha,\beta$ are smaller than (\ref{alphabeta4}),

\begin{thm}\label{lqlrd2}
\begin{enumerate}
\item Given $a_0>0$ arbitrarily, we assume $a\in(0,a_0]$. 
Let $\varepsilon>0,i=0,1$ and let 
$1<q\leq r\leq \infty ~(q\ne\infty)$ 
and $\alpha,\beta$ satisfy $1/q-1/r<1/3,$   
\begin{align}\label{alphabetadef}
\alpha>0,\quad 0<\beta<\min\left\{1-\frac{1}{q},
\frac{1}{3}\right\},\quad 
\alpha+\beta<\min\left\{3\left(1-\frac{1}{q}\right),1\right\}.
\end{align}
If $i=1$ and $2\alpha+\beta<1$, we also suppose 
$\varepsilon<(1-2\alpha-\beta)/2$ and 
$r<3/(1-2\alpha-\beta-2\varepsilon).$ 
Then there exists a constant 
$C=C(D,a_0,\varepsilon,i,q,r,\alpha,\beta)$, 
independent of $a$, such that 
\begin{align}
\|&(1+|x|)^\alpha(1+|x|-x_1)^\beta \nabla^i e^{-tA_a}P_Df\|_{r,D}
\notag\\
&\leq 
Ct^{-\frac{3}{2}(\frac{1}{q}-\frac{1}{r})-\frac{i}{2}
+\frac{\alpha}{4}+\max\{\frac{\alpha}{4},\frac{\beta}{2}\}
+\varepsilon}\|(1+|x|)^\alpha(1+|x|-x_1)^\beta f\|_{q,D}
\label{gradlqlrdlarge5}
\end{align}
for all $t\geq 1$, $f\in L^{q}_{\rho}(D)$, where
$\rho$ is given by $($\ref{rho}$)$.
\item Given $a_0>0$ arbitrarily, we assume $a\in(0,a_0]$. 
Let $1<q\leq r\leq \infty ~(q\ne\infty)$ 
and $\alpha\geq 0$ satisfy $1/q-1/r<1/3$ and 
$0\leq \alpha<\min\{3(1-1/q),1\}.$ Let $i=0,1$ and 
we also suppose $r\leq 3/(1-2\alpha)$ 
if $i=1$ and $\alpha<1/2.$ Then there exists a constant 
$C=C(D,a_0,i,q,r,\alpha)$, 
independent of $a$, such that 
\begin{align}\label{gradlqlrdlarge6}
\|(1+|x|)^\alpha \nabla^i e^{-tA_a}P_Df\|_{r,D}
\leq Ct^{-\frac{3}{2}(\frac{1}{q}-\frac{1}{r})-\frac{i}{2}
+\frac{\alpha}{2}}
\|(1+|x|)^\alpha f\|_{q,D}
\end{align}
for all $t\geq 1$ and $f\in L^{q}_{(1+|x|)^{\alpha q}}(D).$ 
\item Let $a=0$. 
Let $1<q\leq r\leq\infty~(q\ne\infty)$ 
and $\alpha\geq 0$ satisfy
$0\leq \alpha<\min\{3(1-1/q),1\}$. 
Given $i=0,1$, we also suppose 
\begin{align*}
1<q\leq r\leq \frac{3}{1-\alpha}
\end{align*}
if $i=1.$ 
Then there exists a constant $C(D,i,q,r,\alpha)$ such that 
\begin{align}\label{gradlqlrdlarge7}
\|(1+|x|)^\alpha \nabla^i e^{-tA}P_Df\|_{r,D}
\leq Ct^{-\frac{3}{2}(\frac{1}{q}-\frac{1}{r})-\frac{i}{2}}
\|(1+|x|)^\alpha f\|_{q,D}
\end{align}
for all $t>0$ and $f\in L^q_{(1+|x|)^{\alpha q}}(D)$. 
\item Given $a_0>0$ arbitrarily, we assume $a\in(0,a_0]$.
Let $\varepsilon>0,1<q\leq r<\infty$ and $\alpha,\beta>0$  
satisfy $1/q-1/r<1/3$, 
\begin{align}
\beta<\frac{1}{r},\quad 
\alpha+\beta<\min\left\{1,\frac{3}{r}\right\}\label{alphabetar2}
\end{align}
and 
\begin{align}
1<q\leq r\leq \frac{3}{1+\alpha+\min\{\frac{\alpha}{2},\beta\}
-2\varepsilon}.\label{alphabetar3}
\end{align} 
Then there exists a constant 
$C=C(D,a_0,\varepsilon,q,r,\alpha,\beta)$, 
independent of $a$, such that 
\begin{align}
\|&(1+|x|)^{-\alpha}(1+|x|-x_1)^{-\beta}\nabla
e^{-tA_{-a}}P_Df\|_{r,D}\notag\\
&\leq 
Ct^{-\frac{3}{2}(\frac{1}{q}-\frac{1}{r})-\frac{1}{2}
+\frac{\alpha}{4}+\max\{\frac{\alpha}{4},\frac{\beta}{2}\}
+\varepsilon}
\|(1+|x|)^{-\alpha}(1+|x|-x_1)^{-\beta} f\|_{q,D}
\label{lqlrddual10}
\end{align}
for $t\geq 1$ and $f\in L^{q}_{\rho_{-}}(D)$, where $\rho_-$ is 
given by $($\ref{rhominus}$)$.
\item Given $a_0>0$ arbitrarily, we assume $a\in(0,a_0]$. 
Let $1<q\leq r<\infty$ and $\alpha\geq 0$ 
satisfy $1/q-1/r<1/3$, 
\begin{align}
0\leq \alpha<\min\left\{1,\frac{3}{r}\right\}\label{alphar}
\end{align}
and 
\begin{align}
1<q\leq r\leq \frac{3}{1+\alpha}.\label{alphar2}
\end{align}
Then there exists a constant 
$C=C(D,a_0,q,r,\alpha)$, independent of $a$, such that 
\begin{align}\label{lqlrddual3}
\|(1+|x|)^{-\alpha}\nabla e^{-tA_{-a}}P_Df\|_{r,D}
\leq Ct^{-\frac{3}{2}(\frac{1}{q}-\frac{1}{r})
-\frac{1}{2}+\frac{\alpha}{2}}\|(1+|x|)^{-\alpha}f\|_{q,D}
\end{align}
for all $t\geq 1$ and $f\in L^{q}_{(1+|x|)^{-\alpha q}}(D)$. 
\item Let $a=0.$
Let $1<q\leq r<\infty$ and $\alpha\geq 0$ 
satisfy $($\ref{alphar}$)$ and $($\ref{alphar2}$)$.  
Then there exists a constant 
$C=C(D,q,r,\alpha)$ such that $($\ref{lqlrddual2}$)$ 
for all $t>0$ and $f\in L^{q}_{(1+|x|)^{-\alpha q}}(D)$.
\end{enumerate}
\end{thm}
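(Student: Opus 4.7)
The plan is to derive Theorem~\ref{lqlrd2} from the multi-term anisotropic $L^q$-$L^r$ estimates of Theorems~\ref{lqlrd} and~\ref{thmdual0} by two complementary devices: (i) H\"older's inequality applied inside the right-hand sides of those theorems, converting part of the apparent weight loss $\eta_i$ into faster temporal decay, and (ii) duality in weighted $L^q$ spaces, together with the isotropic gradient estimate of Iwashita--Maremonti--Solonnikov recalled in Remark~\ref{rmkstokes}, to handle the pure Stokes assertions~3 and~6 with extended $r$-range.

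For assertions~1 and~2 with $i=0$, I would apply Theorem~\ref{lqlrd}(1) with $q_1=q_3=q$ and $q_2,q_4<q$ chosen so that H\"older's inequality gives
\begin{align*}
\|(1+|x|-x_1)^\beta f\|_{q_2,D}
&\leq\|(1+|x|)^{-\alpha}\|_{L^{s_2}(\R^3)}\,\|(1+|x|)^\alpha(1+|x|-x_1)^\beta f\|_{q,D},\\
\|f\|_{q_4,D}
&\leq\|(1+|x|)^{-\alpha}(1+|x|-x_1)^{-\beta}\|_{L^{s_4}(\R^3)}\,\|(1+|x|)^\alpha(1+|x|-x_1)^\beta f\|_{q,D},
\end{align*}
with $1/q_i=1/q+1/s_i$. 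Integrability of $(1+|x|)^{-\alpha s_2}$ on $\R^3$ requires $s_2>3/\alpha$, so $1/q_2$ can be brought arbitrarily close to $1/q+\alpha/3$ from below. A computation in wake-adapted coordinates $\rho=|x|$, $s=|x|-x_1$, in which $dx=\rho\,d\rho\,ds\,d\phi$ on $0\leq s\leq 2\rho$, shows that $(1+|x|)^{-\alpha s_4}(1+|x|-x_1)^{-\beta s_4}$ is integrable iff either $\alpha s_4>2$ and $\beta s_4>1$, or $(\alpha+\beta)s_4>3$ and $\beta s_4<1$; optimising in $s_4$ gives $1/s_4$ arbitrarily close to $\min\{\alpha/2,(\alpha+\beta)/3\}$, the first option being binding when $\alpha\leq 2\beta$ and the second when $\alpha\geq 2\beta$. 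Substituting these choices into the exponent $-\tfrac{3}{2}(1/q_i-1/r)+\eta_i$ coming from~\eqref{gammadeltaeta} yields $\alpha/2+O(\varepsilon)$ for the second term and $\alpha/4+\max\{\alpha/4,\beta/2\}+O(\varepsilon)$ for the fourth; the first and third terms carry the harmless exponents $0$ and $\beta/2$, both dominated by $\alpha/4+\max\{\alpha/4,\beta/2\}$. Summing recovers assertion~1, while assertion~2 is the specialisation $\beta=0$.

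For the gradient case $i=1$ in assertions~1 and~2, and for assertions~4 and~5 (negative weight on $\nabla e^{-tA_{-a}}$), the same H\"older reduction is applied to Theorem~\ref{lqlrd}(2)--(4) and Theorem~\ref{thmdual0}(1), respectively. Each such subtheorem carries its own $r$-restriction arising from the Muckenhoupt condition on the outgoing weight; the bound $r<3/(1-2\alpha-\beta-2\varepsilon)$ in assertion~1 and its analogue~\eqref{alphabetar3} in assertion~4 are exactly those $r$-restrictions re-expressed at the H\"older-reduced exponent $q_4$, with the $2\varepsilon$ encoding the strict inequalities $s_2>3/\alpha$ and so on. Assertions~3 and~6 (the Stokes case) extend the Kobayashi--Kubo range $r\leq 3$ of Remark~\ref{rmkstokes} to $r\leq 3/(1\mp\alpha)$: I would prove assertion~6 by the semigroup factorisation $\nabla e^{-tA}P_Df=\nabla e^{-(t/2)A}\cdot e^{-(t/2)A}P_Df$, applying the weighted non-gradient estimate of Kobayashi--Kubo to the right factor to reach an intermediate exponent at the Muckenhoupt threshold $3/(1+\alpha)$ for $(1+|x|)^{-\alpha r}$, and the unweighted gradient estimate of Iwashita--Maremonti--Solonnikov to the left; assertion~3 then follows by the Farwig--Sohr duality $(L^q_\rho)^*=L^{q'}_{\rho^{-1/(q-1)}}$ applied to the adjoint identity $(\nabla e^{-tA}P_D)^*=-e^{-tA}P_D\div$.

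The main obstacle is the simultaneous bookkeeping required in the gradient case: one must maintain the ordering $q_4\leq q_{2,3}\leq q_1\leq r$ demanded by Theorem~\ref{lqlrd}(1), the Muckenhoupt conditions~\eqref{alphabeta2} and~\eqref{alphabeta4} at all four H\"older-reduced exponents, and the strict positivity $q_4>1$. The last requirement translates into $1/q_4\leq 1/q+\min\{\alpha/2,(\alpha+\beta)/3\}<1$; combined with $q_4\leq r$, it is this constraint that forces the hypothesis $1/q-1/r<1/3$ of Theorem~\ref{lqlrd2}, and the need to respect the Muckenhoupt threshold strictly at the reduced exponent is precisely what produces the $2\varepsilon$-slack in the bound $r<3/(1-2\alpha-\beta-2\varepsilon)$.
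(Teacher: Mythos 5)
Your plan is to deduce Theorem \ref{lqlrd2} from Theorems \ref{lqlrd} and \ref{thmdual0} by H\"older reduction of the exponents $q_2,q_4$, plus a factorisation/duality trick for the Stokes assertions. This is not how the paper proceeds, and the route has genuine gaps. First, the H\"older reduction of the second term of (\ref{lqlrdlarge}) is blocked by the hypothesis $\beta<1-1/q_2$ of Theorem \ref{lqlrd}(1) (the Muckenhoupt condition at the reduced exponent): with $1/q_2=1/q+1/s_2$ you need $1/s_2<1-\beta-1/q$, while your rate calculation needs $1/s_2$ close to $\alpha/3$. Take $q=3/2$, $\alpha=3/5$, $\beta=3/10$, which is admissible in (\ref{alphabetadef}); then $1-\beta-1/q=1/30\ll\alpha/3$, the second term can only be improved to roughly $t^{\alpha-1/20}$, and the target exponent $\alpha/4+\max\{\alpha/4,\beta/2\}+\varepsilon=3/10+\varepsilon$ is unreachable. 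Second, assertions 2, 3, 5, 6 carry \emph{no} $\varepsilon$ and (\ref{gradlqlrdlarge7}) is homogeneous; plain H\"older forces strict inequalities ($s_2>3/\alpha$, etc.), so your route can only produce $\alpha/2+\varepsilon$ (resp.\ an $\varepsilon$-inhomogeneous loss in the Stokes case), never the endpoint. The paper reaches the endpoint by weak-norm H\"older with $(1+|x|)^{-\alpha}\in L^{3/\alpha,\infty}$ and Young's inequality in Lorentz spaces, see (\ref{beta})--(\ref{g2est}) and (\ref{losslessr32})--(\ref{losslessr33}); this cannot be simulated through Theorem \ref{lqlrd}, which is not available on the Lorentz scale. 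Third, the plan for assertions 3 and 6 fails as described: the Kobayashi--Kubo estimate of Remark \ref{rmkstokes} concerns the weight $(1+|x|)^{+\alpha}$, for data with $(1+|x|)^{-\alpha}f\in L^q$ you cannot discard or H\"older away the weight since $(1+|x|)^{\alpha}\notin L^s(\R^3)$, and the adjoint of the weighted gradient map gives estimates for $e^{-tA}P_D\div$ (as in Theorem \ref{thmdual0}(2)), not assertion 3.

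For contrast, the paper's proof does not pass through Theorem \ref{lqlrd} at all. It reruns the cut-off scheme of Section 7: the far-field part is handled with the sharper single-norm whole-space estimates of Proposition \ref{lqlrr3} (assertions 3--5) and Proposition \ref{proplqlrr3dual} applied to the Duhamel formula (\ref{duhamelv}) resp.\ (\ref{duhamelv2}), while the near-boundary part uses the weighted local energy decay of Proposition \ref{local3} with rate $t^{-3/(2s)}$, $s$ chosen via Lemma \ref{holder}. The restrictions such as $r<3/(1-2\alpha-\beta-2\varepsilon)$, $r\leq 3/(1-\alpha)$ and (\ref{alphabetar3}) arise from balancing $3/(2s)$ (or $3/(2q)+\alpha/2$, or $3/(2q)-\eta_4$) against $\tfrac32(1/q-1/r)+i/2-\eta_0$, not from Muckenhoupt conditions at a H\"older-reduced exponent as you assert. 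If you want to keep a reduction-style argument, you would have to redo it at the level of the whole-space kernels $G_{1,k},\dots,G_{4,k}$, which is exactly what Section 5 of the paper does.
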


Let us consider the Stokes semigroup. 
Let $0\leq \alpha<1.$ By the assertion 3 and 6 
of Theorem \ref{lqlrd2}, it follows that 
(\ref{gradlqlrdlarge3})
for $t>0,f\in L^{q}_{(1+|x|)^{\alpha q}}(D)$ provided  
\begin{align}\label{rcritical}
\frac{3}{3-\alpha}<q\leq r\leq \frac{3}{1-\alpha}
\end{align} 
and that (\ref{lqlrddual2}) 
for $t>0,f\in L^{q}_{(1+|x|)^{-\alpha q}}(D)$ 
provided (\ref{alphar2}). 
Our next aim is to prove the optimality of 
the restrictions (\ref{rcritical}) and (\ref{alphar2}). 
In the $L^q$ framework, 
it was proved by Maremonti and Solonnikov \cite{maso1997} and 
Hishida \cite{hishida2011} that 
the restriction 
$1<q\leq r\leq 3$ is optimal 
in the sense that we cannot have 
\begin{align*} 
\|\nabla e^{-tA}P_Df\|_{r,D} 
\leq Ct^{-\frac{3}{2}(\frac{1}{q}-\frac{1}{r})-\frac{1}{2}} 
\|f\|_{q,D} 
\end{align*} 
for $1<q\leq r<q_0$ with $q_0>3.$ However, we state in the next 
theorem that the conditions (\ref{rcritical}) and (\ref{alphar2}) 
are optimal, which yields that the range of exponent is 
enlarged in $L^q_{(1+|x|)^{\alpha q}}(D)$ 
(restricted in $L^q_{(1+|x|)^{-\alpha q}}(D)$) 
than the one in $L^q(D)$. 

\begin{thm}\label{thmoptimal}
Let $e^{-tA}$ be the Stokes semigroup and let $0\leq \alpha<1.$ 
We have $($\ref{gradlqlrdlarge3}$)$ 
for $t>0,f\in L^{q}_{(1+|x|)^{\alpha q}}(D)$ 
provided $($\ref{rcritical}$)$. Moreover, 
the condition $($\ref{rcritical}$)$ 
is optimal in the sense that it is impossible 
to have $($\ref{gradlqlrdlarge3}$)$ for $3/(3-\alpha)<q\leq r<q_0$ 
with $q_0>3/(1-\alpha).$ We also have $($\ref{lqlrddual2}$)$ 
for $t>0,f\in L^{q}_{(1+|x|)^{-\alpha q}}(D)$ 
provided $($\ref{alphar2}$)$, and the condition $($\ref{alphar2}$)$ 
is optimal in the sense that it is impossible to have 
$($\ref{lqlrddual2}$)$ for $1<q\leq r<q_0$ with $q_0>3/(1+\alpha).$
\end{thm}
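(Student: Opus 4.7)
The positive direction of (\ref{gradlqlrdlarge3}) under $3/(3-\alpha)<q\le r\le 3/(1-\alpha)$ is exactly the content of assertion 3 of Theorem \ref{lqlrd2}, and the positive direction of (\ref{lqlrddual2}) under (\ref{alphar2}) is the content of assertion 6 of the same theorem. Only the two sharpness statements therefore require new work, and the plan is to adapt the argument of Hishida \cite{hishida2011} to the weighted framework.

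The core mechanism is the rigidity of the steady Stokes flow past a bounded obstacle: for $f\in C_{0,\sigma}^\infty(D)$, the vector field $u_s:=A^{-1}P_Df=\int_0^\infty e^{-tA}P_Df\,dt$ solves the steady Stokes system with body force $f$, and admits the asymptotic expansion $u_s(x)=E(x)m(f)+O(|x|^{-2})$ as $|x|\to\infty$ (Galdi \cite{galdi2011}), with $E$ the Stokes fundamental tensor and $m(f)\in\R^3$ the net force on the body. A pairing argument against the translational Stokes flow around $\partial D$ produces a witness $f\in C_{0,\sigma}^\infty(D)$ with $m(f)\ne 0$; for such a witness, $(1+|x|)^{\alpha}u_s\in L^s(D)$ precisely when $s>3/(1-\alpha)$, and $(1+|x|)^{-\alpha}u_s\in L^s(D)$ precisely when $s>3/(1+\alpha)$. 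These are the two critical thresholds appearing in Theorem \ref{thmoptimal}.

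To disprove (\ref{gradlqlrdlarge3}) for $r>3/(1-\alpha)$, I argue by contradiction: suppose the estimate held for $3/(3-\alpha)<q\le r<q_0$ with some $q_0>3/(1-\alpha)$. By Theorem \ref{muckenthm} with $\beta=0$, the weights $(1+|x|)^{\pm\alpha q}$ lie in $\A_q(\R^3)$, so the duality $L^{q}_{(1+|x|)^{\alpha q}}(D)^{\ast}=L^{q'}_{(1+|x|)^{-\alpha q'}}(D)$ holds, and the hypothetical estimate transfers to one on $e^{-tA}P_D\div$ in inversely weighted $L^{q'}$ spaces. Splitting the time integral at $t=1$, controlling the short-time piece by the weighted smoothing estimate (\ref{lqlrdsmall}) of Theorem \ref{lqlrd0} and the long-time piece by the dual assumed estimate with $(q,r)$ chosen inside the enlarged range so that $1/q-1/r>1/3$, I derive a finite bound on $(1+|x|)^{-\alpha}u_s$ in $L^{q'}(D)$ with $q'$ pushed to the forbidden side of the threshold $3/(1+\alpha)$. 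Taking $F\in C_0^\infty(D)$ such that $P_D\div F$ realizes a witness $f$ with $m(f)\ne 0$ (via an extension-cut-off construction together with the weighted boundedness of $P_D$) then contradicts the asymptotic threshold above. The argument for (\ref{lqlrddual2}) is the mirror image under $\alpha\leftrightarrow -\alpha$, with the threshold $3/(1+\alpha)$ attained directly, without the duality swap.

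The main obstacle is the exponent arithmetic near the critical boundary: the dual long-time estimate is integrable only when $1/q-1/r>1/3$, and one must simultaneously land the dual target $q'$ on the forbidden side of the summability threshold. The slack $q_0>3/(1-\alpha)$ should open the needed room on both sides, and this is the weighted analogue of the calculation in Hishida \cite{hishida2011} that produces the unweighted cutoff $r=3$. A secondary technical point is ensuring the witness $F$ has the required weighted integrability while still producing $m(f)\ne 0$; this reduces to standard weighted $L^q$ theory via the Farwig-Sohr Helmholtz decomposition \cite{faso1997} invoked throughout the paper.
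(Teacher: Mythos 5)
Your proposal follows essentially the same route as the paper: the positive statements are quoted from assertions 3 and 6 of Theorem \ref{lqlrd2}, and the optimality is proved by contradiction, converting the hypothetical estimate on an enlarged range of exponents (via the duality $(e^{-tA}P_D\div F,\varphi)=-(F,\nabla e^{-tA}\varphi)$ and a split of the time integral at $t=1$ with the short piece absorbed by the smoothing estimate) into weighted summability of a steady Stokes flow at the critical exponent, which is then incompatible with a nonvanishing net force. The paper packages exactly this computation as Lemma \ref{lemvintegral} and closes the loop with Lemma \ref{lemnet} (net force must vanish when $(1+|x|)^{\gamma}v\in L^{3/(1-\gamma)}(D)$), whereas you invoke the Chang--Finn/Galdi expansion $v=E\,m+O(|x|^{-2})$ \cite{chfi1961,galdi2011} directly; these are interchangeable, since Lemma \ref{lemnet} is just the summability form of that expansion, and your exponent arithmetic ($q$ near $3/(2-\alpha)$, $r>3/(1-\alpha)$ inside $(3/(1-\alpha),q_0)$ so that $1/q-1/r>1/3$, giving $q'=3/(1+\alpha)$) matches the paper's choice of $p_0$ with $1/p_0=2/3-1/q_0$.

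Two points deserve attention. First, your starting identity $u_s:=A^{-1}P_Df=\int_0^\infty e^{-tA}P_Df\,dt$ is not available as stated: $0$ is not in the resolvent set of the Stokes operator in an exterior domain, and while your splitting shows the weighted norm of $\int_0^\infty e^{-tA}P_D\div F\,dt$ is finite \emph{under the contradiction hypothesis}, you still must identify this time integral with the steady solution whose net force/asymptotics you quote. The paper does this by working with $v(t)=\int_{-\infty}^{t}e^{-(t-\tau)A}P_D\nabla\cdot F(\tau)\,d\tau$ for forcings of class {\bf (A)} and observing that a time-independent $F$ makes $v$ periodic with arbitrary period, hence stationary; some such argument (or a uniqueness statement for the steady problem in the relevant class) is needed and is the one genuinely nontrivial step your sketch elides. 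Second, in the mirror argument for (\ref{lqlrddual2}) the summability threshold reached should be $(1+|x|)^{\alpha}u_s\in L^{3/(1-\alpha)}(D)$ (take $q=3/(2+\alpha)$, $r>3/(1+\alpha)$), not $3/(1+\alpha)$ as written; this is consistent with your own earlier statement of the thresholds and with the paper's proof, so it reads as a slip of wording rather than a flaw in the method.
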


Although the rate in (\ref{gradlqlrdlarge5})  
is better than the one in (\ref{lqlrdlarge}), 
(\ref{gradlqlrdlarge}) with $q_i=q$, that is 
\begin{align}
\|&(1+|x|)^\alpha(1+|x|-x_1)^\beta 
\nabla^je^{-tA_a}P_Df\|_{r,D}\notag\\
&\leq C
t^{-\frac{3}{2}(\frac{1}{q}-\frac{1}{r})-\frac{j}{2}}
\|(1+|x|)^{\alpha}(1+|x|-x_1)^{\beta} f\|_{q,D}+C
t^{-\frac{3}{2}(\frac{1}{q}-\frac{1}{r})-\frac{j}{2}+\alpha}
\|(1+|x|-x_1)^{\beta} f\|_{q,D}\notag\\
&\qquad+Ct^{-\frac{3}{2}(\frac{1}{q}-\frac{1}{r})-\frac{j}{2}
+\frac{\beta}{2}}
\|(1+|x|)^{\alpha}f\|_{q,D}
+Ct^{-\frac{3}{2}(\frac{1}{q}-\frac{1}{r})-\frac{j}{2}
+\alpha+\frac{\beta}{2}}
\|f\|_{q,D},
\label{lqlrdlarge3}
\end{align}
but it seems difficult to apply 
(\ref{gradlqlrdlarge5}) to the nonlinear problems. 
Therefore, we make use of (\ref{lqlrdlarge3}) 
in the nonlinear problems.

\par Let us proceed to the nonlinear problems. To state the main 
theorems precisely, we recall following properties 
of stationary solutions. 
\begin{prop}\label{propsta}
For every $(\alpha_1,\alpha_2,\beta_1,\beta_2)$ satisfying 
\begin{align}\label{parameter1}
2<\alpha_1\leq 4\leq \alpha_2<6,\quad 
\frac{4}{3}<\beta_1\leq 2\leq \beta_2<\frac{12}{5},
\end{align}
there exists a constant 
$\kappa_1=\kappa_1(D,\alpha_1,\alpha_2,\beta_1,\beta_2)\in(0,1)$ 
such that if 
\begin{align*} 
0<a^{\frac{1}{4}}<\kappa_1, 
\end{align*} 
problem $(\ref{sta})$ admits a unique solution $u_s$ along with 
\begin{align}
&\|u_s\|_{\alpha_1,D}+\|u_s\|_{\alpha_2,D}\leq Ca^{\frac{1}{2}},\quad 
\|\nabla u_s\|_{\beta_1,D}+\|\nabla u_s\|_{\beta_2,D}
\leq Ca^{\frac{3}{4}},\label{usest}
\end{align}
where $C>0$ is independent of $a$. Furthermore, there exists 
a constant $\kappa_2\in(0,\kappa_1]$ such that if 
\begin{align*} 
0<a^{\frac{1}{4}}<\kappa_2, 
\end{align*} 
then the solution $u_s$ enjoys 
\begin{align}
&u_s(x)=O((1+|x|)^{-1}(1+|x|-x_1)^{-1}),\quad 
\nabla u_s(x)=O((1+|x|)^{-\frac{3}{2}}(1+|x|-x_1)^{-\frac{3}{2}})
\label{usest2}
\end{align}
as $|x|\rightarrow\infty$.
\end{prop}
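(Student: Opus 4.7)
Our plan is to treat Proposition \ref{propsta} as a small-$a$ existence and decay result, in the spirit of the classical Finn--Galdi theory of physically reasonable stationary Navier--Stokes flows. The argument naturally splits into (i) construction of $u_s$ with the Lebesgue bounds (\ref{usest}) via a fixed-point scheme built on the linear exterior Oseen theory, and (ii) upgrading to the pointwise wake decay (\ref{usest2}) by means of the Oseen integral representation.

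For (i), first pick a solenoidal extension $V=V_a$ of $-ae_1$ with compact support near $\partial D$, which exists because $\int_{\partial D}(-ae_1)\cdot\nu\,dS=0$. Such a $V$ can be constructed as a cut-off of $-ae_1$ corrected by a Bogovski\u{\i} operator on a bounded annular region, and it satisfies $\|V\|_{W^{2,q}(D)}\leq Ca$ uniformly in $1<q<\infty$. Setting $u_s=V+v$ recasts (\ref{sta}) as a linear Oseen problem for $(v,p_s)$ with zero boundary data and right-hand side
\begin{equation*}
-v\cdot\nabla v-v\cdot\nabla V-V\cdot\nabla v-V\cdot\nabla V+\Delta V-a\partial_{x_1}V.
\end{equation*}
We then invoke the linear exterior Oseen $L^q$ estimates of Galdi \cite{galdi2011} and Kobayashi--Shibata \cite{kosh1998} to obtain control of $\|v\|_{\alpha_i,D}$ and $\|\nabla v\|_{\beta_i,D}$ from the right-hand side, and run a contraction mapping on the Banach space
\begin{equation*}
X=\bigl\{v\,:\,v\in L^{\alpha_1}(D)\cap L^{\alpha_2}(D),\ \nabla v\in L^{\beta_1}(D)\cap L^{\beta_2}(D)\bigr\}.
\end{equation*}
The exponent ranges (\ref{parameter1}) are precisely what is needed so that the Oseen fundamental tensor $E_a$, with its anisotropic wake decay $|E_a(x)|\leq C(1+|x|)^{-1}(1+a(|x|-x_1))^{-1}$, defines bounded convolution operators into these spaces, and so that H\"older's inequality applied to $v\cdot\nabla v$ and $V\cdot\nabla v$ closes inside $X$. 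The smallness $a^{1/4}<\kappa_1$ provides both the smallness of the forcing and the smallness of the contraction modulus.

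For (ii), with the $L^q$ bounds in hand, we use the Oseen integral representation (Finn \cite{finn1965s}, Galdi \cite{galdi2011}): $u_s$ is expressed as boundary-integral contributions involving the Oseen velocity and pressure fundamental tensors $E_a,K_a$ applied to the stress $T(u_s,p_s)$, together with a volume integral of $\nabla_y E_a$ against $u_s\otimes u_s$. The pointwise bounds $|E_a(x)|\leq C(1+|x|)^{-1}(1+|x|-x_1)^{-1}$ and $|\nabla E_a(x)|\leq C(1+|x|)^{-3/2}(1+|x|-x_1)^{-3/2}$, combined with H\"older estimates using the $L^{\alpha_i}$ and $L^{\beta_i}$ control of $u_s$ and $\nabla u_s$, yield (\ref{usest2}) after a short bootstrap iteration.

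The main obstacle I anticipate is producing the sharp $a$-dependence $a^{1/2}$ and $a^{3/4}$ in (\ref{usest}), rather than existence itself, which is essentially classical. Extracting these explicit exponents requires tracking $a$ uniformly through the linear Oseen estimates, through the Bogovski\u{\i} correction, and through the small-$a$ gain supplied by the factor $(1+a(|x|-x_1))^{-1}$ in $E_a$ when integrated against the compactly supported forcing $\Delta V-a\partial_{x_1}V$. Once this bookkeeping is carried out, the contraction in $X$ closes and the pointwise bootstrap produces the anisotropic decay (\ref{usest2}).
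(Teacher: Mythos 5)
Your part (i) is essentially the construction the paper delegates to the author's earlier work: the paper does not reprove existence and (\ref{usest}) at all, but cites \cite[Theorem 1.1]{takahashi2021}, where exactly your fixed-point scheme (solenoidal extension plus the linear exterior Oseen $L^q$ theory of \cite{galdi1992oseen,galdi2011}) is carried out, including the $a^{1/2}$ and $a^{3/4}$ bookkeeping you flag as the ``main obstacle.'' So for (i) your plan is sound, though the difficulty you single out is precisely the part that is already available by citation.

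The genuine gap is in part (ii). The paper does \emph{not} derive (\ref{usest2}) by running the Oseen integral representation and a bootstrap on $u_s$; it proves that the fixed-point solution $u_s$ coincides with Finn's physically reasonable solution $u_{s,F}$ of \cite{finn1965o}, whose anisotropic decay (leading profile given by the Oseen fundamental solution) is already known, using the quantitative bounds of \cite[Theorem 1.1]{shibata1999} (with $\delta=1/8$, $\beta=4/3$) only to place $u_{s,F}$, via Lemma \ref{finite}, in the uniqueness class $N(\widetilde{\alpha}_1,\alpha_2,\widetilde{\beta}_1,\beta_2,C,a)$ from \cite{takahashi2021}; uniqueness in that class then transfers (\ref{usest2}) to $u_s$ for $a^{1/4}<\kappa_2$. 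Your alternative — representation formula plus ``a short bootstrap iteration'' — is not short: extracting the full wake rates $(1+|x|)^{-1}(1+|x|-x_1)^{-1}$ and, especially, $(1+|x|)^{-3/2}(1+|x|-x_1)^{-3/2}$ for $\nabla u_s$ from $u_s\in L^{\alpha_1}\cap L^{\alpha_2}$, $\nabla u_s\in L^{\beta_1}\cap L^{\beta_2}$ requires the delicate anisotropic convolution estimates for $E_a$, $\nabla E_a$ against the nonlinearity (the content of Finn's and Shibata's analysis, and of the weighted estimates in \cite{farwig1992t,krnopo2001}); none of the intermediate anisotropic gains are spelled out in your sketch. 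Alternatively, if you intend to \emph{cite} the known pointwise asymptotics rather than reprove them, you face exactly the issue the paper's uniqueness argument resolves: those results are stated for a particular solution (Finn's), not automatically for the solution produced by your contraction, so an identification step with explicit smallness of $a$ is unavoidable. As written, your proposal neither carries out the asymptotic analysis nor supplies this identification, and that is where the proof of (\ref{usest2}) actually lives.
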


\begin{proof}
The estimate (\ref{usest}) was derived in 
the present author's paper \cite[Theorem 1.1]{takahashi2021} 
by employing the $L^q$-theory of 
the Oseen system developed by Galdi \cite{galdi1992oseen,galdi2011}. 
To deduce (\ref{usest2}), which is the same as the one of 
the Oseen fundamental solution, we prove that the solution 
$u_s$ constructed in \cite{takahashi2021} coincides with 
the solution constructed by Finn \cite{finn1965o}, 
which we denote by $u_{s,F}$. This is because 
the leading profile of $u_{s,F}$ is the Oseen fundamental solution 
as long as the net force does not vanish. 
Let $\alpha_1,\alpha_2,\beta_1,\beta_2$ 
satisfy (\ref{parameter1}). We denote the constant 
$C$ in (\ref{usest}) by $\mathcal{C}_1$. Then we have 
\begin{align}
&u_{s}\in N(\alpha_1,\alpha_2,
\beta_1,\beta_2,\mathcal {C}_1,a),\label{N}
\\
&N(\alpha_1,\alpha_2,
\beta_1,\beta_2,\mathcal {C}_1,a)
:=\{u\in L^{\alpha_1}(D)\cap L^{\alpha_2}(D)\mid
\nabla u\in L^{\beta_1}(D)\cap L^{\beta_2}(D),
\|u\|_{N(\alpha_1,\alpha_2,\beta_1,\beta_2,a)}\leq \mathcal{C}_1a\},
\notag\\
&\|u\|_{N(\alpha_1,\alpha_2,
\beta_1,\beta_2,a)}:=
a^\frac{1}{2}\|u\|_{\alpha_1,D}
+a^\frac{1}{2}\|u\|_{\alpha_2,D}
+a^\frac{1}{4}\|\nabla u\|_{\beta_1,D}
+a^\frac{1}{4}\|\nabla u\|_{\beta_2,D}
\notag
\end{align} 
if $a^{1/4}<\kappa_1$. 
It follows from Finn \cite[Theorem 5.1]{finn1965o} and 
Shibata \cite[Theorem 1.1]{shibata1999} 
with $\delta=1/8,\beta=4/3$ that 
\begin{align*}
(1+|x|)(1+|x|-x_1)^{\frac{1}{8}}|u_{s,F}(x)|,
(1+|x|)^{\frac{3}{2}}(1+|x|-x_1)^{\frac{5}{8}}
|\nabla u_{s,F}(x)|\leq a^{\frac{3}{4}}
\end{align*}
for $x\in D$ if $a$ is smaller than $\varepsilon$ and 
$(1/2)^{4/3}$, where 
$\varepsilon$ is given by \cite[Theorem 1.1]{shibata1999} 
with $\delta=1/8,\beta=4/3$. 
By taking 
$\widetilde{\alpha}_1,\widetilde{\beta}_1$ so that 
\begin{align*}
\alpha_1\leq 
\widetilde{\alpha}_1,\quad 
\frac{8}{3}<\widetilde{\alpha}_1\leq 4,\quad 
\beta_1\leq \widetilde{\beta}_1,\quad 
\frac{24}{17}<\widetilde{\beta}_1\leq 2
\end{align*}
and by $u_{s,F},\nabla u_{s,F}\in L^{\infty}(D)$, 
Lemma \ref{finite} in Section 5 yields 
$u_{s,F}\in N(\widetilde{\alpha}_1,\alpha_2,
\widetilde{\beta}_1,\beta_2,\mathcal {C}_2,a)$, 
where 
\begin{align*}
\mathcal{C}_2=
\max\big\{
&\|(1+|x|)^{-1}
(1+|x|-x_1)^{-\frac{1}{8}}\|_{\widetilde{\alpha}_1,\R^3},
\|(1+|x|)^{-1}(1+|x|-x_1)^{-\frac{1}{8}}
\|_{\alpha_2,\R^3},
\\
&\|(1+|x|)^{-\frac{3}{2}}
(1+|x|-x_1)^{-\frac{5}{8}}\|_{\widetilde{\beta}_1,\R^3},
\|(1+|x|)^{-\frac{3}{2}}
(1+|x|-x_1)^{-\frac{5}{8}}\|_{\beta_2,\R^3}
\big\}.
\end{align*}
By the proof of \cite[Theorem 1.1]{takahashi2021}, 
we know that there exists a constant 
$\mathcal{C}_3$ dependent on $\widetilde{\alpha}_1,\alpha_2,
\widetilde{\beta}_1,\beta_2$ such that  
if $C\geq \mathcal{C}_3$ and if $a^{1/4}<1/C^2$, 
then the problem $(\ref{sta})$ 
has at most one solution within 
$N(\widetilde{\alpha}_1,\alpha_2,\widetilde{\beta}_1,
\beta_2,C,a)$. Since we have 
$u_s\in N(\widetilde{\alpha}_1,\alpha_2,\widetilde{\beta}_1,
\beta_2,2\mathcal{C}_1,a)$ due to (\ref{N}),  
$\alpha_1\leq \widetilde{\alpha}_1$ and 
$\beta_1\leq \widetilde{\beta}_1$, 
the uniqueness within 
$N(\widetilde{\alpha}_1,\alpha_2,\widetilde{\beta}_1,
\beta_2,\max\{2\mathcal{C}_1,\mathcal{C}_2,\mathcal{C}_3\},a)$ 
leads us to $u_s=u_{s,F}$ whenever  
\begin{align*}
0<a^{\frac{1}{4}}<\min\left\{\kappa_1,\varepsilon,
\left(\frac{1}{2}\right)^{\frac{4}{3}},
\frac{1}{\max\{2\mathcal{C}_1,\mathcal{C}_2,\mathcal{C}_3\}^2}\right\}
=:\kappa_2
\end{align*}
is satisfied. The proof is complete. 
\end{proof}

To study the stability/attainability of the stationary solution 
$u_s$ given by Proposition \ref{propsta}, it is convenient to set 
\begin{align}\label{staclass2}
\alpha_1=\frac{3}{1+\mu_1},\quad\quad
\alpha_2=\frac{3}{1-\mu_2},\quad\quad 
\beta_1=\frac{3}{2+\mu_3},\quad\quad\beta_2=\frac{3}{2-\mu_4}
\end{align}
with $(\mu_1,\mu_2,\mu_3,\mu_4)$ satisfying
\begin{align}\label{mu1}
0<\mu_1<\frac{1}{2},\quad
\frac{1}{4}\leq \mu_2<\frac{1}{2},\quad 
0<\mu_3<\frac{1}{4},\quad
\frac{1}{2}\leq \mu_4<\frac{3}{4}.
\end{align}
Moreover, if we consider the starting problem (\ref{NS3}), 
then we assume the additional condition 
\begin{align}\label{mu2}
\mu_2+\mu_4>1.
\end{align}
By using the Oseen semigroup $e^{-tA_a}$, the problem 
(\ref{NS32}) and (\ref{NS3}) are converted into
\begin{align}\label{NS42}
v(t)=e^{-tA_a}b-\int_0^t e^{-(t-\tau)A_a}P_D\Big[v\cdot\nabla v
+v\cdot\nabla u_s+u_s\cdot\nabla v\Big]d\tau
\end{align}
and 
\begin{align}\label{NS4}
w(t)=\int_0^t e^{-(t-\tau)A_a}P_D\Big[-w\cdot\nabla w
&-\psi(\tau)w\cdot\nabla u_s-\psi(\tau)u_s\cdot\nabla w\nonumber\\
&+\big(1-\psi(\tau)\big)a\partial_{x_1}w
+f_1(\tau)+f_2(\tau)\Big]d\tau,
\end{align}
respectively. 
\par We are now in a position to give the main result 
on the stability of stationary solutions. 
\begin{thm}\label{stabilitythm}
Let $\alpha,\beta$ satisfy 
\begin{align}\label{alphabeta3}
\alpha\geq 0,\quad 0\leq\beta<\frac{1}{3},\quad \alpha+\beta<1. 
\end{align}
Then there exist constants $\kappa=\kappa(\alpha,\beta)>0$ 
and $\varepsilon=\varepsilon(\alpha,\beta,a)>0$ such that 
if $0<a^{1/4}<\kappa$ and if 
$b\in L^3_{(1+|x|)^{3\alpha}(1+|x|-x_1)^{3\beta}}(D)$ fulfills   
$\|b\|_{3,D}<\varepsilon,$ 
then the problem $($\ref{NS42}$)$ admits a solution $v$ which enjoys 
\begin{align}
&\|(1+|x|)^{\gamma_i}(1+|x|-x_1)^{\delta_i} v(t)\|_{q,D}
=o(t^{-\frac{1}{2}+\frac{3}{2q}+\gamma_i+\frac{\delta_i}{2}}), 
\label{anisov}\\
&\|(1+|x|)^{\gamma_i}(1+|x|-x_1)^{\delta_i} \nabla v(t)\|_{3,D}
=o(t^{-\frac{1}{2}+\gamma_i+\frac{\delta_i}{2}})\label{anisov2}
\end{align} 
as $t\rightarrow\infty$ for all $q\in[3,\infty]$ and 
$i=1,2,3,4,$ where $\gamma_i,\delta_i$ are 
given by $($\ref{gammadeltaeta}$)$.
\end{thm}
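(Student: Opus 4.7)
The plan is to construct $v$ first in an unweighted framework by a Fujita--Kato contraction, and then to obtain the anisotropically weighted decay rates a posteriori by a linear bootstrap on the integral equation (\ref{NS42}) applied successively with the four weight pairs $(\gamma_i,\delta_i)$ of Theorem \ref{lqlrd}. This separation is forced by the hypothesis: while $\|b\|_{3,D}$ must be small, the weighted $L^3$ norm of $b$ is only assumed to be finite.

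\emph{Phase 1 (unweighted solution).} Fix $q_0\in(3,\infty)$ and run a standard Fujita--Kato contraction for the map $v\mapsto$ (RHS of (\ref{NS42})) in the ball
$$\Bigl\{v : \sup_{0<t<T}\bigl(t^{\tfrac{1}{2}-\tfrac{3}{2q_0}}\|v(t)\|_{q_0,D} + t^{1/2}\|\nabla v(t)\|_{3,D}\bigr)\le R\Bigr\}.$$
The necessary linear bounds are the case $i=4$ of (\ref{lqlrdlarge})--(\ref{gradlqlrdlarge}) for $t\ge 3$, the smoothing estimate (\ref{lqlrdsmall}) of Theorem \ref{lqlrd0} for $t\le 3$, the divergence-form bound (\ref{lqlrddiv0}) of Theorem \ref{thmdual0}, and the stationary bounds (\ref{usest}). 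Smallness of $a^{1/4}$ and of $\|b\|_{3,D}$ supplies both contraction and ball-invariance, yielding a global $v$ with $\|v(t)\|_{q,D}=O(t^{-1/2+3/(2q)})$ and $\|\nabla v(t)\|_{3,D}=O(t^{-1/2})$ for every $q\in[3,\infty]$. Upgrading $O$ to $o$ is performed by a Borchers--Miyakawa type density argument: split $b$ into a smooth compactly supported piece enjoying faster decay via Theorem \ref{lqlrd0} and an arbitrarily small $L^3$ remainder.

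\emph{Phase 2 (weighted rates).} For each $i\in\{1,2,3\}$, apply the norm $\|(1+|x|)^{\gamma_i}(1+|x|-x_1)^{\delta_i}\,\cdot\,\|_{q,D}$ to (\ref{NS42}). The linear term $e^{-tA_a}b$ is bounded directly by Theorem \ref{lqlrd}, which absorbs the finite weighted norm of $b$ at the correct rate. For the nonlinear convolutions, write $v\cdot\nabla v=\operatorname{div}(v\otimes v)$, $v\cdot\nabla u_s=\operatorname{div}(v\otimes u_s)$, $u_s\cdot\nabla v=\operatorname{div}(u_s\otimes v)$, and use the weighted divergence-form estimate (\ref{lqlrddiv0}). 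In the interaction terms, the weight is absorbed into the wake decay (\ref{usest2}) of $u_s$, which is permissible precisely because $\alpha+\beta<1$, and is then paired with a Phase 1 bound on $v$ in a suitable $L^q$. The purely quadratic term is handled by distributing the weight symmetrically,
$$(1+|x|)^{\gamma_i}(1+|x|-x_1)^{\delta_i}|v\otimes v|\le\Bigl[(1+|x|)^{\gamma_i/2}(1+|x|-x_1)^{\delta_i/2}|v|\Bigr]\Bigl[(1+|x|)^{\gamma_i/2}(1+|x|-x_1)^{\delta_i/2}|v|\Bigr],$$
closing the estimate first in a weaker weight and then iterating up to the full exponent $(\gamma_i,\delta_i)$.

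\emph{Main obstacle.} The principal difficulty is that the weighted norm of $b$ is only finite, so it cannot drive a contraction; the weighted estimates have to be produced through a schematic inequality of the form
$$N(t)\le C\|(1+|x|)^\alpha(1+|x|-x_1)^\beta b\|_{3,D} + C\bigl(a^{1/4} + \sup_{\tau>0}\tau^{1/2}\|\nabla v(\tau)\|_{3,D}\bigr)N(t),$$
where $N(t)$ bundles the four relevant space-time weighted norms of $v$, so that the nonlinear contribution is absorbed by the smallness already produced in Phase 1. The constraints $0\le\beta<1/3$ and $\alpha+\beta<1$ enter here to keep the loss exponents $\eta_i$ of (\ref{gammadeltaeta}) strictly below $1/2$, which is precisely what makes the convolution of the kernel (\ref{lqlrdlarge}) with the Phase 1 decay rates time-integrable. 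Correctly matching each of the four source terms on the right of (\ref{lqlrdlarge}) with the appropriate factorization of the integrand is the central bookkeeping task.
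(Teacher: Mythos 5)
Your two-phase architecture (unweighted Fujita--Kato smallness first, then weighted norms obtained through an estimate that is \emph{linear} in the weighted quantity and absorbed by the already-established smallness) is exactly the strategy of the paper; the paper implements the absorption on the Picard iterates $v_m$ rather than on $v$ itself, which sidesteps the circularity in your schematic inequality $N(t)\le C\|\rho b\|_{3,D}+C(\text{small})N(t)$: to absorb you must first know $N(t)<\infty$, and that is not a priori clear for the limit solution. Your technical choices also differ: the paper does not use the divergence form together with (\ref{lqlrddiv0}) here (note that with $v\otimes v\in L^{3/2}$ the kernel $(t-\tau)^{-3/2(2/3-1/3)-1/2}=(t-\tau)^{-1}$ is not integrable at $\tau=t$, so you would have to perturb the exponent), and instead of your symmetric splitting of the weight over the two factors of $v\otimes v$ it puts the \emph{full} weight on one factor (measured in weighted $L^6$ by interpolating weighted $L^3$ and $L^\infty$) and leaves $\nabla v$ unweighted in $L^3$; this is what makes the weighted norm enter linearly in a single pass, whereas your symmetric splitting forces an auxiliary induction up the weight exponent. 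These are workable variants, not errors.

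The genuine gap is the little-$o$ in the \emph{weighted} norms (\ref{anisov})--(\ref{anisov2}). Your bootstrap in Phase 2 can only produce the big-$O$ rates, and your density argument is confined to Phase 1 (and, as stated, invokes Theorem \ref{lqlrd0}, which is the smoothing estimate near $t=0$ and gives no extra large-time decay; what produces faster decay for nice data is Theorem \ref{lqlrd} applied with an exponent $q_i<3$). Upgrading to $o(t^{-1/2+3/(2q)+\gamma_i+\delta_i/2})$ requires a separate argument: for $b\in C^\infty_{0,\sigma}(D)$ one must extract an additional factor $t^{-\mu/2}$ in the weighted $L^3$ norms via $L^{3/(1+\mu)}$--$L^3$ estimates and a second absorption (using $\|\nabla u_s\|_{3/(2+\mu_3)}$, $\|u_s\|_{3/(1+\mu_1)}$, which is where the strict inequalities in (\ref{alphabeta3}) and the choice of $\mu_i$ enter); one then needs density of $C^\infty_{0,\sigma}(D)$ in the \emph{weighted} space $L^3_{(1+|x|)^{3\alpha}(1+|x|-x_1)^{3\beta},\sigma}(D)$ together with continuity of the data-to-solution map in the weighted space-time norms, so that the remainder is small in the weighted norm, not merely in $L^3$. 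Finally, the weighted $L^\infty$ and gradient rates (\ref{anisov2}) do not follow from the weighted $L^3$ rate by the semigroup alone; one must restart the integral equation at $T=t/2$ and redo the weighted estimates on $[T,t]$, which your proposal omits.
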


In the starting problem, 
since the fluid is initially at 
rest and since the stationary solution $u_s$ 
belongs to $L^q(D)$ with $q<3$, 
we expect that the decay rate in the starting problem 
is better than the one in Theorem \ref{stabilitythm}. 
In fact, we have the following. 
\begin{thm}\label{attainthm}
Let $\alpha,\beta$ satisfy 
\begin{align}\label{alphabeta5}
0\leq\alpha<\frac{1}{3},\quad 0\leq\beta<\frac{1}{3}.
\end{align}
Let $\psi$ be a function on $\R$ satisfying $(\ref{psidef})$ 
and set $M=\max_{t\in\R}|\psi'(t)|$. 
Then there exists constant 
$\widetilde{\kappa}=\widetilde{\kappa}(\alpha,\beta)$ 
$($independent of $\varepsilon$$)$ such that 
if $0<(M+1)a^{1/4}<\widetilde{\kappa}$, 
the problem $($\ref{NS4}$)$ admits a solution $w$ which enjoys 
\begin{align}
&\|(1+|x|)^{\gamma_i}(1+|x|-x_1)^{\delta_i}w(t)\|_{q,D}
=O(t^{-\frac{1}{2}+\frac{3}{2q}-\frac{1}{4}+\varepsilon
+\gamma_i+\frac{\delta_i}{2}}), 
\label{anisow}\\
&\|(1+|x|)^{\gamma_i}(1+|x|-x_1)^{\delta_i} \nabla w(t)\|_{3,D}
=O(t^{-\frac{1}{2}-\frac{1}{4}+\varepsilon
+\gamma_i+\frac{\delta_i}{2}})\label{anisow2}
\end{align} 
as $t\rightarrow\infty$ for all $q\in[3,\infty],i=1,2,3,4$ and 
$\varepsilon>0$, where $\gamma_i,\delta_i$ are 
given by $($\ref{gammadeltaeta}$)$. 
\end{thm}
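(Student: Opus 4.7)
The plan is to construct $w$ as a fixed point of the map $\Phi$ given by the right-hand side of the integral equation $(\ref{NS4})$, following the idea of the author's previous paper \cite{takahashi2021} now adapted to anisotropically weighted $L^q$ spaces. The Banach space of iteration records $\|(1+|x|)^{\gamma_i}(1+|x|-x_1)^{\delta_i}w(t)\|_{q,D}$ for $q\in[3,\infty]$ and $\|(1+|x|)^{\gamma_i}(1+|x|-x_1)^{\delta_i}\nabla w(t)\|_{3,D}$, multiplied respectively by the temporal weights $t^{1/2-3/(2q)+1/4-\varepsilon-\gamma_i-\delta_i/2}$ and $t^{1/2+1/4-\varepsilon-\gamma_i-\delta_i/2}$, over all four weight pairs $(\gamma_i,\delta_i)$ of $(\ref{gammadeltaeta})$. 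The successive approximations $w_0=0$, $w_{k+1}=\Phi[w_k]$ are then shown to converge in this space provided $(M+1)a^{1/4}$ is small enough.

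Each of the six terms in $(\ref{NS4})$ is handled by a different linear estimate from the toolkit developed earlier. The data $f_1=-\psi'(\tau)u_s$, $f_2=\psi(\tau)(1-\psi(\tau))(u_s\cdot\nabla u_s+a\partial_{x_1}u_s)$, and the linear term $(1-\psi(\tau))a\partial_{x_1}w$ all vanish for $\tau\geq 1$, so one applies the non-divergence estimate $(\ref{lqlrdlarge})$ with exponents $q_i\in(2,3)$ chosen to match the integrability class of $u_s$ provided by Proposition \ref{propsta}; because $u_s\in L^{2+\varepsilon}(D)$ while $u_s\notin L^2(D)$ in general, the resulting rate $t^{-3(1/q_i-1/r)/2}$ with $q_i$ slightly above $2$ produces the extra decay factor $t^{-1/4+\varepsilon}$ that distinguishes the starting problem from the stability theorem. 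The linear coupling terms $\psi u_s\cdot\nabla w$ and $\psi w\cdot\nabla u_s$ are estimated by applying $(\ref{lqlrdlarge3})$ after using H\"older's inequality together with the pointwise wake decay $(\ref{usest2})$ and the bounds $(\ref{usest})$ on $u_s,\nabla u_s$. Finally, the genuine nonlinearity $w\cdot\nabla w=\div(w\otimes w)$ is absorbed by the divergence-form estimate $(\ref{lqlrddiv0})$ of Theorem \ref{thmdual0}.

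The heart of the argument is the weighted $L^3$ bound $(\ref{attainrate3})$, from which $(\ref{anisow})$--$(\ref{anisow2})$ for the remaining $q\in(3,\infty]$ and for the weighted gradient norm follow by reapplying Theorem \ref{lqlrd} and Theorem \ref{thmdual0} in the manner of Enomoto--Shibata \cite{ensh2005}. I would establish $(\ref{attainrate3})$ by a two-step bootstrap: first derive only the slower rate $\|(1+|x|)^\alpha(1+|x|-x_1)^\beta w(t)\|_{3,D}=O(t^{-\mu/2+\alpha+\beta/2})$ with some $\mu\in(0,1/4)$ by a direct iteration, which uses only crude control of the forcing; this weaker bound then upgrades the weighted $L^q$ norms for $q>3$ and the weighted gradient norm in $L^3$ through the four-term decomposition $(\ref{lqlrdlarge3})$, and these improved bounds are fed back into the weighted $L^3$ form of $\Phi$ to push the $L^3$ exponent up to $-1/4+\varepsilon+\alpha+\beta/2$ after finitely many passes.

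The chief obstacle lies in the bookkeeping of the four temporal exponents arising from $(\ref{lqlrdlarge3})$ when convolved against $\Phi$: one must verify integrability of each contribution both near $\tau=0$ and near $\tau=t$, and ensure that the weighted gradient estimate $(\ref{gradlqlrdlarge})$ is invoked only within its admissible range of $r$ dictated by the assumption $\alpha,\beta<1/3$. The smallness parameter $\widetilde{\kappa}$ must be tuned so that the closure constant of $\Phi$ remains below one both in the initial contraction step and uniformly through every pass of the bootstrap, which is where the wake structure $(\ref{usest2})$ of $u_s$ and the compatibility of the weight pair $(\alpha,\beta)$ with the exponent conditions of Theorem \ref{lqlrd} really come together.
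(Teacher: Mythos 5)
Your overall strategy coincides with the paper's: successive approximations for (\ref{NS4}); the $t^{-1/4+\varepsilon}$ gain extracted from $u_s\in L^{2/(1-\nu)}(D)$ (small $\nu>0$) via the multi-exponent estimates with exponents slightly above $2$; a finite bootstrap of the weighted $L^3$ rate from $-\mu/2$ up to $-1/4+\varepsilon$; and the Enomoto--Shibata $t\mapsto t/2$ argument to transfer the rate to $q>3$ and to the gradient norm. One point needs repair, however. You propose to run the contraction in a Banach space whose temporal weights already carry the factor $t^{1/4-\varepsilon}$. In that norm the bound for the forcing term $F_1$ necessarily involves $\|(1+|x|)^{\gamma}(1+|x|-x_1)^{\delta}u_s\|_{2/(1-\gamma-\nu),D}$ with $\nu\lesssim\varepsilon$, and since $u_s\notin L^2(D)$ in general this constant blows up as $\varepsilon\to 0$; the closure condition of the contraction would then force the smallness of $(M+1)a^{1/4}$ to depend on $\varepsilon$, contradicting the assertion that $\widetilde{\kappa}$ is independent of $\varepsilon$. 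The paper avoids this by performing the contraction only in the $\varepsilon$-independent norms $[[\cdot]]_{\infty}$ and $[[\cdot]]_{\alpha,\beta,\infty}$ (which give existence together with the cruder rates of Theorem \ref{thm2}) and then deriving (\ref{attainrate3}) as an a posteriori induction on the already constructed solution, where the $\varepsilon$-dependent constants only enter the implicit constant in the $O(\cdot)$. Your third paragraph in fact describes exactly this two-stage bootstrap, so the fix is merely to decouple it from the definition of the iteration space. A minor further difference: the paper applies the divergence-form estimate (\ref{lqlrddiv02}) to $(1-\psi(\tau))a\partial_{x_1}w=a\,\mathrm{div}(e_1\otimes w)$ rather than to $w\cdot\nabla w$, which it handles through $\|w\|_{6,D}\|\nabla w\|_{3,D}$; your assignment could be made to work but must absorb the extra growth factor $(1+t-\tau)^{\alpha+\beta/2}$ appearing in (\ref{lqlrddiv0}).
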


\begin{rmk}
For the uniqueness of solutions to (\ref{NS4}), 
the present author \cite{takahashi2021} employed the idea due to 
Brezis \cite{brezis1994} and 
established the uniqueness within the class 
\begin{align}\label{unique} 
\{v\in BC([0,\infty);L^3_{\sigma}(D))\mid 
t^{\frac{1}{2}}v\in BC((0,\infty);L^{\infty}(D)),
t^{\frac{1}{2}}\nabla v\in BC((0,\infty);L^3(D))\}.
\end{align} 
Since the argument in \cite{takahashi2021} is also 
valid for the problem (\ref{NS42}), 
a solution to (\ref{NS42}) is also unique within (\ref{unique}).  
\end{rmk}

\section{Proof of Theorem \ref{muckenthm}}
\quad In this section, we first prepare some results on 
the Muckenhoupt class $\A_q(\R^3)$ 
and the estimate of anisotropic weight, 
and then prove Theorem \ref{muckenthm}.
We begin by the following lemma, for the proof, 
see \cite[Chapter IV, Corollary 5.3]{garu1985}, 
\cite[Chapter IX, Proposition 4.3 and Theorem 5.5]{torchinsky1986} 
and \cite[Chapter V, Proposition 9]{stein1993}.
\begin{prop}\label{a1aq}
Let $1<q<\infty$. 
A weight $\rho$ belongs to $\A_q(\R^3)$ if and only if 
there are weights 
$\rho_1,\rho_2\in\A_1(\R^3)$ such that 
\begin{align*}
\rho=\frac{\rho_1}{\rho_2^{q-1}}.
\end{align*}
\end{prop}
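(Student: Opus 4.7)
My plan is to treat the two implications separately: the sufficiency is a direct calculation using the $\A_1$ condition, while the necessity is Jones's factorization theorem, for which I would invoke the Rubio de Francia iteration of the Hardy--Littlewood maximal operator $M$.

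For the sufficiency, assume $\rho = \rho_1/\rho_2^{q-1}$ with $\rho_1,\rho_2 \in \A_1(\R^3)$, so that $|B|^{-1}\int_B \rho_i \leq C_i \essinf_B \rho_i$ for every ball $B\subset\R^3$. Noting that $\rho^{-1/(q-1)} = \rho_1^{-1/(q-1)}\rho_2$ and pulling $\essinf_B \rho_2$ and $\essinf_B \rho_1$ out of the relevant integrals yields
\begin{align*}
\frac{1}{|B|}\int_B \rho \;\leq\; \frac{C_1\,\essinf_B \rho_1}{(\essinf_B \rho_2)^{q-1}},\qquad
\frac{1}{|B|}\int_B \rho^{-\frac{1}{q-1}} \;\leq\; \frac{C_2\,\essinf_B \rho_2}{(\essinf_B \rho_1)^{\frac{1}{q-1}}}.
\end{align*}
Raising the second bound to the $(q-1)$-th power and multiplying by the first, the $\essinf$'s cancel cleanly and the $\A_q$ condition holds with constant $C_1 C_2^{q-1}$.

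For the necessity, the starting point is that $\rho\in\A_q(\R^3)$ makes $M$ bounded on $L^q(\rho)$ with some norm $N_1$, while the dual weight $\sigma:=\rho^{1-q'}$ (with $q'=q/(q-1)$) lies in $\A_{q'}(\R^3)$, so $M$ is bounded on $L^{q'}(\sigma)$ with some norm $N_2$. For any nonnegative $g\in L^q(\rho)$ the Rubio de Francia operator
\begin{align*}
R_1 g \;:=\; \sum_{k=0}^{\infty} \frac{M^k g}{(2N_1)^k}
\end{align*}
converges in $L^q(\rho)$, majorises $g$, and satisfies $M(R_1 g)\leq 2N_1\, R_1 g$; hence $R_1 g$ is an $\A_1$ weight with constant $2N_1$. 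The analogous operator $R_2$ on $L^{q'}(\sigma)$ likewise turns positive inputs into $\A_1$ weights.

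To produce the factorization, I would choose suitable positive $g\in L^q(\rho)$ and $h\in L^{q'}(\sigma)$ adapted to $\rho$, set $\rho_2:=R_2 h\in\A_1(\R^3)$, and verify that $\rho_1:=\rho\,\rho_2^{q-1}$ is also in $\A_1(\R^3)$; then $\rho=\rho_1/\rho_2^{q-1}$ is the desired factorization. The main obstacle is precisely this last step: showing the pointwise bound $M\rho_1\lesssim \rho_1$, which is where both the $\A_q$ hypothesis on $\rho$ and the Rubio de Francia control of $\rho_2$ must enter simultaneously. I expect to carry this out through a double-iteration scheme alternating between the $L^q(\rho)$- and $L^{q'}(\sigma)$-boundedness of $M$, combined with H\"older's inequality applied inside each ball average; the resulting constants can then be absorbed into scalar rescalings of $\rho_1,\rho_2$, which preserve the $\A_1$ class.
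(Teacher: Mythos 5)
The paper does not actually prove this proposition; it is the classical Jones factorization theorem and the text points to Garc\'{\i}a-Cuerva--Rubio de Francia, Torchinsky and Stein for the proof, so the relevant comparison is with the standard literature argument. Your sufficiency direction is complete and correct: pulling $\essinf_B\rho_2$ out of the first average and $\essinf_B\rho_1$ out of the second, the infima cancel exactly and the $\A_q$ constant $C_1C_2^{q-1}$ comes out as you say. No complaints there.

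The necessity direction, however, has a genuine gap, and it sits exactly where you locate it. You propose to first manufacture $\rho_2:=R_2h\in\A_1(\R^3)$ from the Rubio de Francia iteration on $L^{q'}(\sigma)$ and then \emph{verify} that $\rho_1:=\rho\,\rho_2^{q-1}$ is also in $\A_1(\R^3)$. For a generic admissible $h$ this is simply false: $\rho\,(R_2h)^{q-1}$ has no reason to satisfy $M(\rho_1)\lesssim\rho_1$, because the two factors of the factorization cannot be built one after the other — they must be produced \emph{simultaneously} from a single auxiliary function. The standard Rubio de Francia proof does this by defining one sublinear operator $S=S_1+S_2$ on the \emph{unweighted} space $L^{qq'}(\R^3)$, where $S_1$ encodes the boundedness of $M$ on $L^q(\rho)$ and $S_2$ that on $L^{q'}(\sigma)$ (each piece conjugated by an appropriate power of $\rho$ so that it is bounded on $L^{qq'}(dx)$); the single majorant $\varphi=\sum_k S^k u/(2\|S\|)^k$ then satisfies \emph{both} pointwise inequalities $S_1\varphi\le C\varphi$ and $S_2\varphi\le C\varphi$ at once, and these two inequalities translate, after raising to the powers $q'$ and $q$ respectively, into the $\A_1$ conditions for $\rho_1$ and $\rho_2$, with the exponents of $\varphi$ and $\rho$ arranged in advance so that $\rho_1\rho_2^{-(q-1)}=\rho$ identically. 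Your "double-iteration scheme alternating between the two spaces" is not this construction, and as described it is not clear that the alternation converges to anything with the required joint property; the phrase "I expect to carry this out" is doing the work of the entire hard half of the theorem. Since the result is classical, citing it (as the paper does) is legitimate, but as a proof your necessity argument is incomplete until the single-operator construction on $L^{qq'}(\R^3)$ is actually carried out.
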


\noindent For the anisotropic weight, 
Farwig \cite{farwig1992t} proved the 
following.
\begin{lem}[{\cite[Lemma 4.6]{farwig1992t}}]\label{falem}
For all $\beta\in (-1,0],$ the weight $(1+|x|-x_1)^\beta$ belongs 
to $\A_1(\R^3)$.
\end{lem}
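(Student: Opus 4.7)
The case $\beta=0$ gives the constant weight $1$, which lies in $\A_1(\R^3)$ trivially, so throughout I assume $\beta\in(-1,0)$. Write $\sigma(y):=|y|-y_1\geq 0$, so the claimed weight is $w(y)=(1+\sigma(y))^{\beta}$. My first observation is that $\sigma$ is $2$-Lipschitz:
\[
|\sigma(y)-\sigma(x)|\leq \bigl||y|-|x|\bigr|+|y_1-x_1|\leq 2|y-x|.
\]
Hence on any ball $B=B_r(x)$ one has $\sigma(y)\leq \sigma(x)+2r$, and because $(1+s)^{\beta}$ is decreasing in $s\geq 0$,
\[
\essinf_{y\in B}w(y)=\bigl(1+\sup_{B}\sigma\bigr)^{\beta}\geq \bigl(1+\sigma(x)+2r\bigr)^{\beta}.
\]
Therefore the $\A_1$ estimate reduces to producing an absolute constant $C=C(\beta)$ with
\[
\frac{1}{|B|}\int_{B}(1+\sigma(y))^{\beta}\,dy\leq C\bigl(1+\sigma(x)+2r\bigr)^{\beta}\qquad (x\in\R^3,\ r>0).\tag{$\ast$}
\]

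\textbf{Case split.} I would handle $(\ast)$ by comparing $\sigma(x)$ with $r$. In the regime $\sigma(x)\geq 4r$, the Lipschitz bound gives $\sigma(y)\geq \sigma(x)/2$ on $B$, hence $w(y)\leq 2^{-\beta}(1+\sigma(x))^{\beta}$, and since also $1+\sigma(x)+2r\leq \tfrac{3}{2}(1+\sigma(x))$, inequality $(\ast)$ follows with $C=2^{-\beta}(3/2)^{-\beta}$. In the regime $\sigma(x)<4r$ and $r\leq 1$ both sides are comparable to a positive constant, and the trivial bound $w\leq 1$ settles $(\ast)$. The only substantive case is $\sigma(x)<4r$ with $r>1$, where $1+\sigma(x)+2r\leq 7r$, so $(\ast)$ becomes
\[
\int_{B_r(x)}(1+\sigma(y))^{\beta}\,dy\leq C\,r^{3+\beta}.\tag{$\ast\ast$}
\]

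\textbf{Core estimate.} For $(\ast\ast)$ I would pass to cylindrical coordinates $y=(y_1,\varrho\cos\theta,\varrho\sin\theta)$ and perform the change of variable $s:=\sigma(y)=\sqrt{y_1^{2}+\varrho^{2}}-y_1$. The relation $s^{2}+2y_1 s=\varrho^{2}$ yields $\varrho\,d\varrho=(y_1+s)\,ds$, so
\[
\int_{B_r(x)}(1+\sigma)^{\beta}\,dy=2\pi\int_{x_1-r}^{x_1+r}dy_1\int_{s_{\min}(y_1)}^{s_{\max}(y_1)}(y_1+s)(1+s)^{\beta}\,ds,
\]
where $s_{\min}(y_1)=\max(0,-2y_1)$ and $s_{\max}(y_1)\leq Cr$ from the ball constraint $\varrho^{2}\leq r^{2}-(y_1-x_1)^{2}$. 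I would then estimate the inner integral in two subranges: for $|y_1|\leq 2r$ use $(y_1+s)\leq C(r+s)\leq C(1+s)r$ and invoke $\int_{0}^{Cr}(1+s)^{\beta+1}\,ds=O(r^{\beta+2})$, valid because $\beta+1>0$; for $|y_1|>2r$ (only relevant if $x_1\gtrsim r$) note that $s_{\max}(y_1)\lesssim r^{2}/y_1$ is small, so $(1+s)^{\beta}\leq 1$ and the inner integral is bounded by $C y_1\cdot r^{2}/y_1=Cr^{2}$. Integrating in $y_1$ over an interval of length $2r$ then gives $(\ast\ast)$.

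\textbf{Main obstacle.} The technical heart is the subcase where $B_r(x)$ straddles the positive $y_1$-axis (on which $\sigma\equiv 0$ and the integrand is maximal); there the Lipschitz reduction is not sharp and one cannot avoid the cylindrical computation. The cylindrical substitution $s=\sigma(y)$ is what trivializes the geometry, turning the problem into a one-dimensional weighted integral of $(1+s)^{\beta+1}$, and the hypothesis $\beta>-1$ is precisely what guarantees the scaling $\int_{0}^{R}(1+s)^{\beta+1}\,ds=O(R^{\beta+2})$ needed to match the target $r^{3+\beta}$. If $\beta\leq -1$ the one-dimensional integral produces a logarithmic (or worse) loss, which is consistent with the failure of the $\A_1$ condition outside the stated range.
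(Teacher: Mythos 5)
The paper does not actually prove this lemma; it is imported verbatim from Farwig \cite{farwig1992t}, so there is no internal argument to compare with. Your reduction is the natural one, and most of it is sound: the $2$-Lipschitz bound for $\sigma(y)=|y|-y_1$, the resulting lower bound $\essinf_{B}w\geq(1+\sigma(x)+2r)^{\beta}$, and the cases $\sigma(x)\geq 4r$ and $r\leq 1$ are all correct. The gap is in the ``core estimate''. The identity
\[
\int_{B_r(x)}(1+\sigma)^{\beta}\,dy=2\pi\int_{x_1-r}^{x_1+r}dy_1\int_{s_{\min}}^{s_{\max}}(y_1+s)(1+s)^{\beta}\,ds,
\qquad \varrho^{2}\leq r^{2}-(y_1-x_1)^{2},
\]
presumes that every $y_1$-cross-section of $B_r(x)$ is a disk (or annulus) centered on the $x_1$-axis, i.e.\ that $x_2=x_3=0$. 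For a general center the cross-section is a disk of radius $\rho_0=\sqrt{r^2-(y_1-x_1)^2}$ centered at distance $d=\sqrt{x_2^2+x_3^2}$ from the axis, the $\theta$-integral is not $2\pi$, and the constraint you quote is false. This cannot be waved away by symmetry: the weight is only invariant under rotations about the $x_1$-axis, so you may assume $x_3=0$ but not $d=0$. Nor is it covered by your case (a): balls with $\sigma(x)<4r$ exist at arbitrary distance from the axis (take $x=(N,\sqrt{2rN},0)$ with $N\to\infty$, for which $\sigma(x)\approx r$ while $d\approx\sqrt{2rN}$). For such balls the obvious repair --- replacing each cross-sectional disk by the full annulus $\{|\varrho-d|\leq\rho_0\}$ so that the one-dimensional $s$-integral applies --- inflates the area by a factor of order $d/\rho_0$, which is unbounded, and the resulting estimate no longer closes to $Cr^{3+\beta}$.

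The statement itself survives, and the fix is to split case (c) once more according to the distance $d$ from the axis. If $d\leq 2r$, then $B_r(x)$ is contained in the coaxial cylinder $\{|y_1-x_1|<r,\ \varrho<3r\}$, on which your substitution $s=\sigma(y)$, $\varrho\,d\varrho=(y_1+s)\,ds$ is exact and your one-dimensional estimate (using $\beta+1>0$) goes through with $3r$ in place of $r$. If $d>2r$, then $\sigma$ is in fact comparable at all points of the ball: writing $\sigma(y)=\varrho(y)^2/(|y|+y_1)$, one has $\varrho(y)\in[\tfrac12\varrho(x),2\varrho(x)]$ and $|y|+y_1$ comparable to $|x|+x_1$ on $B_r(x)$ (note $|x|+x_1=\varrho(x)^2/\sigma(x)>r$ there), so $\sup_B\sigma\leq C\inf_B\sigma$ and the $\A_1$ inequality is immediate without any integration. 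With this additional split your argument becomes a complete proof; it is also consistent with how the present paper handles the analogous integrals in Section~3, where polar coordinates are only ever applied after enlarging $B_r(x)$ to a ball centered at the origin (the restriction $r\geq|x|/2$ in (\ref{gammadelta1})), precisely to avoid the off-axis configuration that your computation overlooks.
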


\noindent The following lemma directly follows from 
Proposition \ref{a1aq} and Lemma \ref{falem}.
\begin{lem}\label{betaaq}
Let $1<q<\infty$. 
For all $-1<\beta<q-1$, the weight $(1+|x|-x_1)^\beta$ belongs 
to $\A_q(\R^3)$.
\end{lem}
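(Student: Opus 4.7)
The plan is to combine Proposition \ref{a1aq} with Lemma \ref{falem} in a direct way: produce an explicit decomposition $(1+|x|-x_1)^\beta = \rho_1/\rho_2^{q-1}$ with $\rho_1,\rho_2\in\A_1(\R^3)$, which by Proposition \ref{a1aq} forces the weight into $\A_q(\R^3)$. The constant weight $1$ is trivially in $\A_1(\R^3)$, so it is enough to place the full weight on one side of the quotient depending on the sign of $\beta$.

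First I would split into the two cases that are compatible with the range $\beta\in(-1,0]$ allowed in Lemma \ref{falem}. If $-1<\beta\leq 0$, then Lemma \ref{falem} gives $(1+|x|-x_1)^\beta\in \A_1(\R^3)$ directly, and we set $\rho_1=(1+|x|-x_1)^\beta$ and $\rho_2=1$, so that $\rho_1/\rho_2^{q-1}=(1+|x|-x_1)^\beta$. If $0<\beta<q-1$, then $\beta_2:=-\beta/(q-1)$ lies in $(-1,0)$, so Lemma \ref{falem} gives $\rho_2:=(1+|x|-x_1)^{\beta_2}\in \A_1(\R^3)$; setting $\rho_1=1$ yields
\begin{equation*}
\frac{\rho_1}{\rho_2^{q-1}}=(1+|x|-x_1)^{-(q-1)\beta_2}=(1+|x|-x_1)^{\beta}.
\end{equation*}

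In both cases, Proposition \ref{a1aq} then yields $(1+|x|-x_1)^\beta\in \A_q(\R^3)$, completing the proof.

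There is no real obstacle here; the statement is essentially a corollary of the factorization theorem (Proposition \ref{a1aq}) applied to Farwig's one-dimensional-type bound (Lemma \ref{falem}), and the only thing to check is that the exponent $\beta_2=-\beta/(q-1)$ used in the second case indeed falls into the admissible range $(-1,0)$, which is equivalent to the hypothesis $0<\beta<q-1$.
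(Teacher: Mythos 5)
Your proof is correct and follows essentially the same route as the paper: both arguments feed Farwig's $\A_1$ bound (Lemma \ref{falem}) into the factorization criterion of Proposition \ref{a1aq}, the paper letting both exponents $\beta_1,\beta_2\in(-1,0]$ vary so that $\beta_1-\beta_2(q-1)$ sweeps out $(-1,q-1)$, while you simply fix one factor to be the constant weight $1$ in each of the two sign cases. The verification that $-\beta/(q-1)\in(-1,0)$ for $0<\beta<q-1$ is exactly the point to check, and you have it.
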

\begin{proof}
Proposition \ref{a1aq} and Lemma \ref{falem} imply that 
$(1+|x|-x_1)^{\beta_1-\beta_2(q-1)}\in \A_q(\R^3)$ for 
all $-1<\beta_1,\beta_2\leq 0$. Since $\beta_1-\beta_2(q-1)$ 
runs through 
all of the open interval $(-1,q-1)$, we have
$(1+|x|-x_1)^\beta\in \A_q(\R^3)$ for all $-1<\beta<q-1$.
\end{proof}

To prove Theorem \ref{muckenthm}, we also 
need some estimates of weight
$(1+|x|)^\gamma(1+|x|-x_1)^\delta$.
\begin{prop}
\begin{enumerate}
\item Let $\gamma\in\R$ and $\delta>-1$ satisfy 
$\gamma+\delta>-3.$ Then 
there exists a constant $C>0$ such that 
\begin{align}\label{gammadelta1}
\int_{B_r(x)}(1+|y|)^\gamma(1+|y|-y_1)^\delta\,dy
\leq Cr^{\gamma+\delta+3}
\end{align}
for all balls $B_r(x)=\{y\in \R^3\,;\,|y-x|<r\}$ 
subject to 
\begin{align}\label{gammadelta2}
r\geq \frac{|x|}{2},\qquad r+|x|\geq 1.
\end{align} 
\item Let $\gamma\in\R$ and $\delta>-1$ 
satisfy $\gamma+\delta=-3.$ 
Then there exist constants $C>0$ and $R>1$
such that 
\begin{align}\label{gammadelta3}
\int_{B_r(0)}(1+|y|)^\gamma(1+|y|-y_1)^\delta\,dy\geq C\log r
\end{align}
for all $r\geq R$.
\item Let $\gamma\in\R$ and $\delta>-1$.
Then there exist constants 
$C>0$ and $R>1$ such that 
\begin{align}\label{gammadelta4}
\int_{B_r(0)}(1+|y|)^\gamma(1+|y|-y_1)^\delta\,dy
\geq C\max\{r^{\gamma+\delta+3},1\} 
\end{align}
for all $r\geq R$.
\item Let $\gamma\in\R$. Then there exist constants 
$C>0$ and $R>1$
such that 
\begin{align}\label{gammadelta41}
\int_{B_r(0)}(1+|y|)^\gamma(1+|y|-y_1)^{-1}\,dy
\geq Cr^{\gamma+2}\log r
\end{align}
for all $r\geq R$.
\item Let $\gamma\in\R$ and $\delta<-1$. 
Then there exist constants $C>0$ and $R>1$ such that 
\begin{align}\label{gammadelta42}
\int_{B_r(0)}(1+|y|)^\gamma(1+|y|-y_1)^\delta\,dy
\geq Cr^{\gamma+2}
\end{align}
for all $r\geq R$.
\end{enumerate}
\end{prop}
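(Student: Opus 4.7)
The plan is to reduce every assertion to a one-dimensional estimate in $\rho=|y|$ by passing to spherical coordinates on $\R^3$ centered at the origin. Writing $y_1=\rho\cos\theta$ and substituting $u=1-\cos\theta$, one has for any $R>0$
\[
\int_{B_R(0)}(1+|y|)^\gamma(1+|y|-y_1)^\delta\,dy
=2\pi\int_0^R(1+\rho)^\gamma\rho^2\,\Phi_\delta(\rho)\,d\rho,
\qquad
\Phi_\delta(\rho):=\int_0^2(1+\rho u)^\delta\,du,
\]
and the inner integral is explicit:
\[
\Phi_\delta(\rho)=\frac{(1+2\rho)^{\delta+1}-1}{\rho(\delta+1)}\ (\delta\ne-1),
\qquad
\Phi_{-1}(\rho)=\frac{\log(1+2\rho)}{\rho}.
\]
From these formulas one reads off $\Phi_\delta(\rho)\to 2$ as $\rho\to 0$, together with the $\rho\to\infty$ asymptotics $\Phi_\delta(\rho)\sim c_\delta(1+\rho)^\delta$ if $\delta>-1$, $\Phi_{-1}(\rho)\sim\rho^{-1}\log\rho$, and $\Phi_\delta(\rho)\sim|\delta+1|^{-1}\rho^{-1}$ if $\delta<-1$. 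These yield two-sided bounds of the form $c(1+\rho)^\delta\le\Phi_\delta(\rho)\le C(1+\rho)^\delta$ valid on all of $[0,\infty)$ in the regime $\delta>-1$, and analogous one-sided bounds in the other regimes. Everything that follows is then a one-variable estimate against the weight $(1+\rho)^\gamma\rho^2$.

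For assertion~1, the condition $r\ge|x|/2$ gives $B_r(x)\subset B_{3r}(0)$, while $r+|x|\ge 1$ together with $|x|\le 2r$ forces $r\ge 1/3$, hence $1+3r\le 10 r$. Plugging the upper bound on $\Phi_\delta$ into the above identity over $B_{3r}(0)$ reduces the claim to
\[
\int_0^{3r}(1+\rho)^{\gamma+\delta}\rho^2\,d\rho\le C(1+3r)^{\gamma+\delta+3}\le Cr^{\gamma+\delta+3},
\]
which uses only the hypothesis $\gamma+\delta>-3$.

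For assertions~2--5 I would insert the lower bound on $\Phi_\delta$ into the spherical identity with $R=r$ and discard the integrand on $[0,1]$ (where it is nonnegative). Assertion~2 is the critical case $\gamma+\delta=-3$: the surviving integrand is of order $\rho^{-1}$ on $[1,r]$, producing $\int_1^r\rho^{-1}d\rho\sim\log r$. Assertion~3 reduces to $\int_1^r(1+\rho)^{\gamma+\delta}\rho^2\,d\rho$, which is comparable to $r^{\gamma+\delta+3}$ when $\gamma+\delta>-3$ and bounded below by a positive constant when $\gamma+\delta\le-3$; since in the latter case $r^{\gamma+\delta+3}\le 1$, both possibilities are covered by $C\max\{r^{\gamma+\delta+3},1\}$ for $r$ large. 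Assertion~4 leads to $\int_1^r\rho(1+\rho)^\gamma\log(1+2\rho)\,d\rho$, which an integration by parts shows is of order $r^{\gamma+2}\log r$ when $\gamma>-2$, of order $(\log r)^2$ when $\gamma=-2$, and bounded below by a positive constant when $\gamma<-2$ (the last two cases trivially dominate the target $r^{\gamma+2}\log r$, which is respectively $\log r$ and $o(1)$). Assertion~5 reduces to $\int_1^r\rho(1+\rho)^\gamma\,d\rho\sim r^{\gamma+2}$ by exactly the same trichotomy on $\gamma$. The one nontrivial step throughout is extracting the sharp angular asymptotics of $\Phi_\delta$ in the three regimes $\delta>-1$, $\delta=-1$, $\delta<-1$; once those are in hand, every part of the proposition is an elementary one-variable computation.
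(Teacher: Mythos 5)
Your proposal is correct and takes essentially the same route as the paper: polar coordinates centered at the origin, the substitution $u=1-\cos\theta$ producing the explicit angular factor $\Phi_\delta$, and elementary one-variable estimates with the same case splittings in $\gamma$ and $\delta$. The only differences are cosmetic (the paper encloses $B_r(x)$ in $B_{r+|x|}(0)$ rather than $B_{3r}(0)$ and works directly with the explicit primitive, handling the subtracted constant via terms like $\int_1^r s^{\gamma+1}\,ds$, where you invoke two-sided bounds $\Phi_\delta\asymp(1+\rho)^\delta$ and their analogues for $\delta\le-1$).
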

\begin{proof}
Let $\gamma\in\R,\delta>-1$ and $x\in\R^3$ satisfy 
$\gamma+\delta>-3$ and (\ref{gammadelta2}). 
By using the polar coordinates 
$y_1=s\cos\theta,y_2=s\sin\theta\cos\varphi,
y_3=s\sin\theta\sin\varphi$, 
where $(s,\theta,\varphi)\in (0,r+|x|)\times 
(0,\pi)\times (0,2\pi)$ and by changing 
the variable $t=1-\cos\theta$, we have 
\begin{align}
\int_{B_r(x)}(1+|y|)^\gamma(1+|y|-y_1)^\delta\,dy&\leq
\int_{B_{r+|x|}(0)}(1+|y|)^\gamma(1+|y|-y_1)^\delta\,dy\notag\\
&= 2\pi\int_0^{r+|x|}\,ds\int_0^{\pi}
(1+s)^\gamma(1+s-s\cos\theta)^\delta s^2\sin\theta\,d\theta\notag\\
&=2\pi\int_0^{r+|x|}\,ds\,(1+s)^\gamma s^2\int_0^{2}
(1+st)^\delta\,dt\notag\\
&\leq \frac{2\pi}{\delta+1}\int_0^{r+|x|}(1+s)^\gamma s
(1+2s)^{\delta+1}\,ds=\frac{2\pi}{\delta+1}
\left(\int_0^1+\int_1^{r+|x|}\right).\label{polar}
\end{align} 
It follows from $\gamma+\delta+3>0$ that 
\begin{align*}
&\int_0^1\leq \left(\int_0^1(1+s)^\gamma s
(1+2s)^{\delta+1}\,ds\right)
(3r)^{\gamma+\delta+3},\\
&\int_1^{r+|x|}\leq 
C\int_1^{r+|x|}s^{\gamma+\delta+2}\,ds
\leq C(r+|x|)^{\gamma+\delta+3}\leq C(3r)^{\gamma+\delta+3},
\end{align*}
where we have used $3r\geq 1$ and 
$2r\geq |x|$ due to (\ref{gammadelta2}). 
We thus conclude the assertion 1.
\par Let $\gamma\in\R$ and $\delta>-1$ 
satisfy $\gamma+\delta=-3.$ 
By the same calculation as in (\ref{polar}), 
the left hand side of (\ref{gammadelta3}) is estimated as
\begin{align*}
\int_{B_r(0)}(1+|y|)^\gamma(1+|y|-y_1)^\delta\,dy&=
2\pi\int_0^{r}\,ds\,(1+s)^\gamma s^2\int_0^{2}
(1+st)^\delta\,dt\\
&=\frac{2\pi}{\delta+1}\int_0^{r}(1+s)^\gamma s
\{(1+2s)^{\delta+1}-1\}\,ds\\
&\geq C\int_1^r s^{-1}- s^{\gamma+1}\,ds
\geq  C\log r
\end{align*}
for $r\geq R$ with some $C>0$ and $R>1$, 
where we have used $\gamma=-3-\delta<-2$ in the last inequality. 
The proof of the assertion 2 is complete.
\par We next prove the assertion 3. 
Let $\gamma\in \R$ and $\delta>-1.$
If $\gamma+\delta=-3$, then by the assertion 2, 
there exist constants $C_1>0$ and $R_1>1$ such that 
\begin{align}\label{gammadelta5}
\int_{B_r(0)}(1+|y|)^\gamma(1+|y|-y_1)^\delta\,dy
\geq  C_1\log r\geq C_1\log R_1
=(C_1\log R_1)\max\{r^{\gamma+\delta+3},1\}
\end{align}
for all $r\geq R_1$. If $\gamma+\delta<-3$, then it holds that 
\begin{align}\label{gammadelta6}
\int_{B_r(0)}(1+|y|)^\gamma(1+|y|-y_1)^\delta\,dy&\geq 
\int_{B_1(0)}(1+|y|)^\gamma(1+|y|-y_1)^\delta\,dy\nonumber\\
&=\left(\int_{B_1(0)}(1+|y|)^\gamma(1+|y|-y_1)^\delta\,dy\right)
\max\{r^{\gamma+\delta+3},1\}
\end{align}
for all $r\geq 1$. Moreover, if $\gamma+\delta>-3,$ then we get
\begin{align}
\int_{B_r(0)}(1+|y|)^\gamma(1+|y|-y_1)^\delta\,dy
&=\frac{2\pi}{\delta+1}\int_0^{r}(1+s)^\gamma s
\{(1+2s)^{\delta+1}-1\}\,ds\nonumber\\
&\geq C\int_1^{r}s^{\gamma+\delta+2}-s^{\gamma+1}\,ds
\geq C_2r^{\gamma+\delta+3}=C_2\max\{r^{\gamma+\delta+3},1\}
\label{gammadelta7}
\end{align}
for all $r\geq R_2$ with some $C_2>0$ and $R_2>1$. Collecting 
(\ref{gammadelta5})--(\ref{gammadelta7}) 
leads to (\ref{gammadelta4}) with
\begin{align*} 
C=\min\left\{C_1\log R_1,\int_{B_1(0)}(1+|y|)^\gamma
(1+|y|-y_1)^\delta\,dy,C_2\right\},\quad R=\max\{R_1,R_2\},
\end{align*}
which implies the assertion 3.
\par If $\gamma<-2$, then 
by virtue of $r^{\gamma+2}\log r\rightarrow 0$ 
as $r\rightarrow\infty$, 
there exists a constant $R>1$ such that 
\begin{align*}
\int_{B_1(0)}(1+|y|)^\gamma(1+|y|-y_1)^{-1}\,dy
\geq r^{\gamma+2}\log r
\end{align*}
for all $r\geq R.$
we thus get 
\begin{align*}
\int_{B_r(0)}(1+|y|)^\gamma(1+|y|-y_1)^{-1}\,dy\geq 
\int_{B_1(0)}(1+|y|)^\gamma(1+|y|-y_1)^{-1}\,dy
\geq r^{\gamma+2}\log r
\end{align*}
for all $r\geq R$ with some $R>1$, which yields the assertion 4 
with $\gamma<-2$. Moreover, 
if $\gamma\geq -2$, then it follows that 
\begin{align*}
\int_{B_r(0)}(1+|y|)^\gamma(1+|y|-y_1)^{-1}\,dy
&=2\pi\int_0^{r}\,ds\,(1+s)^\gamma s^2
\int_0^2(1+st)^{-1}\,dt\\
&=2\pi \int_0^r (1+s)^\gamma s\log(1+2s)\,ds\\
&\geq C\int_1^rs^{\gamma+1}\log s\,ds\geq Cr^{\gamma+2}\log r 
\end{align*}
for all $r\geq R$ with some $C>0$ and $R>1$, where we have used 
\begin{align*}
\int_1^rs^{\gamma+1}\log s\,ds
&= 
\begin{cases}
\displaystyle\frac{1}{2}(\log r)^2\quad &{\rm if}~\gamma=-2,\\
r^{\gamma+2}\log r\left(\displaystyle\frac{1}{\gamma+2}-
\frac{1}{(\gamma+2)^2\log r}+
\frac{1}{(\gamma+2)^2r^{\gamma+2}\log r}\right)&{\rm if}~\gamma>-2
\end{cases}\\
&\geq Cr^{\gamma+2}\log r
\end{align*}
for all $r\geq R$ with some $C>0$ and $R>1$
in the last inequality. We thus conclude the assertion 4. 
\par Let $\gamma\in\R$ and $\delta<-1.$
The assertion 5 with $\gamma<-2$ is proved 
by the similar argument 
of the assertion 4 with $\gamma<-2$ 
and the assertion 5 with $\gamma\geq -2$ is proved 
by the estimate
\begin{align*}
\int_{B_r(0)}(1+|y|)^\gamma(1+|y|-y_1)^\delta\,dy
&=2\pi\int_0^{r}\,ds\,(1+s)^\gamma s^2
\int_0^2(1+st)^{\delta}\,dt\\
&=\frac{2\pi}{-(\delta+1)}\int_0^r(1+s)^\gamma s
\big\{1-(1+2s)^{\delta+1}\big\}\,ds\\
&\geq \frac{2\pi}{-(\delta+1)}(1-3^{\delta+1})
\int_1^r(1+s)^\gamma s\,ds
\geq Cr^{\gamma+2}
\end{align*}
for all $r\geq R$ with some $C>0$ and $R>1$.
The proof is complete.
\end{proof}

\noindent{\bf Proof of Theorem \ref{muckenthm}.}
~We first prove that if (\ref{alphabeta}) is satisfied, 
then there exists $C>0$ such that
\begin{align}\label{aqweightrho}
\left(\frac{1}{|B|}\int_B \rho_{\alpha,\beta}(y)\,dy\right)
\left(\frac{1}{|B|}\int_B \rho_{\alpha,\beta}
(y)^{-\frac{1}{q-1}}\,dy\right)^{q-1}\leq C
\end{align}
holds for all 
balls $B\subset\R^3$. This asserts that 
(\ref{alphabeta}) is sufficient condition on $\alpha,\beta$ so that 
$\rho_{\alpha,\beta}\in\A_q(\R^3)$. Let $r\leq |x|/2$. Due to 
$|x|/2\leq |y|\leq (3|x|)/2$ for $y\in B_r(x)$, 
it follows from Lemma \ref{betaaq} that 
\begin{align}
&\left(\frac{1}{|B_r(x)|}\int_{B_r(x)}
\rho_{\alpha,\beta}(y)\,dy\right)
\left(\frac{1}{|B_r(x)|}\int_{B_r(x)} \rho_{\alpha,\beta}
(y)^{-\frac{1}{q-1}}\,dy\right)^{q-1}\nonumber\\&\leq C
\left(\frac{1}{|B_r(x)|}\int_{B_r(x)} (1+|y|-y_1)^\beta\,dy\right)
\left(\frac{1}{|B_r(x)|}\int_{B_r(x)}
(1+|y|-y_1)^{-\frac{\beta}{q-1}}\,dy
\right)^{q-1}\leq C\label{rx1}
\end{align}
for all $B_r(x)$ with $r\leq |x|/2$. If $r\geq |x|/2$ and 
$r+|x|\leq 1$, we find 
\begin{align*}
1\leq 1+|y|-y_1\leq 1+2|y|\leq 1+2(r+|x|)\leq 3
\end{align*} 
for all $y\in B_r(x)$, thus 
\begin{align}
&\left(\frac{1}{|B_r(x)|}\int_{B_r(x)}
\rho_{\alpha,\beta}(y)\,dy\right)
\left(\frac{1}{|B_r(x)|}\int_{B_r(x)} \rho_{\alpha,\beta}
(y)^{-\frac{1}{q-1}}\,dy\right)^{q-1}\nonumber\\&\leq C
\left(\frac{1}{|B_r(x)|}\int_{B_r(x)}\,dy\right)
\left(\frac{1}{|B_r(x)|}\int_{B_r(x)}\,dy\label{rx2}
\right)^{q-1}\leq C
\end{align}
for all $B_r(x)$ subject to $r\geq |x|/2$ and $r+|x|\leq 1$. 
If $r\geq |x|/2$ and $r+|x|\geq 1$, 
we can use (\ref{gammadelta1}) with 
$(\gamma,\delta)=(\alpha,\beta),
(-\alpha/(q-1),-\beta/(q-1))$ to see 
\begin{align}
&\left(\frac{1}{|B_r(x)|}\int_{B_r(x)} 
\rho_{\alpha,\beta}(y)\,dy\right)
\left(\frac{1}{|B_r(x)|}\int_{B_r(x)} \rho_{\alpha,\beta}
(y)^{-\frac{1}{q-1}}\,dy\right)^{q-1}\nonumber\\&\leq C
\left(\frac{1}{|B_r(x)|}r^{\alpha+\beta+3}\right)
\left(\frac{1}{|B_r(x)|}r^{-\frac{\alpha}{q-1}-\frac{\beta}{q-1}+3}
\right)^{q-1}\leq C\label{rx3}
\end{align}
for all $B_r(x)$ subject to $r\geq |x|/2$ and $r+|x|\geq 1$. 
Collecting (\ref{rx1})--(\ref{rx3}) leads to (\ref{aqweightrho})
for all balls whenever (\ref{alphabeta}) is satisfied. 
\par We next prove that 
if (\ref{alphabeta}) is not satisfied, then 
\begin{align}\label{infty}
\left(\frac{1}{|B_r(0)|}\int_{B_r(0)} 
\rho_{\alpha,\beta}(y)\,dy\right)
\left(\frac{1}{|B_r(0)|}\int_{B_r(0)} \rho_{\alpha,\beta}
(y)^{-\frac{1}{q-1}}\,dy\right)^{q-1}\rightarrow\infty
\qquad {\rm as}~r\rightarrow\infty,
\end{align}
which implies that there does not exist constant $C$ such that 
$(\ref{aqweightrho})$ holds for all balls.
This combined with the first half completes the proof. Let 
$\beta\leq -1$. 
Since $-\beta/(q-1)>-1$, we use (\ref{gammadelta4}) with 
$(\gamma,\delta)=(-\alpha/(q-1),-\beta/(q-1))$ to see 
\begin{align*}
\left(\frac{1}{|B_r(0)|}\int_{B_r(0)} \rho_{\alpha,\beta}
(y)^{-\frac{1}{q-1}}\,dy\right)^{q-1}
\geq C\left(\frac{1}{r^3}
\max\{r^{-\frac{\alpha}{q-1}-\frac{\beta}{q-1}+3},1\}\right)^{q-1}
\geq Cr^{-\alpha-\beta}
\end{align*}
for all $r\geq R$. From this and 
(\ref{gammadelta41})--(\ref{gammadelta42}) 
with $(\gamma,\delta)=(\alpha,\beta)$, we have 
\begin{align*}
&\left(\frac{1}{|B_r(0)|}\int_{B_r(0)} 
\rho_{\alpha,\beta}(y)\,dy\right)
\left(\frac{1}{|B_r(0)|}\int_{B_r(0)} \rho_{\alpha,\beta}
(y)^{-\frac{1}{q-1}}\,dy\right)^{q-1}\\
&\geq 
\begin{cases}
Cr^{\alpha-1}(\log r)\times r^{-\alpha-\beta}=C\log r
&\quad{\rm if}~\beta=-1,\\[2pt]
Cr^{\alpha-1}\times r^{-\alpha-\beta}=Cr^{-\beta-1}
&\quad{\rm if}~\beta<-1\end{cases}
\end{align*} 
for $r\geq R$, which implies (\ref{infty}) for $\beta\leq -1$. 
If $\beta\geq q-1$, then from (\ref{gammadelta4}) with 
$(\gamma,\delta)=(\alpha,\beta)$ 
and (\ref{gammadelta41})--(\ref{gammadelta42}) with 
$(\gamma,\delta)=(-\alpha/(q-1),-\beta/(q-1))$, it holds that 
\begin{align*}
&\left(\frac{1}{|B_r(0)|}\int_{B_r(0)} 
\rho_{\alpha,\beta}(y)\,dy\right)
\left(\frac{1}{|B_r(0)|}\int_{B_r(0)} \rho_{\alpha,\beta}
(y)^{-\frac{1}{q-1}}\,dy\right)^{q-1}\\
&\geq 
\begin{cases}
Cr^{\alpha+\beta}\times r^{-\alpha-(q-1)}(\log r)^{q-1}
=C(\log r)^{q-1}
&\quad{\rm if}~\beta=q-1,\\[2pt]
Cr^{\alpha+\beta}\times r^{-\alpha-(q-1)}=Cr^{\beta-(q-1)}
&\quad{\rm if}~\beta>q-1\end{cases}
\end{align*} 
for $r\geq R$, thus we  conclude (\ref{infty}) 
for $\beta\geq q-1$. 
Let $\alpha,\beta$ satisfy $\alpha+\beta\leq -3$ 
and $-1<\beta<q-1$. 
If $\alpha+\beta=-3$
(resp. $\alpha+\beta<-3$), then we use 
(\ref{gammadelta3}) (resp. (\ref{gammadelta4})) to have 
\begin{align*}
&\frac{1}{|B_r(0)|}\int_{B_r(0)} \rho_{\alpha,\beta}(y)\,dy
\geq Cr^{-3}\log r\quad
\left({\rm resp.}~
\frac{1}{|B_r(0)|}\int_{B_r(0)} \rho_{\alpha,\beta}(y)\,dy
\geq Cr^{-3}\right)
\end{align*}
for all $r\geq R$. These and (\ref{gammadelta4}) 
with $(\gamma,\delta)=(-\alpha/(q-1),-\beta/(q-1))$ yield 
\begin{align*}
&\left(\frac{1}{|B_r(0)|}\int_{B_r(0)} 
\rho_{\alpha,\beta}(y)\,dy\right)
\left(\frac{1}{|B_r(0)|}\int_{B_r(0)} \rho_{\alpha,\beta}
(y)^{-\frac{1}{q-1}}\,dy\right)^{q-1}\\
&\geq 
\begin{cases}
Cr^{-3}(\log r)\times r^{-\alpha-\beta}
=C\log r
&\quad{\rm if}~\alpha+\beta=-3,\\[2pt]
Cr^{-3}\times r^{-\alpha-\beta}=Cr^{-\alpha-\beta-3}
&\quad{\rm if}~\alpha+\beta<-3
\end{cases}
\end{align*}  
for all $r\geq R$, which leads us to (\ref{infty}) 
for $\alpha+\beta\leq -3$ and $-1<\beta<q-1$.
\par We finally assume $\alpha+\beta\geq 3(q-1)$ 
and $-1<\beta<q-1$. 
Then we see from (\ref{gammadelta3}) with 
$(\gamma,\delta)=(-\alpha/(q-1),-\beta/(q-1))$ 
and (\ref{gammadelta4}) with 
$(\gamma,\delta)=(\alpha,\beta),(-\alpha/(q-1),-\beta/(q-1))$
that 
\begin{align*}
&\left(\frac{1}{|B_r(0)|}\int_{B_r(0)} 
\rho_{\alpha,\beta}(y)\,dy\right)
\left(\frac{1}{|B_r(0)|}\int_{B_r(0)} \rho_{\alpha,\beta}
(y)^{-\frac{1}{q-1}}\,dy\right)^{q-1}\\
&\geq 
\begin{cases}
Cr^{\alpha+\beta}\times (r^{-3}\log r)^{q-1}
=C(\log r)^{q-1}
&\quad{\rm if}~\alpha+\beta=3(q-1),\\[2pt]
Cr^{\alpha+\beta}\times (r^{-3})^{q-1}=Cr^{\alpha+\beta-3(q-1)}
&\quad{\rm if}~\alpha+\beta>3(q-1)\end{cases}
\end{align*}
for all $r\geq R$. 
Consequently, if (\ref{alphabeta}) is not satisfied, 
then (\ref{infty}) holds. The proof is complete.\qed

\section{Proof of Theorem \ref{lqlrd0}}

\quad This section is devoted to the proof of Theorem \ref{lqlrd0}. 
We start with the following interpolation inequality 
derived by Farwig and Sohr \cite[Theorem 3.5]{faso1997}. 
\begin{lem}[\cite{faso1997}]\label{interpo}
Suppose $1<q<\infty$ and $\rho\in \A_q(D)$. 
Given $\varepsilon>0$, there exists a 
constant $C=C(D,q,\rho,\varepsilon)$ such that 
\begin{align*}
\|\rho^{\frac{1}{q}}\nabla u\|_{q,D}\leq 
\varepsilon\|\rho^{\frac{1}{q}}\nabla^2 u\|_{q,D}
+C\|\rho^{\frac{1}{q}} u\|_{q,D}
\end{align*}
for all $u\in W^{2,q}_{\rho}(D)$. 
\end{lem}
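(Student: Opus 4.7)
The plan is to reduce the inequality to a whole-space interpolation estimate with an $\A_q(\R^3)$ weight, and then exploit the theory of weighted singular integrals together with a standard moment-type inequality for fractional powers of an analytic generator.

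First I would use Definition~\ref{defaqd} to write $\rho = \widetilde{\rho}|_D$ with $\widetilde{\rho} \in \A_q(\R^3)$, and build a bounded extension operator $E : W^{2,q}_{\rho}(D) \to W^{2,q}_{\widetilde{\rho}}(\R^3)$ by a standard reflection across the $C^2$ boundary $\partial D$. The second clause of Definition~\ref{defaqd} guarantees that $\rho$ is bounded above and away from zero in a neighborhood of $\partial D$, so near $\partial D$ the weighted norm is equivalent to the unweighted one and the reflection is bounded; away from $\partial D$ we take $E$ to be the identity. After this reduction it suffices to prove the inequality on $\R^3$ for $v = Eu$ with weight $\widetilde{\rho} \in \A_q(\R^3)$.

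Second, I would factor $\nabla$ through $(1-\Delta)^{1/2}$. The operators $\partial_j(1-\Delta)^{-1/2}$ are Fourier multipliers of Mikhlin/Calder\'on--Zygmund type, hence bounded on $L^q_{\widetilde{\rho}}(\R^3)$ by the weighted singular-integral theory for $\A_q$ weights referenced in the introduction. This gives
\[
\|\widetilde{\rho}^{1/q}\nabla v\|_{q,\R^3} \leq C\|\widetilde{\rho}^{1/q}(1-\Delta)^{1/2}v\|_{q,\R^3}.
\]
Setting $A := 1-\Delta$ on $L^q_{\widetilde{\rho}}(\R^3)$, the same multiplier theory (applied to the Gaussian) shows that $-A$ generates a bounded analytic semigroup, so $A^{1/2}$ has the representation $A^{1/2}v = \pi^{-1/2}\int_0^\infty s^{-1/2}Ae^{-sA}v\,ds$. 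Splitting the integral at $s=t$ and using $\|Ae^{-sA}\|_{\mathrm{op}} \leq Cs^{-1}$, $\|e^{-sA}\|_{\mathrm{op}} \leq C$ gives
\[
\|A^{1/2}v\|_{L^q_{\widetilde{\rho}}} \leq C\bigl(t^{1/2}\|Av\|_{L^q_{\widetilde{\rho}}} + t^{-1/2}\|v\|_{L^q_{\widetilde{\rho}}}\bigr)
\]
for every $t>0$; Young's inequality (or optimizing in $t$) then yields $\|A^{1/2}v\|_{L^q_{\widetilde{\rho}}} \leq \varepsilon\|Av\|_{L^q_{\widetilde{\rho}}} + C_\varepsilon\|v\|_{L^q_{\widetilde{\rho}}}$.

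Chaining these two estimates and applying them to $v = Eu$, then restricting back to $D$, produces the desired bound once we note that $\|Av\|_{L^q_{\widetilde{\rho}}(\R^3)}$ is controlled by $\|\nabla^2 u\|_{L^q_\rho(D)} + \|u\|_{L^q_\rho(D)}$ via boundedness of $E$, and the extra $\|u\|_{L^q_\rho(D)}$ contribution (together with the second term in the Young's-inequality step) gets absorbed into the $C\|\rho^{1/q}u\|_{q,D}$ on the right-hand side. I expect the main obstacle to be rigorous verification of the two weighted operator-theoretic ingredients — the $L^q_{\widetilde{\rho}}$ boundedness of $\partial_j(1-\Delta)^{-1/2}$ and the analyticity of the semigroup generated by $-(1-\Delta)$ on $L^q_{\widetilde{\rho}}(\R^3)$ — both of which ultimately rest on the Muckenhoupt-weighted Calder\'on--Zygmund theorem but require some care because $\widetilde{\rho}$ need not be bounded either above or below on $\R^3$.
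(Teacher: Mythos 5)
The paper offers no proof of this lemma: it is quoted verbatim from Farwig and Sohr \cite[Theorem 3.5]{faso1997}, so there is no internal argument to compare yours against. Your whole-space machinery (weighted Mikhlin multiplier theorem for $\partial_j(1-\Delta)^{-1/2}$, pointwise domination of the heat semigroup by the maximal function to get bounded analyticity on $L^q_{\widetilde{\rho}}(\R^3)$, and the moment inequality for $A^{1/2}$) is sound and is indeed the standard route to the interpolation inequality on $\R^3$ for an $\A_q$ weight.

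The genuine gap is in the first step, the reduction from $D$ to $\R^3$ by reflection. Definition \ref{defaqd} controls $\rho$ only on a set $\overline{G}\subset\overline{D}$, i.e.\ on the $D$-side of $\partial D$; it says nothing about the extension $\widetilde{\rho}$ on the interior of the obstacle $\O=\R^3\setminus D$, which is exactly where the reflected part of $Eu$ lives. An admissible $\widetilde{\rho}\in\A_q(\R^3)$ can be unbounded inside $\O$ (e.g.\ $\widetilde{\rho}(x)=|x-x_0|^{-\gamma}$ with $x_0$ interior to $\O$ and $0<\gamma<3$ restricts to a perfectly good weight in $\A_q(D)$), and then $\|\widetilde{\rho}^{1/q}\nabla^2 Eu\|_{q,\O}$ is not controlled by the weighted norms of $u$ on $D$: your claim that ``near $\partial D$ the weighted norm is equivalent to the unweighted one'' is true on the $D$-side but is precisely what fails on the target side of the reflection. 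Either you must first replace $\widetilde{\rho}$ inside $\O$ by a bounded positive function and re-verify the $\A_q(\R^3)$ condition for the modified weight (a nontrivial gluing argument you have not supplied), or — more cleanly — avoid extension altogether: write $u=\chi u+(1-\chi)u$ with $\chi$ a cut-off equal to $1$ near $\partial D$ and supported in a bounded set, apply the classical unweighted Ehrling/interpolation inequality to $\chi u$ on the bounded $C^2$ domain $D\cap B_R$ where $\rho$ is comparable to a constant by Definition \ref{defaqd}, apply your whole-space weighted estimate to the zero-extension of $(1-\chi)u$, and absorb the commutator terms $(\nabla\chi)u$, $(\nabla^2\chi)u$, $(\nabla\chi)\otimes\nabla u$, the last of which is handled by choosing $\varepsilon$ small enough to reabsorb $\varepsilon\|\nabla\chi\|_\infty\|\rho^{1/q}\nabla u\|_{q,D}$ into the left-hand side.
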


\noindent The following lemma follows from the above lemma. 
\begin{lem}\label{oseeninterpo}
Given $a_0>0$ arbitrarily, we assume $|a|\leq a_0$. Let 
$1<q<\infty$ and $\rho\in \A_q(D)$.
For each $\varepsilon>0$, there exists a 
constant $C=C(D,a_0,q,\rho,\varepsilon)$, independent of $a$, 
such that 
\begin{align}\label{perturb}
\|\rho^{\frac{1}{q}}aP_D\partial_{x_1}u\|_{q,D}\leq 
\varepsilon\|\rho^{\frac{1}{q}}A u\|_{q,D}
+C\|\rho^{\frac{1}{q}} u\|_{q,D}
\end{align}
for $u\in\D(A)$, where $A=A_0=-P_D\Delta.$
\end{lem}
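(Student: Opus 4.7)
The plan is to treat $aP_D\partial_{x_1}$ as a first-order perturbation of the Stokes operator $A$, exactly as one handles lower-order terms in second-order elliptic theory. First, since $\rho\in\A_q(D)$, the Fujita--Kato projection $P_D$ is bounded on $L^q_\rho(D)$ by Farwig--Sohr \cite{faso1997}. Combining this with $|a|\leq a_0$, I would begin with
\begin{align*}
\|\rho^{1/q} aP_D\partial_{x_1}u\|_{q,D}
\leq |a|\,\|P_D\|_{L^q_\rho\to L^q_\rho}\,\|\rho^{1/q}\partial_{x_1}u\|_{q,D}
\leq C\,a_0\,\|\rho^{1/q}\nabla u\|_{q,D},
\end{align*}
where $C=C(D,q,\rho)$. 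This is the step that produces the $a$-uniformity of the final constant.

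Next, I would apply the weighted interpolation inequality of Lemma \ref{interpo}: for any $\varepsilon_1>0$,
\begin{align*}
\|\rho^{1/q}\nabla u\|_{q,D}
\leq \varepsilon_1\,\|\rho^{1/q}\nabla^2 u\|_{q,D}
+C(\varepsilon_1)\,\|\rho^{1/q} u\|_{q,D},
\end{align*}
valid because $u\in\D(A)\subset W^{2,q}_\rho(D)$. Now the key input is the weighted second-order a priori estimate for the Stokes operator on the exterior domain $D$, namely
\begin{align*}
\|\rho^{1/q}\nabla^2 u\|_{q,D}
\leq C\bigl(\|\rho^{1/q}Au\|_{q,D}+\|\rho^{1/q}u\|_{q,D}\bigr)
\qquad(u\in\D(A)),
\end{align*}
which is available from the weighted $L^q$-resolvent theory of Farwig--Sohr together with the weighted Helmholtz decomposition (\ref{helmholtz}); in fact it is implicitly what characterizes $\D(A)=W^{2,q}_\rho\cap W^{1,q}_0\cap L^q_{\rho,\sigma}$ with graph-norm equivalent to the $W^{2,q}_\rho$-norm modulo $L^q_\rho$.

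Chaining the three estimates gives
\begin{align*}
\|\rho^{1/q} aP_D\partial_{x_1}u\|_{q,D}
\leq C a_0\varepsilon_1\,\|\rho^{1/q}Au\|_{q,D}
+C(\varepsilon_1,a_0)\,\|\rho^{1/q}u\|_{q,D},
\end{align*}
and choosing $\varepsilon_1=\varepsilon/(Ca_0)$ yields (\ref{perturb}) with a constant depending only on $D,a_0,q,\rho,\varepsilon$ and, crucially, independent of $a\in[-a_0,a_0]$.

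The one substantive point to verify is the weighted second-order Stokes estimate on $D$; this is the main obstacle in the sense that it is what actually allows the interpolation trick to work, but it is not a new result --- it is a direct consequence of Farwig--Sohr's weighted Stokes resolvent theory applied at $\lambda=0$ (or any fixed $\lambda>0$) and the equivalence of norms on $\D(A)$. Everything else is a direct application of interpolation and the boundedness of $P_D$ on $L^q_\rho(D)$.
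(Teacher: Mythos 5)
Your proposal is correct and follows essentially the same route as the paper: bound $\|\rho^{1/q}aP_D\partial_{x_1}u\|_{q,D}$ by $Ca_0\|\rho^{1/q}\nabla u\|_{q,D}$ via the weighted boundedness of $P_D$, apply the interpolation inequality of Lemma \ref{interpo}, and absorb the second-order term using the weighted a priori estimate $\|\rho^{1/q}\nabla^2u\|_{q,D}\leq C\|\rho^{1/q}(1+A)u\|_{q,D}$ from Farwig--Sohr (the paper cites \cite[Theorem 5.5 (i)]{faso1997} for exactly this). The one point you flag as needing verification is precisely the ingredient the paper invokes, so there is no gap.
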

\begin{proof}
Let $\delta>0$. By applying Lemma \ref{interpo}, we have  
\begin{align*}
\|\rho^{\frac{1}{q}}aP_D\partial_{x_1}u\|_{q,D}\leq 
\delta a_0\|\rho^{\frac{1}{q}}\nabla^2 u\|_{q,D}
+Ca_0\|\rho^{\frac{1}{q}} u\|_{q,D}
\end{align*}
for $u\in\D(A)$. This combined with 
\begin{align*}
\|\rho^{\frac{1}{q}} \nabla^2 u\|_{q,D} 
\leq C\|\rho^{\frac{1}{q}} (1+A)u\|_{q,D}
\leq C\|\rho^{\frac{1}{q}} u\|_{q,D}
+C\|\rho^{\frac{1}{q}} Au\|_{q,D}
\end{align*} 
due to Farwig and Sohr \cite[Theorem 5.5 (i)]{faso1997} 
asserts (\ref{perturb}) for all $\varepsilon>0$. 
The proof is complete.
\end{proof}

We also mention the resolvent estimate for resolvent parameter 
$|\lambda|>>1$. 
In view of Lemma \ref{oseeninterpo} and the resolvent estimate 
for the Stokes operator derived by Farwig and Sohr 
\cite[Theorem 5.5 (i)]{faso1997}, the proof of the following lemma 
is essentially same as the one of 
Kobayashi and Shibata \cite[Lemma 4.5]{kosh1998}, thus we omit it.
\begin{lem}\label{lambdalarge}
Given $a_0>0$ arbitrarily, we assume $|a|\leq a_0$. Let  
$1<q<\infty$ and $\rho\in \A_q(D)$.
For $\varepsilon\in(0,\pi/2)$, there exist 
constants $C=C(D,a_0,q,\rho,\varepsilon)$ and 
$R=R(D,a_0,q,\rho,\varepsilon)$ such that
\begin{align*}
|\lambda|\|\rho^{\frac{1}{q}}u\|_{q,D}+
\|\rho^{\frac{1}{q}}\nabla^2u\|_{q,D}\leq 
C\|\rho^{\frac{1}{q}}(\lambda+A_a)u\|_{q,D}
\end{align*}
for all $u\in\D(A_a)$ and $|\lambda|\geq R$ 
with $|{\rm arg} \lambda|<\pi-\varepsilon$.
\end{lem}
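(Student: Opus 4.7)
The plan is to view the Oseen operator as a perturbation of the Stokes operator, namely $A_a = A + aP_D\partial_{x_1}$, and to combine the weighted Stokes resolvent estimate of Farwig--Sohr with the interpolation bound provided by Lemma \ref{oseeninterpo}. The guiding principle, following \cite{kosh1998}, is that the first-order perturbation $aP_D\partial_{x_1}$ can be absorbed into the leading-order terms as soon as $|\lambda|$ is large enough, uniformly in $|a|\leq a_0$.

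First I would invoke the weighted resolvent estimate for the Stokes operator from Farwig--Sohr \cite[Theorem 5.5]{faso1997}: for $|\arg\lambda|<\pi-\varepsilon$ and $|\lambda|\geq R_0(D,q,\rho,\varepsilon)$,
\begin{equation*}
|\lambda|\,\|\rho^{1/q}u\|_{q,D}+\|\rho^{1/q}\nabla^2 u\|_{q,D}\leq C_0\,\|\rho^{1/q}(\lambda+A)u\|_{q,D}
\end{equation*}
for every $u\in\D(A)$. Writing $(\lambda+A)u=(\lambda+A_a)u-aP_D\partial_{x_1}u$ and applying the triangle inequality yields
\begin{equation*}
|\lambda|\,\|\rho^{1/q}u\|_{q,D}+\|\rho^{1/q}\nabla^2 u\|_{q,D}\leq C_0\,\|\rho^{1/q}(\lambda+A_a)u\|_{q,D}+C_0\,\|\rho^{1/q}\,aP_D\partial_{x_1}u\|_{q,D}.
\end{equation*}

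Next I would apply Lemma \ref{oseeninterpo} with $\varepsilon':=1/(2C_0 K)$, where $K$ denotes the constant in $\|\rho^{1/q}Au\|_{q,D}\leq K\|\rho^{1/q}\nabla^2 u\|_{q,D}$ (which follows from boundedness of $P_D$ on $L^q_\rho(D)$). Since the constant in Lemma \ref{oseeninterpo} depends only on $D,a_0,q,\rho,\varepsilon'$, this yields $C_1=C_1(D,a_0,q,\rho,\varepsilon)$, independent of $a$, such that
\begin{equation*}
C_0\,\|\rho^{1/q}\,aP_D\partial_{x_1}u\|_{q,D}\leq \tfrac{1}{2}\,\|\rho^{1/q}\nabla^2 u\|_{q,D}+C_1\,\|\rho^{1/q}u\|_{q,D}.
\end{equation*}
Substituting into the previous inequality and absorbing the $\nabla^2 u$ term on the left gives
\begin{equation*}
|\lambda|\,\|\rho^{1/q}u\|_{q,D}+\tfrac{1}{2}\,\|\rho^{1/q}\nabla^2 u\|_{q,D}\leq C_0\,\|\rho^{1/q}(\lambda+A_a)u\|_{q,D}+C_1\,\|\rho^{1/q}u\|_{q,D}.
\end{equation*}

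Finally, enlarging $R$ to $R:=\max\{R_0,2C_1\}$ makes the term $C_1\|\rho^{1/q}u\|_{q,D}$ absorbable into $|\lambda|\|\rho^{1/q}u\|_{q,D}$ whenever $|\lambda|\geq R$, which produces the asserted estimate with a new constant depending only on $D,a_0,q,\rho,\varepsilon$. The main point of care is the uniformity in $a$: this comes entirely from Lemma \ref{oseeninterpo}, whose constant is uniform for $|a|\leq a_0$. No serious obstacle is expected beyond the bookkeeping needed to make each constant in the chain independent of $a$, and of verifying that the boundedness of the Fujita--Kato projection $P_D$ on the weighted space $L^q_\rho(D)$, already used implicitly in Lemma \ref{oseeninterpo}, suffices to compare $\|\rho^{1/q}Au\|_{q,D}$ with $\|\rho^{1/q}\nabla^2 u\|_{q,D}$.
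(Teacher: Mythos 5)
Your argument is correct and is exactly the route the paper intends: the paper omits the proof, stating that it follows from Lemma \ref{oseeninterpo} together with the weighted Stokes resolvent estimate of Farwig--Sohr by the same perturbation-and-absorption argument as in Kobayashi--Shibata, which is precisely what you carry out (noting $\D(A_a)=\D(A)$ and that all constants are uniform in $|a|\leq a_0$). No gaps.
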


\noindent{\bf Proof of Theorem \ref{lqlrd0}.} 
\quad The analyticity of $e^{-tA_a}$ directly follows from Lemma 
\ref{lambdalarge}. Furthermore, due to 
Farwig and Sohr \cite[Theorem 1.5 (ii)]{faso1997}, 
we can prove that 
$e^{-tA}$ is bounded analytic $C_0$-semigroup 
in $L^{q}_{\rho,\sigma}(D)$. In fact, let $-1/q<\beta<1-1/q$. 
For $\alpha_0\in(-3/q-\beta+2,-3/q-\beta+3)$ and 
$\alpha_1\in(-3/q-\beta,-3/q-\beta+1)$, we set 
$\rho_0(x):=(1+|x|)^{\alpha_0 q}(1+|x|-x_1)^{\beta q}$ and
$\rho_1(x):=(1+|x|)^{\alpha_1 q}(1+|x|-x_1)^{\beta q}$. 
Then it holds that 
\begin{align}\label{rho0rho1}
\rho_0,\rho_1,\rho_0(1+|x|)^{-2q},\rho_1(1+|x|)^{2q}\in\A_q(D).
\end{align} 
We note that $\alpha_0$ and $\alpha_1$ can be written as
$\alpha_0=-3/q-\beta+(2+\eta_0)$ and 
$\alpha_1=-3/q-\beta+\eta_1$ with some $\eta_0,\eta_1\in(0,1)$. 
Given $\alpha=-3/q-\beta+3\eta$ with $\eta\in(0,1)$, we take 
$\eta_0$ and $\eta_1$ so that $\eta_1<3\eta<2+\eta_0$, then 
there exists $\theta\in(0,1)$ 
such that $3\eta=(1-\theta)(2+\eta_0)+\theta\eta_1$, thus 
\begin{align}\label{rhotheta}
\rho=(1+|x|)^{\alpha q}
(1+|x|-x_1)^{\beta q}=\rho_0^{1-\theta}\rho_1^\theta.
\end{align} 
From (\ref{rho0rho1}) and (\ref{rhotheta}) together with 
Farwig and Sohr \cite[Theorem 1.5 (ii)]{faso1997}, 
$\{e^{-tA}\}_{t\geq 0}$ is a bounded analytic 
$C_0$-semigroup in $L^{q}_{\rho,\sigma}(D)$, 
which yields the assertion 1. 
\par We next prove the assertion 2. 
By Lemma \ref{lambdalarge}, we know
\begin{align}
&\|(1+|x|)^\alpha(1+|x|-x_1)^\beta e^{-tA_a}P_Df\|_{q,D}
\leq C\|(1+|x|)^\alpha(1+|x|-x_1)^\beta
f\|_{q,D},\label{lqlqsmall}\\
&\|(1+|x|)^\alpha(1+|x|-x_1)^\beta \nabla^2 e^{-tA_a}P_Df\|_{q,D}
\leq Ct^{-1}
\|(1+|x|)^\alpha(1+|x|-x_1)^\beta f\|_{q,D}\label{lqlqsmall2}
\end{align}
for all $f\in L^q_{\rho}(D)$ and $0<t\leq 3$. 
These with Lemma \ref{interpo} yields 
\begin{align*}
\|(1+|x|)^\alpha(1+|x|-x_1)^\beta \nabla e^{-tA_a}P_Df\|_{q,D}
\leq Ct^{-1}
\|(1+|x|)^\alpha(1+|x|-x_1)^\beta f\|_{q,D}
\end{align*} 
for all $f\in L^q_{\rho}(D)$ and $0<t\leq 3$. 
Furthermore, in view of $0\in \R^3\setminus \overline{D}$ and 
\begin{align*}
&\partial_{x_i}\big\{(1+|x|)^\alpha(1+|x|-x_1)^\beta\big\}\\
&\qquad=(1+|x|)^\alpha(1+|x|-x_1)^\beta\left(\frac{1}{1+|x|}\cdot
\frac{\alpha x_i}{|x|}+\frac{1}{1+|x|-x_1}\cdot
\frac{\beta(x_i-|x|\delta_{1i})}{|x|}\right),\\
&\partial^2_{x_ix_j}\big\{
(1+|x|)^\alpha(1+|x|-x_1)^\beta\big\}\\
&\qquad=(1+|x|)^\alpha(1+|x|-x_1)^\beta\left[
\frac{1}{1+|x|}\left(\frac{\alpha \delta_{ij}}{|x|}+
\frac{\alpha x_ix_j}{|x|^3}\right)+
\frac{1}{(1+|x|)^2}\cdot\frac{(\alpha^2+\alpha)x_ix_j}{|x|^2}\right.\\
&\qquad\qquad\quad+\frac{1}{(1+|x|)(1+|x|-x_1)}\left\{
\frac{\beta}{|x|}\left(\delta_{ij}-\frac{x_j}{|x|}\delta_{1i}\right)
+\frac{\beta x_j}{|x|^3}(x_i-|x|\delta_{1i})\right\}\\
&\left.\quad\qquad\qquad+\frac{1}{(1+|x|-x_1)^2}\left\{
\frac{\beta^2}{|x|^2}\left(\frac{x_j}{|x|}-\delta_{1i}\right)
(x_i-|x|\delta_{1i})
+\frac{\beta}{|x|^2}(x_i-|x|\delta_{1i})(x_j-|x|\delta_{1j})
\right\}\right],
\end{align*}
where $\delta_{ij}$ denotes the Kronecker delta, we find
\begin{align}\label{bound}
\big|\partial_{x_i}\big\{(1+|x|)^\alpha(1+|x|-x_1)^\beta\big\}\big|,
\big|\partial^2_{x_ix_j}\big\{
(1+|x|)^\alpha(1+|x|-x_1)^\beta\big\}\big|
\leq C(1+|x|)^\alpha(1+|x|-x_1)^\beta
\end{align}
for all $x\in D$ and $i,j=1,2,3$. We thus get 
\begin{align}
&\|(1+|x|)^\alpha(1+|x|-x_1)^\beta e^{-tA_a}P_Df\|_{W^{2,q}(D)}
\nonumber\\&\leq C\sum_{m=0}^2
\|(1+|x|)^\alpha(1+|x|-x_1)^\beta \nabla^m e^{-tA_a}P_Df\|_{q,D}
\leq Ct^{-1}\|(1+|x|)^\alpha(1+|x|-x_1)^\beta f\|_{q,D}
\label{nabla2lqlqsmall}
\end{align}
for all $f\in L^q_{\rho}(D)$ and $0<t\leq 3$, thereby, 
\begin{align}
&\|\nabla\{(1+|x|)^\alpha(1+|x|-x_1)^\beta e^{-tA_a}P_Df\}\|_{q,D}
\nonumber\\
&\leq \|(1+|x|)^\alpha(1+|x|-x_1)^\beta 
e^{-tA_a}P_Df\|^{\frac{1}{2}}_{W^{2,q}(D)}
\|(1+|x|)^\alpha(1+|x|-x_1)^\beta
e^{-tA_a}P_Df\|^{\frac{1}{2}}_{q,D}\nonumber\\
&\leq Ct^{-\frac{1}{2}}
\|(1+|x|)^\alpha(1+|x|-x_1)^\beta f\|_{q,D}\label{nablalqlqsmall0}
\end{align}
for all $f\in L^q_{\rho}(D)$ and $0<t\leq 3$. 
From (\ref{lqlqsmall}), (\ref{bound}), (\ref{nablalqlqsmall0}) 
together with 
\begin{align}
(1+|x|)^\alpha(1+|x|-x_1)^\beta \nabla e^{-tA_a}P_Df=
\nabla&\{(1+|x|)^\alpha(1+|x|-x_1)^\beta e^{-tA_a}P_Df\}\nonumber\\
&-\nabla\{(1+|x|)^\alpha(1+|x|-x_1)^\beta\} 
e^{-tA_a}P_Df,\label{leibniz}
\end{align}
we obtain 
\begin{align}\label{nablalqlqsmall}
\|(1+|x|)^\alpha(1+|x|-x_1)^\beta \nabla e^{-tA_a}P_Df\|_{q,D}
\leq Ct^{-\frac{1}{2}}
\|(1+|x|)^\alpha(1+|x|-x_1)^\beta f\|_{q,D}
\end{align}
for all $f\in L^q_{\rho}(D)$ and $0<t\leq 3$.
\par Let $1<q\leq r\leq\infty~(q\ne\infty)$ 
satisfy $1/q-1/r<1/3$. 
We use the Gagliard-Nirenberg inequality to see 
\begin{align}
&\|(1+|x|)^\alpha(1+|x|-x_1)^\beta e^{-tA_a}P_Df\|_{r,D}\nonumber
\\&\leq C\|\nabla\{(1+|x|)^\alpha(1+|x|-x_1)^\beta 
e^{-tA_a}P_Df\}\|^{3(\frac{1}{q}-\frac{1}{r})}_{q,D}
\|(1+|x|)^\alpha(1+|x|-x_1)^\beta
e^{-tA_a}P_Df\|^{1-3(\frac{1}{q}-\frac{1}{r})}
_{q,D}\nonumber\\
&\leq Ct^{-\frac{3}{2}(\frac{1}{q}-\frac{1}{r})}
\|(1+|x|)^\alpha(1+|x|-x_1)^\beta f\|_{q,D}\notag
\end{align}
for all $f\in L^q_{\rho}(D)$ and $0<t\leq 3$, where 
we have taken (\ref{lqlqsmall}), 
(\ref{nablalqlqsmall0}) into account. 
Given $1<q\leq r\leq\infty~(q\ne\infty)$, by
taking $q_0,\cdots,q_{j}$ so that 
$q=q_0\leq q_1\leq \cdots\leq q_j=r$ and 
$1/q_{i-1}-1/q_i<1/3~(i=1,\cdots, j)$, 
and by using the semigroup property, 
the condition $1/q-1/r<1/3$ is eliminated, 
which implies (\ref{lqlrdsmall}) with $|k|=0$. 
If $r<\infty$, then due to (\ref{nablalqlqsmall}) and 
(\ref{lqlrdsmall}) with $|k|=0$, we get 
\begin{align}
&\|(1+|x|)^\alpha(1+|x|-x_1)^\beta 
\nabla e^{-tA_a}P_Df\|_{r,D}\nonumber\\
&\leq Ct^{-\frac{1}{2}}
\|(1+|x|)^\alpha(1+|x|-x_1)^\beta e^{-\frac{t}{2}A_a}P_Df\|_{r,D}
\leq Ct^{-\frac{3}{2}(\frac{1}{q}-\frac{1}{r})-\frac{1}{2}}
\|(1+|x|)^\alpha(1+|x|-x_1)^\beta f\|_{q,D}
\notag
\end{align}
for all $f\in L^q_{\rho}(D)$ and $0<t\leq 3$. 
If $3<q<\infty$, then by (\ref{lqlrdsmall}) with $|k|=0$, 
(\ref{lqlqsmall})--(\ref{nabla2lqlqsmall}), (\ref{leibniz})
and the Gagliard-Nirenberg inequality, it holds that 
\begin{align*}
&\|(1+|x|)^\alpha(1+|x|-x_1)^\beta 
\nabla e^{-tA_a}P_Df\|_{\infty,D}\\
&\leq 
\|\nabla\{(1+|x|)^\alpha(1+|x|-x_1)^\beta 
e^{-tA_a}P_Df\}\|_{\infty,D}
+C\|(1+|x|)^\alpha(1+|x|-x_1)^\beta e^{-tA_a}P_Df\|_{\infty,D}\\
&\leq C\|(1+|x|)^\alpha(1+|x|-x_1)^\beta 
e^{-tA_a}P_Df\|^{\frac{3}{2q}+\frac{1}{2}}_{W^{2,q}(D)}
\|(1+|x|)^\alpha(1+|x|-x_1)^\beta 
e^{-tA_a}P_Df\|^{-\frac{3}{2q}+\frac{1}{2}}_{q,D}\\
&\qquad+Ct^{-\frac{3}{2q}}
\|(1+|x|)^\alpha(1+|x|-x_1)^\beta f\|_{q,D}\\
&\leq C t^{-\frac{3}{2q}-\frac{1}{2}}
\|(1+|x|)^\alpha(1+|x|-x_1)^\beta f\|_{q,D}
\end{align*} 
for all $f\in L^q_{\rho}(D)$ and $0<t\leq 3$. If $1<q\leq 3$, 
then in view of this and (\ref{lqlrdsmall}) with $|k|=0$, we get 
\begin{align*}
\|(1+|x|)^\alpha(1+|x|-x_1)^\beta \nabla e^{-tA_a}P_Df\|_{\infty,D}
&\leq Ct^{-\frac{3}{8}-\frac{1}{2}}
\|(1+|x|)^\alpha(1+|x|-x_1)^\beta e^{-\frac{t}{2}A_a}P_Df\|_{4,D}\\
&\leq C t^{-\frac{3}{2q}-\frac{1}{2}}
\|(1+|x|)^\alpha(1+|x|-x_1)^\beta f\|_{q,D}
\end{align*} 
for all $f\in L^q_{\rho}(D)$ and $0<t\leq 3$. 
Therefore, 
we conclude (\ref{lqlrdsmall}) with $|k|=1$. 
The proof is complete.   
\qed

\section{Anisotropically weighted
 $L^q$-$L^r$ estimates of the Oseen semigroup in $\R^3$}
\label{section5}
\quad In this section, we derive
the anisotropically weighted $L^q$-$L^r$ estimates of 
solutions to the Oseen equation in $\R^3$\,:
\begin{align}\label{oseenproblemr3}
&\partial_t u-\Delta u+a\partial_{x_1} u+\nabla p=0,\quad 
\nabla\cdot u=0,
\quad x\in\R^3,t>0,\qquad 
u(x,0)=g,\quad x\in\R^3. 
\end{align}
Given $g\in L^q(\R^3),$ 
we denote a solution to the heat equation by $e^{t\Delta}g,$ which
is given by the formula: 
\begin{align*}
(e^{t\Delta}g)(x)=
\left(\frac{1}{4\pi t}\right)^{\frac{3}{2}}\int_{\R^3}
e^{-\frac{|x-y|^2}{4t}}g(y)\,dy.
\end{align*}
If in particular, $g\in L^q_\sigma(\R^3)$, 
then we see that $\nabla p=0$ and that  
\begin{align}\label{oseensemir3}
u(x,t)=(S_a(t)g)(x):=(e^{t\Delta}g)(x-ate_1)
\end{align}
solves the problem (\ref{oseenproblemr3}). 
For later use, it is convenient to prepare the integrability of 
anisotropic weights. 
\begin{lem}\label{finite}
Let $\gamma,\delta>0$. If either
\begin{align}\label{gammadeltas}
2\delta<\gamma, \quad 
3/(\gamma+\delta)<s<1/\delta
\end{align}
or 
\begin{align*}
s>\max\{1/\delta,2/\gamma\},
\end{align*}
then
\begin{align*}
\int_{\R^3}(1+|x|)^{-\gamma s}(1+|x|-x_1)^{-\delta s}\,dx<\infty.
\end{align*}
\end{lem}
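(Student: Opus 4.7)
The plan is to pass to spherical coordinates adapted to the $x_1$-axis and carry out the angular integration explicitly, reducing the problem to a one-dimensional integrability question at infinity. The integrand is clearly bounded on any compact set (since $\gamma,\delta,s>0$ imply both weights are $\leq 1$ on $\R^3$), so only the behaviour as $|x|\to\infty$ matters.

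First, I would write $x_1=r\cos\theta$ with $r=|x|\in(0,\infty)$ and $\theta\in(0,\pi)$, and substitute $t=1-\cos\theta$ exactly as in the proof of the estimate \eqref{polar}. This yields
\begin{align*}
\int_{\R^3}(1+|x|)^{-\gamma s}(1+|x|-x_1)^{-\delta s}\,dx
=2\pi\int_0^\infty (1+r)^{-\gamma s}\,r^2\,J(r)\,dr,\qquad
J(r):=\int_0^2(1+rt)^{-\delta s}\,dt.
\end{align*}
Because $(1+r)^{-\gamma s}r^2 J(r)$ is continuous and bounded on $[0,1]$, convergence reduces to the large-$r$ behaviour of the integrand.

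Next I would split according to the sign of $\delta s-1$. If $\delta s>1$, explicit computation of $J(r)$ gives $J(r)=\tfrac{1-(1+2r)^{1-\delta s}}{(\delta s-1)r}\leq C/r$ for $r\geq 1$, so the outer integrand is $O(r^{1-\gamma s})$ and integrability at infinity is equivalent to $\gamma s>2$; together with the assumption $\delta s>1$ this yields exactly the second condition $s>\max\{1/\delta,2/\gamma\}$. If $\delta s<1$, the same explicit formula gives $J(r)=\tfrac{(1+2r)^{1-\delta s}-1}{(1-\delta s)r}\sim C r^{-\delta s}$ as $r\to\infty$, so the outer integrand behaves like $r^{2-(\gamma+\delta)s}$ and integrability requires $(\gamma+\delta)s>3$, i.e.\ $s>3/(\gamma+\delta)$. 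Non-emptiness of the range $3/(\gamma+\delta)<s<1/\delta$ is then equivalent to $3\delta<\gamma+\delta$, i.e.\ $2\delta<\gamma$, recovering the first condition \eqref{gammadeltas}.

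There is no genuine obstacle here; the only minor care point is bookkeeping across the two regimes $\delta s\gtrless 1$ and verifying that the hypotheses of the lemma are precisely the combined ranges produced by the two elementary convergence criteria above. (The borderline case $\delta s=1$, excluded by both hypotheses, would give $J(r)\sim r^{-1}\log r$ and hence fails integrability unless $\gamma s>2$, which is consistent with the stated conditions being sharp on the relevant boundary.)
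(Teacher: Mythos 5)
Your proposal is correct and follows essentially the same route as the paper: polar coordinates with the substitution $t=1-\cos\theta$, explicit evaluation of the angular integral, and then reading off convergence of the remaining radial integral in the two regimes $\delta s<1$ (giving $(\gamma+\delta)s>3$, with $2\delta<\gamma$ as non-emptiness of the $s$-range) and $\delta s>1$ (giving $\gamma s>2$). No gaps; the paper simply bounds the radial integrand directly instead of phrasing it as an asymptotic comparison.
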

\begin{proof}
The polar coordinates imply 
\begin{align*}
\int_{\R^3}(1+|x|)^{-\gamma s}(1+|x|-x_1)^{-\delta s}\,dx&=
2\pi\int_0^\infty\,dr\int_0^\pi
(1+r)^{-\gamma s}(1+r-r\cos\theta)^{-\delta s}
r^2\sin\theta\,d\theta\\
&=2\pi \int_0^\infty (1+r)^{-\gamma s}r^2\,dr
\int_0^2(1+rt)^{-\delta s}\,dt\\
&=\frac{2\pi}{-\delta s+1}\int_0^\infty(1+r)^{-\gamma s}r
\{(1+2r)^{-\delta s+1}-1\}\,dr,
\end{align*}
thus we have 
\begin{align*}
\int_{\R^3}(1+|x|)^{-\gamma s}(1+|x|-x_1)^{-\delta s}\,dx
\leq \frac{2\pi}{-\delta s+1}\int_0^\infty
(1+r)^{-\gamma s}r(1+2r)^{-\delta s+1}\,dr<\infty
\end{align*}
if (\ref{gammadeltas}) and 
\begin{align*}
\int_{\R^3}(1+|x|)^{-\gamma s}(1+|x|-x_1)^{-\delta s}\,dx
\leq \frac{2\pi}{\delta s-1}\int_0^\infty(1+r)^{-\gamma s}r\,dr
<\infty
\end{align*}
if $s>\max\{1/\delta,2/\gamma\}$. The proof is complete. 
\end{proof}

\noindent Lemma \ref{finite} and (Lorentz-)H\"{o}lder inequality 
yield the following.

\begin{lem}\label{holder}
Let $\Omega=\R^3$ or $D$. 
\begin{enumerate}
\item Let $1<q<\infty$ and let $\alpha,\beta>0$ 
satisfy $\alpha+\beta<3(1-1/q)$. 
Suppose $f\in L^q_{\rho}(\Omega)$, 
where $\rho$ is given by $($\ref{rho}$)$.
Then we have $f\in L^r(\Omega)$ for all 
\begin{align*}
r\in \left(\max\Big\{
\frac{3q}{3+\alpha q+\beta q},\frac{2q}{2+\alpha q}\Big\},q\right]
\end{align*}
with the estimate
\begin{align*}
\|f\|_{r,\Omega}
\leq\|(1+|x|)^{-\alpha}(1+|x|-x_1)^{-\beta}\|_{\frac{qr}{q-r},\Omega}
\|(1+|x|)^{\alpha}(1+|x|-x_1)^{\beta}f\|_{q,\Omega}.
\end{align*}
\item Let $1<q<\infty,0<\alpha<3(1-1/q)$. Then 
$f\in L^q_{(1+|x|)^{\alpha q}}(\Omega)$ belongs to 
$L^{(3q)/(3+\alpha q),q}(\Omega)$ with 
\begin{align*}
\|f\|_{L^{\frac{3q}{3+\alpha q},q}(\Omega)}
\leq\|(1+|x|)^{-\alpha}\|_{L^{\frac{3}{\alpha},\infty}(\Omega)}
\|(1+|x|)^{\alpha}f\|_{q,\Omega},
\end{align*} 
where $L^{q,r}(\Omega)$ denotes the Lorentz space.
\end{enumerate}
\end{lem}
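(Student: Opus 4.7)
The plan is to prove both assertions by factoring $f$ against the chosen weight and applying appropriate versions of Hölder's inequality, using Lemma \ref{finite} to handle the integrability of the weight factor.

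For assertion 1, I would write
\begin{align*}
f=\bigl[(1+|x|)^{-\alpha}(1+|x|-x_1)^{-\beta}\bigr]\cdot \bigl[(1+|x|)^{\alpha}(1+|x|-x_1)^{\beta}f\bigr]
\end{align*}
and apply the classical Hölder inequality on $\Omega$ with exponents $s:=qr/(q-r)$ and $q$, which satisfy $1/s+1/q=1/r$. The right-hand factor is controlled by hypothesis, so the only task is to show that the weight factor lies in $L^{s}(\Omega)$. A short computation shows that as $r$ runs over the interval $\bigl(\max\{3q/(3+\alpha q+\beta q),\,2q/(2+\alpha q)\},\,q\bigr]$, the exponent $s$ runs over $\bigl(\max\{3/(\alpha+\beta),\,2/\alpha\},\,\infty\bigr]$. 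I would then verify the two cases of Lemma \ref{finite} with $(\gamma,\delta)=(\alpha,\beta)$\,: if $2\beta>\alpha$ then $\max\{3/(\alpha+\beta),2/\alpha\}=2/\alpha>1/\beta$, so the second regime $s>\max\{1/\beta,2/\alpha\}$ applies; if $2\beta\leq\alpha$ then $\max\{3/(\alpha+\beta),2/\alpha\}=3/(\alpha+\beta)$ and one splits according as $s<1/\beta$ (first regime of Lemma \ref{finite}, using $2\beta<\alpha$) or $s\geq 1/\beta$ (second regime, using $s>3/(\alpha+\beta)\geq 2/\alpha$). In every case the $L^{s}$-norm of the weight is finite, yielding the stated inequality after extending to $\Omega=D$ by restriction.

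For assertion 2, the strategy is the same factorization $f=(1+|x|)^{-\alpha}\cdot (1+|x|)^{\alpha}f$, but now applying the Hölder inequality in Lorentz spaces
\begin{align*}
\|uv\|_{L^{p,q}}\leq C\|u\|_{L^{p_1,\infty}}\|v\|_{L^{p_2,q}},\qquad \frac{1}{p}=\frac{1}{p_1}+\frac{1}{p_2},
\end{align*}
with $p_1=3/\alpha$, $p_2=q$, which gives $p=3q/(3+\alpha q)$. Since $L^{q,q}(\Omega)=L^{q}(\Omega)$, the right-hand factor is $(1+|x|)^{\alpha}f\in L^{q,q}(\Omega)$. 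Finiteness of $\|(1+|x|)^{-\alpha}\|_{L^{3/\alpha,\infty}(\Omega)}$ reduces, via the distribution-function definition, to the elementary bound $|\{x\in\R^3:(1+|x|)^{-\alpha}>t\}|\lesssim t^{-3/\alpha}$ for $0<t<1$, which is immediate from $|B_r(0)|\sim r^3$.

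Neither step presents a real obstacle; the only point that requires a little care is the case analysis for assertion 1, since the range of $r$ in the statement is tailored precisely so that $s=qr/(q-r)$ falls into one of the two regimes of Lemma \ref{finite}. I would therefore organize the write-up so that the threshold $2\beta\lessgtr\alpha$ is isolated at the start, after which both subcases reduce to a direct invocation of Lemma \ref{finite}.
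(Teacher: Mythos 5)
Your argument is essentially the paper's own proof: both parts are obtained by factoring $f$ against the weight, applying the (Lorentz--)H\"{o}lder inequality, and invoking Lemma \ref{finite} with $(\gamma,\delta)=(\alpha,\beta)$ after splitting on $2\beta\lessgtr\alpha$; part 2 likewise matches the paper's use of $(1+|x|)^{-\alpha}\in L^{3/\alpha,\infty}$ and O'Neil's inequality. The one spot to patch is your subcase ``$s\geq 1/\beta$'' when $2\beta<\alpha$: at $s=1/\beta$ exactly, i.e.\ $r=q/(1+\beta q)$, which does lie inside the claimed range of $r$, neither regime of Lemma \ref{finite} applies, since the second regime demands the strict inequality $s>\max\{1/\beta,2/\alpha\}=1/\beta$. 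The paper dispenses with this single exponent by combining the range $3/(\alpha+\beta)<s<1/\beta$ with $f\in L^q(\Omega)$ and interpolating; alternatively, a direct computation shows that the borderline integral $\int_{\R^3}(1+|x|)^{-\alpha/\beta}(1+|x|-x_1)^{-1}\,dx$ converges precisely because $\alpha>2\beta$, so either one-line remark closes the point and your proof stands.
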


\begin{proof}
By H\"{o}lder inequality and Lemma \ref{finite}, we have 
\begin{align*}
\int_\Omega|f|^r\,dx \leq
\left(\int_\Omega(1+|x|)^{-\alpha\frac{qr}{q-r}}
(1+|x|-x_1)^{-\beta\frac{qr}{q-r}}\,dx\right)^{1-\frac{r}{q}}
\|(1+|x|)^{\alpha}
(1+|x|-x_1)^{\beta}f\|_{q,\Omega}^r<\infty
\end{align*}
if $r<q$, $2\beta<\alpha$ and 
\begin{align}\label{qs}
\frac{3}{\alpha+\beta}<\frac{qr}{q-r}<\frac{1}{\beta}
\end{align}
or if $r<q$ and 
\begin{align}\label{qs2}
\frac{qr}{q-r}>\max\left\{\frac{1}{\beta},
\frac{2}{\alpha}\right\}.
\end{align}
Because the condition (\ref{qs}) 
is equivalent to 
$(3q)/(3+\alpha q+\beta q)<r<q/(1+\beta q)$ and because 
$f\in L^q(\Omega)$, we have $f\in L^r(\Omega)$ for all 
$r\in((3q)/(3+\alpha q+\beta q),q]$ if $2\beta<\alpha$. 
On the other hand, 
if $2\beta\geq \alpha$, then we find that 
(\ref{qs2}) is equivalent to 
$r>q/(2+\alpha q)$, thus $f\in L^r(\Omega)$ for all 
$r\in((2q)/(2+\alpha q),q]$. 
Since the condition $2\beta<\alpha$ (resp. $2\beta\geq\alpha$) 
is equivalent to 
\begin{align*}
\max\Big\{
\frac{3q}{3+\alpha q+\beta q},\frac{2q}{2+\alpha q}\Big\}
=\frac{3q}{3+\alpha q+\beta q}\quad 
\left(\text{resp.}~\max\Big\{
\frac{3q}{3+\alpha q+\beta q},\frac{2q}{2+\alpha q}\Big\}
=\frac{2q}{2+\alpha q}\right), 
\end{align*}
the above observation yields the assertion 1. 
The assertion 2 directly follows from 
$(1+|x|)^{-\alpha}\in L^{3/\alpha,\infty}(\Omega)$ and the 
Lorentz-H\"{o}lder inequality.
The proof is complete.
\end{proof}

By Lemma \ref{holder} and by 
the explicit formula (\ref{oseensemir3}), 
we have the following estimates. 
\begin{prop}\label{lqlrr3}
\begin{enumerate}
\item Given $a_0>0$ arbitrarily, we assume $a\in[0,a_0]$. 
Let $1<q\leq r\leq \infty ~(q\ne\infty)$ 
and $\alpha,\beta\geq 0$ satisfy 
\begin{align}\label{alphabetadef0}
\beta<1-\frac{1}{q},\quad 
\alpha+\beta<3\left(1-\frac{1}{q}\right). 
\end{align}
For multi-index $k$ satisfying $|k|\leq 1$, 
there exists a constant 
$C=C(a_0,q,r,\alpha,\beta,k)$, independent of $a$, such that 
\begin{align}\label{lqlrr3small}
\|(1+|x|)^\alpha(1+|x|-x_1)^\beta\partial^k_{x}
S_a(t)P_{\R^3}f\|_{r,\R^3}\leq 
Ct^{-\frac{3}{2}(\frac{1}{q}-\frac{1}{r})-\frac{|k|}{2}}
\|(1+|x|)^\alpha(1+|x|-x_1)^\beta f\|_{q,\R^3}
\end{align}
for all $t\leq 1$, $f\in L^{q}_{\rho}(\R^3)$, where
$\rho$ is given by $($\ref{rho}$)$. 
\item Given $a_0>0$ arbitrarily, we assume $a\in[0,a_0]$. 
Let $1<q_i<\infty~(i=1,2,3,4)$, $1<r\leq\infty$  
and $\alpha,\beta$ satisfy $q_i\leq r~(i=1,2,3,4)$ and 
\begin{align*}
0\leq \alpha<3\left(1-\frac{1}{q_3}\right),
\quad0\leq 
\beta<\min\left\{1-\frac{1}{q_1},1-\frac{1}{q_2}\right\},
\quad\alpha+\beta<3\left(1-\frac{1}{q_1}\right).
\end{align*}  
For multi-index $k$ satisfying $|k|\leq 1$, there exists a constant 
$C=C(a_0,q_1,q_2,q_3,q_4,r,\alpha,\beta,k)$, 
independent of $a$, such that 
\begin{align}
\|&(1+|x|)^\alpha(1+|x|-x_1)^\beta\partial^k_{x}S_a(t)P_{\R^3}f
\|_{r,\R^3}
\nonumber\\
&\leq 
C\sum_{i=1}^4
t^{-\frac{3}{2}(\frac{1}{q_i}-\frac{1}{r})-\frac{|k|}{2}+\eta_i}
\|(1+|x|)^{\gamma_i}(1+|x|-x_1)^{\delta_i} f\|_{q_i,\R^3}
\label{lqlrr3large}
\end{align}
for all $t\geq 1$, $f\in \displaystyle\bigcap^3_{i=1}
L^{q_i}_{\rho_i}(\R^3)\cap L^{q_4}(\R^3)$,
where $\gamma_i,\delta_i,\eta_i$ and $\rho_i$ are 
given by $($\ref{gammadeltaeta}$)$ and $($\ref{rhodef}$)$.
\item Given $a_0>0$ arbitrarily, we assume $a\in(0,a_0]$. 
Let $1<q\leq r\leq\infty~(q\ne\infty)$ 
and $\alpha,\beta>0$ satisfy $($\ref{alphabetadef0}$)$. 
For multi-index $k$ satisfying $|k|\leq 1$ and $\varepsilon>0,$ 
there exists a constant 
$C=C(a_0,q,r,\alpha,\beta,k,\varepsilon)$, 
independent of $a$, such that  
\begin{align}
\|&(1+|x|)^\alpha(1+|x|-x_1)^\beta\partial^k_{x}
S_a(t)P_{\R^3}f\|_{r,\R^3}\notag\\&\leq 
Ct^{-\frac{3}{2}(\frac{1}{q}-\frac{1}{r})-\frac{|k|}{2}+
\frac{\alpha}{4}+\max\{\frac{\alpha}{4},\frac{\beta}{2}\}
+\varepsilon}
\|(1+|x|)^\alpha(1+|x|-x_1)^\beta f\|_{q,\R^3}\label{losslessr31}
\end{align}
for all $t\geq 1$ and $f\in L^{q}_{\rho}(\R^3)$, 
where $\rho$ is given by $($\ref{rho}$).$
\item Given $a_0>0$ arbitrarily, we assume $a\in(0,a_0]$. 
Let $1<q\leq r\leq\infty~(q\ne\infty)$ 
and $0\leq\alpha<3(1-1/q)$. 
For multi-index $k$ satisfying $|k|\leq 1$, 
there exists a constant 
$C=C(a_0,q,r,\alpha,k)$, 
independent of $a$, such that 
\begin{align}
\|(1+|x|)^\alpha\partial^k_{x}
S_a(t)P_{\R^3}f\|_{r,\R^3}\leq 
Ct^{-\frac{3}{2}(\frac{1}{q}-\frac{1}{r})-\frac{|k|}{2}+
\frac{\alpha}{2}}
\|(1+|x|)^\alpha f\|_{q,\R^3}\label{losslessr32}
\end{align}
for all $t\geq 1$ and $f\in L^{q}_{(1+|x|)^{\alpha q}}(\R^3)$.
\item Let $a=0$. Let $1<q\leq r\leq\infty~(q\ne\infty)$ 
and $0\leq\alpha<3(1-1/q)$. 
For multi-index $k$ satisfying $|k|\leq 1$, 
there exists a constant 
$C=C(q,r,\alpha,k)$, such that 
\begin{align}
\|(1+|x|)^\alpha\partial^k_{x}
S_0(t)P_{\R^3}f\|_{r,\R^3}\leq 
Ct^{-\frac{3}{2}(\frac{1}{q}-\frac{1}{r})-\frac{|k|}{2}}
\|(1+|x|)^\alpha f\|_{q,\R^3}\label{losslessr33}
\end{align}
for all $t>0$ and $f\in L^{q}_{(1+|x|)^{\alpha q}}(\R^3)$.
\end{enumerate}
\end{prop}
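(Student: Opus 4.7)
My plan is to prove the five assertions sequentially, with assertion 2 providing the technical core and assertions 3--5 derived from it via Lorentz--H\"older interpolation. Throughout I use the explicit representation $(S_a(t)P_{\R^3}f)(x)=(e^{t\Delta}P_{\R^3}f)(x-ate_1)$; since $\rho\in\A_q(\R^3)$ by Theorem \ref{muckenthm}, the Helmholtz projection $P_{\R^3}=I+\mathcal R\otimes\mathcal R$ is bounded on $L^q_\rho$ by Muckenhoupt theory, so $P_{\R^3}f$ may be replaced by a general element of $L^q_\rho(\R^3)$. For the small-time assertion 1, the shift $ate_1$ has bounded norm $\leq a_0$, so $\rho(x)$ and $\rho(x-ate_1)$ are pointwise comparable with constants depending only on $a_0$; the argument then follows the template used for the exterior-domain case in Theorem \ref{lqlrd0}, namely a whole-space version of Lemma \ref{lambdalarge} for uniform $L^q_\rho$ boundedness and the $t^{-1}$ bound on $\|\rho^{1/q}\nabla^2 S_a(t)f\|_q$, Lemma \ref{interpo} for the first derivative, the Leibniz identity analogous to \eqref{leibniz} together with the pointwise weight-derivative bounds analogous to \eqref{bound}, and Gagliardo--Nirenberg with a bootstrap in $r$ to eliminate the condition $1/q-1/r<1/3$.

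For the large-time assertion 2, the core is an additive decomposition of the weight. Setting $z:=x-ate_1-y$ and using $|x|\leq|z|+at+|y|$ together with the elementary fact that $u\mapsto\sqrt{u^2+v^2}-u$ is decreasing, which yields $|y+ate_1|-(y+ate_1)_1\leq|y|-y_1$ and hence $|x|-x_1\leq(|z|-z_1)+(|y|-y_1)$, one obtains
\begin{align*}
(1+|x|)^\alpha &\leq C\bigl[(1+|z|)^\alpha+(1+|y|)^\alpha+(1+at)^\alpha\bigr],\\
(1+|x|-x_1)^\beta &\leq C\bigl[(1+|z|-z_1)^\beta+(1+|y|-y_1)^\beta\bigr].
\end{align*}
Crucially, the shift does not produce any factor $(1+at)^\beta$: it is invisible in the wake direction, which is the reason no $at$ powers appear in the $\beta$-components of $\eta_i$. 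Multiplying yields six cross-terms, each of the schematic form (Gaussian moment kernel in $z$) convolved with a weighted $f$. Applying Young's inequality with $1/s=1+1/r-1/q_i$ together with the Gaussian moment identity $\|(1+|z|)^{\alpha'}(1+|z|-z_1)^{\beta'}G_t\|_s\leq Ct^{-3(1-1/s)/2}(1+t^{\alpha'/2+\beta'/2})$ and the translation invariance of the $L^s$ norm under the shift $ate_1$ produces all six bounds. For $t\geq 1$ and $a>0$ the $(1+at)^\alpha$-terms dominate the corresponding $(1+|z|)^\alpha$-terms (since $t^\alpha\geq t^{\alpha/2}$), collapsing the six into the four listed in \eqref{lqlrr3large}; when $a=0$ the $(1+at)^\alpha$-contribution is identically $1$, leaving the Stokes values $(0,\alpha/2,\beta/2,\alpha/2+\beta/2)$. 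The gradient case $|k|=1$ replaces $G_t$ by $\nabla G_t$, pointwise bounded by $Ct^{-1/2}G_{2t}$, contributing the extra factor $t^{-1/2}$ throughout.

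Assertions 3--5 are deduced from assertion 2 by applying the Lorentz--H\"older inequalities of Lemma \ref{holder} to convert each of the right-hand-side norms into $\|\rho^{1/q}f\|_q$ (or $\|(1+|x|)^\alpha f\|_q$ in assertion 4), with admissible exponents governed by Lemma \ref{finite}. For assertions 4 and 5, only the single weight $(1+|x|)^{-\alpha}\in L^{3/\alpha,\infty}$ is needed, attaining the critical exponent exactly via Lemma \ref{holder}(2) and yielding the $\alpha/2$ loss for $a>0$ and no loss at all for $a=0$ (hence the homogeneous bound \eqref{losslessr33} valid for all $t>0$ after absorbing small times from assertion 1). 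For assertion 3 both weights appear in the conversion, and Lemma \ref{finite} produces two cases depending on whether $2\beta<\alpha$ (minimal admissible $1/p=(\alpha+\beta)/3$) or $2\beta\geq\alpha$ (minimal $1/p=\alpha/2$), which collapse exactly into $\alpha/4+\max\{\alpha/4,\beta/2\}$; the $\varepsilon$ loss reflects the strict inequality at the endpoint of Lemma \ref{finite}. The main obstacle is the term $\|(1+|x|)^\alpha f\|_{q_3}$ in assertion 3, since $(1+|x|-x_1)^{-\beta p}$ is not integrable on $\R^3$ for any $p$, so direct H\"older fails; this forces replacement of the additive $\beta$-decomposition by the multiplicative bound $(1+|x|-x_1)^\beta\leq(1+|z|-z_1)^\beta(1+|y|-y_1)^\beta$ in precisely those terms where $(1+|y|)^\alpha$ would appear on $f$, eliminating the offending norm at the cost of placing $(1+|y|-y_1)^\beta$ on the remaining $f$-norms, which is then removed via the $(1+|x|)^{-\alpha}$ Lorentz--H\"older. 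Once this navigation of the $\beta$-direction is in place, the remainder is a bookkeeping of Gaussian-moment estimates.
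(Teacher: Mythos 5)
Your proposal is correct in substance and, for the core assertions 2--5, follows essentially the same route as the paper: an additive splitting of the anisotropic weight across the convolution, Gaussian moment estimates for the weighted kernels, Young's inequality with $1/s_i=1-1/q_i+1/r$, and Lorentz--H\"older conversions (via Lemma \ref{holder} and Lemma \ref{finite}) to trade spatial decay of $f$ for temporal decay. Two differences are worth recording. First, for assertion 1 the paper does not invoke any resolvent or Gagliardo--Nirenberg machinery: it simply reuses the same kernel decomposition, observing that for $t\leq 1$ the kernel norms $\|G_{i,k}(t)\|_{s}$ carry the unweighted rate $t^{-3/2-|k|/2+3/(2s)}$ with constants depending only on $a_0$; your route through a whole-space analogue of Lemma \ref{lambdalarge} would work but is considerably heavier, and in $\R^3$ the restriction $1/q-1/r<1/3$ never arises, so no bootstrap is needed. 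Second, your six-term decomposition with $(1+at)^\alpha$ extracted additively is equivalent to the paper's four-term decomposition, in which the shift is absorbed when computing $\|G_{2,k}(t)\|_s$ and $\|G_{4,k}(t)\|_s$ (yielding the factors $(1+at)^\alpha+t^{\alpha/2}$); your observation that the shift is invisible in the wake weight is exactly the paper's computation $1+|ate_1+2\sqrt{t}z|-(at+2\sqrt{t}z_1)\leq 1+4\sqrt{t}|z|$.

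Your diagnosis of the ``main obstacle'' in assertion 3 is, however, misplaced, even though your workaround happens to be harmless. The term carrying $\|(1+|x|)^\alpha f\|_{q}$ needs no H\"older conversion at all: since $(1+|x|-x_1)^\beta\geq 1$, one has trivially $\|(1+|x|)^\alpha f\|_{q}\leq\|(1+|x|)^\alpha(1+|x|-x_1)^\beta f\|_{q}$, and its kernel $G_{3,k}$ already decays like $t^{\beta/2}\leq t^{\alpha/4+\max\{\alpha/4,\beta/2\}}$, so no exponent-lowering is required and the non-integrability of $(1+|x|-x_1)^{-\beta p}$ is irrelevant. The genuinely delicate terms are the other two: the term with $\|(1+|x|-x_1)^\beta f\|_q$ and kernel rate $t^{\alpha}$, which must be improved to $t^{\alpha/2}$ by combining the Lorentz--H\"older bound $(1+|x|)^{-\alpha}\in L^{3/\alpha,\infty}$ with O'Neil's convolution inequality and a real-interpolation estimate of $\|G_{2,k}(t)\|_{L^{s_5,3s_5/(3+\alpha s_5)}}$ (ordinary Young does not suffice here); and the term with the plain $\|f\|$ norm, where Lemma \ref{holder}(1) and the strict endpoint of Lemma \ref{finite} produce the two cases $2\beta<\alpha$ and $2\beta\geq\alpha$ and the $\varepsilon$-loss, exactly as you computed. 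Your text does gesture at both mechanisms, so the argument closes, but you should restructure the exposition around where the difficulty actually sits.
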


\begin{proof}
We first prove the assertion 1 and 2. From 
\begin{align*}
(1+|x|)^\alpha(1+|x|-x_1)^\beta
\leq C\big\{&(1+|y|)^\alpha(1+|y|-y_1)^\beta
+(1+|x-y|)^\alpha(1+|y|-y_1)^\beta\\
&+(1+|y|)^\alpha(1+|x-y|-(x_1-y_1))^\beta\\
&+(1+|x-y|)^\alpha(1+|x-y|-(x_1-y_1))^\beta\big\},
\end{align*}
we have 
\begin{align}
(1+|x|)^\alpha&(1+|x|-x_1)^\beta|\partial^k_{x}S_a(t)P_{\R^3}f(x)|
\nonumber\\
&\leq C\Big\{G_{1,k}*\big((1+|y|)^\alpha
(1+|y|-y_1)^\beta|P_{\R^3}f|\big)(x)
+G_{2,k}*\big((1+|y|-y_1)^\beta|P_{\R^3}f|\big)(x)\nonumber\\
&\qquad\quad+G_{3,k}*\big((1+|y|)^\alpha|P_{\R^3}f|\big)(x)
+G_{4,k}*|P_{\R^3}f|(x)\Big\},\label{g1g4}
\end{align}
where 
\begin{align}
&G_{1,k}(x,t)=\left(\frac{1}{4\pi t}\right)^{\frac{3}{2}}
\left(\frac{1}{2\sqrt{t}}\right)^{|k|}
\left|h_k
\left(\frac{x-ate_1}{2\sqrt{t}}\right)\right|,\label{g1def}\\
&G_{2,k}(x,t)=\left(\frac{1}{4\pi t}\right)^{\frac{3}{2}}
\left(\frac{1}{2\sqrt{t}}\right)^{|k|}
\left|
h_k\left(\frac{x-ate_1}{2\sqrt{t}}\right)
\right|
(1+|x|)^\alpha,\label{g2def}\\
&G_{3,k}(x,t)=\left(\frac{1}{4\pi t}\right)^{\frac{3}{2}}
\left(\frac{1}{2\sqrt{t}}\right)^{|k|}
\left|
h_k\left(\frac{x-ate_1}{2\sqrt{t}}\right)
\right|
(1+|x|-x_1)^\beta,\label{g3def}\\
&G_{4,k}(x,t)=\left(\frac{1}{4\pi t}\right)^{\frac{3}{2}}
\left(\frac{1}{2\sqrt{t}}\right)^{|k|}
\left|
h_k\left(\frac{x-ate_1}{2\sqrt{t}}\right)
\right|
(1+|x|)^\alpha(1+|x|-x_1)^\beta,\label{g4def}\\
&h_k(z)=\partial_z^ke^{-|z|^2}\label{hdef}
\end{align}
By changing variables $z=(x-ate_1)/(2\sqrt{t})$, we get 
\begin{align*}
\|G_{1,k}(t)\|_{s,\R^3}^s=
\left(\frac{1}{4\pi t}\right)^{\frac{3}{2}s}
\left(\frac{1}{2\sqrt{t}}\right)^{|k|s}(2\sqrt{t})^3
\int_{\R^3}|\partial_z^ke^{-|z|^2}|^s\,dz
\end{align*}
for all $t>0$, $s\in [1,\infty)$, thus 
\begin{align}\label{g1s}
\|G_{1,k}(t)\|_{s,\R^3}
\leq Ct^{-\frac{3}{2}-\frac{|k|}{2}+\frac{3}{2s}}
\end{align}
for  $t>0$ and $s\in [1,\infty)$. Moreover, it holds that 
\begin{align}
\|G_{2,k}(t)\|_{s,\R^3}^s
&\leq \left(\frac{1}{4\pi t}\right)^{\frac{3}{2}s}
\left(\frac{1}{2\sqrt{t}}\right)^{|k|s}(2\sqrt{t})^3
\int_{\R^3} |\partial_z^ke^{-|z|^2}|^s
\big(1+at+2\sqrt{t}|z|\big)^{\alpha s}\,dz\notag\\
&\leq Ct^{-\frac{3}{2}s-\frac{|k|}{2}s+\frac{3}{2}}
\left(\int_{|z|\leq \frac{1+at}{2\sqrt{t}}}
|\partial_z^ke^{-|z|^2}|^s\cdot2^{\alpha s}
(1+at)^{\alpha s}\,dz\right.\notag\\
&\hspace{5cm}\left.+\int_{|z|\geq \frac{1+at}{2\sqrt{t}}}
|\partial_z^ke^{-|z|^2}|^s\cdot2^{\alpha s}
(2\sqrt{t}|z|)^{\alpha s}\,dz
\right)\notag\\
&\leq Ct^{-\frac{3}{2}s-\frac{|k|}{2}s+\frac{3}{2}}
\left((1+at)^{\alpha s}\int_{\R^3}|\partial_z^ke^{-|z|^2}|^s\,dz
+t^{\frac{\alpha}{2}s}
\int_{\R^3}|\partial_z^ke^{-|z|^2}|^s\,|z|^{\alpha s}\,dz\right)
\notag\\
&\leq Ct^{-\frac{3}{2}s-\frac{|k|}{2}s+\frac{3}{2}}
\{(1+at)^{\alpha s}+t^{\frac{\alpha}{2}s}\},\label{g2est0}
\end{align}
and that 
\begin{align*}
\|G_{3,k}(t)\|^s_{s,\R^3}&=
\left(\frac{1}{4\pi t}\right)^{\frac{3}{2}s}
\left(\frac{1}{2\sqrt{t}}\right)^{|k|s}(2\sqrt{t})^3
\int_{\R^3} |\partial_z^ke^{-|z|^2}|^s
\big(1+|ate_1+2\sqrt{t}z|-(at+2\sqrt{t}z_1)\big)^{\beta s}\,dz\\
&\leq \left(\frac{1}{4\pi t}\right)^{\frac{3}{2}s}
\left(\frac{1}{2\sqrt{t}}\right)^{|k|s}(2\sqrt{t})^3
\int_{\R^3} |\partial_z^ke^{-|z|^2}|^s
\big(1+4\sqrt{t}|z|\big)^{\beta s}\,dz\\
&\leq Ct^{-\frac{3}{2}s-\frac{|k|}{2}s+\frac{3}{2}}\left(
\int_{\R^3} |\partial_z^ke^{-|z|^2}|^s\,dz+
(\sqrt{t})^{\beta s}\int_{\R^3}
|\partial_z^ke^{-|z|^2}|^s|z|^{\beta s}\,dz\right)\\
&\leq Ct^{-\frac{3}{2}s-\frac{|k|}{2}s+\frac{3}{2}}
(1+t^{\frac{\beta}{2}s}),
\end{align*}
thereby,
\begin{align}\label{g2s}
\|G_{2,k}(t)\|_{s,\R^3}\leq 
\begin{cases}
 C(1+a_0)^{\alpha }t^{-\frac{3}{2}-\frac{|k|}{2}+\frac{3}{2s}}
&\quad{\rm if}~t\leq 1,a\in[0,a_0],\\
Ca_0^{\alpha }t^{-\frac{3}{2}-\frac{|k|}{2}+\frac{3}{2s}+\alpha}
&\quad{\rm if}~t\geq 1,a\in(0,a_0],\\
Ct^{-\frac{3}{2}-\frac{|k|}{2}+\frac{3}{2s}+\frac{\alpha}{2}}
&\quad{\rm if}~t\geq 1,a=0 
\end{cases}
\end{align}
and 
\begin{align}\label{g3s}
\|G_{3,k}(t)\|_{s,\R^3}\leq 
\begin{cases}
 Ct^{-\frac{3}{2}-\frac{|k|}{2}+\frac{3}{2s}}
&\quad{\rm if}~t\leq 1,a\in[0,a_0],\\
Ct^{-\frac{3}{2}-\frac{|k|}{2}+\frac{3}{2s}+\frac{\beta}{2}}
&\quad{\rm if}~t\geq 1,a\in[0,a_0]
\end{cases}
\end{align}
for all $s\in[1,\infty)$, where constant $C$ is 
independent of $a$. From
\begin{align*}
\|G_{4,k}(t)\|_{s,\R^3}^s&
\leq Ct^{-\frac{3}{2}s-\frac{|k|}{2}s+\frac{3}{2}}
\int_{\R^3} |\partial_z^ke^{-|z|^2}|^s
(1+at+2\sqrt{t}|z|)^{\alpha s}
\big(1+4\sqrt{t}|z|\big)^{\beta s}\,dz\\
&=Ct^{-\frac{3}{2}s-\frac{|k|}{2}s+\frac{3}{2}}
\left(\int_{|z|\leq \frac{1}{4\sqrt{t}}}
+\int_{\frac{1}{4\sqrt{t}}\leq |z|\leq\frac{1+at}{2\sqrt{t}}}
+\int_{\frac{1+at}{2\sqrt{t}}\leq |z|}\right)\\
&\leq Ct^{-\frac{3}{2}s-\frac{|k|}{2}s+\frac{3}{2}}
\left\{(1+at)^{\alpha s}\int_{\R^3}
|\partial_z^ke^{-|z|^2}|^s\,dz+(1+at)^{\alpha s}(\sqrt{t})^{\beta s}
\int_{\R^3}
|\partial_z^ke^{-|z|^2}|^s\,|z|^{\beta s}\,dz\right.\\
&\hspace{4cm}\left.+(\sqrt{t})^{(\alpha+\beta)s}
\int_{\R^3}
|\partial_z^ke^{-|z|^2}|^s\,|z|^{\alpha s+\beta s}\,dz\right\}\\
&\leq Ct^{-\frac{3}{2}s-\frac{|k|}{2}s+\frac{3}{2}}
\big\{(1+at)^{\alpha s}(1+t^{\frac{\beta}{2}s})
+t^{\frac{\alpha}{2}s+\frac{\beta}{2}s}\big\},
\end{align*}
we find
\begin{align}
\|G_{4,k}(t)\|_{s,\R^3}\leq 
\begin{cases}
C(1+a_0)^{\alpha}t^{-\frac{3}{2}-\frac{|k|}{2}+\frac{3}{2s}}
&\quad{\rm if}~t\leq 1,a\in[0,a_0],\\
Ca_0^{\alpha}t^{-\frac{3}{2}-\frac{|k|}{2}+\frac{3}{2s}+\alpha
+\frac{\beta}{2}}
&\quad{\rm if}~t\geq 1,a\in(0,a_0],\\
Ct^{-\frac{3}{2}-\frac{|k|}{2}+\frac{3}{2s}+
\frac{\alpha}{2}+\frac{\beta}{2}}
&\quad{\rm if}~t\geq 1,a=0 
\end{cases}
\end{align} 
for all $s\in[1,\infty)$, where constant $C$ is 
independent of $a$. Given $q,q_i\leq r~(i=1,2,3,4)$, 
we set 
\begin{align}\label{si}
\frac{1}{s_0}:=1-\frac{1}{q}+\frac{1}{r}\in (0,1],\quad 
\frac{1}{s_i}:=1-\frac{1}{q_i}+\frac{1}{r}\in (0,1]
\quad (i=1,2,3,4).
\end{align}
Then collecting (\ref{g1g4})--(\ref{si}) implies that
\begin{align}
\|&(1+|x|)^\alpha(1+|x|-x_1)^\beta
\partial^k_{x}S_a(t)P_{\R^3}f(x)\|_{r,\R^3}\nonumber\\
&\leq C\Big(\|G_{1,k}(t)\|_{s_0,\R^3}
\|\big((1+|y|)^\alpha(1+|y|-y_1)^\beta P_{\R^3}f\|_{q,\R^3}
+\|G_{2,k}(t)\|_{s_0,\R^3}\|(1+|y|-y_1)^\beta P_{\R^3}f
\|_{q,\R^3}\nonumber\\
&\qquad\quad+\|G_{3,k}(t)\|_{s_0,\R^3}
\|(1+|y|)^\alpha P_{\R^3}f\|_{q,\R^3}
+\|G_{4,k}(t)\|_{s_0,\R^3}\|P_{\R^3}f\|_{q,\R^3}\Big)\nonumber\\
&\leq Ct^{-\frac{3}{2}(\frac{1}{q}-\frac{1}{r})-\frac{|k|}{2}}
\|(1+|y|)^\alpha(1+|y|-y_1)^\beta f
\|_{q,\R^3}\label{lqlrr3small2}
\end{align}
for $t\leq 1$, $f\in L^{q}_{\rho}(\R^3)$ and that  
\begin{align}
\|&(1+|x|)^\alpha(1+|x|-x_1)^\beta
\partial^k_{x}S_a(t)P_{\R^3}f(x)\|_{r,\R^3} \nonumber\\
&\leq C\Big(\|G_{1,k}(t)\|_{s_1,\R^3}
\|\big((1+|y|)^\alpha(1+|y|-y_1)^\beta P_{\R^3}f\|_{q_1,\R^3}
+\|G_{2,k}(t)\|_{s_2,\R^3}
\|(1+|y|-y_1)^\beta P_{\R^3}f\|_{q_2,\R^3}\nonumber\\
&\qquad\quad+\|G_{3,k}(t)\|_{s_3,\R^3}
\|(1+|y|)^\alpha P_{\R^3}f\|_{q_3,\R^3}
+\|G_{4,k}(t)\|_{s_4,\R^3}\|P_{\R^3}f\|_{q_4,\R^3}\Big)\notag\\
&\leq C\sum_{i=1}^4
t^{-\frac{3}{2}(\frac{1}{q_i}-\frac{1}{r})-\frac{|k|}{2}+\eta_i}
\|(1+|x|)^{\gamma_i}(1+|x|-x_1)^{\delta_i}f\|_{q_i,\R^3}
\label{lqlrlr3large2}
\end{align}
for all $t\geq 1$, $f\in \displaystyle\bigcap^3_{i=1}
L^{q_i}_{\rho_i}(\R^3)\cap L^{q_4}(\R^3)$. 
The proof of the assertion 1 and 2 is complete. 
\par We next prove the assertion 3. 
To derive (\ref{losslessr31}), we use (\ref{g1g4}). 
It follows from (\ref{g1s}) and (\ref{g3s}) that 
\begin{align}
\|G_{1,k}*\big((1+|y|)^\alpha(1+|y|-y_1)^\beta|P_{\R^3}f|
\big)\|_{r,\R^3}&\leq 
\|G_{1,k}(t)\|_{s_0,\R^3}\|(1+|y|)^\alpha
(1+|y|-y_1)^\beta P_{\R^3}f\|_{q,\R^3}\notag\\
&\leq Ct^{-\frac{3}{2}(\frac{1}{q}-\frac{1}{r})-\frac{|k|}{2}}
\|(1+|x|)^\alpha(1+|x|-x_1)^\beta f\|_{q,\R^3}\label{g1est}
\end{align}
and
\begin{align}
\|G_{3,k}*\big((1+|y|)^\alpha|P_{\R^3}f|\big)\|_{r,\R^3}&\leq 
\|G_{3,k}(t)\|_{s_0,\R^3}\|(1+|y|)^\alpha P_{\R^3}f\|_{q,\R^3}\notag\\
&\leq Ct^{-\frac{3}{2}(\frac{1}{q}-\frac{1}{r})-\frac{|k|}{2}
+\frac{\beta}{2}}\|(1+|y|)^{\alpha}(1+|y|-y_1)^\beta
f\|_{q,\R^3}\label{g3est}
\end{align}
for all $t\geq 1$, $f\in L^{q}_{\rho}(\R^3)$, 
where $s_0$ is defined by (\ref{si}). 
If $f\in L^q_{\rho}(\R^3)$, then 
$(1+|y|-y_1)^\beta P_{\R^3}f\in L^q_{(1+|x|)^{\alpha q}}(\R^3)$, thus 
the assertion 2 of Lemma \ref{holder} yields 
$(1+|y|-y_1)^\beta P_{\R^3}f\in L^{(3q)/(3+\alpha q),q}(\R^3)$ with 
\begin{align}\label{beta}
\|(1+|y|-y_1)^\beta P_{\R^3}f\|_{L^{\frac{3q}{3+\alpha q},q}(\R^3)}
\leq\|(1+|y|)^{-\alpha}\|_{L^{\frac{3}{\alpha},\infty}(\R^3)}
\|(1+|y|)^{\alpha}(1+|y|-y_1)^\beta P_{\R^3}f\|_{q,\R^3}.
\end{align}
Moreover, we define $s_5$ by  
\begin{align*}
\frac{1}{s_5}=-\frac{1}{q}+\frac{1}{r}-\frac{\alpha}{3}+1,
\end{align*}
which satisfies 
$1<s_5<\infty$ and $1<(3s_5)/(3+\alpha s_5)<\infty$ 
due to $\alpha<3(1-1/q)$ and $q\leq r$. 
Then we get 
$G_{2,k}(t)\in L^{s_5,(3s_5)/(3+\alpha s_5)}(\R^3)$ with 
\begin{align}
\|G_{2,k}(t)\|_{L^{s_5,\frac{3s_5}{3+\alpha s_5}}(\R^3)}
\leq C\|G_{2,k}(t)\|^{1-\theta}_{\kappa_1,\R^3}
\|G_{2,k}(t)\|^\theta_{\kappa_2,\R^3}\label{interpo0}
\end{align}
by virtue of 
\begin{align*}
L^{s_5,\frac{3s_5}{3+\alpha s_5}}(\R^3)
=(L^{\kappa_1}(\R^3),L^{\kappa_2}(\R^3))_{\theta,
\frac{3s_5}{3+\alpha s_5}},
\end{align*} 
where $(\cdot,\cdot)_{\theta,q}$ denotes 
the real interpolation functor and 
$1<\kappa_1<s_5<\kappa_2<\infty,0<\theta<1$ satisfy 
$1/s_5=(1-\theta)/\kappa_1+\theta/\kappa_2$, see 
Bergh and L\"{o}fstr\"{o}m \cite{belobook1976}. 
From (\ref{beta}), (\ref{interpo0}), (\ref{g2s})  
together with Young's inequality for convolution in 
Lorentz spaces due to O'neil 
\cite[Theorem 2.6 and Theorem 3.6]{oneil1963}, we have 
\begin{align}
\|&G_{2,k}*\big((1+|y|-y_1)^\beta|P_{\R^3}f|\big)\|_{r,\R^3}\notag\\
&\leq C\|G_{2,k}(t)\|_{L^{s_5,\frac{3s_5}{3+\alpha s_5}}(\R^3)}
\|(1+|y|-y_1)^\beta P_{\R^3}f\|
_{L^{\frac{3q}{3+\alpha q},q}(\R^3)}\label{g2est2}\\
&\leq Ct^{-\frac{3}{2}-\frac{|k|}{2}+\frac{3}{2s_5}+\alpha} 
\|(1+|y|)^{-\alpha}\|_{L^{\frac{3}{\alpha},\infty}(\R^3)}
\|(1+|y|)^{\alpha}(1+|y|-y_1)^\beta P_{\R^3}f\|_{q,\R^3}\notag\\
&=Ct^{-\frac{3}{2}(\frac{1}{q}-\frac{1}{r})-\frac{|k|}{2}
+\frac{\alpha}{2}} 
\|(1+|y|)^{\alpha}(1+|y|-y_1)^\beta f\|_{q,\R^3}\label{g2est}
\end{align}
for all $t\geq 1$, $f\in L^{q}_{\rho}(\R^3)$. 
Let $\widetilde{q}$ satisfy 
$\max\{
(3q)/(3+\alpha q+\beta q),(2q)/(2+\alpha q)\}<\widetilde{q}<q$ and 
set  
\begin{align}
\frac{1}{\widetilde{q}}=
\left(\max\Big\{
\frac{3q}{3+\alpha q+\beta q},
\frac{2q}{2+\alpha q}\Big\}\right)^{-1}-\frac{2}{3}\varepsilon,
\notag
\end{align}
where $\varepsilon\in(0,(\alpha+\beta)/2)$ (resp. 
$\varepsilon\in(0,(3\alpha)/4)$) 
if $2\beta<\alpha$ (resp. $2\beta\geq \alpha$). 
Then the condition (\ref{alphabetadef0}) ensures 
$\widetilde{q}\in(1,\infty)$ 
and $0<1-(1/\widetilde{q}-1/r)<1$, thus we use 
the assertion 1 in Lemma \ref{holder} to obtain
\begin{align}
\|G_{4,k}*|P_{\R^3}f|\|_{r,\R^3}
&\leq \|G_{4,k}(t)\|_{(1-(1/\widetilde{q}-1/r))^{-1},\R^3}
\|P_{\R^3}f\|_{\widetilde{q},\R^3}\notag\\
&\leq Ct^{-\frac{3}{2}(\frac{1}{\widetilde{q}}-\frac{1}{r})
-\frac{|k|}{2}+\alpha+\frac{\beta}{2}}
\|(1+|y|)^{\alpha}(1+|y|-y_1)^\beta f\|_{q,\R^3}\notag\\
&\leq 
Ct^{-\frac{3}{2}(\frac{1}{q}-\frac{1}{r})
-\frac{|k|}{2}+\frac{\alpha}{4}+
\max\{\frac{\alpha}{4},\frac{\beta}{2}\}+\varepsilon}
\|(1+|y|)^{\alpha}(1+|y|-y_1)^\beta f\|_{q,\R^3}\label{g4est}
\end{align}
for all $t\geq 1$ and $f\in L^{q}_{\rho}(\R^3)$. 
Collecting (\ref{g1g4}), (\ref{g1est}), 
(\ref{g3est}), (\ref{g2est})--(\ref{g4est}) 
yields (\ref{losslessr31}) for $t\geq 1$ and 
$f\in L^{q}_{\rho}(\R^3)$, from which we conclude 
the assertion 3. 
\par Under the condition $0\leq \alpha<3(1-1/q)$, we see 
from the same calculation as in (\ref{g2est}) that 
\begin{align*}
\|G_{2,k}*|P_{\R^3}f|\|_{r,\R^3}\leq 
\begin{cases}
Ct^{-\frac{3}{2}(\frac{1}{q}-\frac{1}{r})-\frac{|k|}{2}
+\frac{\alpha}{2}} 
\|(1+|y|)^{\alpha}f\|_{q,\R^3}&{\rm if~}a>0,\\
Ct^{-\frac{3}{2}(\frac{1}{q}-\frac{1}{r})-\frac{|k|}{2}} 
\|(1+|y|)^{\alpha}f\|_{q,\R^3}&{\rm if~}a=0
\end{cases}
\end{align*}
for $t\geq 1$, $f\in L^{q}_{\rho}(\R^3)$. 
By applying this estimate and 
(\ref{g1est}) with $\beta=0$ to 
\begin{align}
(1+|x|)^\alpha|\partial^k_{x}S_a(t)P_{\R^3}f(x)|
\leq C\big\{G_{1,k}*\big((1+|y|)^\alpha|P_{\R^3}f|\big)(x)
+G_{2,k}*|P_{\R^3}f|(x)\big\},\label{g1g2}
\end{align}
we have (\ref{losslessr32}) and (\ref{losslessr33}) for 
$t\geq 1,f\in L^q_{(1+|x|)^{\alpha q}}(D)$, 
which combined with the assertion 1 with $a=\beta=0$ 
completes the proof.
\end{proof}

\begin{rmk}
By the similar argument to the proof of Proposition \ref{lqlrr3}, 
Kobayashi and Kubo \cite{koku2015} also derived 
the homogeneous estimate (\ref{losslessr33}), 
but it is not clear whether we can obtain 
homogeneous one if $\beta>0$ or if $a>0$. 
For the relation between (\ref{losslessr32}) and (\ref{losslessr33}),
we can recover (\ref{losslessr33}) by taking the 
limit $a\rightarrow 0$. In fact, 
from (\ref{g1g2}), (\ref{g2est0}), 
(\ref{g1est}) with $\beta=0$, (\ref{interpo0}) and 
(\ref{g2est2}) with $\beta=0$, we can obtain
\begin{align}
\|&(1+|x|)^\alpha\partial^k_{x}S_a(t)P_{\R^3}f\|_{r,\R^3}\notag\\
&\leq C\|G_{1,k}*\big((1+|y|)^\alpha|P_{\R^3}f|\big)\|_{r,\R^3}
+C\|G_{2,k}(t)\|_{L^{s_5,
\frac{3s_5}{3+\alpha s_5}}(\R^3)}\|P_{\R^3}f\|
_{L^{\frac{3q}{3+\alpha q},q}(\R^3)}\notag\\
&\leq Ct^{-\frac{3}{2}(\frac{1}{q}-\frac{1}{r})-\frac{|k|}{2}}
\|(1+|y|)^\alpha f\|_{q,\R^3}+
Ct^{-\frac{3}{2}-\frac{|k|}{2}+\frac{3}{2s_5}}
\{(1+at)^{\alpha}+t^{\frac{\alpha}{2}}\}
\|(1+|y|)^\alpha f\|_{q,\R^3}\notag\\
&=Ct^{-\frac{3}{2}(\frac{1}{q}-\frac{1}{r})-\frac{|k|}{2}}
\big\{t^{-\frac{\alpha}{2}}(1+at)^\alpha+1\big\}
\|(1+|y|)^\alpha f\|_{q,\R^3}\label{lqlrr3iso}
\end{align}
for all $a>0$, $t\geq 1$ and 
$f\in L^{q}_{(1+|x|)^{\alpha q}}(\R^3)$.
\end{rmk}

We next derive the estimates of $S_{-a}(t)$ in 
$L^q_{\rho_-}(D)$ by employing the 
duality argument based on the following lemma, 
where $\rho_-$ is given by (\ref{rhominus}), thus it is important to 
derive the $L^1$-$L^r$ estimates in the assertion 1.

\begin{lem}\label{lemlosslessr3}
\begin{enumerate}
\item Given $a_0>0$ arbitrarily, we assume $a\in[0,a_0]$. 
Let $1\leq q\leq r<\infty$ and 
let $\alpha,\beta\geq 0$. 
Then there exists a constant 
$C=C(a_0,q,r,\alpha,\beta)$, independent of $a$, such that 
\begin{align*}
\|(1+|x|)^\alpha(1+|x|-x_1)^\beta
S_a(t)g\|_{r,\R^3}\leq 
Ct^{-\frac{3}{2}(\frac{1}{q}-\frac{1}{r})}
(1+t)^{\eta_4}
\|(1+|x|)^\alpha(1+|x|-x_1)^\beta g\|_{q,\R^3}
\end{align*}
for all $t>0,g\in C_0^\infty(\R^3)^3$ and that 
\begin{align*}
\|&(1+|x|)^\alpha(1+|x|-x_1)^\beta
S_a(t)\partial_{y_j}g\|_{r,\R^3}\\
&\leq 
Ct^{-\frac{3}{2}(\frac{1}{q}-\frac{1}{r})-\frac{1}{2}}
(1+t)^{\eta_4}
\|(1+|x|)^\alpha(1+|x|-x_1)^\beta g\|_{q,\R^3}
\end{align*}
for all $t>0,g\in C_0^\infty(\R^3)^3,j=1,2,3$, where 
$\eta_4$ is given by $($\ref{gammadeltaeta}$)$. 
\item Given $a_0>0$ arbitrarily, we assume $a\in(0,a_0]$. 
Let $1<q\leq r<\infty$ 
and $\alpha,\beta>0$ satisfy $($\ref{alphabetadef0}$)$. 
For $\varepsilon>0,$ there exists a constant 
$C=C(a_0,q,r,\alpha,\beta,\varepsilon)$, 
independent of $a$, such that  
\begin{align*}
\|&(1+|x|)^\alpha(1+|x|-x_1)^\beta
S_a(t)g\|_{r,\R^3}\notag\\&\leq 
Ct^{-\frac{3}{2}(\frac{1}{q}-\frac{1}{r})+
\frac{\alpha}{4}+\max\{\frac{\alpha}{4},\frac{\beta}{2}\}
+\varepsilon}
\|(1+|x|)^\alpha(1+|x|-x_1)^\beta g\|_{q,\R^3}
\end{align*}
for all $t\geq 1$, $g\in C^\infty_{0}(\R^3)^3$ and that 
\begin{align*}
\|&(1+|x|)^\alpha(1+|x|-x_1)^\beta
S_a(t)\partial_{y_j}g\|_{r,\R^3}\notag\\&\leq 
Ct^{-\frac{3}{2}(\frac{1}{q}-\frac{1}{r})-\frac{1}{2}+
\frac{\alpha}{4}+\max\{\frac{\alpha}{4},\frac{\beta}{2}\}
+\varepsilon}
\|(1+|x|)^\alpha(1+|x|-x_1)^\beta g\|_{q,\R^3}
\end{align*}
for all $t\geq 1,g\in C^\infty_{0}(\R^3)^3,j=1,2,3$.
\item Given $a_0>0$ arbitrarily, we assume $a\in(0,a_0]$. 
Let $1<q\leq r<\infty$ 
and $0\leq\alpha<3(1-1/q)$. Then there exists a constant 
$C=C(a_0,q,r,\alpha)$, 
independent of $a$, such that 
\begin{align*}
\|&(1+|x|)^\alpha
S_a(t)g\|_{r,\R^3}\leq 
Ct^{-\frac{3}{2}(\frac{1}{q}-\frac{1}{r})+
\frac{\alpha}{2}}
\|(1+|x|)^\alpha g\|_{q,\R^3}
\end{align*}
for all $t\geq 1$, $g\in C^\infty_{0}(\R^3)^3$ and that 
\begin{align*}
\|(1+|x|)^\alpha S_a(t)\partial_{y_j}g\|_{r,\R^3}\leq 
Ct^{-\frac{3}{2}(\frac{1}{q}-\frac{1}{r})-\frac{1}{2}+
\frac{\alpha}{2}}
\|(1+|x|)^\alpha g\|_{q,\R^3}
\end{align*}
for all $t\geq 1,g\in C^\infty_{0}(\R^3)^3,j=1,2,3$.
\item Let $a=0$. Let $1<q\leq r<\infty$ 
and $0\leq\alpha<3(1-1/q)$. Then there exists a constant 
$C=C(q,r,\alpha)$ such that 
\begin{align*}
\|&(1+|x|)^\alpha
S_0(t)g\|_{r,\R^3}\leq 
Ct^{-\frac{3}{2}(\frac{1}{q}-\frac{1}{r})}
\|(1+|x|)^\alpha g\|_{q,\R^3}
\end{align*}
for all $t>0$, $g\in C^\infty_{0}(\R^3)^3$ and that 
\begin{align*}
\|(1+|x|)^\alpha S_0(t)\partial_{y_j}g\|_{r,\R^3}\leq 
Ct^{-\frac{3}{2}(\frac{1}{q}-\frac{1}{r})-\frac{1}{2}}
\|(1+|x|)^\alpha g\|_{q,\R^3}
\end{align*}
for all $t>0,g\in C^\infty_{0}(\R^3)^3,j=1,2,3$.
\end{enumerate}
\end{lem}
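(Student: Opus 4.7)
The plan is to adapt the proof of Proposition \ref{lqlrr3} by replacing $P_{\R^3}f$ with $g$ throughout. Since the formula (\ref{oseensemir3}) represents $S_a(t)$ as pure convolution with the translated Gaussian $G_a(x,t) = (4\pi t)^{-3/2} e^{-|x-ate_1|^2/(4t)}$, the Helmholtz projection plays no role in the kernel bounds, and the weight-splitting inequality leading to (\ref{g1g4}) applies verbatim to $g$. For the second half of each assertion, I would first integrate by parts in the kernel to write $S_a(t)\partial_{y_j}g(x) = \int \partial_{x_j}G_a(x-y,t)g(y)\,dy$, so that the same four-term decomposition applies with the convolution kernels $G_{i,k}$ taken at $|k|=1$ instead of $|k|=0$.

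For assertion 1, I would apply Young's inequality to each of the four convolutions with exponent $1/s_0 = 1-1/q+1/r$, using the pointwise bounds (\ref{g1s}), (\ref{g2s}), (\ref{g3s}) together with the $G_{4,k}$ bound derived in Proposition \ref{lqlrr3}. These bounds are uniform in $a \in [0,a_0]$ and require only $s_0 \in [1,\infty)$, which is exactly $1 \leq q \leq r < \infty$; the $(1+t)^{\eta_4}$ factor on the right-hand side comes from the worst of the four kernels, namely $G_{4,k}$. Because no Lorentz interpolation or boundedness of the Riesz transform is invoked at this step, the endpoint $q=1$ is admissible, which is the one genuinely new feature compared with Proposition \ref{lqlrr3}. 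For assertion 2, I would reuse the refined large-$t$ arguments from the proof of assertion 3 of Proposition \ref{lqlrr3}: the Lorentz-space interpolation for $G_{2,k}$ based on assertion 2 of Lemma \ref{holder} (so that $(1+|y|-y_1)^\beta g \in L^{3q/(3+\alpha q),q}(\R^3)$), combined with a H\"older step for $G_{4,k}$ using an exponent $\widetilde{q}$ strictly larger than $\max\{3q/(3+\alpha q+\beta q), 2q/(2+\alpha q)\}$, which is precisely where the arbitrarily small loss $\varepsilon$ arises. Assertions 3 and 4 then follow by specialising to $\beta=0$: the weight-splitting collapses to the two-term decomposition (\ref{g1g2}), and in assertion 4 the factor $(1+at)^\alpha$ with $a=0$ disappears, leaving the purely homogeneous rate; the small-$t$ portion in assertion 4 is recovered from assertion 1.

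The main obstacle is bookkeeping the exponents in the large-$t$ regime so that the combined worst-case rate in assertion 2 is exactly $\alpha/4 + \max\{\alpha/4,\beta/2\} + \varepsilon$, balancing the $G_{2,k}$ contribution (worst rate $\alpha$) against the $G_{4,k}$ contribution (worst rate $\alpha+\beta/2$) through the interpolation parameter $\widetilde{q}$. Since the kernel estimates depend only on the translated Gaussian structure and not on the data $g$, they transfer mechanically from Proposition \ref{lqlrr3}; the remaining verifications are routine.
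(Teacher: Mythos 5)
Your proposal follows the paper's own route essentially verbatim: the paper also replaces $P_{\R^3}f$ by $g$, writes $S_a(t)\partial_{y_j}g$ as convolution with $\partial_{z_j}e^{-|z|^2}$ scaled, reuses the four-kernel decomposition and the bounds on $G_{i,k}$ (Young's inequality for assertion 1, noting that for $q=1$ each exponent $s_i$ equals $r<\infty$ so no Riesz-transform or interpolation step is needed, hence the endpoint is admissible), and for assertions 2--4 repeats the Lorentz-interpolation/H\"older refinements and the $\beta=0$ specialization from Proposition \ref{lqlrr3}, with the small-$t$ part of assertion 4 supplied by assertion 1. The only minor inaccuracy is descriptive: after the Lorentz refinement the $G_{2,k}$ contribution enters with rate $\alpha/2$ rather than $\alpha$, but this does not affect the argument or the stated exponent.
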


\begin{proof}
It follows that  
\begin{align}
(1+|x|)^\alpha&(1+|x|-x_1)^\beta|S_a(t)
g(x)|\nonumber\\
&\leq C\Big\{G_{1,0}*\big((1+|y|)^\alpha
(1+|y|-y_1)^\beta|g|\big)(x)
+G_{2,0}*\big((1+|y|-y_1)^\beta|g|\big)(x)\nonumber\\
&\qquad\quad+G_{3,0}*\big((1+|y|)^\alpha|g|\big)(x)
+G_{4,0}*|g|(x)\Big\}\label{g1g42}
\end{align} 
for $g\in C_0^\infty(\R^3)^3$ and that 
\begin{align}
(1+|x|)^\alpha&(1+|x|-x_1)^\beta|S_a(t)
\partial_{y_j}g(x)|\nonumber\\
&\leq C\Big\{G_{1,1}*\big((1+|y|)^\alpha
(1+|y|-y_1)^\beta|g|\big)(x)
+G_{2,1}*\big((1+|y|-y_1)^\beta|g|\big)(x)\nonumber\\
&\qquad\quad+G_{3,1}*\big((1+|y|)^\alpha|g|\big)(x)
+G_{4,1}*|g|(x)\Big\}\label{g1g43}
\end{align} 
for $g\in C_0^\infty(\R^3)^3$ due to
\begin{align*}
S_a(t)\partial_{y_j}g(x)
=-\left(\frac{1}{4\pi t}\right)^{\frac{3}{2}}
\left(\frac{1}{2\sqrt{t}}\right)
\int_{\R^3}h_j
\left(\frac{x-ate_1-y}{2\sqrt{t}}\right)g(y)\,dy,
\end{align*} 
where $h_j(z)=\partial_{z_j}e^{-|z|^2}$, 
see also (\ref{g1def})--(\ref{hdef}). 
By (\ref{g1g42})--(\ref{g1g43}), 
we conclude the assertions from the same calculation as in 
the proof of Proposition \ref{lqlrr3}. 
We note that each 
$s_0,\cdots,s_4$ with $q=q_i=1$, see (\ref{si}), coincides with $r$, 
which leads to $1<s_i<\infty~(i=0,1,2,3,4)$ if $r<\infty$. 
Therefore, we have the assertion 1 with $q=1$ 
by the same calculations as in 
(\ref{lqlrr3small2})--(\ref{lqlrlr3large2}) with $q=q_i=1$. 
\end{proof}

\noindent By taking Lemma \ref{lemlosslessr3} into account, 
the duality argument yields the following. 
In order to analyze the nonlinear problems, it is enough to 
derive the assertion 1 below. 
However, the assertion 2--4 below imply that  
the rate in the assertion 1 can be improved 
if $r<\infty$ and if $\alpha,\beta$ are smaller than 
those in the assertion 1, which is of independent interest. 
\begin{prop}\label{proplqlrr3dual}
\begin{enumerate}
\item Given $a_0>0$ arbitrarily, we assume $a\in[0,a_0]$. 
Let $i=0,1$ and let $1<q\leq r\leq \infty~(q\ne\infty),
\alpha,\beta\geq 0$ satisfy $($\ref{alphabeta4}$)$.
Then there exists a constant 
$C=C(a_0,i,q,r,\alpha,\beta)$, independent of $a$, such that 
\begin{align}
\|&(1+|x|)^{-\alpha}(1+|x|-x_1)^{-\beta}\nabla^i 
S_{-a}(t)P_{\R^3}f\|_{r,\R^3}
\nonumber\\
&\leq 
Ct^{-\frac{3}{2}(\frac{1}{q}-\frac{1}{r})-\frac{i}{2}}
(1+t)^{\eta_4}
\|(1+|x|)^{-\alpha}(1+|x|-x_1)^{-\beta} f\|_{q,\R^3}
\label{lqlrr3dual3}
\end{align}
for $t>0$ and $f\in L^{q}_{\rho_{-}}(\R^3)$, where 
$\eta_4$ and $\rho_-$ are given by $($\ref{gammadeltaeta}$)$ and 
$($\ref{rhominus}$)$, respectively. 
\item Given $a_0>0$ arbitrarily, we assume $a\in(0,a_0]$. 
Let $i=0,1$ and let $1<q\leq r<\infty,\alpha,\beta>0$ 
satisfy $\beta<1/r$ and $\alpha+\beta<3/r$. 
For $\varepsilon>0,$ there exists a constant 
$C=C(a_0,i,q,r,\alpha,\beta,\varepsilon)$, 
independent of $a$, such that 
\begin{align}
\|&(1+|x|)^{-\alpha}(1+|x|-x_1)^{-\beta}\nabla^i 
S_{-a}(t)P_{\R^3}f\|_{r,\R^3}
\nonumber\\
&\leq 
Ct^{-\frac{3}{2}(\frac{1}{q}-\frac{1}{r})
-\frac{i}{2}+
\frac{\alpha}{4}+\max\{\frac{\alpha}{4},\frac{\beta}{2}\}
+\varepsilon}
\|(1+|x|)^{-\alpha}(1+|x|-x_1)^{-\beta} f\|_{q,\R^3}
\label{lqlrr3dual}
\end{align}
for $t\geq 1$ and $f\in L^{q}_{\rho_{-}}(\R^3)$, where 
$\rho_-$ is given by $($\ref{rhominus}$)$. 
\item Given $a_0>0$ arbitrarily, we assume $a\in(0,a_0]$. 
Let $i=0,1$ and let $1<q\leq r<\infty,0\leq \alpha<3/r.$ 
Then there exists a constant 
$C=C(a_0,i,q,r,\alpha)$, 
independent of $a$, such that 
\begin{align}
\|(1+|x|)^{-\alpha}\nabla^i S_{-a}(t)P_{\R^3}f\|_{r,\R^3}
\leq 
Ct^{-\frac{3}{2}(\frac{1}{q}-\frac{1}{r})
-\frac{i}{2}+\frac{\alpha}{2}}
\|(1+|x|)^{-\alpha}f\|_{q,\R^3}
\label{lqlrr3dual2}
\end{align}
for $t\geq 1$ and $f\in L^{q}_{\rho_{-}}(\R^3)$, where 
$\rho_-$ is given by $($\ref{rhominus}$)$. 
\item Let $a=0,i=0,1$ and let $1<q\leq r<\infty,0\leq \alpha<3/r$.   
Then there exists a constant 
$C=C(i,q,r,\alpha)$ such that 
\begin{align}
\|(1+|x|)^{-\alpha}\nabla^i S_{0}(t)P_{\R^3}f\|_{r,\R^3}
\leq Ct^{-\frac{3}{2}(\frac{1}{q}-\frac{1}{r})
-\frac{i}{2}}\|(1+|x|)^{-\alpha}f\|_{q,\R^3}
\label{lqlrr3dual4}
\end{align}
for $t>0$ and $f\in L^{q}_{(1+|x|)^{-\alpha q}}(\R^3)$.
\end{enumerate}
\end{prop}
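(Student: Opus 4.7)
The strategy is a duality argument based on the adjoint identities for the underlying Fourier multipliers. Since $S_a(t)g(x)=(e^{t\Delta}g)(x-ate_1)$, the self-adjointness of the heat semigroup together with the translation identity $\int h_1(x+ate_1)h_2(x)\,dx=\int h_1(x)h_2(x-ate_1)\,dx$ yields the adjoint relation $S_{-a}(t)^{\ast}=S_a(t)$; the Fujita-Kato projection $P_{\R^3}=I+\mathcal{R}\otimes\mathcal{R}$ is self-adjoint, and both $S_a(t)$ and $P_{\R^3}$ commute with every $\partial_{x_j}$. Set $w(x):=(1+|x|)^{-\alpha}(1+|x|-x_1)^{-\beta}$, so that the left-hand side of each claimed inequality reads $\|w\nabla^i S_{-a}(t)P_{\R^3}f\|_{r,\R^3}$.

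For $r<\infty$ I express this norm as the supremum of $\int_{\R^3}(\nabla^i S_{-a}(t)P_{\R^3}f)(wg)\,dx$ over $g\in C_0^\infty(\R^3)$ with $\|g\|_{r',\R^3}=1$. Integrating by parts $i$ times and invoking the two adjoint identities above transforms the pairing into $(-1)^i\langle f,\,P_{\R^3}S_a(t)\partial^i(wg)\rangle$. Weighted H\"older then bounds it by $\|wf\|_{q,\R^3}\,\|w^{-1}P_{\R^3}S_a(t)\partial^i(wg)\|_{q',\R^3}$. Under hypothesis~$(\ref{alphabeta4})$, Theorem \ref{muckenthm} shows that the dual weight $(w^{-1})^{q'}=\rho_{-}^{-1/(q-1)}$ belongs to $\A_{q'}(\R^3)$, so the Helmholtz projection is bounded on $L^{q'}_{(w^{-1})^{q'}}(\R^3)$ and may be discarded.

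The remaining quantity $\|w^{-1}S_a(t)\partial^i(wg)\|_{q',\R^3}$ is precisely what the first assertion of Lemma \ref{lemlosslessr3} controls, applied with $(q,r)$ replaced by $(r',q')$ and data $wg$: it produces a bound $Ct^{-\frac{3}{2}(\frac{1}{q}-\frac{1}{r})-\frac{i}{2}}(1+t)^{\eta_4}\|w^{-1}(wg)\|_{r',\R^3}=Ct^{-\frac{3}{2}(\frac{1}{q}-\frac{1}{r})-\frac{i}{2}}(1+t)^{\eta_4}\|g\|_{r',\R^3}$, which after taking the supremum in $g$ yields assertion~1 for $r<\infty$. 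The endpoint $r=\infty$ is handled identically through weak-$\ast$ duality with $L^1(\R^3)$: here $r'=1$ and $q'\in(1,\infty)$, and the first assertion of Lemma \ref{lemlosslessr3} still applies since its source-index hypothesis is $1\leq q$. Assertions 2--4 follow by the same duality chain, replacing the first assertion of Lemma \ref{lemlosslessr3} by its sharper counterparts (assertions~2, 3, 4 respectively); the stricter constraints on $(\alpha,\beta,r)$ imposed there are stronger than~$(\ref{alphabeta4})$, so the Muckenhoupt boundedness of $P_{\R^3}$ on the appropriate dual space remains automatic.

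The only delicate point is verifying that the dual weight lies in $\A_{q'}(\R^3)$, which is exactly the content of hypothesis~$(\ref{alphabeta4})$ read through Theorem \ref{muckenthm}; the integration by parts is justified since $S_{-a}(t)P_{\R^3}f$ is smooth for $t>0$ while $wg$ is compactly supported, and the singularities of $\nabla w$ are confined to lower-dimensional sets and remain integrable. Once these points are secured, the remainder reduces to routine applications of H\"older's inequality, self-adjointness, and the relevant part of Lemma \ref{lemlosslessr3}.
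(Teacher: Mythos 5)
Your proposal is correct and is essentially the paper's own argument: dualize, use $S_{-a}(t)^{\ast}=S_a(t)$ and the self-adjointness plus weighted ($\A_q$/Muckenhoupt, via Theorem \ref{muckenthm}) boundedness of $P_{\R^3}$, and reduce to the corresponding assertion of Lemma \ref{lemlosslessr3} applied with the exponents $(r',q')$. The only (cosmetic) difference is that you attach the weight to the test function and discard $P_{\R^3}$ on that side — so you must mollify $wg$, which is merely Lipschitz, before literally invoking Lemma \ref{lemlosslessr3} — whereas the paper keeps $\varphi\in C_0^\infty$ unweighted, splits the weights by H\"older, and absorbs $P_{\R^3}$ into $\|(1+|x|)^{-\alpha}(1+|x|-x_1)^{-\beta}f\|_{q,\R^3}$; the two routes rest on the same $\A_q$ condition (\ref{alphabeta4}).
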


\begin{proof}
If (\ref{alphabeta4}) is fulfilled, then it turns out that 
\begin{align*}
|(S_{-a}(t)P_{\R^3}f,\varphi)_{\R^3}|
&=\left|(P_{\R^3}f,S_a(t)\varphi)_{\R^3}\right|\\
&\leq \|(1+|x|)^{-\alpha}(1+|x|-x_1)^{-\beta}P_{\R^3}f\|_{q,\R^3}
\|(1+|x|)^{\alpha}(1+|x|-x_1)^{\beta}S_a(t)\varphi\|_{q',\R^3}\\
&\leq \|(1+|x|)^{-\alpha}(1+|x|-x_1)^{-\beta}f\|_{q,\R^3}
\|(1+|x|)^{\alpha}(1+|x|-x_1)^{\beta}S_a(t)\varphi\|_{q',\R^3}
\end{align*} 
for $t>0,\varphi\in C_{0}^\infty(\R^3)^3$ and that 
\begin{align*}
|(\nabla S_{-a}(t)P_{\R^3}f,\varphi)_{\R^3}|
&=\left|\sum_{i=1}^3(P_{\R^3}f,S_a(t)\partial_{y_i}
(\varphi_{i1},\varphi_{i2},\varphi_{i3}))_{\R^3}\right|\\
&\leq \|(1+|x|)^{-\alpha}(1+|x|-x_1)^{-\beta}P_{\R^3}f\|_{q,\R^3}\\
&\qquad \times \sum_{i=1}^3
\|(1+|x|)^{\alpha}(1+|x|-x_1)^{\beta}S_a(t)
\partial_{y_i}(\varphi_{i1},\varphi_{i2},\varphi_{i3})\|_{q',\R^3}\\
&\leq \|(1+|x|)^{-\alpha}(1+|x|-x_1)^{-\beta}f\|_{q,\R^3}\\
&\qquad \times \sum_{i=1}^3
\|(1+|x|)^{\alpha}(1+|x|-x_1)^{\beta}S_a(t)
\partial_{y_i}(\varphi_{i1},\varphi_{i2},\varphi_{i3})\|_{q',\R^3}
\end{align*}
for $t>0,\varphi=\{\varphi_{ij}\}_{i,j=1,2,3}
\in C_0^\infty(\R^3)^{3\times 3},$ 
where $1/q'=1-1/q$ and  
$(\cdot,\cdot)_{\R^3}$ denotes inner product on $\R^3$. 
Therefore, we conclude the assertions  
by applying Lemma \ref{lemlosslessr3} with $q=r',r=q'$ and by 
$L^{r'}_{(1+|x|)^{\alpha r'}
(1+|x|-x_1)^{\beta r'}}(\R^3)^*
=L^r_{(1+|x|)^{-\alpha r}(1+|x|-x_1)^{-\beta r}}(\R^3)$. The proof is 
complete.
\end{proof}

\begin{rmk}
\par Concerning the relation between (\ref{lqlrr3dual2}) 
(resp. (\ref{lqlrr3dual3}) with $\beta=0,a>0$)
and (\ref{lqlrr3dual4}) 
(resp. (\ref{lqlrr3dual3}) with $\beta=0,a=0$), we can find that 
(\ref{lqlrr3dual4}) and (\ref{lqlrr3dual3}) with $\beta=0,a=0$, 
that is 
\begin{align}
\|(1+|x|)^{-\alpha}\nabla^i S_{0}(t)P_{\R^3}f\|_{\infty,\R^3}
\leq Ct^{-\frac{3}{2q}-\frac{i}{2}}(1+t)^{\frac{\alpha}{2}}
\|(1+|x|)^{-\alpha}f\|_{q,\R^3}
\label{lqlrr3dual5}
\end{align}
are recovered by taking the limit $a\rightarrow 0$. 
In fact, by applying (\ref{g2est0}), (\ref{g1est}) with $\beta=0$, 
(\ref{interpo0}) and (\ref{g2est2}) with $\beta=0$ to 
\begin{align*}
&(1+|x|)^\alpha|S_a(t)g(x)|
\leq C\big\{G_{1,0}*\big((1+|y|)^\alpha|g|\big)(x)
+G_{2,0}*|g|(x)\big\}\\
&(1+|x|)^\alpha|S_a(t)
\partial_{y_j}g(x)|\leq C\big\{G_{1,1}*\big((1+|y|)^\alpha|g|\big)(x)
+G_{2,1}*|g|(x)\big\}, 
\end{align*} 
it holds that 
\begin{align*}
&\|(1+|x|)^\alpha S_a(t)g\|_{r,\R^3}
\leq 
Ct^{-\frac{3}{2}(\frac{1}{q}-\frac{1}{r})}
\big\{t^{-\frac{\alpha}{2}}(1+at)^{\alpha}+1\big\}
\|(1+|x|)^\alpha g\|_{q,\R^3},\\
&\|(1+|x|)^\alpha S_a(t)\partial_{y_j}g\|_{r,\R^3}\leq 
Ct^{-\frac{3}{2}(\frac{1}{q}-\frac{1}{r})-\frac{1}{2}}
\big\{t^{-\frac{\alpha}{2}}(1+at)^{\alpha}+1\big\}
\|(1+|x|)^\alpha g\|_{q,\R^3},\\
&\|(1+|x|)^\alpha S_a(t)g\|_{r,\R^3}
\leq Ct^{-\frac{3}{2}(1-\frac{1}{r})}
\big\{(1+at)^{\alpha}+t^{\frac{\alpha}{2}}\big\}
\|(1+|x|)^\alpha g\|_{1,\R^3},\\
&\|(1+|x|)^\alpha S_a(t)\partial_{y_j}g\|_{r,\R^3}\leq 
Ct^{-\frac{3}{2}(1-\frac{1}{r})-\frac{1}{2}}
\big\{(1+at)^{\alpha}+t^{\frac{\alpha}{2}}\big\}
\|(1+|x|)^\alpha g\|_{1,\R^3}
\end{align*}
for $a>0,t\geq 1,g\in C_0^\infty(\R^3)^3$ 
and $j=1,2,3$ provided $1<q\leq r<\infty.$ 
Therefore, as in the proof of Proposition \ref{proplqlrr3dual}, 
the duality argument leads us to 
\begin{align}
&\|(1+|x|)^{-\alpha}\nabla^i S_{-a}(t)P_{\R^3}f\|_{r,\R^3}
\leq Ct^{-\frac{3}{2}(\frac{1}{q}-\frac{1}{r})-\frac{i}{2}}
\big\{t^{-\frac{\alpha}{2}}(1+at)^{\alpha}+1\big\}
\|(1+|x|)^{-\alpha}f\|_{q,\R^3}\label{lqlinftyr30}
\\
&\|(1+|x|)^{-\alpha}\nabla^i S_{-a}(t)P_{\R^3}f\|_{\infty,\R^3}
\leq Ct^{-\frac{3}{2q}-\frac{i}{2}}
\big\{(1+at)^{\alpha}+t^{\frac{\alpha}{2}}\big\}
\|(1+|x|)^{-\alpha}f\|_{q,\R^3}\label{lqlinftyr3}
\end{align}
for $a>0,t\geq 1,f\in L^{q}_{(1+|x|)^{-\alpha q}}(\R^3)$ 
provided $1<q\leq r<\infty$. 
We also note that it is impossible to 
get the decay rate $t^{-3/(2q)-i/2}$ in (\ref{lqlrr3dual5}), 
thus the $L^q$-$L^\infty$ estimates  
with the weight $(1+|x|)^{-\alpha}$ 
are essentially inhomogeneous, 
see Remark \ref{rmks0optimal} in Section 7.
\end{rmk}

\section{Decay estimates near the boundary}
\quad This section is devoted to the 
decay estimates of $e^{-tA_a}f$ near the boundary of $D$. 
Given $f\in L^q(D)$, Kobayashi and Shibata \cite{kosh1998} 
derived the decay estimate with the rate $t^{-3/(2q)}$, however, 
given $f\in L^q_{\rho}(D)$, 
we use the better spatial decay structure of $f$ at infinity 
to get the better decay rate $t^{-3/(2q)-\varepsilon}$, where 
$\rho$ is given by (\ref{rho}). 
We also derive the slower rate if $f\in L^q_{\rho_-}(D)$, 
where $\rho_-$ is given by (\ref{rhominus}). 
\par Let us start with the so-called 
local energy decay estimates derived by 
\cite{kosh1998}, which are the decay
estimates of $e^{-tA_a}f$ near the boundary 
when the initial data $f$ has compact support.

\begin{prop}[{\cite[Theorem 1.1]{kosh1998}}]\label{local}
\par Let $R>0$ such that $\R^3\setminus D\subset B_{R-1}(0)$. 
We set $D_R:=D\cap B_{R}(0)$. Let 
$1<q<\infty$, $a_0>0$ and assume $|a|\leq a_0$. Then 
there exists a constant $C>0$, independent of $a$, such that  
\begin{align*}
\|\partial_t e^{-tA_a}f\|_{q,D_R}
+\|e^{-tA_a}f\|_{W^{2,q}(D_R)}
\leq Ct^{-\frac{3}{2}}\|f\|_{q,D_R}
\end{align*}
for all $t\geq 1$ and 
$f\in \{f\in L_{\sigma}^q(D)\mid f(x)=0~{\rm for }~|x|\geq R\}$.
\end{prop}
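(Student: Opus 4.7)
The plan is to start from the Dunford--Taylor representation $e^{-tA_a}f=\frac{1}{2\pi i}\int_\Gamma e^{\lambda t}(\lambda+A_a)^{-1}f\,d\lambda$ along a contour $\Gamma$ in the resolvent set of $-A_a$, and to reduce the proof to a study of the resolvent $(\lambda+A_a)^{-1}$ localized in $D_R$ near $\lambda=0$, since the low-frequency behaviour controls the polynomial decay rate. By Lemma \ref{lambdalarge}, for $|\lambda|\geq \delta$ inside a sector $|\arg\lambda|<\pi-\varepsilon$ the resolvent decays like $|\lambda|^{-1}$ uniformly in $a\in[-a_0,a_0]$, so deforming $\Gamma$ into the left half-plane away from a small disk around zero contributes an exponentially decaying piece. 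All the difficulty is concentrated in the portion of $\Gamma$ encircling $\lambda=0$.

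The core step is to show that, for $f$ supported in $D_R$, the map $\lambda \mapsto \chi_R(\lambda+A_a)^{-1}f$ (with $\chi_R$ a smooth cut-off to $D_R$) extends holomorphically through $\lambda=0$ to a full neighbourhood, with derivatives up to third order bounded in $W^{2,q}(D_R)$. I would construct a parametrix by gluing the whole-space Oseen resolvent $R_a(\lambda):=(\lambda-\Delta+a\partial_{x_1})^{-1}P_{\R^3}$ (acting far from the obstacle) with the Stokes resolvent on a bounded auxiliary domain (acting near $\partial D$) via two cut-off functions, and then correcting the divergence created by the cut-offs with a Bogovski\u{\i}-type operator. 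The $\R^3$ resolvent is an explicit convolution built from the Oseen fundamental solution; when sandwiched between cut-offs supported in a neighbourhood of $D_R$ and tested against $L^q_\sigma(D)$ functions vanishing outside $D_R$, it admits an asymptotic expansion in $\lambda$ whose low-order terms are holomorphic at $\lambda=0$ (the non-smooth $\sqrt{\lambda}$-contributions arise from the kernel at spatial infinity and are killed by the localisation). The parametrix then reduces the resolvent construction to a Fredholm equation $(I+K(\lambda))v=g$ on a bounded set, with $K(\lambda)$ compact and holomorphic in a neighbourhood of $0$; invertibility of $I+K(0)$ follows from uniqueness for the stationary Oseen problem in $D$ with appropriate decay at infinity.

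Once holomorphic extension and derivative bounds are in hand, three integrations by parts in $\lambda$ on the piece of $\Gamma$ near the origin convert $e^{\lambda t}$ into the factor $t^{-3/2}$ (the third derivative is the precise threshold at which the resulting integrand is integrable on $\Gamma$ uniformly in $t\geq 1$). The bound on $\partial_t e^{-tA_a}f$ comes from the same argument applied to $\lambda(\lambda+A_a)^{-1}f$, and the $W^{2,q}(D_R)$ bound comes by combining with Lemma \ref{lambdalarge} on the high-frequency part. The main obstacle is the low-frequency analysis of the parametrix: the Oseen fundamental solution has a non-analytic dependence on $\lambda$ at the origin because of the paraboloidal wake, and one has to argue that after localisation by $\chi_R$ all non-analytic contributions vanish or are controlled; a secondary difficulty is to keep constants uniform in $a\in[-a_0,a_0]$, which requires that both the Neumann-series inversion of $I+K(\lambda)$ and the bound for $K(0)^{-1}$ behave continuously as $a\to 0$, where the operator degenerates from Oseen type to Stokes type.
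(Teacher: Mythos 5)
This proposition is not proved in the paper at all: it is quoted verbatim from Kobayashi--Shibata \cite[Theorem 1.1]{kosh1998}, so the relevant comparison is with their resolvent analysis. Your overall frame (Dunford--Taylor integral, high frequencies via Lemma \ref{lambdalarge}, parametrix gluing the whole-space Oseen resolvent to a bounded-domain problem with Bogovski\u{\i} correction, Fredholm inversion of $I+K(\lambda)$ via uniqueness for the stationary problem) is indeed the skeleton of that proof. But the core low-frequency step in your proposal is wrong in two concrete ways. First, localisation does \emph{not} kill the non-analytic behaviour at $\lambda=0$. Already for $a=0$ (which is included, since $|a|\le a_0$) the cut-off whole-space resolvent has kernel $e^{-\sqrt{\lambda}|x-y|}/(4\pi|x-y|)$, whose expansion on a compact set is $\tfrac{1}{4\pi|x-y|}-\tfrac{\sqrt{\lambda}}{4\pi}+O(\lambda)$: the $\sqrt{\lambda}$ term has the constant coefficient $-1/(4\pi)$ and survives any spatial cut-off. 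Hence $\chi_R(\lambda+A_a)^{-1}f$ is analytic in $\sqrt{\lambda}$, not in $\lambda$, and no holomorphic extension through $\lambda=0$ with bounded third derivatives exists uniformly in $a\in[-a_0,a_0]$; the derivatives of the localised resolvent blow up like $|\lambda|^{-1/2}$, which is exactly what Kobayashi--Shibata (and Iwashita in the Stokes case) quantify.

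Second, even if your holomorphy claim were true, the arithmetic of your final step does not produce the stated rate: each integration by parts in $\lambda$ of $e^{\lambda t}$ yields one factor of $t^{-1}$, so three integrations by parts would give $t^{-3}$, not $t^{-3/2}$. There is no way to get the half-integer rate out of a resolvent that is smooth at the origin; the rate $t^{-3/2}$ in three dimensions is precisely the Laplace-transform signature of the leading $\lambda^{1/2}$ term in the expansion of the localised resolvent. The correct mechanism is therefore the opposite of what you propose: one establishes an expansion of $\chi_R(\lambda+A_a)^{-1}$ near $\lambda=0$ in powers of $\lambda^{1/2}$ (with remainder estimates on one or two $\lambda$-derivatives of the form $C|\lambda|^{1/2-j}$, uniformly in $a$, which is the genuinely hard part because the Oseen symbol degenerates to the Stokes one as $a\to0$), and then converts the leading non-analytic term into $t^{-3/2}$ through the Fourier--Laplace representation of the semigroup. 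Your parametrix construction can be salvaged, but its purpose must be to extract this fractional expansion rather than to prove holomorphy.
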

\noindent Combining Proposition \ref{local} and 
the $L^q$-$L^r$ estimates of $S_a(t)$, 
see (\ref{oseensemir3}),   
implies the following estimates 
of $e^{-tA_a}f$ near the boundary when $f\in L^q_{\sigma}(D)$.  
\begin{prop}[{\cite[(6.18)]{kosh1998}}]\label{local2}
\par Let $R>0$ such that $\R^3\setminus D\subset B_{R-1}(0)$ 
and set $D_R=D\cap B_{R}(0)$. Let 
$1<q<\infty$, $a_0>0$ and assume $|a|\leq a_0$. Then 
there exists a constant $C>0$, independent of $a$, such that  
\begin{align*}
\|\partial_t e^{-tA_a}f\|_{q,D_R}
+\|e^{-tA_a}f\|_{W^{2,q}(D_R)}
\leq Ct^{-\frac{3}{2q}}\|f\|_{q,D}
\end{align*}
for all $t\geq 1$ and $f\in L_{\sigma}^q(D)$.
\end{prop}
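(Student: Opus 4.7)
The plan is to combine the local energy decay for compactly supported data (Proposition \ref{local}) with the whole-space $L^q$-$L^r$ estimates of Proposition \ref{lqlrr3}, linked together by a cut-off and Bogovskii decomposition. For $t\in[1,2]$ the desired bound is immediate from the analytic semigroup estimates of Theorem \ref{lqlrd0}, so the main case is $t\geq 2$. Using the semigroup property, first I would set $g:=e^{-A_a}f$ and reduce to estimating $e^{-(t-1)A_a}g$; by Theorem \ref{lqlrd0} one has $g\in W^{2,q}(D)\cap L^q_\sigma(D)$ with $\|g\|_{q,D}\leq C\|f\|_{q,D}$.

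Next, choose radii $R_1>R$ with $\R^3\setminus D\subset B_{R_1-2}$ and a cut-off $\phi\in C_0^\infty(\R^3)$ with $\phi\equiv 1$ on $B_{R_1-1}$ and $\mathrm{supp}\,\phi\subset B_{R_1}$. Let $\mathbb{B}$ denote Bogovskii's operator on the annulus $A=B_{R_1}\setminus\overline{B_{R_1-1}}$; the identity $\int_A\nabla\phi\cdot g\,dx=0$ (from $\nabla\cdot g=0$ and $\phi$ having compact support) justifies the decomposition
\[
g=g_1+g_2,\qquad g_1:=\phi g-\mathbb{B}[\nabla\phi\cdot g],\qquad g_2:=(1-\phi)g+\mathbb{B}[\nabla\phi\cdot g].
\]
Both pieces are solenoidal on $D$; $g_1$ is supported in $D\cap B_{R_1}$, while $g_2$ vanishes on $B_{R_1-2}\supset\R^3\setminus D$, so $g_2$ extends by zero to a solenoidal $\tilde g_2\in L^q_\sigma(\R^3)$. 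Standard estimates for $\mathbb{B}$ give $\|g_1\|_{q,D}+\|\tilde g_2\|_{q,\R^3}\leq C\|g\|_{q,D}\leq C\|f\|_{q,D}$.

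For the compact piece $g_1$, Proposition \ref{local} yields directly
\[
\|\partial_t e^{-(t-1)A_a}g_1\|_{q,D_R}+\|e^{-(t-1)A_a}g_1\|_{W^{2,q}(D_R)}\leq C(t-1)^{-\frac{3}{2}}\|g_1\|_{q,D},
\]
a rate much stronger than needed. For the far-field piece, I would view $S_a(t-1)\tilde g_2$ as the principal term. By Proposition \ref{lqlrr3} with $r=\infty$ (and its derivative version),
\[
\|\nabla^k S_a(t-1)\tilde g_2\|_{\infty,\R^3}\leq C(t-1)^{-\frac{3}{2q}-\frac{k}{2}}\|\tilde g_2\|_{q,\R^3}\qquad (k=0,1,2);
\]
since $D_R$ has finite measure, H\"older converts these to the target $W^{2,q}(D_R)$ bound with rate $t^{-3/(2q)}$. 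The correction $h:=e^{-(t-1)A_a}g_2-S_a(t-1)\tilde g_2$ solves the Oseen initial-boundary value problem on $D$ with zero initial data and inhomogeneous boundary datum $-S_a(s)\tilde g_2|_{\partial D}$; lifting this datum by a compactly supported solenoidal extension (available because $g_2$ vanishes near $\partial D$, so the trace is smooth and decays rapidly in $s$ by Proposition \ref{lqlrr3}) and invoking Duhamel reduces the estimation of $h$ on $D_R$ to Proposition \ref{local} applied to compactly supported sources. The bound on $\|\partial_t e^{-tA_a}f\|_{q,D_R}$ finally follows from $\partial_t u=P_D[\Delta u-a\partial_{x_1}u]$ together with the $W^{2,q}(D_R)$ bound.

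The main obstacle is the rigorous handling of the correction $h$: intuitively it encodes only a boundary effect of data vanishing near $\partial D$ and should decay at least as fast as the whole-space principal part, but executing the Duhamel lifting while keeping the source compactly supported (so that Proposition \ref{local} is applicable) and tracking the trace estimates of $S_a(s)\tilde g_2$ on $\partial D$ is the technical core of the argument.
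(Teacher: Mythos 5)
Your overall architecture is the right one and is essentially the scheme the paper (following Kobayashi--Shibata, reproduced in the proof of Proposition \ref{local3}) uses: split off a compactly supported part handled by Proposition \ref{local}, push the far-field part through the whole-space Oseen semigroup, and control the remainder by Duhamel plus Proposition \ref{local}. The genuine gap is exactly the step you yourself flag: the correction $h=e^{-(t-1)A_a}g_2-S_a(t-1)\tilde g_2$. As written you only announce a lifting of the boundary datum $-S_a(s)\tilde g_2|_{\partial D}$, but you never construct the lift, never show that the resulting force is compactly supported and has the time decay needed in the Duhamel integral (including near $\tau=0$ and $\tau=t$), and never say how the associated pressure enters; so the central estimate of the proof is missing. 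The way the cited proof avoids all of this is not to lift boundary data at all but to cut off the whole-space evolution itself: with $\tilde\zeta$ vanishing near $\partial D$ one sets $v(t)=u(t)-\tilde\zeta\,S_a(t)g+\B_{A_{R,R+1}}[(\nabla\tilde\zeta)\cdot S_a(t)g]$, so that $v$ has \emph{zero} boundary values, compactly supported initial value, and solves the Oseen system with a force $K(t)$ supported in an annulus and bounded by local $W^{3,q}$ norms of $S_a(t)g$; then Proposition \ref{local} and Duhamel give the rate, exactly as in the derivation of (\ref{drs}). If you carry out your lifting with a cut-off of $S_a(s)\tilde g_2$ you will reproduce precisely this computation, so the idea is recoverable, but in the proposal it is not executed.

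A second, concrete error is the last step: the claim that the bound on $\|\partial_t e^{-tA_a}f\|_{q,D_R}$ ``follows from $\partial_t u=P_D[\Delta u-a\partial_{x_1}u]$ together with the $W^{2,q}(D_R)$ bound'' fails because $P_D$ is nonlocal: $\|P_D w\|_{q,D_R}$ is not controlled by $\|w\|_{q,D_R}$, and the available global estimate is only $\|\partial_t e^{-tA_a}f\|_{q,D}\leq Ct^{-1}\|f\|_{q,D}$, which is weaker than $t^{-3/(2q)}$ whenever $q<3/2$. The correct route is to differentiate the Duhamel formula for the corrected quantity in time and use that Proposition \ref{local} bounds $\partial_t e^{-tA_a}$ for compactly supported data as well (this is how Proposition \ref{local3} handles $\partial_t$). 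Two smaller points: Proposition \ref{lqlrr3} is stated only for $|k|\leq 1$, so for the second-order whole-space bound you should invoke the kernel estimate (\ref{lqlrr33}); and the mean-zero condition for the Bogovski\u{\i} operator on the annulus uses the vanishing trace $g|_{\partial D}=0$ (from $g\in W^{1,q}_0(D)$), not merely $\nabla\cdot g=0$ and the compact support of $\phi$, since $\phi\equiv 1$ near $\partial D$.
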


Lemma \ref{holder} tells us that $f\in L^r(D)$ with 
some $r<q$ if $f\in L^q_{\rho}(D)$. By making use of this, 
we next derive the better decay rate 
$t^{-3/2q-\varepsilon}$ than the one in Proposition \ref{local2}. 
We also deduce the slower decay rate if $f\in L^q_{\rho_-}(D)$.
\begin{prop}\label{local3} 
Let $1<q<\infty.$ 
We take $R>0$ such that $\R^3\setminus D\subset B_{R-1}(0)$ 
and set $D_R=D\cap B_{R}(0)$. Fix $a_0>0$ and assume $a\in[0,a_0]$. 
\begin{enumerate}
\item Let $\alpha,\beta>0$ satisfy $\alpha+\beta<3(1-1/q)$ and let 
$s\in(\max\{3q/(3+\alpha q+\beta q),2q/(2+\alpha q)\},q]$. 
Then there exists a constant $C(D,a_0,q,s,\alpha,\beta)$ 
such that 
\begin{align}\label{loc20}
\|\partial_t e^{-tA_a}P_Df\|_{q,D_R}
+\|e^{-tA_a}P_Df\|_{W^{2,q}(D_R)}
\leq Ct^{-\frac{3}{2s}}\|(1+|x|)^\alpha
(1+|x|-x_1)^\beta f\|_{q,D}
\end{align}
for all $t\geq 3$ and $f\in L^q_{\rho}(D)$,
where $\rho$ is given by $($\ref{rho}$)$. 
\item Let $0\leq \alpha<3(1-1/q)$. 
Then there exists
a constant $C(D,a_0,q,\alpha)$ such that
\begin{align*}
\|\partial_t e^{-tA_a}P_Df\|_{q,D_R}
+\|e^{-tA_a}P_Df\|_{W^{2,q}(D_R)}
\leq Ct^{-\frac{3}{2q}-\frac{\alpha}{2}}
\|(1+|x|)^\alpha f\|_{q,D}
\end{align*}
for all $t\geq 3$ and $f\in L^q_{(1+|x|)^{\alpha q}}(D)$.
\item For $\alpha,\beta\geq 0$ satisfying $($\ref{alphabeta4}$)$, 
there exists a constant $C(D,a_0,q,\alpha,\beta)$ such that 
\begin{align}
\|&\partial_t
e^{-tA_{-a}}P_Df\|_{q,D_R}+\|e^{-tA_{-a}}P_Df\|_{W^{2,q}(D_R)}
\leq Ct^{-\frac{3}{2q}+\eta_4}
\|(1+|x|)^{-\alpha}(1+|x|-x_1)^{-\beta}f\|_{q,D}\label{loc2}
\end{align}
for all $t\geq 3$ and $f\in L^q_{\rho_-}(D)$, where
$\eta_4$ and $\rho_-$ are 
given by $($\ref{gammadeltaeta}$)$ and 
$($\ref{rhominus}$)$, respectively. 
\end{enumerate}
\end{prop}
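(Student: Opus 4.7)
The plan is to adapt the Bogovskii cut-off decomposition that Kobayashi and Shibata use to derive Proposition \ref{local2}, but to feed it into the anisotropically weighted whole-space bounds of Section \ref{section5} (Proposition \ref{lqlrr3} and Lemma \ref{lemlosslessr3}) together with the weighted-to-unweighted embeddings supplied by Lemma \ref{holder}. The starting point is a cut-off $\psi\in C_0^\infty(\R^3)$ that is $1$ on $\overline{D_{R+1}}$ and $0$ outside $B_{R+2}(0)$, together with a Bogovskii operator $\mathbb{B}$ on the annulus $\mathcal A=\{R+1<|x|<R+2\}$. Setting $u(t)=e^{-tA_a}P_D f$, define
\[
v(t)=(1-\psi)\,u(t)+\mathbb{B}[u(t)\cdot\nabla\psi],
\]
extended by zero to $\R^3\setminus D$. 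Then $v$ is solenoidal on $\R^3$, coincides with $u$ outside $B_{R+2}(0)$, and satisfies an Oseen system on $\R^3$ with initial value $v(0)=(1-\psi)f+\mathbb{B}[f\cdot\nabla\psi]$ and a source term $G(t)$ supported in $\overline{\mathcal A}$ depending linearly on $(u,\nabla u)|_{\mathcal A}$. On $D_R$ one has $u=v$, so bounding $v$ suffices.

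Duhamel's formula on $\R^3$ reads $v(t)=S_a(t)v(0)+\int_0^t S_a(t-\tau)P_{\R^3}G(\tau)\,d\tau$. Because $D_R$ has finite Lebesgue measure, the $L^q(D_R)$-norm of $v(t)$ is bounded by $|D_R|^{1/q}$ times an $L^\infty(\R^3)$- (or suitable Lorentz-) norm, which is exactly where the whole-space estimates of Section \ref{section5} enter. For assertion 1, Lemma \ref{holder}(1) gives $v(0)\in L^s(\R^3)$ with norm controlled by the weighted norm of $f$, and the $L^s$--$L^\infty$ bound for $S_a(t)$ of Proposition \ref{lqlrr3} delivers the rate $t^{-3/(2s)}$. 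For assertion 2, Lemma \ref{holder}(2) places $v(0)$ in the Lorentz class $L^{3q/(3+\alpha q),q}(\R^3)$, and a Young--O'Neil inequality in Lorentz spaces (as used in the proof of Proposition \ref{lqlrr3}, cf.\ (\ref{g2est2})) produces the rate $t^{-3/(2q)-\alpha/2}$. For assertion 3, the decaying weight $(1+|x|)^{-\alpha}(1+|x|-x_1)^{-\beta}$ precludes any global unweighted $L^s$-bound on $f$; instead one invokes the $L^1$--$L^q$ estimate of Lemma \ref{lemlosslessr3}(1), which carries the polynomial factor $(1+t)^{\eta_4}$, applied to a bounded-domain truncation of $v(0)$ on which the weight is comparable to $1$.

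The Duhamel integral with source $G$, which is compactly supported in $\overline{\mathcal A}$ and linear in $u|_{\mathcal A}$ and $\nabla u|_{\mathcal A}$, is handled by splitting $\int_0^t=\int_0^{t/2}+\int_{t/2}^t$ and using Proposition \ref{local2} (the unweighted local energy decay) to control $\|u(\tau)\|_{W^{1,q}(\mathcal A)}$ with the coarser rate $\tau^{-3/(2q)}$; combining with the $L^s$--$L^\infty$ bound of Proposition \ref{lqlrr3} for $S_a(t-\tau)$ on each subinterval yields a majorant of the form $(t-\tau)^{-\mu_1}\tau^{-\mu_2}$ whose time-integral is controlled by $t^{-\mu_1-\mu_2+1}$ and absorbed into the leading contribution from $S_a(t)v(0)$. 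Once the $L^q(D_R)$-bound for $v(t)$ has been established, the full $W^{2,q}(D_R)$ bound and the $L^q(D_R)$-bound on $\partial_t u=-A_a u$ follow from interior regularity for the Oseen resolvent (Lemma \ref{lambdalarge}) applied to the analytic semigroup after a short-time smoothing step, combined with the interpolation estimate of Lemma \ref{interpo}.

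\textbf{Main obstacle.} The delicate point is assertion 3: the decaying weight makes the reduction to a smaller unweighted Lebesgue exponent unavailable, so one must carefully balance a size-$N$ spatial truncation of $v(0)$---whose $L^q(\R^3)$-norm grows like $(1+N)^{\alpha+\beta}$ times the weighted norm---against the far-field contribution, which is estimated by the $L^1$--$L^q$ bound of Lemma \ref{lemlosslessr3}(1) producing the $(1+t)^{\eta_4}$ factor. Verifying that the optimal choice of $N$ (tied to $\sqrt t$ through the heat kernel scaling) yields precisely the rate $t^{-3/(2q)+\eta_4}$, and that both the near- and far-field pieces remain compatible with the Duhamel bootstrap for the source term, is the technical core. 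A secondary concern is ensuring that the Bogovskii correction $\mathbb{B}[f\cdot\nabla\psi]$ admits bounds of the same quality as $(1-\psi)f$; this is routine because the correction is supported in the fixed bounded annulus $\mathcal A$, on which all weights are comparable to $1$.
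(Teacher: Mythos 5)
There is a genuine gap, and it is structural. Your cut-off goes in the wrong direction for this proposition: with $\psi=1$ on $\overline{D_{R+1}}$ and $\psi=0$ outside $B_{R+2}(0)$, the field $v=(1-\psi)u+\mathbb{B}[u\cdot\nabla\psi]$ coincides with $u$ only where $\psi=0$, i.e.\ for $|x|\geq R+2$; on $D_R$ it equals the Bogovski\u{\i} correction (in fact it vanishes there, since $\mathbb{B}$ produces fields supported in the annulus), so the asserted identity ``$u=v$ on $D_R$'' is false and bounding $v$ gives no control of $u$, let alone of $\partial_t u$ and $\nabla^2 u$, near the boundary. Moreover, even if one only aimed at a far-field bound, your Duhamel term cannot produce the improved rates: the source $G(\tau)$ is controlled only through Proposition \ref{local2}, i.e.\ by $\tau^{-3/(2q)}\|f\|_{q,D}$, so the portion $\int_{t/2}^{t}$ of the integral contributes at best $t^{-3/(2q)}$, which is strictly worse than the claimed $t^{-3/(2s)}$ ($s<q$) and $t^{-3/(2q)-\alpha/2}$; upgrading the source would require the very local decay you are trying to prove, so the bootstrap is circular. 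The paper avoids both problems by decomposing in the opposite direction: it builds a solenoidal modification $g$ of $e^{-A_a}P_Df$ (cut off away from the boundary), lets the whole-space flow $S_a(t)g$ carry the improved decay (via Lemma \ref{holder} and the $L^s$--$L^\infty$/Lorentz or weighted dual estimates of Section \ref{section5}, which give $\|S_a(t)g\|_{W^{3,q}(D_{R+1})}\lesssim t^{-1/2}(1+t)^{1/2-3/(2s)}$, resp.\ the $(1+t)^{\eta_4}$ variant for assertion 3), and then writes $u$ as $\widetilde{\zeta}S_a(t)g$ minus a Bogovski\u{\i} correction plus a remainder $v$ which solves the \emph{exterior} problem with compactly supported initial data and compactly supported forcing $K(t)$; the improved rate then comes from convolving the decay of $K$ (inherited from $S_a(t)g$) with the strong local-energy-decay kernel $(1+t-\tau)^{-3/2}$ of Proposition \ref{local}, which directly yields the $\partial_t$ and $W^{2,q}(D_R)$ norms.

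Two further points in your plan would also need repair. First, the claim that the $W^{2,q}(D_R)$ and $\partial_t$ bounds follow from Lemma \ref{lambdalarge} ``after a short-time smoothing step'' is not justified: the resolvent estimate gives global (weighted) smoothing, and the global weighted $L^q$ norm of $u(t-1)$ does not decay, so local $L^q$ decay on $D_R$ does not transfer to local $W^{2,q}$ decay this way; in the paper these norms are exactly what Proposition \ref{local} delivers for the compactly supported pieces. Second, for assertion 3 no truncation/optimization in $N$ is needed: since the weight $\rho_-$ is comparable to $1$ on any fixed bounded set, the paper simply applies the weighted whole-space dual estimates (Proposition \ref{proplqlrr3dual}, derived from Lemma \ref{lemlosslessr3}(1)) to $S_{-a}(t)g_-$ on $D_{R+1}$, obtaining the factor $(1+t)^{\eta_4}$ directly and then running the same correction-plus-local-energy-decay scheme.
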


\begin{proof}
In order to prove the assertions, we use the estimates
\begin{align}\label{lqlrr33}
\|\partial_x^kS_a(t)f\|_{r,\R^3}
\leq Ct^{-\frac{3}{2}(\frac{1}{q}-\frac{1}{r})-\frac{|k|}{2}}
\|f\|_{q,\R^3}
\end{align}
for $|a|\leq a_0$, $|k|\leq 3,f\in L^q_\sigma(\R^3),t>0,$ 
provided $1<q\leq r\leq \infty~(q\ne\infty)$, which are derived by 
the same calculation as in the proof of Proposition \ref{lqlrr3}. 
We first prove the assertion 1. Under the assumption in 
the assertion 1, Lemma \ref{holder} yields 
\begin{align}\label{holderf}
\|f\|_{s,D}
\leq\|(1+|x|)^{-\alpha}(1+|x|-x_1)^{-\beta}\|_{\frac{qs}{q-s},D}
\|(1+|x|)^{\alpha}(1+|x|-x_1)^{\beta}f\|_{q,D}
\end{align}
for $f\in L^q_{\rho}(D).$ Let $\zeta$ 
be a function on $\R^3$ satisfying 
\begin{align}\label{zetadef}
\zeta\in C^\infty(\R^3),\quad 
\zeta(x)=0\quad |x|\leq R-1,\quad  
\zeta(x)=1\quad |x|\geq R
\end{align}
and denote the Bogovski\u{\i} operator on 
$A_{R-1,R}=\{x\in\R^3\,;\,R-1<|x|<R\}$ by $\B_{A_{R-1,R}}$, 
see Bogovski\u{\i} \cite{bogovskii1979}, 
Borchers and Sohr \cite{bosh1990} and Galdi \cite{galdi2011}.  
Set 
\begin{align*}
g:=\zeta e^{-A_a}P_Df
-\B_{A_{R-1,R}}[(\nabla\zeta)\cdot e^{-A_a}P_Df]. 
\end{align*}
We then find $\nabla \cdot g=0$ in $\R^3$ and 
\begin{align}
&\|g\|_{W^{2,q}(\R^3)}\leq C\|e^{-A_a}P_Df\|_{W^{2,q}(D)}
\leq C\|f\|_{q,D}\leq
C\|(1+|x|)^{\alpha}(1+|x|-x_1)^{\beta}f\|_{q,D},\label{gest2}\\
&\|g\|_{s,\R^3}\leq 
C\|e^{-A_a}P_Df\|_{s,D}\leq C\|f\|_{s,D}\leq
C\|(1+|x|)^{\alpha}(1+|x|-x_1)^{\beta}f\|_{q,D}\label{gest}
\end{align}
due to (\ref{holderf}).  
In view of (\ref{lqlrr33}), (\ref{gest2}), (\ref{gest}) and 
the formula (\ref{oseensemir3}), we get 
\begin{align}\label{sag}
\|S_a(t)g\|_{W^{3,q}(D_R)}\leq Ct^{-\frac{1}{2}}
(1+t)^{\frac{1}{2}-\frac{3}{2s}}
\|(1+|x|)^{\alpha}(1+|x|-x_1)^{\beta}f\|_{q,D}
\end{align}
for all $t>0$. In fact, it holds that
\begin{align*}
\|S_a(t)g\|_{W^{3,q}(D_R)}
\leq C\|S_a(t)g\|_{W^{3,\infty}(\R^3)}\leq 
Ct^{-\frac{3}{2s}}\|g\|_{s,\R^3}
\leq Ct^{-\frac{3}{2s}}
\|(1+|x|)^{\alpha}(1+|x|-x_1)^{\beta}f\|_{q,D}
\end{align*}
for $t\geq 1$ and that 
\begin{align}
\|S_a(t)g\|_{W^{3,q}(D_R)}
&\leq
\|S_a(t)g\|_{W^{1,q}(D_R)}
+\sum_{i,j=1}^3
\|S_a(t)(\partial_{y_i}\partial_{y_j}g)\|_{W^{1,q}(D_R)}\notag\\
&\leq Ct^{-\frac{1}{2}}\|g\|_{W^{2,q}(\R^3)}
\leq Ct^{-\frac{1}{2}}
\|(1+|x|)^{\alpha}(1+|x|-x_1)^{\beta}f\|_{q,D}\label{sagsmall}
\end{align}
for $t<1$. We thus obtain (\ref{sag}). 
We take a function $\widetilde{\zeta}$ such that  
\begin{align*}
\widetilde{\zeta}\in C^\infty(\R^3),\quad \widetilde{\zeta}(x)=0
\quad |x|\leq R,\quad  
\widetilde{\zeta}(x)=1\quad |x|\geq R+1 
\end{align*} 
and set
\begin{align*}
v(t):=u(t)-\widetilde{\zeta}S_a(t)g+\B_{A_{R,R+1}}
[(\nabla \widetilde{\zeta})\cdot S_a(t)g],\quad 
u(t):=e^{-(t+1)A_a}P_Df,
\end{align*}
then the pair $(v(t),p)$, where $p(t)$ is the pressure 
associated with $u(t)$, obeys 
\begin{align*}
&\partial_tv-\Delta v+a\partial_{x_1}v+\nabla p=K,
\quad \nabla\cdot v=0,\quad x\in D,t>0\qquad v|_{\partial D}=0,
\quad t>0,\\
&v(x,0)=(1-\widetilde{\zeta})e^{-A_a}P_Df+\B_{A_{R,R+1}}
[(\nabla\widetilde{\zeta})\cdot e^{-A_a}P_Df],\quad x\in D,
\end{align*} 
where 
\begin{align*} 
K(t)&=(\partial_t-\Delta+a\partial_{x_1}) 
\{-\widetilde{\zeta}S_a(t)g 
+\B_{A_{R,R+1}}[(\nabla \widetilde{\zeta})\cdot S_a(t)g]\}\notag\\  
&=(\Delta \widetilde{\zeta})S_a(t)g 
+2(\nabla \widetilde{\zeta}\cdot\nabla)S_a(t)g 
-(a\partial_{x_1}\widetilde{\zeta})S_a(t)g 
+(\partial_t-\Delta+a\partial_{x_1}) 
\B_{A_{R,R+1}}[(\nabla\widetilde{\zeta})\cdot S_a(t)g].
\end{align*}
From the estimate of the Bogovski\u{\i} operator, 
$\partial_t=-\Delta+a\partial_{x_1}$ and (\ref{sag}), 
we have 
\begin{align}
\|K(t)\|_{W^{2,q}(D)}
&\leq C\|S_a(t)g\|_{W^{3,q}(D_{R+1})}+
C\|\B_{A_{R,R+1}}[(\nabla\widetilde{\zeta})\cdot 
\partial_tS_a(t)g]\|_{W^{2,q}(A_{R+1})}\nonumber\\
&\qquad+C\|\B_{A_{R,R+1}}[(\nabla\widetilde{\zeta})\cdot 
S_a(t)g]\|_{W^{4,q}(A_{R,R+1})}\nonumber\\
&\leq C\|S_a(t)g\|_{W^{3,q}(D_{R+1})}\leq 
Ct^{-\frac{1}{2}}(1+t)^{\frac{1}{2}-\frac{3}{2s}}
\|(1+|x|)^{\alpha}(1+|x|-x_1)^\beta f\|_{q,D}\label{lest2}
\end{align}
for all $t>0$. 
By Duhamel's principle, $v$ satisfies the integral equation 
\begin{align}\label{wintegral}
v(t)=e^{-tA_a}v(0)+\int_0^te^{-(t-\tau)A_a}K(\tau)\,d\tau.
\end{align}
Since supp~$v(0)\subset D_{R+1}$, applying
Proposition \ref{local} leads to 
\begin{align}\label{localv0}
\|\partial_te^{-tA_a}v(0)\|_{q,D_{R+1}}
+\|e^{-tA_a}v(0)\|_{W^{2,q}(D_{R+1})}
&\leq Ct^{-\frac{3}{2}}\|v(0)\|_{q,D_{R+1}}\nonumber\\
&\leq Ct^{-\frac{3}{2}}
\|(1+|x|)^{\alpha}(1+|x|-x_1)^\beta f\|_{q,D}
\end{align}
for all $t\geq 1$. On the other hand, 
due to Proposition \ref{local} and 
\begin{align*}
\|&\partial_te^{-(t-\tau)A_a}K(\tau)\|_{q,D_{R+1}}+
\|e^{-(t-\tau)A_a}K(\tau)\|_{W^{2,q}(D_{R+1})}
\\&\leq C(\|e^{-(t-\tau)A_a}A_aK(\tau)\|_{q,D}
+\|K(\tau)\|_{q,D})
\leq C\|K(\tau)\|_{W^{2,q}(D)}
\end{align*}
for $0<\tau<t$ with $t-\tau<1,$ we have  
\begin{align*}
\|\partial_te^{-(t-\tau)A_a}K(\tau)\|_{q,D_{R+1}}+
\|e^{-(t-\tau)A_a}K(\tau)\|_{W^{2,q}(D_{R+1})}
\leq C(1+t-\tau)^{-\frac{3}{2}}\|K(\tau)\|_{W^{2,q}(D)}
\end{align*}
for all $0<\tau<t$, which combined with (\ref{lest2}) yields  
\begin{align}
\int_0^t&\|\partial_te^{-(t-\tau)A_a}K(\tau)\|_{q,D_{R+1}}
\,d\tau+
\int_0^t\|e^{-(t-\tau)A_a}K(\tau)\|_{W^{2,q}(D_{R+1})}\,d\tau
\nonumber\\
&\leq C\int_0^t(1+t-\tau)^{-\frac{3}{2}}\tau^{-\frac{1}{2}}
(1+\tau)^{\frac{1}{2}-\frac{3}{2s}}\,d\tau\,
\|(1+|x|)^{\alpha}(1+|x|-x_1)^\beta f\|_{q,D}\nonumber\\
&\leq C\left\{t^{-\frac{3}{2}}\int_0^1\tau^{-\frac{1}{2}}
(1+\tau)^{\frac{1}{2}-\frac{3}{2s}}\,d\tau+
\left(1+\frac{t}{2}\right)^{-\frac{3}{2}}
\int_1^{\frac{t}{2}}(1+\tau)^{\frac{1}{2}-\frac{3}{2s}}\,d\tau
\right.\notag\\
&\left.\qquad\qquad +\left(\frac{t}{2}\right)^{-\frac{1}{2}}
(1+t)^{\frac{1}{2}}
\left(1+\frac{t}{2}\right)^{-\frac{3}{2s}}
\int_{\frac{t}{2}}^t(1+t-\tau)^{-\frac{3}{2}}\,d\tau\right\}
\|(1+|x|)^{\alpha}(1+|x|-x_1)^\beta f\|_{q,D}\nonumber\\
&\leq Ct^{-\frac{3}{2s}}
\|(1+|x|)^{\alpha}(1+|x|-x_1)^\beta f\|_{q,D}\label{drs}
\end{align}
for $t\geq 2$. Due to 
\begin{align*}
\partial_tv(t)=\partial_te^{-tA_a}v(0)+\int_0^t
\partial_te^{-(t-\tau)A_a}K(\tau)\,d\tau+K(t)
\end{align*} 
and $v|_{D_R}=u(t)$, 
collecting (\ref{lest2})--(\ref{drs}) 
completes the proof of the assertion 1. 
\par We next prove the assertion 2. 
We know from Lemma \ref{holder} that 
$f\in L^{(3q)/(3+\alpha q),q}(D)$ with 
$\|f\|_{L^{(3q)/(3+\alpha q),q}(D)}
\leq C\|(1+|x|)^{\alpha}f\|_{q,D}$ for 
$f\in L^q_{(1+|x|)^{\alpha q}}(D)$. 
Moreover, it holds that 
\begin{align*}
L^{\frac{3q}{3+\alpha q},q}(\Omega)=(L^{q_0}(\Omega),
L^{q_1}(\Omega))_{\theta,q},\qquad \Omega=\R^3~{\rm or }~D,
\end{align*} 
where $1<q_0<(3q)/(3+\alpha q)<q_1<\infty$ 
and $0<\theta<1$ satisfy 
$(3+\alpha q)/(3q)=(1-\theta)/q_0+\theta/q_1$.
From this fact  
and (\ref{lqlrr33}) with $r=\infty,$ we have 
\begin{align*}
&\|\partial_x^kS_a(t)P_{\R^3}h\|_{\infty,\R^3}
\leq Ct^{-\frac{3}{2q}-\frac{\alpha}{2}-\frac{|k|}{2}}
\|h\|_{L^{\frac{3q}{3+\alpha q},q}(\R^3)}
\end{align*}
for all $t>0$, $|k|\leq 3$ and 
$h\in L^{3q/(3+\alpha q),q}(\R^3)$, 
which combined with (\ref{sagsmall}) with $\beta=0$ implies that 
(\ref{sag}) is replaced by 
\begin{align*}
\|S_a(t)g\|_{W^{3,q}(D_R)}\leq
Ct^{-\frac{1}{2}}(1+t)^{\frac{1}{2}-\frac{3}{2q}-\frac{\alpha}{2}}
\|(1+|x|)^{\alpha}f\|_{q,D}.
\end{align*}
By applying this estimate,  
we can obtain (\ref{lest2}) and (\ref{drs}) with 
$\beta=0,s=(3q)/(3+\alpha q)$. We thus conclude the assertion 2. 
\par To prove the assertion 3, as in the proof of the assertion 1, 
we set 
\begin{align*}
v_-(t)&:=u_-(t)-\widetilde{\zeta}S_{-a}(t)g_-+\B_{A_{R,R+1}}
[(\nabla \widetilde{\zeta})\cdot S_{-a}(t)g_-],\quad 
u_-(t):=e^{-(t+1)A_{-a}}P_Df,\\
g_-&:=\zeta e^{-A_{-a}}P_Df-
\B_{A_{R-1,R}}[(\nabla\zeta)\cdot e^{-A_{-a}}P_Df].
\end{align*}
Then the pair $(v_-,p_-),$ where $p_-(t)$ is 
the pressure associated with $u_-(t),$ obeys  
\begin{align*}
&\partial_tv_--\Delta v_--a\partial_{x_1}v_-+\nabla p_-=K_-,
\quad \nabla\cdot
v_-=0,\quad x\in D,t>0,\qquad v_-|_{\partial D}=0,\quad t>0,\\
&v_-(x,0)=(1-\widetilde{\zeta})e^{-A_{-a}}P_Df+
\B_{A_{R,R+1}}
[(\nabla\widetilde{\zeta})\cdot e^{-A_{-a}}P_Df],
\quad x\in D,
\end{align*} 
where 
\begin{align*}
K_-(t)=(\Delta \widetilde{\zeta})S_{-a}(t)g_- 
&+2(\nabla \widetilde{\zeta}\cdot\nabla)S_{-a}(t)g_- 
+(a\partial_{x_1}\widetilde{\zeta})S_{-a}(t)g_-\\
&+(\partial_t-\Delta-a\partial_{x_1}) 
\B_{A_{R,R+1}}[(\nabla\widetilde{\zeta})\cdot S_{-a}(t)g_-].
\end{align*}
Therefore, it suffices to 
consider the integral equation 
\begin{align}\label{wintegral2}
v_-(t)=e^{-tA_{-a}}v_-(0)+\int_0^te^{-(t-\tau)A_{-a}}K_-(\tau)
\,d\tau.
\end{align}
Proposition \ref{local} yields 
\begin{align}
\|\partial_te^{-tA_{-a}}v_-(0)\|_{q,D_{R+1}}
+&\|e^{-tA_{-a}}v_-(0)\|_{W^{2,q}(D_{R+1})}\notag\\
\leq Ct^{-\frac{3}{2}}\|v_-(0)\|_{q,D_{R+1}}
&\leq Ct^{-\frac{3}{2}}\|(1+|x|)^{-\alpha}
(1+|x|-x_1)^{-\beta}e^{-A_{-a}}P_Df\|_{q,D}\notag\\
&\leq Ct^{-\frac{3}{2}}
\|(1+|x|)^{-\alpha}(1+|x|-x_1)^{-\beta}f\|_{q,D}\label{localv02}
\end{align}
for all $t\geq 1$. By (\ref{oseensemir3}) and
(\ref{lqlrr3dual3}), we have 
\begin{align}
\|S_{-a}(t)g_-\|_{W^{3,q}(D_{R+1})}&\leq 
\|S_{-a}(t)g_-\|_{W^{1,q}(D_{R+1})}+\sum_{|k|=2}
\|S_{-a}(t)\partial^k_{y}g_-\|_{W^{1,q}(D_{R+1})}
\notag\\
&\leq C\|(1+|x|)^{-\alpha}(1+|x|-x_1)^{-\beta}
S_{-a}(t)g_-\|_{q,\R^3}\notag\\
&\qquad+C
\|(1+|x|)^{-\alpha}(1+|x|-x_1)^{-\beta}
\nabla S_{-a}(t)g_-\|_{q,\R^3}\notag\\
&\qquad+C\sum_{|k|=2}
\|(1+|x|)^{-\alpha}(1+|x|-x_1)^{-\beta}
S_{-a}(t)\partial^k_{y}g_-\|_{q,\R^3}\notag\\
&\qquad+C\sum_{|k|=2}
\|(1+|x|)^{-\alpha}(1+|x|-x_1)^{-\beta}
\nabla S_{-a}(t)\partial^k_{y}g_-\|_{q,\R^3}\notag\\
&\leq Ct^{-\frac{1}{2}}\sum_{i=0,2}
\|(1+|x|)^{-\alpha}(1+|x|-x_1)^{-\beta}\nabla^ig_-\|_{q,\R^3}
\label{sminusa1}
\end{align}
for $t\leq 1$ and 
\begin{align}
\|S_{-a}(t)g_-\|_{W^{3,q}(D_{R+1})}
&\leq \|S_{-a}(t)g_-\|_{W^{3,\infty}(D_{R+1})}\notag\\
&\leq C\|(1+|x|)^{-\alpha}(1+|x|-x_1)^{-\beta}
S_{-a}(t)g_-\|_{\infty,\R^3}\notag\\
&\qquad+C
\|(1+|x|)^{-\alpha}(1+|x|-x_1)^{-\beta}
\nabla S_{-a}(t)g_-\|_{\infty,\R^3}\notag\\
&\qquad+C\sum_{|k|=2}
\|(1+|x|)^{-\alpha}(1+|x|-x_1)^{-\beta}
S_{-a}(t)\partial^k_{y}g_-\|_{\infty,\R^3}\notag\\
&\qquad+C\sum_{|k|=2}
\|(1+|x|)^{-\alpha}(1+|x|-x_1)^{-\beta}
\nabla S_{-a}(t)\partial^k_{y}g_-\|_{\infty,\R^3}
\notag\\
&\leq Ct^{-\frac{3}{2q}}(1+t)^{\eta_4}
\sum_{i=0,2}
\|(1+|x|)^{-\alpha}(1+|x|-x_1)^{-\beta}\nabla^ig_-\|_{q,\R^3}
\label{sminusa2}
\end{align}
for $t\geq 1$. Moreover, due to (\ref{lqlqsmall}), 
(\ref{lqlqsmall2}) and (\ref{nablalqlqsmall}), 
we have 
\begin{align}
\|(1+|x|)^{-\alpha}(1+|x|-x_1)^{-\beta}\partial_x^k g_-\|_{q,\R^3}
&\leq C\sum_{i=0}^2
\|(1+|x|)^{-\alpha}(1+|x|-x_1)^{-\beta}\nabla^i
e^{-A_{-a}}P_Df\|_{q,D}\notag\\
&\qquad+C\|e^{-A_{-a}}P_Df\|_{W^{1,q}(A_{R-1,R})}\notag\\
&\leq C\sum_{i=0}^2
\|(1+|x|)^{-\alpha}(1+|x|-x_1)^{-\beta}\nabla^i
e^{-A_{-a}}P_Df\|_{q,D}\notag\\
&\leq C\|(1+|x|)^{-\alpha}(1+|x|-x_1)^{-\beta}f\|_{q,D}\notag
\end{align}
for $|k|\leq 2$, which together with 
(\ref{sminusa1})--(\ref{sminusa2}) yields 
\begin{align*}
\|S_{-a}(t)g_-\|_{W^{3,q}(D_{R+1})}\leq Ct^{-\frac{1}{2}}
(1+t)^{\frac{1}{2}-\frac{3}{2q}+\eta_4}
\|(1+|x|)^{-\alpha}(1+|x|-x_1)^{-\beta}f\|_{q,D}
\end{align*} 
for $t>0$, thereby,
\begin{align}
\|K_-(t)\|_{W^{2,q}(D)}\leq C\|S_{-a}(t)g_-\|_{W^{3,q}(D_{R+1})}
\leq Ct^{-\frac{1}{2}}
(1+t)^{\frac{1}{2}-\frac{3}{2q}+\eta_4}
\|(1+|x|)^{-\alpha}(1+|x|-x_1)^{-\beta}f\|_{q,D}.
\label{lminus}
\end{align}
for $t>0$. 
With (\ref{lminus}) at hand, 
we carry out the same calculation as in (\ref{drs}) to obtain 
\begin{align}
\int_0^t&\|\partial_te^{-(t-\tau)A_{-a}}K_-(\tau)\|_{q,D_{R+1}}
\,d\tau+
\int_0^t\|e^{-(t-\tau)A_{-a}}K_-(\tau)\|_{W^{2,q}(D_{R+1})}\,d\tau
\nonumber\\
&\leq C(1+t)^{\eta_4}
\int_0^t(1+t-\tau)^{-\frac{3}{2}}\tau^{-\frac{1}{2}}
(1+\tau)^{\frac{1}{2}-\frac{3}{2q}}\,d\tau\,
\|(1+|x|)^{-\alpha}(1+|x|-x_1)^{-\beta} f\|_{q,D}\nonumber\\
&\leq Ct^{-\frac{3}{2q}}(1+t)^{\eta_4}
\|(1+|x|)^{-\alpha}(1+|x|-x_1)^{-\beta}f\|_{q,D}\notag
\end{align}
for $t\geq 2$, which combined with 
(\ref{wintegral2})--(\ref{localv02}) and (\ref{lminus}) leads 
to the assertion 3. The proof is complete. 
\end{proof}

\section{Proof of Theorem \ref{lqlrd}, 
\ref{thmdual0}, \ref{lqlrd2} and \ref{thmoptimal}}

\quad This section is devoted to the anisotropically weighted  
$L^q$-$L^r$ decay estimates of the Oseen semigroup 
in the exterior domain. We begin by proving 
Theorem \ref{lqlrd} and \ref{thmdual0}.\\
\noindent{\bf Proof of Theorem \ref{lqlrd} and \ref{thmdual0}.} 
\quad To prove Theorem \ref{lqlrd}, 
we derive the estimate on $\R^3\setminus B_R(0)$.
Let $\zeta$ be a function on $\R^3$ satisfying (\ref{zetadef}). 
Given $f\in\displaystyle\bigcap^3_{i=1}
L^{q_i}_{\rho_i}(D)\cap L^{q_4}(D)$, we define 
\begin{align}\label{uvpdef}
u(t):=e^{-(t+1)A_a}P_Df,\quad
w(t):=\zeta u(t)-\B_{A_{R-1,R}}
[\nabla \zeta\cdot u(t)],\quad \pi(t):=\zeta p(t),
\end{align}
where $\B_{A_{R-1,R}}$ denotes the 
Bogovski\u{\i} operator on $A_{R-1,R}=\{x\in\R^3\,;\,R-1<|x|<R\}$ 
and $p(t)$ is the pressure associated 
with $u(t)$ that satisfies
\begin{align}\label{p}
\int_{A_{R-1,R}}p(t)\,dx=0
\end{align}
for all $t>0$. Then the pair $(w,\pi)$ fulfills 
\begin{align*}
&\partial_t w-\Delta w+a\partial_{x_1} w+\nabla \pi=L,\quad  
\nabla \cdot w=0,\quad x\in\R^3,t>0,\\
&w(x,0)=\zeta e^{-A_a}P_Df-\B_{A_{R-1,R}}
[\nabla \zeta\cdot e^{-A_a}P_Df], \quad x\in\R^3,
\end{align*}
where 
\begin{align}\label{kdef}
L(x,t)=-2(\nabla\zeta\cdot \nabla)u
-(\Delta\zeta)u+a(\partial_{x_1}\zeta)u
-(\partial_t-\Delta+a\partial_{x_1})\B_{A_{R-1,R}}
[\nabla \zeta\cdot u(t)]
+(\nabla \zeta)p,
\end{align}
thereby,  Duhamel's principle implies 
\begin{align}\label{duhamelv}
w(t)=S_a(t)w(0)+\int_0^tS_a(t-\tau)P_{\R^3}L(\tau)\,d\tau.
\end{align}
Here, $S_a(t)$ is the Oseen semigroup in $\R^3$, 
see (\ref{oseensemir3}). 
Since $e^{-A_a}:L^{q_i}_{\rho_i,\sigma}(D)\rightarrow 
L^{q_i}_{\rho_i,\sigma}(D)$ is bounded for $i=1,2,3,4$, 
it follows from (\ref{lqlrr3large}) that 
\begin{align}
\|(1+|x|)^\alpha(1+|x|-x_1)^\beta S_a(t)w(0)\|_{r,\R^3}
\leq C\sum_{i=1}^4
t^{-\frac{3}{2}(\frac{1}{q_i}-\frac{1}{r})+\eta_i}
\|(1+|x|)^{\gamma_i}(1+|x|-x_1)^{\delta_i} f\|_{q_i,D}
\label{salqlr}
\end{align}
for $t>0$ and $f\in \displaystyle\bigcap^3_{i=1}
L^{q_i}_{\rho_i}(D)\cap L^{q_4}(D)$, 
where $\gamma_i,\delta_i,\eta_i$ are 
given by (\ref{gammadeltaeta}). 
By (\ref{p}) and Proposition \ref{local2}, 
the Poincar\'{e} inequality implies that 
$(\nabla\zeta)p(t)\in L^{\kappa}(\R^3)$ for all $t>0,$  
$\kappa=q_i~(i=1,2,3,4)$ and that 
\begin{align}
\|(\nabla\zeta)p(t)\|_{\kappa,\R^3}
\leq C\|p(t)\|_{\kappa,A_{R-1,R}}
&\leq C\|\nabla p(t)\|_{\kappa,A_{R-1,R}}\nonumber\\
&=C\|\partial_t u-\Delta u+a\partial_{x_1}u\|_{\kappa,A_{R-1,R}}
\leq C(1+t)^{-\frac{3}{2\kappa}}\|f\|_{\kappa,D}\label{pkappa}
\end{align}
for all $t>0$ and $\kappa=q_i~(i=1,2,3,4).$
Given $1<q_i<\infty~(i=1,2,3,4)$, $1<r\leq \infty$
and $\alpha,\beta$ 
subject to (\ref{qr0})--(\ref{alphabeta2}), 
we take $\lambda_i~(i=1,2,3,4)$ so that 
\begin{align*}
1<\lambda_i<\min\left\{\frac{3}{2},q_i\right\},\quad
\beta<1-\frac{1}{\lambda_1},\quad
\beta<1-\frac{1}{\lambda_2},\quad 
\alpha<3\left(1-\frac{1}{\lambda_3}\right),\quad 
\alpha+\beta<3\left(1-\frac{1}{\lambda_1}\right)
\end{align*} 
for $i=1,2,3,4.$ Then due to $L^{\lambda_i}$ boundedness of 
$\B_{A_{R-1,R}}$, 
(\ref{pkappa}), Proposition \ref{local2} 
and ${\rm supp}\,L(t)\subset A_{R-1,R}$, we find 
$L(t)\in L^{\lambda_i}_{(1+|x|)^{\gamma_i\lambda_i}
(1+|x|-x_1)^{\delta_i\lambda_i}}(\R^3)$ with  
\begin{align}
\|&(1+|x|)^{\gamma_i}(1+|x|-x_1)^{\delta_i}
P_{\R^3}L(t)\|_{\lambda_i,\R^3}\notag\\
&\leq C\|(1+|x|)^{\gamma_i}(1+|x|-x_1)^{\delta_i}
L(t)\|_{\lambda_i,\R^3}\notag\\
&\leq C\|L(t)\|_{q_i,A_{R-1,R}}\leq C(1+t)^{-\frac{3}{2q_i}}
\|(1+|x|)^{\gamma_i}(1+|x|-x_1)^{\delta_i}f\|_{q_i,D}\label{lest}
\end{align}
for $t>0$ and $i=1,2,3,4$. Similarly, we have 
\begin{align}
\|(1+|x|)^{\alpha}(1+|x|-x_1)^{\beta}
P_{\R^3}L(t)\|_{q_1,\R^3}
\leq C(1+t)^{-\frac{3}{2q_1}}
\|(1+|x|)^{\alpha}(1+|x|-x_1)^{\beta}f\|_{q_1,D}\label{lest1}
\end{align}
\par For $t\geq 2$, 
the integral in (\ref{duhamelv}) is splitted into 
\begin{align}\label{split}
\int_0^t\|(1+|x|)^{\alpha}(1+|x|-x_1)^{\beta}
S_a(t-\tau)P_{\R^3}L(\tau)\|_{r,\R^3}\,d\tau=\int_0^{\frac{t}{2}}+
\int^{t-1}_{\frac{t}{2}}+\int_{t-1}^t.
\end{align}
By applying (\ref{lqlrr3large}) and (\ref{lest}), we see 
\begin{align*}
\int_0^{\frac{t}{2}}
&\leq C\int_0^{\frac{t}{2}}\Big[
\sum_{i=1}^4(t-\tau)^{-\frac{3}{2}
(\frac{1}{\lambda_i}-\frac{1}{r})+\eta_i}
(1+\tau)^{-\frac{3}{2q_i}}\,
\|(1+|x|)^{\gamma_i}(1+|x|-x_1)^{\delta_i} f\|_{q_i,D}\Big]\,d\tau\\
&\leq C\sum_{i=1}^4t^{-\frac{3}{2}
(\frac{1}{\lambda_i}-\frac{1}{r})+\eta_i}
\|(1+|x|)^{\gamma_i}(1+|x|-x_1)^{\delta_i} f\|_{q_i,D}
\int_0^{\frac{t}{2}}(1+\tau)^{-\frac{3}{2q_i}}\,d\tau.
\end{align*}
This combined with 
\begin{align*}
\int_0^{\frac{t}{2}}(1+\tau)^{-\frac{3}{2q_i}}\,d\tau\leq 
\begin{cases}
Ct^{-\frac{3}{2q_i}+1}&\quad{\rm if}~
\displaystyle q_i>\frac{3}{2},\\[10pt]
C\log t&\quad{\rm if}~\displaystyle q_i=\frac{3}{2},\\[10pt]
C&\quad{\rm if}~\displaystyle q_i<\frac{3}{2}
\end{cases}
\end{align*}
for $t\geq 2$ implies
\begin{align}
\int_0^{\frac{t}{2}}\leq C\sum_{i=1}^4t^{-\frac{3}{2}
(\frac{1}{q_i}-\frac{1}{r})+\eta_i}
\|(1+|x|)^{\gamma_i}(1+|x|-x_1)^{\delta_i}f\|_{q_i,D}
\label{0totover2}
\end{align}
for $t\geq 2$, $f\in \displaystyle\bigcap^3_{i=1}
L^{q_i}_{\rho_i}(D)\cap L^{q_4}(D)$. Similarly, we have 
\begin{align}
\int_{\frac{t}{2}}^{t-1}\leq C\sum_{i=1}^4t^{-\frac{3}{2}
(\frac{1}{q_i}-\frac{1}{r})+\eta_i}
\|(1+|x|)^{\gamma_i}(1+|x|-x_1)^{\delta_i} f\|_{q_i,D}
\label{tover2tot-1}
\end{align}
due to 
\begin{align*}
\int_{\frac{t}{2}}^{t-1}
&\leq C\int_{\frac{t}{2}}^{t-1}\Big[
\sum_{i=1}^4(t-\tau)^{-\frac{3}{2}
(\frac{1}{\lambda_i}-\frac{1}{r})+\eta_i}
(1+\tau)^{-\frac{3}{2q_i}}\,
\|(1+|x|)^{\gamma_i}(1+|x|-x_1)^{\delta_i} f\|_{q_i,D}\Big]\,d\tau\\
&\leq C\sum_{i=1}^4
t^{-\frac{3}{2q_i}+\eta_i}\|(1+|x|)^{\gamma_i}
(1+|x|-x_1)^{\delta_i} f\|_{q_i,D}
\int_{\frac{t}{2}}^{t-1}
(t-\tau)^{-\frac{3}{2}(\frac{1}{\lambda_i}
-\frac{1}{r})}\,d\tau\\
&\leq C\sum_{i=1}^4
t^{-\frac{3}{2}(\frac{1}{q_i}-\frac{1}{r})+\eta_i}
\|(1+|x|)^{\gamma_i}
(1+|x|-x_1)^{\delta_i} f\|_{q_i,D}
\end{align*}
for $t\geq 2$, $f\in \displaystyle\bigcap^3_{i=1}
L^{q_i}_{\rho_i}(D)\cap L^{q_4}(D)$. 
Moreover, under the assumption $1/q_1-1/r<2/3$, 
it follows from (\ref{lqlrr3small}) and (\ref{lest1}) that 
\begin{align}
\int_{t-1}^t&\leq C\int_{t-1}^{t}
(t-\tau)^{-\frac{3}{2}(\frac{1}{q_1}
-\frac{1}{r})}
(1+\tau)^{-\frac{3}{2q_1}}\,
\|(1+|x|)^{\alpha}(1+|x|-x_1)^{\beta} f\|_{q_1,D}\,d\tau
\nonumber\\
&\leq Ct^{-\frac{3}{2q_1}} 
\|(1+|x|)^\alpha(1+|x|-x_1)^\beta f\|_{q_1,D}
\int_{t-1}^t(t-\tau)^{-\frac{3}{2}(\frac{1}{q_1}
-\frac{1}{r})}\,d\tau\nonumber\\
&\leq Ct^{-\frac{3}{2q_1}} 
\|(1+|x|)^\alpha(1+|x|-x_1)^\beta f\|_{q_1,D}
\leq Ct^{-\frac{3}{2}(\frac{1}{q_1}-\frac{1}{r})} 
\|(1+|x|)^\alpha(1+|x|-x_1)^\beta f\|_{q_1,D}.\label{t-1tot}
\end{align}
Collecting (\ref{duhamelv}), (\ref{salqlr}), 
(\ref{split})--(\ref{t-1tot}) together with 
$w|_{\R^3\setminus B_R(0)}=e^{-(t+1)A_a}P_Df$ yields
\begin{align*}
\|(1+|x|)^\alpha(1+|x|-x_1)^\beta
e^{-tA_a}P_Df\|_{r,\R^3\setminus B_R(0)}
\leq C\sum_{i=1}^4
t^{-\frac{3}{2}(\frac{1}{q_i}-\frac{1}{r})+\eta_i}
\|(1+|x|)^{\gamma_i}(1+|x|-x_1)^{\delta_i} f\|_{q_i,D}
\end{align*}
for $t\geq 3$, $f\in \displaystyle\bigcap^3_{i=1}
L^{q_i}_{\rho_i}(D)\cap L^{q_4}(D)$ if $1/q_1-1/r<2/3.$ 
On the other hand, if $1/q_1-1/r<1/3$, then we also have 
\begin{align}\label{lqlrdr}
\|&(1+|x|)^\alpha(1+|x|-x_1)^\beta
e^{-tA_a}P_Df\|_{r,D_R}\notag\\
&\leq C\|e^{-tA_a}P_Df\|_{W^{1,q_1}(D_R)}\leq 
Ct^{-\frac{3}{2q_1}}\|f\|_{q_1,D}\leq 
Ct^{-\frac{3}{2}(\frac{1}{q_1}-\frac{1}{r})}
\|(1+|x|)^{\alpha}(1+|x|-x_1)^{\beta} f\|_{q_1,D}
\end{align}  
by the Sobolev embedding
and Proposition \ref{local2}, thus
(\ref{lqlrdlarge}) holds if $1/q_1-1/r<1/3$. 
To eliminate the restriction $1/q_1-1/r<1/3$, 
we take $\{s_i\}^j_{i=0}$ 
such that $q_1=s_0\leq s_1\leq \cdots\leq s_j=r,~ 
1/s_{i-1}-1/s_i<1/3$ for $i=1,\cdots,j.$ Then 
\begin{align*}
\|&(1+|x|)^\alpha(1+|x|-x_1)^\beta e^{-tA_a}P_Df\|_{r,D}\\
&\leq 
Ct^{-\frac{3}{2}(\frac{1}{s_{j-1}}-\frac{1}{r})}
\|(1+|x|)^\alpha(1+|x|-x_1)^\beta
e^{-\frac{(j-1)}{j}tA_a}P_Df\|_{s_{j-1},D}\nonumber\\
&\qquad+C
\sum_{i=2}^4t^{-\frac{3}{2}(\frac{1}{q_i}-\frac{1}{r})
+\eta_i}
\|(1+|x|)^{\gamma_i}(1+|x|-x_1)^{\delta_i}
e^{-\frac{(j-1)}{j}tA_a}P_Df\|_{q_i,D}\\
&\leq Ct^{-\frac{3}{2}(\frac{1}{s_{j-2}}-\frac{1}{r})}
\|(1+|x|)^\alpha(1+|x|-x_1)^\beta e^{-\frac{(j-2)}{j}tA_a}P_Df
\|_{s_{j-2},D}\\
&\qquad+C
\sum_{i=2}^4t^{-\frac{3}{2}(\frac{1}{q_i}-\frac{1}{r})+\eta_i}
\|(1+|x|)^{\gamma_i}(1+|x|-x_1)^{\delta_i}
e^{-\frac{(j-2)}{j}tA_a}P_Df\|_{q_i,D}\\
&\qquad+C\sum_{i=2}^4t^{-\frac{3}{2}(\frac{1}{q_i}
-\frac{1}{r})+\eta_i}
\|(1+|x|)^{\gamma_i}(1+|x|-x_1)^{\delta_i}f\|_{q_i,D}\\
&\leq \cdots\leq C\sum_{i=1}^4t^{-\frac{3}{2}(\frac{1}{q_i}
-\frac{1}{r})+\eta_i}
\|(1+|x|)^{\gamma_i}(1+|x|-x_1)^{\delta_i}f\|_{q_i,D}.
\end{align*}
The proof of the assertion 1 of Theorem \ref{lqlrd} is complete. 
\par Let $\alpha,\beta>0$ and $1<q_4\leq q_i\leq r~(i=2,3)$ satisfy 
\begin{align}\label{alphabetaqr}
\alpha<\min\left\{3\left(1-\frac{1}{q_3}\right),1\right\},
\quad \beta<\min\left\{1-\frac{1}{q_2},
\frac{1}{3}\right\},\quad 
\alpha+\beta<\min\left\{3\left(1-\frac{1}{r}\right),1\right\}
\end{align}
and we also suppose 
\begin{align}
&1<q_4\leq q_i\leq r<\min\left\{\frac{3}{1-\alpha-\beta},
\frac{3}{1-\frac{3\alpha}{2}}\right\}\quad(i=2,3)\\
&\left({\rm resp.}~1<q_4\leq q_i\leq r
<\frac{3}{1-\alpha-\beta}\quad (i=2,3)\right)\label{alphabetaqr2}
\end{align}
if $\alpha<2/3$ (resp. $\alpha\geq 2/3)$. 
In view of the semigroup property and the assertion 1, 
to prove the assetion 2, it is enough to derive 
\begin{align}
\|&(1+|x|)^\alpha(1+|x|-x_1)^\beta\nabla e^{-tA_a}P_Df\|_{r,D}
\nonumber\\
&\leq Ct^{-\frac{1}{2}}
\|(1+|x|)^{\alpha}(1+|x|-x_1)^{\beta} f\|_{r,D}
+C\sum_{i=2}^4
t^{-\frac{3}{2}(\frac{1}{q_i}-\frac{1}{r})-\frac{1}{2}+\eta_i}
\|(1+|x|)^{\gamma_i}(1+|x|-x_1)^{\delta_i} f\|_{q_i,D}
\label{gradlrlr2}
\end{align}
for $t\geq 3$ and $f\in \displaystyle\bigcap^3_{i=2}
L^{q_i}_{\rho_i}(D)\cap L^{r}_{\widetilde{\rho}}(D)\cap L^{q_4}(D)$, 
where 
\begin{align}\label{rhotilde} 
\widetilde{\rho}=(1+|x|)^{\alpha r}(1+|x|-x_1)^{\beta r}. 
\end{align} 
Under the conditions 
(\ref{alphabetaqr})--(\ref{alphabetaqr2}), we find 
\begin{align*} 
1<\max\left\{\frac{3r}{3+\alpha r+\beta r}, 
\frac{2r}{2+\alpha r}\right\}<\min\{3,r\}, 
\end{align*} 
thus take $s_0$ so that 
\begin{align*} 
\max\left\{\frac{3r}{3+\alpha r+\beta r}, 
\frac{2r}{2+\alpha r}\right\}<s_0<\min\{3,r\}. 
\end{align*} 
It follows from (\ref{loc20}) with $s=s_0,q=r$ that 
\begin{align} 
\|(1+|x|)^{\alpha}(1+|x|-x_1)^{\beta}\nabla e^{-tA_a}P_Df\|_{r,D_R}
&\leq Ct^{-\frac{3}{2s_0}}
\|(1+|x|)^{\alpha}(1+|x|-x_1)^{\beta} f\|_{r,D}\notag\\
&\leq Ct^{-\frac{1}{2}}
\|(1+|x|)^{\alpha}(1+|x|-x_1)^{\beta} f\|_{r,D}\label{graddr}
\end{align}
for all $t\geq 3$ and $f\in L^r_{\widetilde{\rho}}(D)$. We use 
\begin{align}\label{duhamelgradv}
\nabla w(t)=\nabla S_a(t)w(0)
+\int_0^t\nabla S_a(t-\tau)P_{\R^3}L(\tau)\,d\tau
\end{align}
to derive the estimate on $\R^3\setminus B_R(0)$, 
see (\ref{duhamelv}). 
Applying (\ref{lqlrr3large}) leads us to  
\begin{align}
\|&(1+|x|)^\alpha(1+|x|-x_1)^\beta\nabla S_a(t)w(0)\|_{r,\R^3}
\nonumber\\
&\leq Ct^{-\frac{1}{2}}
\|(1+|x|)^{\alpha}(1+|x|-x_1)^{\beta} f\|_{r,D}
+C\sum_{i=2}^4
t^{-\frac{3}{2}(\frac{1}{q_i}-\frac{1}{r})-\frac{1}{2}+\eta_i}
\|(1+|x|)^{\gamma_i}(1+|x|-x_1)^{\delta_i} f\|_{q_i,D}
\label{salqlr2}
\end{align}
for $t\geq 1$ 
and $f\in \displaystyle\bigcap^3_{i=2}
L^{q_i}_{\rho_i}(D)\cap L^{r}_{\widetilde{\rho}}(D)
\cap L^{q_4}(D)$. 
Let $\{\widetilde{\lambda}_i\}_{i=1}^4$ 
satisfy   
\begin{align*}
&1<\widetilde{\lambda}_1<\min\left\{\frac{3}{2},r\right\}, \quad
1<\widetilde{\lambda}_i<\min\left\{\frac{3}{2},q_i\right\}\quad 
{\rm for}~i=2,3,4,\quad
 \frac{1}{\widetilde{\lambda}_i}-\frac{1}{r}\ne\frac{1}{3}
\quad{\rm for}~i=1,2,3,4,\\
&\beta<1-\frac{1}{\widetilde{\lambda}_1},\quad
\beta<1-\frac{1}{\widetilde{\lambda}_2},\quad 
\alpha<3\left(1-\frac{1}{\widetilde{\lambda}_3}\right),\quad 
\alpha+\beta<3\left(1-\frac{1}{\widetilde{\lambda}_1}\right).
\end{align*}
Then by (\ref{loc20}), we get 
\begin{align*}
&\|(\nabla\zeta)p(t)\|_{\widetilde{\lambda}_1,\R^3},\|(\nabla\zeta)p(t)\|_{r,\R^3}
\leq C\|\nabla p(t)\|_{r,A_{R-1,R}}
\leq C(1+t)^{-\frac{3}{2s_0}}
\|(1+|x|)^\alpha(1+|x|-x_1)^{\beta}f\|_{r,D}
\end{align*}
for $t>0$, see the calculation as in (\ref{pkappa}). 
From this estimate and (\ref{pkappa}) with $\kappa=q_i~(i=2,3,4)$, 
we carry out the same calculation as in (\ref{lest}) to get 
\begin{align*}
\|&(1+|x|)^{\alpha}(1+|x|-x_1)^{\beta}
P_{\R^3}L(t)\|_{\widetilde{\lambda}_1,\R^3}\leq 
C(1+t)^{-\frac{3}{2s_0}}\|(1+|x|)^\alpha(1+|x|-x_1)^{\beta}f\|_{r,D},
\\
\|&(1+|x|)^{\alpha}(1+|x|-x_1)^{\beta}
P_{\R^3}L(t)\|_{r,\R^3}\leq 
C(1+t)^{-\frac{3}{2s_0}}\|(1+|x|)^\alpha(1+|x|-x_1)^{\beta}f\|_{r,D},
\\
\|&(1+|x|)^{\gamma_i}(1+|x|-x_1)^{\delta_i}
P_{\R^3}L(t)\|_{\widetilde{\lambda}_i,\R^3}
\leq C(1+t)^{-\frac{3}{2q_i}}
\|(1+|x|)^{\gamma_i}(1+|x|-x_1)^{\delta_i}f\|_{q_i,D}
\end{align*}
for $t>0,i=2,3,4$. These estimates together with 
(\ref{lqlrr3small}), (\ref{lqlrr3large}) assert
\begin{align}
\int_0^t&\|(1+|x|)^{\alpha}(1+|x|-x_1)^{\beta}
\nabla
 S_a(t-\tau)P_{\R^3}L(\tau)\|_{r,\R^3}\,d\tau=\int_0^{\frac{t}{2}}+
\int^{t-1}_{\frac{t}{2}}+\int_{t-1}^t,\label{gradsplit}\\
\int_0^{\frac{t}{2}}
&\leq Ct^{-\frac{3}{2}
(\frac{1}{\widetilde{\lambda}_1}-\frac{1}{r})-\frac{1}{2}}
\|(1+|x|)^{\alpha}(1+|x|-x_1)^{\beta} f\|_{r,D}
\int_0^{\frac{t}{2}}(1+\tau)^{-\frac{3}{2s_0}}\,d\tau\nonumber\\
&\qquad+C\sum_{i=2}^4t^{-\frac{3}{2}
(\frac{1}{\widetilde{\lambda}_i}-\frac{1}{r})-\frac{1}{2}+\eta_i}
\|(1+|x|)^{\gamma_i}(1+|x|-x_1)^{\delta_i} f\|_{q_i,D}
\int_0^{\frac{t}{2}}(1+\tau)^{-\frac{3}{2q_i}}\,d\tau\nonumber\\
&\leq Ct^{-\frac{1}{2}}
\|(1+|x|)^{\alpha}(1+|x|-x_1)^{\beta} f\|_{r,D}
+C\sum_{i=2}^4
t^{-\frac{3}{2}(\frac{1}{q_i}-\frac{1}{r})-\frac{1}{2}+\eta_i}
\|(1+|x|)^{\gamma_i}(1+|x|-x_1)^{\delta_i} f\|_{q_i,D},
\label{grad0totover2}\\
\int_{\frac{t}{2}}^{t-1}&\leq Ct^{-\frac{3}{2s_0}}
\|(1+|x|)^{\alpha}(1+|x|-x_1)^{\beta} f\|_{r,D}
\int_{\frac{t}{2}}^{t-1}
(t-\tau)^{-\frac{3}{2}(\frac{1}{\widetilde{\lambda}_1}
-\frac{1}{r})-\frac{1}{2}}\,d\tau
\nonumber\\
&\qquad+C\sum_{i=2}^4
t^{-\frac{3}{2q_i}+\eta_i}\|(1+|x|)^{\gamma_i}
(1+|x|-x_1)^{\delta_i} f\|_{q_i,D}
\int_{\frac{t}{2}}^{t-1}
(t-\tau)^{-\frac{3}{2}(\frac{1}{\widetilde{\lambda}_i}
-\frac{1}{r})-\frac{1}{2}}\,d\tau\nonumber\\
&\leq Ct^{-\frac{1}{2}}
\|(1+|x|)^{\alpha}(1+|x|-x_1)^{\beta} f\|_{r,D}
+C\sum_{i=2}^4
t^{-\frac{3}{2}(\frac{1}{q_i}-\frac{1}{r})-\frac{1}{2}+\eta_i}
\|(1+|x|)^{\gamma_i}(1+|x|-x_1)^{\delta_i} f\|_{q_i,D}
\end{align}
and 
\begin{align}
\int_{t-1}^t
&\leq Ct^{-\frac{3}{2s_0}} 
\|(1+|x|)^\alpha(1+|x|-x_1)^\beta f\|_{r,D}
\int_{t-1}^t(t-\tau)^{-\frac{1}{2}}\,d\tau\nonumber\\
&\leq Ct^{-\frac{3}{2s_0}} 
\|(1+|x|)^\alpha(1+|x|-x_1)^\beta f\|_{r,D}
\leq Ct^{-\frac{1}{2}} 
\|(1+|x|)^\alpha(1+|x|-x_1)^\beta f\|_{r,D}\label{gradt-1tot}
\end{align} 
for  all $t\geq 2$ and $f\in \displaystyle\bigcap^3_{i=2}
L^{q_i}_{\rho_i}(D)\cap L^{r}_{\widetilde{\rho}}(D)
\cap L^{q_4}(D)$.
Collecting (\ref{graddr})--(\ref{gradt-1tot}), we obtain 
(\ref{gradlrlr2}), from which the assertion 2 follows. 
Similarly, given $q_2,r,\beta$ satisfying
$1<q_2\leq r\leq 3,0\leq \beta<\min\{1-1/r,1/3\}$, 
we have 
\begin{align*}
\|(1+|x|-x_1)^\beta\nabla e^{-tA_a}P_Df\|_{r,D}
\leq Ct^{-\frac{1}{2}}
\|(1+|x|-x_1)^{\beta} f\|_{r,D}
+Ct^{-\frac{3}{2}(\frac{1}{q_2}-\frac{1}{r})-\frac{1}{2}
+\frac{\beta}{2}}\|f\|_{q_2,D}
\end{align*}
for $t\geq 3,f\in L^r_{(1+|x|-x_1)^{\beta r}}(D)\cap L^{q_2}(D)$ 
by using Proposition \ref{local2} and  
\begin{align*}
&\|(1+|x|-x_1)^{\beta}
P_{\R^3}L(t)\|_{\widetilde{\lambda}_1,\R^3}
\leq C(1+t)^{-\frac{3}{2r}}\|
(1+|x|-x_1)^{\beta}f\|_{r,D},\\
&
\|(1+|x|-x_1)^{\beta}
P_{\R^3}L(t)\|_{r,\R^3}
\leq C(1+t)^{-\frac{3}{2r}}\|
(1+|x|-x_1)^{\beta}f\|_{r,D},\\
&\|P_{\R^3}L(t)\|_{\widetilde{\lambda}_2,\R^3}
\leq C(1+t)^{-\frac{3}{2q_2}}
\|f\|_{q_2,D},
\end{align*}
where we have taken $\widetilde{\lambda}_1,\widetilde{\lambda}_2$ 
so that 
\begin{align*}
1<\widetilde{\lambda}_1<\min\left\{\frac{3}{2},r\right\}, \quad
1<\widetilde{\lambda}_2<\min\left\{\frac{3}{2},q_2\right\},\quad 
\frac{1}{\widetilde{\lambda}_i}-\frac{1}{r}\ne\frac{1}{3},\quad 
\beta<1-\frac{1}{\widetilde{\lambda}_i} \quad (i=1,2).
\end{align*}
And also, given $q_2,r,\alpha$ satisfying   
$1<q_2\leq r\leq 3/(1-\alpha),0\leq\alpha<\min\{3(1-1/r),1\}$, 
we can obtain 
\begin{align*}
\|(1+|x|)^\alpha\nabla e^{-tA_a}P_Df\|_{r,D}
\leq Ct^{-\frac{1}{2}}
\|(1+|x|)^{\alpha} f\|_{r,D}
+Ct^{-\frac{3}{2}(\frac{1}{q_2}-\frac{1}{r})-\frac{1}{2}
+\eta_2}\|f\|_{q_2,D}
\end{align*}
by using the assertion 2 of Proposition \ref{local3} and
\begin{align*}
&\|(1+|x|)^{\alpha}
P_{\R^3}L(t)\|_{\widetilde{\lambda}_1,\R^3}
\leq C(1+t)^{-\frac{3}{2r}-\frac{\alpha}{2}}
\|(1+|x|)^{\alpha}f\|_{r,D},\\
&\|(1+|x|)^{\alpha}
P_{\R^3}L(t)\|_{r,\R^3}
\leq C(1+t)^{-\frac{3}{2r}-\frac{\alpha}{2}}
\|(1+|x|)^{\alpha}f\|_{r,D},\quad 
\|P_{\R^3}L(t)\|_{\widetilde{\lambda}_2,\R^3}
\leq C(1+t)^{-\frac{3}{2q_2}}\|f\|_{q_2,D},
\end{align*}
where $\eta_2$ is given by (\ref{gammadeltaeta}) and 
we have taken $\widetilde{\lambda}_1,\widetilde{\lambda}_2$ so that 
\begin{align*}
1<\widetilde{\lambda}_1<\min\left\{\frac{3}{2},r\right\}, \quad
1<\widetilde{\lambda}_2<\min\left\{\frac{3}{2},q_2\right\},\quad 
\frac{1}{\widetilde{\lambda}_i}-\frac{1}{r}\ne\frac{1}{3},\quad 
\alpha<3\left(1-\frac{1}{\widetilde{\lambda}_i}\right)\quad 
(i=1,2). 
\end{align*}
We thus conclude the assertion 3 and 4, 
which completes the proof of Theorem \ref{lqlrd}. 
\par Let us proceed to the proof of Theorem \ref{thmdual0}. 
Let $a\in(0,a_0]$. To prove the assertion 1, we define 
\begin{align}\label{uvpdef2}
u_-(t):=e^{-(t+1)A_{-a}}P_Df,\quad
w_-(t):=\zeta u_-(t)-\B_{A_{R-1,R}}
[\nabla \zeta\cdot u_-(t)],\quad \pi_-(t):=\zeta p_-(t),
\end{align}
where $\zeta$ is given by (\ref{zetadef}) and 
$p_-(t)$ is the pressure associated 
with $u_-(t)$ that satisfies
\begin{align}\label{p2}
\int_{A_{R-1,R}}p_-(t)\,dx=0
\end{align}
for all $t>0$. Then the pair $(w_-,\pi_-)$ fulfills 
\begin{align*}
&\partial_t w_--\Delta w_--a\partial_{x_1} w_-
+\nabla \pi_-=L_-,\quad  
\nabla \cdot w_-=0,\quad x\in\R^3,t>0,\\
&w_-(x,0)=\zeta e^{-A_{-a}}P_Df-\B_{A_{R-1,R}}
[\nabla \zeta\cdot e^{-A_{-a}}P_Df],
\quad x\in\R^3,
\end{align*}
where 
\begin{align*}
L_-(x,t)=-2(\nabla\zeta\cdot \nabla)u_-
-(\Delta\zeta)u_--a(\partial_{x_1}\zeta)u_-
-(\partial_t-\Delta-a\partial_{x_1})\B_{A_{R-1,R}}
[\nabla \zeta\cdot u_-(t)]
+(\nabla \zeta)p_-,
\end{align*}
thereby, it suffices to consider the integral equation 
\begin{align}\label{duhamelv2}
\nabla w_-(t)=\nabla S_{-a}(t)w_-(0)+
\int_0^t\nabla S_{-a}(t-\tau)P_{\R^3}L_-(\tau)\,d\tau.
\end{align}
Due to boundedness of
$e^{-A_{-a}}:L^{q}_{\rho_-,\sigma}(D)\rightarrow 
L^{q}_{\rho_-,\sigma}(D)$ and (\ref{lqlrr3dual3}), we have 
\begin{align}
\|&(1+|x|)^{-\alpha}(1+|x|-x_1)^{-\beta}\nabla 
S_{-a}(t)w_-(0)\|_{r,\R^3}
\notag\\&
\leq Ct^{-\frac{3}{2}(\frac{1}{q}-\frac{1}{r})-\frac{1}{2}}
(1+t)^{\alpha+\frac{\beta}{2}}
\|(1+|x|)^{-\alpha}(1+|x|-x_1)^{-\beta} f\|_{q,D}
\label{sminusalqlr}
\end{align}
for $f\in L^{q}_{\rho_-}(D),$ where 
$\rho_-$ is given by (\ref{rhominus}).  
Let $1<\nu<\min\{3/2,q\}$ satisfy $1/\nu-1/r\ne 1/3.$
From (\ref{loc2}), 
the same argument as in (\ref{pkappa}) implies that 
$(\nabla\zeta)p_-(t)\in L^{\kappa}(\R^3)$ for $t>0,$ 
$\kappa=\nu,q$ and that 
\begin{align*}
\|(\nabla\zeta)p_-(t)\|_{\kappa,\R^3}
\leq C(1+t)^{-\frac{3}{2q}+\alpha+\frac{\beta}{2}}
\|(1+|x|)^{-\alpha}(1+|x|-x_1)^{-\beta}f\|_{q,D}
\end{align*}
for all $t>0$ and $\kappa=\nu,q.$ We thus obtain 
$L_-(t)\in L^{\kappa}(\R^3)$ with the estimate
\begin{align}
\|P_{\R^3}L_-(t)\|_{\kappa,\R^3}
\leq C(1+t)^{-\frac{3}{2q}+\alpha+\frac{\beta}{2}}
\|(1+|x|)^{-\alpha}(1+|x|-x_1)^{-\beta}f\|_{q,D}\label{kminusest}
\end{align}
for $t>0$ and $\kappa=\nu,q$. If $t\geq 2$, then 
the integral in (\ref{duhamelv2}) is splitted into 
\begin{align}\label{split2}
\int_0^t&\|(1+|x|)^{-\alpha}(1+|x|-x_1)^{-\beta}
\nabla S_{-a}(t-\tau)P_{\R^3}L_-(\tau)\|_{r,\R^3}\,d\tau\notag\\
&\leq \int_0^t\|\nabla 
S_{-a}(t-\tau)P_{\R^3}L_-(\tau)\|_{r,\R^3}\,d\tau
=\int_0^{\frac{t}{2}}+
\int^{t-1}_{\frac{t}{2}}+\int_{t-1}^t.
\end{align}
We apply (\ref{lqlrr33}) and (\ref{kminusest}) to obtain 
\begin{align}
\int_0^{\frac{t}{2}}
&\leq C\int_0^{\frac{t}{2}}
(t-\tau)^{-\frac{3}{2}
(\frac{1}{\nu}-\frac{1}{r})-\frac{1}{2}}
(1+\tau)^{-\frac{3}{2q}+\alpha+\frac{\beta}{2}}\,
\|(1+|x|)^{-\alpha}(1+|x|-x_1)^{-\beta}f\|_{q,D}\,d\tau
\notag\\
&\leq Ct^{-\frac{3}{2}
(\frac{1}{\nu}-\frac{1}{r})-\frac{1}{2}}
\left(1+\frac{t}{2}\right)^{\alpha+\frac{\beta}{2}}
\|(1+|x|)^{-\alpha}(1+|x|-x_1)^{-\beta} f\|_{q,D}
\int_0^{\frac{t}{2}}(1+\tau)^{-\frac{3}{2q}}\,d\tau\notag\\
&\leq Ct^{-\frac{3}{2}
(\frac{1}{q}-\frac{1}{r})-\frac{1}{2}+\alpha+\frac{\beta}{2}}
\|(1+|x|)^{-\alpha}(1+|x|-x_1)^{-\beta} f\|_{q,D},
\label{0totover22}\\
\int_{\frac{t}{2}}^{t-1}
&\leq C\int_{\frac{t}{2}}^{t-1}(t-\tau)^{-\frac{3}{2}
(\frac{1}{\nu}-\frac{1}{r})-\frac{1}{2}}
(1+\tau)^{-\frac{3}{2q}+\alpha+\frac{\beta}{2}}\,
\|(1+|x|)^{-\alpha}(1+|x|-x_1)^{-\beta}f\|_{q,D}\,d\tau \notag\\
&\leq C\left(1+\frac{t}{2}\right)^{-\frac{3}{2q}}
t^{\alpha+\frac{\beta}{2}}\,
\|(1+|x|)^{-\alpha}(1+|x|-x_1)^{-\beta}f\|_{q,D}
\int_{\frac{t}{2}}^{t-1}
(t-\tau)^{-\frac{3}{2}(\frac{1}{\nu}
-\frac{1}{r})-\frac{1}{2}}\,d\tau\notag\\
&\leq Ct^{-\frac{3}{2}
(\frac{1}{q}-\frac{1}{r})-\frac{1}{2}+\alpha+\frac{\beta}{2}}
\|(1+|x|)^{-\alpha}(1+|x|-x_1)^{-\beta} f\|_{q,D}
\end{align}
and 
\begin{align}
\int_{t-1}^t&\leq C\int_{t-1}^{t}
(t-\tau)^{-\frac{3}{2}(\frac{1}{q}
-\frac{1}{r})-\frac{1}{2}}
(1+\tau)^{-\frac{3}{2q}+\alpha+\frac{\beta}{2}}\,
\|(1+|x|)^{-\alpha}(1+|x|-x_1)^{-\beta} f\|_{q,D}\,d\tau
\nonumber\\
&\leq Ct^{-\frac{3}{2q}}(1+t)^{\alpha+\frac{\beta}{2}} 
\|(1+|x|)^{-\alpha}(1+|x|-x_1)^{-\beta} f\|_{q,D}
\int_{t-1}^t(t-\tau)^{-\frac{3}{2}(\frac{1}{q}
-\frac{1}{r})-\frac{1}{2}}\,d\tau\nonumber\\
&\leq Ct^{-\frac{3}{2q}+\alpha+\frac{\beta}{2}}
\|(1+|x|)^{-\alpha}(1+|x|-x_1)^{-\beta} f\|_{q,D}\notag\\
&\leq Ct^{-\frac{3}{2}
(\frac{1}{q}-\frac{1}{r})-\frac{1}{2}+\alpha+\frac{\beta}{2}}
\|(1+|x|)^{-\alpha}(1+|x|-x_1)^{-\beta} f\|_{q,D},\label{t-1tot3}
\end{align}
where we have used $1/q-1/r<1/3$ and $r\leq 3$.  
Collecting (\ref{duhamelv2}), (\ref{sminusalqlr}), 
(\ref{split2})--(\ref{t-1tot3}) together with 
$w_-|_{\R^3\setminus B_R(0)}=e^{-(t+1)A_{-a}}P_Df$ yields
\begin{align*}
\|&(1+|x|)^{-\alpha}(1+|x|-x_1)^{-\beta}
\nabla e^{-tA_{-a}}P_Df\|_{r,\R^3\setminus B_R(0)}\\
&\leq Ct^{-\frac{3}{2}
(\frac{1}{q}-\frac{1}{r})-\frac{1}{2}+\alpha+\frac{\beta}{2}}
\|(1+|x|)^{-\alpha}(1+|x|-x_1)^{-\beta} f\|_{q,D}
\end{align*}
for $t\geq 3$ and $f\in L^{q}_{\rho_-}(D)$ whenever 
$r\leq 3$, (\ref{alphabeta4}) and $1/q-1/r<1/3$ are 
fulfilled. Moreover, if $1/q-1/r<1/3$, then we also have 
\begin{alignat}{2}
\|&(1+|x|)^{-\alpha}(1+|x|-x_1)^{-\beta}
\nabla&&e^{-tA_{-a}}P_Df\|_{r,D_R}\notag\\
&\leq C\|\nabla e^{-tA_{-a}}P_Df\|_{W^{1,q}(D_R)}&&\leq 
Ct^{-\frac{3}{2q}+\alpha+\frac{\beta}{2}}\,
\|(1+|x|)^{-\alpha}(1+|x|-x_1)^{-\beta}f\|_{q,D}\notag\\
&&&\leq Ct^{-\frac{3}{2}(\frac{1}{q}-\frac{1}{r})-\frac{1}{2}
+\alpha+\frac{\beta}{2}}\,
\|(1+|x|)^{-\alpha}(1+|x|-x_1)^{-\beta}f\|_{q,D}\label{lqlrdr2}
\end{alignat}  
by the Sobolev embedding
and (\ref{loc2}), thus (\ref{lqlrdgraddual0}) holds for $t\geq 1$. 
Since (\ref{lqlrdgraddual0}) 
with $t\leq 1$ directly follows from (\ref{lqlrdsmall}), 
we conclude (\ref{lqlrdgraddual0}) 
for $t>0,$ which completes the proof of the assertion 1. 
Under the assumption in the assertion 2, we can apply the 
assertion 1 with $q=r',r=q'$ to get
\begin{align*}
|(e^{-tA_a}P_D\div F,\varphi)|&=
|(F,\nabla e^{-tA_{-a}}\varphi)|\\&
\leq \|(1+|x|)^{\alpha}(1+|x|-x_1)^{\beta}F\|_{q,D}
\|(1+|x|)^{-\alpha}(1+|x|-x_1)^{-\beta}
\nabla e^{-tA_{-a}}\varphi\|_{q',D}\\
&\leq Ct^{-\frac{3}{2}(\frac{1}{q}-\frac{1}{r})-\frac{1}{2}}
(1+t)^{\alpha+\frac{\beta}{2}}
\|(1+|x|)^{\alpha}(1+|x|-x_1)^{\beta}F\|_{q,D}\\
&\qquad\qquad
\times\|(1+|x|)^{-\alpha}(1+|x|-x_1)^{-\beta}\varphi\|_{r',D}
\end{align*}
for $t>0,\varphi\in C_{0,\sigma}^\infty(D)$, 
which combined with $L^{r'}_{(1+|x|)^{-\alpha r'}
(1+|x|-x_1)^{-\beta r'},\sigma}(D)^*
=L^r_{(1+|x|)^{\alpha r}(1+|x|-x_1)^{\beta r},\sigma}(D)$ 
yields the assertion 2. The proof is complete. \qed

In the proof of Theorem \ref{lqlrd2}, we 
make use of the assertion 3--5 of 
Proposition \ref{lqlrr3} and 
the assertion 2--4 of Proposition \ref{proplqlrr3dual} to 
improve the decay rate in Theorem \ref{lqlrd} and \ref{thmdual0}.

\noindent{\bf Proof of Theorem \ref{lqlrd2}.}
We start with the proof of the assertion 1. 
For simplicity, let us set  
\begin{align*}
\eta_0:=\frac{\alpha}{4}+\max\left\{
\frac{\alpha}{4},\frac{\beta}{2}\right\}
+\varepsilon.
\end{align*}
We use (\ref{duhamelv}) to derive
the estimate on $\R^3\setminus B_R(0).$ 
It follows from (\ref{losslessr31}) that 
\begin{align}
\|(1+|x|)^\alpha(1+|x|-x_1)^\beta\nabla^iS_a(t)v(0)\|_{r,\R^3}
&\leq Ct^{-\frac{3}{2}(\frac{1}{q}-\frac{1}{r})-\frac{i}{2}+\eta_0}
\|(1+|x|)^{\alpha}(1+|x|-x_1)^{\beta} 
v(0)\|_{q,\R^3}\notag\\
&\leq Ct^{-\frac{3}{2}(\frac{1}{q}-\frac{1}{r})-
\frac{i}{2}+\eta_0}
\|(1+|x|)^{\alpha}(1+|x|-x_1)^{\beta} f\|_{q,D}
\label{v0}
\end{align}
for $t\geq 1$ and $f\in L^{q}_{\rho}(D).$ 
Let $\lambda$ satisfy 
\begin{align*}
1<\lambda<\min\left\{\frac{3}{2},q\right\},\quad 
\beta<1-\frac{1}{\lambda},\quad 
\alpha+\beta<3\left(1-\frac{1}{\lambda}\right),  
\end{align*}
then in view of (\ref{loc20}), 
the same calculation as in (\ref{lest}) leads to 
\begin{align}
\|(1+|x|)^{\alpha}(1+|x|-x_1)^{\beta}
P_{\R^3}L(t)\|_{\kappa,\R^3}
&\leq C\|(1+|x|)^{\alpha}(1+|x|-x_1)^{\beta}
L(t)\|_{\kappa,\R^3}\notag\\
&\leq C(1+t)^{-\frac{3}{2s}}
\|(1+|x|)^{\alpha}(1+|x|-x_1)^{\beta}f\|_{q,D}\label{kest}
\end{align}
for $t>0$ and $\kappa=\lambda,q$, where 
\begin{align}\label{sdef}
\max\left\{
\frac{3q}{3+\alpha q+\beta q},\frac{2q}{2+\alpha q}
\right\}<s<q. 
\end{align} 
We apply (\ref{losslessr31}) and (\ref{kest}) to get
\begin{align}
\int_0^{\frac{t}{2}}&\|(1+|x|)^{\alpha}(1+|x|-x_1)^{\beta}\nabla^i
S_a(t-\tau)P_{\R^3}L(\tau)\|_{r,\R^3}\,d\tau\notag\\
&\leq C\int_0^{\frac{t}{2}}(t-\tau)^{-\frac{3}{2}
(\frac{1}{\lambda}-\frac{1}{r})-\frac{i}{2}+\eta_0}
(1+\tau)^{-\frac{3}{2s}}\,d\tau\,
\|(1+|x|)^{\alpha}(1+|x|-x_1)^{\beta} f\|_{q,D}\notag\\
&\leq Ct^{-\frac{3}{2}
(\frac{1}{\lambda}-\frac{1}{r})-\frac{i}{2}+\eta_0}
\|(1+|x|)^{\alpha}(1+|x|-x_1)^{\beta} f\|_{q,D}
\int_0^{\frac{t}{2}}(1+\tau)^{-\frac{3}{2s}}\,d\tau\notag\\
&\leq Ct^{-\frac{3}{2}
(\frac{1}{q}-\frac{1}{r})-\frac{i}{2}+\eta_0}
\|(1+|x|)^{\alpha}(1+|x|-x_1)^{\beta} f\|_{q,D},
\label{0totover20}\\
\int_{\frac{t}{2}}^{t-1}&
\|(1+|x|)^{\alpha}(1+|x|-x_1)^{\beta}\nabla^i
S_a(t-\tau)P_{\R^3}L(\tau)\|_{r,\R^3}\,d\tau\notag\\
&\leq C\int_{\frac{t}{2}}^{t-1}
(t-\tau)^{-\frac{3}{2}
(\frac{1}{\lambda}-\frac{1}{r})-\frac{i}{2}+\eta_0}
(1+\tau)^{-\frac{3}{2s}}\,d\tau\,
\|(1+|x|)^{\alpha}(1+|x|-x_1)^{\beta} f\|_{q,D}\notag\\
&\leq Ct^{-\frac{3}{2s}}\|(1+|x|)^{\alpha}
(1+|x|-x_1)^{\beta} f\|_{q,D}
\int_{\frac{t}{2}}^{t-1}
(t-\tau)^{-\frac{3}{2}(\frac{1}{\lambda}
-\frac{1}{r})-\frac{i}{2}+\eta_0}\,d\tau\notag\\
&\leq 
\begin{cases}
Ct^{-\frac{3}{2}(\frac{1}{s}
-\frac{1}{r})-\frac{i}{2}+\eta_0+1-\frac{3}{2\lambda}}
\|(1+|x|)^{\alpha}
(1+|x|-x_1)^{\beta} f\|_{q,D}
\quad &{\rm if}~
-\displaystyle\frac{3}{2}\left(\frac{1}{\lambda}
-\frac{1}{r}\right)-\frac{i}{2}+\eta_0>-1,\\[13pt]
Ct^{-\frac{3}{2s}}(\log t)
\|(1+|x|)^{\alpha}(1+|x|-x_1)^{\beta} f\|_{q,D}
&{\rm if}~-\displaystyle\frac{3}{2}\left(\frac{1}{\lambda}
-\frac{1}{r}\right)-\frac{i}{2}+\eta_0=-1,\\[13pt]
Ct^{-\frac{3}{2s}}\|(1+|x|)^{\alpha}(1+|x|-x_1)^{\beta} f\|_{q,D}
&{\rm if}~
-\displaystyle\frac{3}{2}\left(\frac{1}{\lambda}
-\frac{1}{r}\right)-\frac{i}{2}+\eta_0<-1
\end{cases}\notag\\
&\leq Ct^{-\min\{\frac{3}{2s},\frac{3}{2}\left(\frac{1}{q}
-\frac{1}{r}\right)+\frac{i}{2}-\eta_0\}}
\|(1+|x|)^{\alpha}(1+|x|-x_1)^{\beta} f\|_{q,D}
\label{tover2tot-12}
\end{align}
for $t\geq 2,f\in L^{q}_{\rho}(D),i=0,1$, where we have used 
\begin{align*}
t^{-\frac{3}{2s}}\log t\leq t^{-\frac{3}{2q}}
=t^{-\frac{3}{2q}-\frac{3}{2}\left(\frac{1}{\lambda}
-\frac{1}{r}\right)-\frac{i}{2}+\eta_0+1}\leq 
t^{-\frac{3}{2}\left(\frac{1}{q}
-\frac{1}{r}\right)-\frac{i}{2}+\eta_0}
\end{align*}
for $t\geq 1$ in the case $-3(1/\lambda-1/r)/2-i/2+\eta_0=-1.$ 
Moreover, it follows from (\ref{lqlrr3small}), (\ref{kest}) that
\begin{align}   
\int_{t-1}^t&\|(1+|x|)^\alpha(1+|x|-x_1)^\beta
\nabla^i S_a(t-\tau)P_{\R^3}L(\tau)\|_{r,\R^3}\,d\tau\notag\\
&\leq Ct^{-\frac{3}{2s}}\int_{t-1}^t
(t-\tau)^{-\frac{3}{2}(\frac{1}{q}
-\frac{1}{r})-\frac{i}{2}}\,d\tau\,
\|(1+|x|)^{\alpha}(1+|x|-x_1)^{\beta} f\|_{q,D}\notag\\
&\leq Ct^{-\frac{3}{2s}}
\|(1+|x|)^{\alpha}(1+|x|-x_1)^{\beta} f\|_{q,D},\label{t-1tot2}
\end{align}
where we have used $1/q-1/r<1/3$ in the last inequality. 
If $1/q-1/r<1/3,$ then by (\ref{loc20}), we also have  
\begin{align}
\|(1+|x|)^\alpha(1+|x|-x_1)^\beta\nabla^i e^{-tA_a}P_Df\|_{r,D_R}
&\leq C\|\nabla^i e^{-tA_a}P_Df\|_{W^{1,q}(D_R)}\notag\\&\leq 
Ct^{-\frac{3}{2s}}\|(1+|x|)^\alpha(1+|x|-x_1)^\beta f\|_{q,D}
\label{locals}
\end{align}
for $t\geq 3,f\in L^q_\rho(D),i=0,1.$
Collecting (\ref{v0}), 
(\ref{0totover20})--(\ref{locals}) yields
\begin{align}\label{lqlrds}
\|(1+|x|)^\alpha(1+|x|-x_1)^\beta 
\nabla e^{-tA_a}P_Df\|_{r,D}\leq 
Ct^{-\min\{\frac{3}{2s},
\frac{3}{2}(\frac{1}{q}-\frac{1}{r})+\frac{i}{2}-\eta_0\}}
\|(1+|x|)^\alpha(1+|x|-x_1)^\beta f\|_{q,D}
\end{align} 
for $t\geq 3,f\in L^q_\rho(D)$ 
if (\ref{alphabetadef}) and $1/q-1/r<1/3$ are satisfied. 
Due to $3/(2s)\geq 3/(2q)\geq 3(1/q-1/r)/2-\eta_0,$ 
we have (\ref{gradlqlrdlarge5}) with $i=0$ 
by (\ref{lqlrds}) with $i=0$. 
To get the case $i=1$, from (\ref{sdef}), 
we put 
\begin{align*}
\frac{3}{2s}=
\frac{3}{2q}+\frac{\alpha}{2}+\min\left\{
\frac{\alpha}{4},\frac{\beta}{2}\right\}-\varepsilon_*
\end{align*}
with some $\varepsilon_*>0,$ then the condition 
\begin{align}\label{condis}
\frac{3}{2}\left(\frac{1}{q}-\frac{1}{r}\right)
+\frac{1}{2}-\eta_0\leq \frac{3}{2s}
\end{align}
is equivalent to 
$1-2\alpha-\beta\leq 3/r+2(\varepsilon-\varepsilon_*)$.
Hence, under the assumption 
\begin{align*} 
1-2\alpha-\beta<\frac{3}{r}+2\varepsilon, 
\end{align*}
the condition (\ref{condis}) is accomplished 
by taking $\varepsilon_*$ small (by taking $s$ small). 
From this observation and (\ref{lqlrds}), 
we obtain (\ref{gradlqlrdlarge5})
with $r\leq \infty$ 
(resp. $r\leq 3/(1-2\alpha-\beta-2\varepsilon))$
if $2\alpha+\beta\geq 1$ (resp. $2\alpha+\beta<1$), which asserts
the assertion 1. 
\par If $\beta=0$, 
we can see from the assertion 2 of Proposition 
\ref{local3} that (\ref{kest}) with 
$\beta=0,s=(3q)/(3+\alpha q)$ holds for $a\in[0,a_0]$. 
From this together with 
(\ref{losslessr32}) or (\ref{losslessr33}), 
the same calculation above leads us to 
\begin{align*}
\|(1+|x|)^\alpha\nabla^i e^{-tA_a}P_Df\|_{r,D}\leq
\begin{cases}
Ct^{-\min\{\frac{3}{2q}+\frac{\alpha}{2},
\frac{3}{2}(\frac{1}{q}-\frac{1}{r})+\frac{i}{2}-\frac{\alpha}{2}\}}
\|(1+|x|)^\alpha f\|_{q,D}&\text{if}~a>0,\\
Ct^{-\min\{\frac{3}{2q}+\frac{\alpha}{2},
\frac{3}{2}(\frac{1}{q}-\frac{1}{r})+\frac{i}{2}\}}
\|(1+|x|)^\alpha f\|_{q,D}&\text{if}~a=0
\end{cases}
\end{align*} 
for $t\geq 3,f\in L^q_{(1+|x|)^{\alpha q}}(D)$ and $i=0,1$ provided 
that $1/q-1/r<1/3.$
From this with $i=0$, we obtain 
\begin{align*}
\|(1+|x|)^\alpha e^{-tA_a}P_Df\|_{r,D}\leq
\begin{cases}
Ct^{-\frac{3}{2}(\frac{1}{q}-\frac{1}{r})+\frac{\alpha}{2}}
\|(1+|x|)^\alpha f\|_{q,D}&\text{if}~a>0,\\
Ct^{-\frac{3}{2}(\frac{1}{q}-\frac{1}{r})}
\|(1+|x|)^\alpha f\|_{q,D}&\text{if}~a=0
\end{cases}
\end{align*} 
for $t\geq 3$ provided that $1/q-1/r<1/3.$ 
In the case of the Stokes semigroup $(a=0)$, 
since the estimate is homogeneous (see also (\ref{lqlrdsmall}) with 
$a=0,\beta=0,|k|=0$), the restriction $1/q-1/r<1/3$ is eliminated 
by the semigroup property, which 
yields the assertion 2 and 3 with $i=0$. 
The case $i=1$ also holds because the condition 
\begin{align*}
\frac{3}{2}\left(\frac{1}{q}-\frac{1}{r}\right)
+\frac{1}{2}-\frac{\alpha}{2}
\leq \frac{3}{2q}+\frac{\alpha}{2}\quad 
\left(\text{resp.}~\frac{3}{2}\left(\frac{1}{q}-\frac{1}{r}\right)
+\frac{1}{2}
\leq \frac{3}{2q}+\frac{\alpha}{2}\right)
\end{align*} 
is accomplished by 
\begin{align*}
1-2\alpha\leq \frac{3}{r}\quad 
\left(\text{resp.}~1-\alpha\leq \frac{3}{r}\right). 
\end{align*}
The proof of the assertion 2 and 3 is complete. 
\par We next prove the assertion 4 
by using (\ref{duhamelv2}). 
Due to boundedness of 
$e^{-A_{-a}}:L^{q}_{\rho_-,\sigma}(D)\rightarrow 
L^{q}_{\rho_-,\sigma}(D)$ and (\ref{lqlrr3dual}), we have 
\begin{align}
\|&(1+|x|)^{-\alpha}(1+|x|-x_1)^{-\beta}\nabla
S_{-a}(t)v_-(0)\|_{r,\R^3}
\notag\\&
\leq Ct^{-\frac{3}{2}(\frac{1}{q}-\frac{1}{r})-\frac{1}{2}}
(1+t)^{\frac{\alpha}{4}
+\max\{\frac{\alpha}{4},\frac{\beta}{2}\}+\varepsilon}
\|(1+|x|)^{-\alpha}(1+|x|-x_1)^{-\beta} f\|_{q,D}\notag
\end{align}
for $t>0,i=0,1$ and $f\in L^{q}_{\rho_-}(D).$ Let $\widetilde{\nu}$ 
satisfy 
\begin{align}\label{lambdaminus}
1<\widetilde{\nu}<\min\left\{\max\left\{\frac{3}{2+\frac{3}{2}\alpha},
\frac{3}{2+\alpha+\beta}\right\},q\right\}
\end{align}
and $1/\widetilde{\nu}-1/r\ne 1/3.$ Here, 
we note that 
the condition (\ref{alphabetar2}) ensures
\begin{align}\label{alphabetageq1}
1<\max\left\{\frac{3}{2+\frac{3}{2}\alpha},
\frac{3}{2+\alpha+\beta}\right\}.
\end{align} 
In fact, if
\begin{align*}
\max\left\{\frac{3}{2+\frac{3}{2}\alpha},
\frac{3}{2+\alpha+\beta}\right\}=\frac{3}{2+\frac{3}{2}\alpha}, 
\end{align*} 
then we have $\alpha\leq 2\beta.$ This together 
with $\alpha+\beta<1$ implies $\alpha<2/3,$ thus 
\begin{align*}
\max\left\{\frac{3}{2+\frac{3}{2}\alpha},
\frac{3}{2+\alpha+\beta}\right\}=\frac{3}{2+\frac{3}{2}\alpha}>1.
\end{align*} 
The other case 
\begin{align*}
\max\left\{\frac{3}{2+\frac{3}{2}\alpha},
\frac{3}{2+\alpha+\beta}\right\}=\frac{3}{2+\alpha+\beta}  
\end{align*} 
directly follows from $\alpha+\beta<1$, 
which asserts (\ref{alphabetageq1}). Because 
\begin{align*}
-\frac{3}{2q}+\alpha+\frac{\beta}{2}
\leq -\frac{3}{2}
\left(\frac{1}{q}-\frac{1}{r}\right)-\frac{1}{2}+\frac{\alpha}{4}
+\max\left\{\frac{\alpha}{4},\frac{\beta}{2}\right\}+\varepsilon
\end{align*}
and 
\begin{align*}
-\frac{3}{2}
\left(\frac{1}{\widetilde{\nu}}-\frac{1}{r}\right)
-\frac{3}{2q}+\alpha+\frac{\beta}{2}+\frac{1}{2}
\leq -\frac{3}{2}
\left(\frac{1}{q}-\frac{1}{r}\right)-\frac{1}{2}+\frac{\alpha}{4}
+\max\left\{\frac{\alpha}{4},\frac{\beta}{2}\right\}+\varepsilon
\end{align*}
are accomplished by 
(\ref{alphabetar2})--(\ref{alphabetar3}) and (\ref{lambdaminus}), 
we see from the same calculation 
as in (\ref{kminusest})--(\ref{lqlrdr2}) that 
\begin{align*}
\int_0^t&\|(1+|x|)^{-\alpha}(1+|x|-x_1)^{-\beta}
\nabla S_{-a}(t-\tau)P_{\R^3}L_-(\tau)\|_{r,\R^3}\,d\tau\notag\\
&\leq \int_0^t\|\nabla 
S_{-a}(t-\tau)P_{\R^3}L_-(\tau)\|_{r,\R^3}\,d\tau
=\int_0^{\frac{t}{2}}+
\int^{t-1}_{\frac{t}{2}}+\int_{t-1}^t,\\
\int_0^{\frac{t}{2}}
&\leq Ct^{-\frac{3}{2}
(\frac{1}{\widetilde{\nu}}-\frac{1}{r})-\frac{1}{2}}
\left(1+\frac{t}{2}\right)^{\alpha+\frac{\beta}{2}}
\|(1+|x|)^{-\alpha}(1+|x|-x_1)^{-\beta} f\|_{q,D}
\int_0^{\frac{t}{2}}(1+\tau)^{-\frac{3}{2q}}\,d\tau\notag\\
&\leq Ct^{-\frac{3}{2}
(\frac{1}{q}-\frac{1}{r})-\frac{1}{2}+\frac{\alpha}{4}
+\max\left\{\frac{\alpha}{4},\frac{\beta}{2}\right\}+\varepsilon}
\|(1+|x|)^{-\alpha}(1+|x|-x_1)^{-\beta} f\|_{q,D},\\
\int_{\frac{t}{2}}^{t-1}
&\leq C\left(1+\frac{t}{2}\right)^{-\frac{3}{2q}}
t^{\alpha+\frac{\beta}{2}}\,
\|(1+|x|)^{-\alpha}(1+|x|-x_1)^{-\beta}f\|_{q,D}
\int_{\frac{t}{2}}^{t-1}
(t-\tau)^{-\frac{3}{2}(\frac{1}{\widetilde{\nu}}
-\frac{1}{r})-\frac{1}{2}}\,d\tau\notag\\
&\leq Ct^{-\frac{3}{2}
(\frac{1}{q}-\frac{1}{r})-\frac{1}{2}+\frac{\alpha}{4}
+\max\left\{\frac{\alpha}{4},\frac{\beta}{2}\right\}+\varepsilon}
\|(1+|x|)^{-\alpha}(1+|x|-x_1)^{-\beta} f\|_{q,D},\\
\int_{t-1}^t
&\leq Ct^{-\frac{3}{2q}+\alpha+\frac{\beta}{2}}
\|(1+|x|)^{-\alpha}(1+|x|-x_1)^{-\beta} f\|_{q,D}\notag\\
&\leq Ct^{-\frac{3}{2}
(\frac{1}{q}-\frac{1}{r})-\frac{1}{2}+\frac{\alpha}{4}
+\max\left\{\frac{\alpha}{4},\frac{\beta}{2}\right\}+\varepsilon}
\|(1+|x|)^{-\alpha}(1+|x|-x_1)^{-\beta} f\|_{q,D}
\end{align*}
and that 
\begin{align*}
\|&(1+|x|)^{-\alpha}(1+|x|-x_1)^{-\beta}\nabla 
e^{-tA_{-a}}P_Df\|_{r,D_R}
\\&
\leq Ct^{-\frac{3}{2}(\frac{1}{q}-\frac{1}{r})-\frac{1}{2}+
\max\left\{\frac{\alpha}{4},\frac{\beta}{2}\right\}+\varepsilon}
\|(1+|x|)^{-\alpha}(1+|x|-x_1)^{-\beta}f\|_{q,D}.
\end{align*}  
We thus conclude the assertion 4. 
Similarly, we can prove the assertion 5 
by taking $\widetilde{\nu}$ so that (\ref{lambdaminus}) with 
$\beta=0,1/\widetilde{\nu}-1/r\ne 1/3$ and by 
using (\ref{lqlrr3dual2}), (\ref{loc2}) with $\beta=0,a>0,$ and 
 (\ref{kminusest}) with 
$\beta=0,\kappa=\widetilde{\nu},q$. 
This is in fact, the conditions 
\begin{align}\label{alphaq}
-\frac{3}{2q}+\alpha
\leq -\frac{3}{2}
\left(\frac{1}{q}-\frac{1}{r}\right)-\frac{1}{2}+\frac{\alpha}{2},
\qquad 
-\frac{3}{2}
\left(\frac{1}{\widetilde{\nu}}-\frac{1}{r}\right)
-\frac{3}{2q}+\alpha+\frac{1}{2}
\leq -\frac{3}{2}
\left(\frac{1}{q}-\frac{1}{r}\right)-\frac{1}{2}+\frac{\alpha}{2}
\end{align}
are accomplished by (\ref{alphar})--(\ref{alphar2}) and 
(\ref{lambdaminus}) with $\beta=0$. 
\par Finally, if $\beta=a=0$, then we take 
$\widetilde{\nu}$ so that (\ref{lambdaminus}) with 
$\beta=0,1/\widetilde{\nu}-1/r\ne 1/3$, then 
due to (\ref{loc2}) with $\beta=a=0$, 
it turns out that 
(\ref{kminusest}) is replaced by 
\begin{align*}
\|P_{\R^3}L_-(t)\|_{\kappa,\R^3}
\leq C(1+t)^{-\frac{3}{2q}+\frac{\alpha}{2}}
\|(1+|x|)^{-\alpha}f\|_{q,D}
\end{align*}
for $\kappa=\widetilde{\nu},q$. 
This combined with (\ref{lqlrr3dual4}), 
(\ref{loc2}) with $\beta=a=0$ and (\ref{alphaq})
yields 
(\ref{lqlrddual2}) whenever (\ref{alphar})--(\ref{alphar2}) and 
$1/q-1/r<1/3$ are satisfied. 
The restriction 
$1/q-1/r<1/3$ is eliminated 
by the semigroup property and by the estimate 
\begin{align}\label{stokeshom}
\|(1+|x|)^{-\alpha}e^{-tA}P_Df\|_{r,D}
\leq Ct^{-\frac{3}{2}(\frac{1}{q}-\frac{1}{r})}
\|(1+|x|)^{-\alpha}f\|_{q,D},
\end{align} 
which can be derived in the same way. 
We thus conclude the assertion 6, which completes the proof. \qed

\begin{rmk}\label{rmkrelation}
Fix $a_0>0$ and assume $a\in(0,a_0].$ 
Let $i=0,1$ and let $1<q\leq r\leq \infty~(q\ne\infty),\alpha\geq 0$ 
satisfy $1/q-1/r<1/3$ and $0\leq \alpha<3(1-1/q).$ 
If $i=1,$ we also suppose $r\leq 3/(1-\alpha).$
By using (\ref{lqlrr3iso}) and by carrying out 
the same calculation as in the proof of 
Theorem \ref{lqlrd2}, we have 
\begin{align}\label{lqlrdiso}
\|(1+|x|)^\alpha \nabla^ie^{-tA_a}P_Df\|_{r,D}\leq
Ct^{-\frac{3}{2}(\frac{1}{q}-\frac{1}{r})-\frac{i}{2}}
(1+a^\alpha t^{\frac{\alpha}{2}})
\|(1+|x|)^\alpha f\|_{q,D}
\end{align}
for $t\geq 3,f\in L^q_{(1+|x|)^{\alpha q}}(D)$ and $i=0,1$. 
Therefore, (\ref{gradlqlrdlarge7}) can be recovered by 
passing to the limit $a\rightarrow 0$ in (\ref{lqlrdiso}). 
\par By (\ref{lqlinftyr3}),
the same argument as in the proof of (\ref{loc2}) leads to   
\begin{align*}
\|&\partial_t
e^{-tA_{-a}}P_Df\|_{q,D_R}+\|e^{-tA_{-a}}P_Df\|_{W^{2,q}(D_R)}
\leq Ct^{-\frac{3}{2q}}(a^\alpha t^{\alpha}+t^{\frac{\alpha}{2}})
\|(1+|x|)^{-\alpha}f\|_{q,D}
\end{align*}
for $t\geq 1,f\in L^q_{(1+|x|)^{-\alpha q}}(D)$ provided that 
$0\leq \alpha<3/q.$
Let $\alpha\geq 0,1<q\leq r<\infty$ satisfy $1/q-1/r<1/3$, 
(\ref{alphar}) and (\ref{alphar2}). Then 
by taking this estimate, (\ref{lqlinftyr30}) and 
(\ref{lqlinftyr3}) into account and by the same calculation as in 
the proof of Theorem \ref{lqlrd2}, we get 
\begin{align}\label{lqlrddual4}
\|(1+|x|)^{-\alpha}\nabla e^{-tA_{-a}}P_Df\|_{r,D}
\leq Ct^{-\frac{3}{2}(\frac{1}{q}-\frac{1}{r})-\frac{1}{2}}
(a^\alpha t^{\frac{\alpha}{2}}+1)
\|(1+|x|)^{-\alpha}f\|_{q,D}.
\end{align}
Therefore, (\ref{lqlrddual2}) can also be recovered by 
passing to the limit $a\rightarrow 0$ in (\ref{lqlrddual4}).
\end{rmk}

Let us proceed to the optimality of the restrictions 
(\ref{rcritical}) and (\ref{alphar2}) for the Stokes semigroup. 
In $L^q$ framework, 
it is known from Hishida \cite{hishida2011} that 
this issue is related to summability 
of the steady Stokes flow near spatial infinity. 
To accomplish our purpose, 
we extend the argument due to \cite{hishida2011} 
to the weighted $L^q$ space.
\par Let $F(t)$ satisfy the following assumption.\medskip\\ 
{\bf (A)} $F(t)\in 
C_0^\infty(\overline{D})^{3\times 3}:=\{\widetilde{u}|_{D}\mid
\widetilde{u}\in C_0^\infty(\R^3)^{3\times 3}\}$ 
for each 
$t\in\R$ and $\sup_{t\in\R}\|F(t)\|_{\infty,D}<\infty.$ 
Furthermore, there exist constants $r_0>0$ and  
$\theta\in (0,1)$ such that 
supp\,$F(t)\subset B_{r_0}(0)=\{y\in \R^3\,;\,|y|<r_0\}$ 
for all $t\in \R$ and 
$\div F(t)\in 
C^{\theta}_{\rm loc}(\R;L^{\infty}(D)^3).$\medskip\\ 
We consider the unsteady problem
\begin{align*}
\partial_t v-\Delta v+\nabla \pi=\nabla\cdot F(t),
\quad \nabla\cdot v=0,
\quad x\in D,t\in\R,\qquad 
v|_{\partial D}=0,\quad t\in\R. 
\end{align*}
This problem is rewritten as 
\begin{align}\label{abstract}
\frac{dv}{dt}+Av=P_D\nabla\cdot F(t),\quad t\in\R
\end{align}
in $L^r_\sigma(D),r\in (1,\infty).$ 
For the problem (\ref{abstract}), in view of 
\begin{align*}
\|\nabla^ie^{-tA}P_Df\|_{r,D}
\leq Ct^{-\frac{3}{2}(\frac{1}{q}-\frac{1}{r})-\frac{i}{2}}
\|f\|_{q,D}
\end{align*}
for $t>0$ and $f\in L^q(D)$ provided that 
$1<q\leq r\leq\infty~(q\ne\infty)$ if $i=0$ and 
$1<q\leq r\leq 3$ if $i=1$,  
we know the following from \cite[Theorem 3.1]{hishida2011}.
\begin{lem}[\cite{hishida2011}]
We suppose {\bf (A)}. Then $($\ref{abstract}$)$ admits a solution 
\begin{align}\label{vrep}
v(t)=\int_{-\infty}^t e^{-(t-\tau)A}P_D\nabla\cdot F(\tau)\,d\tau,
\end{align}
which is of class 
\begin{align*}
v(t)\in L^s_{\sigma}(D)\quad{\rm for}~ t\in\R,\qquad
\sup_{t\in\R}\|v(t)\|_{s,D}<\infty
\end{align*}
for all $s\in(3,\infty).$
\end{lem}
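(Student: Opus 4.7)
The plan is to combine the Iwashita–Maremonti–Solonnikov gradient estimate (\ref{lqlrdstokes}) with a duality argument to obtain the key $L^q$-$L^s$ estimate
\[
\|e^{-tA}P_D\nabla\cdot G\|_{s,D}\leq Ct^{-\frac{3}{2}(\frac{1}{q}-\frac{1}{s})-\frac{1}{2}}\|G\|_{q,D},\qquad t>0,\ \tfrac{3}{2}\leq q\leq s<\infty,
\]
for $G\in C_0^\infty(\overline D)^{3\times 3}$, and then to split the time integral in (\ref{vrep}) so that each piece converges absolutely in $L^s(D)$ uniformly in $t$. The displayed bound follows from the pairing $(e^{-tA}P_D\nabla\cdot G,\phi)_D=-(G,\nabla e^{-tA}\phi)_D$ for $\phi\in C_{0,\sigma}^\infty(D)$ together with (\ref{lqlrdstokes}) applied at the H\"older-conjugate exponents $(s',q')$ (the constraint $1<s'\leq q'\leq 3$ being exactly $3/2\leq q\leq s<\infty$). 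Since $\mathrm{supp}\,F(\tau)\subset B_{r_0}(0)$ and $\sup_\tau\|F(\tau)\|_{\infty,D}<\infty$, the quantity $M_q:=\sup_{\tau\in\R}\|F(\tau)\|_{q,D}$ is finite for every $q\in[1,\infty]$.

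Next, I fix $s\in(3,\infty)$ and decompose $\int_{-\infty}^t=\int_{-\infty}^{t-1}+\int_{t-1}^t$. For the near part I take $q=s$, so that the integrand is bounded in $L^s(D)$ by $C(t-\tau)^{-1/2}M_s$, yielding $\int_{t-1}^t(t-\tau)^{-1/2}\,d\tau<\infty$ independently of $t$. For the far part I take the boundary value $q=3/2$ admissible in the duality, bounding the integrand by $C(t-\tau)^{-3/2+3/(2s)}M_{3/2}$; since $s>3$ forces $-3/2+3/(2s)<-1$, the integral $\int_{-\infty}^{t-1}(t-\tau)^{-3/2+3/(2s)}\,d\tau$ is finite and $t$-independent. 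Consequently the Bochner integral (\ref{vrep}) converges in $L^s_\sigma(D)$ with $\sup_{t\in\R}\|v(t)\|_{s,D}<\infty$.

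To verify that $v$ really solves (\ref{abstract}), I invoke the H\"older regularity in assumption {\bf (A)}: $\nabla\cdot F\in C^\theta_{\mathrm{loc}}(\R;L^\infty(D)^3)$ combined with the uniform compact support of $F$ yields $P_D\nabla\cdot F\in C^\theta_{\mathrm{loc}}(\R;L^r_\sigma(D))$ for every $r\in(1,\infty)$, since $P_D$ is bounded on $L^r(D)$. The analyticity of $e^{-tA}$ on each $L^r_\sigma(D)$ together with the standard Pazy-type theory for linear non-autonomous parabolic equations with a H\"older continuous source then show that (\ref{vrep}) furnishes a classical solution: $v\in C^1(\R;L^r_\sigma(D))$, $v(t)\in D(A)$ for each $t$, and (\ref{abstract}) holds pointwise.

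The hard part will be handling the far portion of the integral. Without the gain of half a derivative encoded in the divergence structure—extracted precisely by the duality move converting $e^{-tA}P_D\nabla\cdot$ into $\nabla e^{-tA}$ acting on the test function—the pure $L^q$-$L^s$ action of $e^{-tA}P_D$ alone would only produce $(t-\tau)^{-3(1/q-1/s)/2}$, an exponent that cannot surpass $-1$ no matter how $q\in(1,\infty)$ is chosen. It is exactly the sharp gradient estimate (\ref{lqlrdstokes}), valid only up to $r\leq 3$ by the Maremonti–Solonnikov/Hishida optimality, and its dual taken at the critical endpoint $q=3/2$ that produces the exponent $-3/2+3/(2s)$ and hence the threshold $s>3$ appearing in the statement.
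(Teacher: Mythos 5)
Your argument is correct and is essentially the approach of the cited source (and of the paper's own proof of the weighted analogue, Lemma \ref{lemvintegral}): pair against a solenoidal test function, convert $e^{-tA}P_D\nabla\cdot$ into $\nabla e^{-tA}$ acting on the test function via duality with (\ref{lqlrdstokes}), and split the time integral into a near part (exponent $-1/2$) and a far part where an exponent strictly below $-1$ is available precisely when $s>3$. The only cosmetic difference is that you package the duality as an operator estimate for $e^{-tA}P_D\nabla\cdot$ before integrating, while the paper estimates the pairing $\bigl(v(t),\varphi\bigr)$ directly; the content is the same.
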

Furthermore, by the similar argument 
to the proof of \cite[Theorem 3.1]{hishida2011}, 
we have the following, which implies
the relation between weighted $L^q$ summability 
of the solution (\ref{vrep}) 
and the weighted $L^q$-$L^r$ estimates of the first derivative of 
$e^{-tA}$. 

\begin{lem}\label{lemvintegral} 
\begin{enumerate}
\item Let $0\leq \alpha<1$ and let $q_0>3/(2-\alpha)$. 
We suppose {\bf (A)} and 
the estimate $($\ref{gradlqlrdlarge3}$)$ for all 
$3/(3-\alpha)<q\leq r<q_0$. 
Then the solution $($\ref{vrep}$)$ is of class 
\begin{align*}
v(t)\in L^s_{(1+|x|)^{-\alpha s},\sigma}(D)
\quad {\rm for}~t\in\R,\qquad 
\sup_{t\in\R}\|(1+|x|)^{-\alpha}v(t)\|_{s,D}<\infty
\end{align*}
for all $s\in(p_0,3/\alpha),$ where $p_0$ is defined by  
\begin{align}\label{p0def}
\frac{1}{p_0}=\frac{2}{3}-\frac{1}{q_0}.
\end{align}
\item Let $0<\alpha<1$ and let $q_0<3/\alpha$. 
We suppose {\bf (A)} and 
the estimate $($\ref{lqlrddual2}$)$ for all $1<q\leq r<q_0$.  
Then the solution $($\ref{vrep}$)$ is of class 
\begin{align}\label{plusalphas}
v(t)\in L^s_{(1+|x|)^{\alpha s},\sigma}(D)
\quad {\rm for~}t\in\R,\qquad 
\sup_{t\in\R}\|(1+|x|)^{\alpha}v(t)\|_{s,D}<\infty
\end{align}
for all $s\in(p_0,\infty),$ 
where $p_0$ is given by $($\ref{p0def}$)$. 
\end{enumerate}
\end{lem}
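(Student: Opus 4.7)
The plan is to establish both assertions by a duality argument applied to the representation (\ref{vrep}), combined with a splitting of the time integral into near-diagonal and far-diagonal pieces. I will treat the two cases in parallel by writing $\nu=-\alpha$ for assertion 1 and $\nu=+\alpha$ for assertion 2, so that in both cases the objective is a uniform-in-$t$ bound on $\|(1+|x|)^{\nu}v(t)\|_{s,D}$.

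First I dualize each hypothesized gradient estimate. Since $e^{-tA}P_Df$ vanishes on $\partial D$ and is solenoidal, and since $(I-P_D)\nabla\cdot G$ is a gradient, integration by parts and the Helmholtz decomposition give
\begin{align*}
(\nabla e^{-tA}P_D f,G)=-(e^{-tA}P_D f,P_D\nabla\cdot G)=-(f,e^{-tA}P_D\nabla\cdot G)
\end{align*}
for solenoidal $f$ and smooth compactly supported matrix-valued $G$; the last equality uses the self-adjointness of $e^{-tA}$ on $L^2_\sigma(D)$. Reading (\ref{gradlqlrdlarge3}), respectively (\ref{lqlrddual2}), as boundedness of $\nabla e^{-tA}P_D:L^q_{(1+|x|)^{-\nu q}}(D)\to L^r_{(1+|x|)^{-\nu r}}(D)$ over the hypothesized parameter range, duality yields
\begin{align*}
\|(1+|x|)^{\nu}e^{-tA}P_D\nabla\cdot G\|_{q',D}\leq Ct^{-\frac{3}{2}(\frac{1}{q}-\frac{1}{r})-\frac{1}{2}}\|(1+|x|)^{\nu}G\|_{r',D}
\end{align*}
over the same range.

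Next I insert this dual estimate into (\ref{vrep}) with $q=s'$ and split the time integral:
\begin{align*}
\|(1+|x|)^{\nu}v(t)\|_{s,D}\leq\left(\int_{-\infty}^{t-1}+\int_{t-1}^{t}\right)\|(1+|x|)^{\nu}e^{-(t-\tau)A}P_D\nabla\cdot F(\tau)\|_{s,D}\,d\tau.
\end{align*}
On the near piece $[t-1,t]$ I take $r=s'$ as well, producing the time factor $(t-\tau)^{-1/2}$, which is locally integrable. This requires $s'\in(3/(3-\alpha),q_0)$ in assertion 1 and $s'\in(1,q_0)$ in assertion 2; the upper bound $s'<q_0$ is in both cases a consequence of the hypothesis $s>p_0>q_0'$, where $p_0>q_0'$ reduces to the elementary inequality $1/p_0=2/3-1/q_0<1-1/q_0=1/q_0'$. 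On the tail $(-\infty,t-1]$ I instead select some $r_*$ satisfying $1/q_0<1/r_*<1/s'-1/3$, so that the time exponent drops strictly below $-1$ and the tail converges. The non-emptiness of this window is precisely the inequality $1/s'>1/3+1/q_0=1/p_0'$, equivalently $s>p_0$; positivity of $1/s'-1/3$ comes from $p_0>3/2$ (which in turn follows from $1/p_0<2/3$). Because $F(\tau)$ is uniformly bounded and supported in $B_{r_0}(0)$, every factor $\|(1+|x|)^{\nu}F(\tau)\|_{p,D}$ is bounded uniformly in $\tau$, so both integrals are finite uniformly in $t$.

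The main obstacle is the exponent bookkeeping encoded by the endpoint $p_0$: the lower bound $s>p_0$ is exactly the condition that renders the tail window $(1/q_0,\,1/s'-1/3)$ non-empty, and without it the far-diagonal integral would diverge. Once this is in place, the remaining admissibility checks $r_*\geq s'$ and $r_*>3/(3-\alpha)$ (in assertion 1), or $r_*>1$ (in assertion 2), are routine consequences of $0\leq\alpha<1$ and $s'>1$.
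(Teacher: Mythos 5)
Your argument is correct and essentially the same as the paper's: both rest on dualizing the hypothesized weighted gradient estimate, splitting the time integral into a near piece (diagonal exponents, time factor $(t-\tau)^{-1/2}$) and a tail (off-diagonal exponents with decay faster than $(t-\tau)^{-1}$), with the non-emptiness of the exponent window exactly encoding $s>p_0$. The only difference is organizational — you pre-dualize into an estimate for $e^{-tA}P_D\nabla\cdot$ and bound $\|(1+|x|)^{\nu}v(t)\|_{s,D}$ directly, while the paper pairs $v(t)$ with $\varphi\in C^\infty_{0,\sigma}(D)$, moves $\nabla e^{-\tau A}$ onto $\varphi$, and invokes the weighted duality $L^{s'}_{\rho',\sigma}(D)^*=L^{s}_{\rho,\sigma}(D)$ at the end — which is the same duality argument in a different order.
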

\begin{proof}
We first prove the assertion 1. 
For $\varphi\in C_{0,\sigma}^\infty(D),$ we find 
\begin{align}\label{vs1}
(v(t),\varphi)=\int_0^\infty
(e^{-\tau A}P_D\nabla\cdot F(t-\tau),\varphi)\,d\tau
=-\int_0^\infty (F(t-\tau),\nabla e^{-\tau A}\varphi)\,d\tau=
\int_0^1+\int_1^\infty.
\end{align} 
Let $s\in (p_0,3/\alpha),$  
then $s'\in(3/(3-\alpha),q_0).$  
We thus use (\ref{gradlqlrdlarge3}) to see 
\begin{align}
\left|\int_0^1 \right|&=
\left|\int_0^1 \big((1+|x|)^{-\alpha}F(t-\tau),(1+|x|)^\alpha
\nabla e^{-\tau A}\varphi\big)\,d\tau \right|\notag\\&\leq 
\int_0^1\|(1+|x|)^{-\alpha}F(t-\tau)\|_{s,D}
\|(1+|x|)^\alpha
\nabla e^{-\tau A}\varphi\|_{s',D}\,d\tau\leq 
CL|B_{r_0}(0)|^{\frac{1}{s}}\|(1+|x|)^\alpha\varphi\|_{s',D}
\label{vs2}
\end{align}
for all $t>0,\varphi\in C_{0,\sigma}^\infty(D),$ where 
$|B_{r_0}(0)|$ denotes the Lebesgue measure of $B_{r_0}(0)$ and 
$L=\sup_{t\in\R}\|F(t)\|_{\infty,D}.$ In view of 
\begin{align}\label{sp0q0}
-\frac{3}{2}\left(\frac{1}{s'}-\frac{1}{q_0}\right)-\frac{1}{2}
<-\frac{3}{2}\left(1-\frac{1}{p_0}-\frac{1}{q_0}\right)-\frac{1}{2}
=-1,
\end{align}
we take $r\in (s',q_0)$ such that
\begin{align}\label{sr}
-\frac{3}{2}\left(\frac{1}{s'}-\frac{1}{r}\right)-\frac{1}{2}<-1,
\end{align}
then  
(\ref{gradlqlrdlarge3}) yields 
\begin{align}
\left|\int_1^\infty \right|&=
\left|\int_1^\infty \big((1+|x|)^{-\alpha}F(t-\tau),(1+|x|)^\alpha
\nabla e^{-\tau A}\varphi\big)\,d\tau \right|\notag\\&\leq 
\int_1^\infty\|(1+|x|)^{-\alpha}F(t-\tau)\|_{r',D}
\|(1+|x|)^\alpha
\nabla e^{-\tau A}\varphi\|_{r,D}\,d\tau\leq 
CL|B_{r_0}(0)|^{\frac{1}{r'}}\|(1+|x|)^\alpha\varphi\|_{s',D}
\label{vs3}
\end{align}
for all $t>0,\varphi\in C_{0,\sigma}^\infty(D).$ 
From (\ref{vs1})--(\ref{vs3}) together with 
$L^{s'}_{(1+|x|)^{\alpha s'},\sigma}(D)^*
=L^s_{(1+|x|)^{-\alpha s},\sigma}(D)$, we conclude 
the assertion 1. Similarly, 
by applying (\ref{lqlrddual2}) to (\ref{vs1}), we get 
\begin{align*}
\left|\int_0^1 \right|&=
\left|\int_0^1 \big((1+|x|)^{\alpha}F(t-\tau),(1+|x|)^{-\alpha}
\nabla e^{-\tau A}\varphi\big)\,d\tau \right|\notag\\&\leq 
\int_0^1\|(1+|x|)^{\alpha}F(t-\tau)\|_{s,D}
\|(1+|x|)^{-\alpha}
\nabla e^{-\tau A}\varphi\|_{s',D}\,d\tau\\
&\leq 
C(1+r_0)^{\alpha}
L|B_{r_0}(0)|^{\frac{1}{s}}\|(1+|x|)^{-\alpha}\varphi\|_{s',D},
\notag\\
\left|\int_1^\infty \right|&=
\left|\int_1^\infty \big((1+|x|)^{\alpha}F(t-\tau),(1+|x|)^{-\alpha}
\nabla e^{-\tau A}\varphi\big)\,d\tau \right|\notag\\&\leq 
\int_1^\infty\|(1+|x|)^{\alpha}F(t-\tau)\|_{r',D}
\|(1+|x|)^{-\alpha}
\nabla e^{-\tau A}\varphi\|_{r,D}\,d\tau\\
&\leq C(1+r_0)^{\alpha}L|B_{r_0}(0)|^{\frac{1}{r'}}
\|(1+|x|)^{-\alpha}\varphi\|_{s',D}\notag
\end{align*}
for all $t>0,\varphi\in C_{0,\sigma}^\infty(D),$ where 
$r\in (s',q_0)$ fulfills (\ref{sr}). Moreover, we have  
$L^{s'}_{(1+|x|)^{-\alpha s'},\sigma}(D)^*
=L^s_{(1+|x|)^{\alpha s},\sigma}(D)$ due to 
$\alpha<1<3(1-1/p_0)<3(1-1/s)$, thus the assertion 2 also holds. 
The proof is complete. 
\end{proof}

We next prepare the following, which asserts 
the net force exerted on the obstacle by 
the fluid must vanish whenever the steady flow decays faster than 
the Stokes fundamental solution. 
This can be proved by using the asymptotic representation 
for $|x|\rightarrow\infty$ of the steady flow, see 
Chang and Finn \cite{chfi1961} and Galdi \cite{galdi2011}. 
Hishida \cite[Lemma 4.1]{hishida2011} also 
gave another independent proof by using the summability of 
the steady flow. Since the proof is same as the one of 
\cite[Lemma 4.1]{hishida2011}, 
we state the following without proof. 
\begin{lem}\label{lemnet}
Let $-1<\gamma<1$ and $F\in C_0^\infty(\overline{D})^{3\times 3}.$ 
Suppose $v$ is a smooth solution $($up to $\partial D$$)$ to 
\begin{align}\label{sta0}
-\Delta v+\nabla \pi=\nabla\cdot F,\quad 
\nabla\cdot v=0,\quad x\in D,\qquad 
v|_{\partial D}=0
\end{align}
of class 
$(1+|x|)^{\gamma}v\in L^{3/(1-\gamma)}(D).$ Then 
\begin{align*}
\int_{\partial D}\nu\cdot\{T(v,\pi)+F\}\,d\sigma=0,
\end{align*}
where $\nu$ stands for the outer unit normal to $\partial D,$
$T(v,\pi)=\nabla v+(\nabla v)^\top-\pi \I$
is the Cauchy stress tensor and $\I$ is the 
$3\times 3$-identity matrix.
\end{lem}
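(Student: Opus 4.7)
The plan is to derive the leading-order far-field asymptotic expansion of $v$ at spatial infinity, identify the coefficient of the leading Stokes-fundamental-solution profile with the net force $m:=\int_{\partial D}\nu\cdot\{T(v,\pi)+F\}\,d\sigma$, and then use the weighted summability hypothesis on $v$ to force $m=0$ by a direct logarithmic comparison. The key point is that the leading profile $E(x)m$ has homogeneous degree $-1$, which is exactly the borderline for membership in $L^{3/(1-\gamma)}_{(1+|x|)^{\gamma\cdot 3/(1-\gamma)}}(D)$.

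First I would set up the representation formula. Let $(E,Q)$ denote the Stokes fundamental solution in $\R^3$ (so $E(x)=O(|x|^{-1})$, $\nabla E(x)=O(|x|^{-2})$, $Q(x)=O(|x|^{-2})$). Take $R>0$ large enough that $\R^3\setminus D\subset B_{R-1}(0)$ and $\mathrm{supp}\,F\subset B_{R-1}(0)$, and apply Green's second identity with $E(x-\cdot)$ on $D\cap B_{\rho}(0)$ for fixed $x$ with $|x|>2R$ and $\rho\to\infty$. The boundary terms on $\partial B_\rho$ vanish in the limit because $v$ has enough integrability at infinity (this is where the assumption $(1+|x|)^\gamma v\in L^{3/(1-\gamma)}(D)$ is used to kill surface integrals on large spheres, after estimating the pressure via the equation). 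What remains is the identity
\begin{align*}
v(x)=\int_{\partial D}E(x-y)\,\nu(y)\cdot\{T(v,\pi)(y)+F(y)\}\,d\sigma_y-\int_D\nabla_yE(x-y):F(y)\,dy,
\end{align*}
valid for $|x|>2R$. Taylor expanding $E(x-y)$ around $x$ for $|y|\leq R$ gives $E(x-y)=E(x)+O(|x|^{-2})$ uniformly in $y$, and together with $\nabla E(x-y)=O(|x|^{-2})$ one obtains the asymptotic expansion
\begin{align*}
v(x)=E(x)\,m+O(|x|^{-2})\qquad\text{as }|x|\to\infty.
\end{align*}

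Next, assume for contradiction that $m\ne 0$. Because $E(x)m$ is homogeneous of degree $-1$ and not identically zero on the unit sphere (the matrix $E(\omega)$ is invertible for each direction $\omega$), there exist a solid open cone $\Gamma\subset\R^3$ and constants $c>0$, $R_0>2R$ such that $|v(x)|\geq c|x|^{-1}$ for $x\in \Gamma\cap\{|x|\geq R_0\}$. Then with $q=3/(1-\gamma)\in(3/2,\infty)$,
\begin{align*}
\int_D(1+|x|)^{\gamma q}|v(x)|^q\,dx\geq C\int_{R_0}^\infty r^{\gamma q-q+2}\,dr=C\int_{R_0}^\infty\frac{dr}{r}=+\infty,
\end{align*}
since $\gamma q-q+2=-1$ by the choice $q=3/(1-\gamma)$. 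This contradicts the hypothesis $(1+|x|)^\gamma v\in L^{3/(1-\gamma)}(D)$, so $m=0$, which is the desired conclusion.

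\textbf{Main obstacle.} The heart of the argument is the representation formula and the decay of the remainder in the asymptotic expansion. The only subtlety is justifying that the boundary integrals on $\partial B_\rho(0)$ tend to zero as $\rho\to\infty$: one needs the weighted-$L^{3/(1-\gamma)}$ information on $v$, plus a pressure estimate of the form $\pi(x)=O(|x|^{-2})$ inferred from the equation $-\Delta v+\nabla\pi=\nabla\cdot F$ (with $F$ of compact support) via a similar cut-off/fundamental-solution argument applied to the pressure; this is classical and carried out in Galdi \cite{galdi2011} and Hishida \cite{hishida2011}. Once the representation is in hand, the comparison with $E(x)m$ and the logarithmic divergence are mechanical.
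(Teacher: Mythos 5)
Your argument is correct in substance, but it follows the \emph{other} classical route than the one the paper relies on: the paper states the lemma without proof, asserting that the proof is the same as Hishida's summability argument (\cite[Lemma 4.1]{hishida2011}), which never derives pointwise asymptotics at all. Hishida's route exploits only the identity $\nabla\cdot\{T(v,\pi)+F\}=0$, a cut-off supported on large annuli, and local (Caccioppoli/scaling) estimates for the Stokes system to show that the flux through $\partial B_\rho$ tends to zero along a sequence of radii precisely at the critical summability exponent; this avoids the representation formula, the pressure normalization, and the far-field expansion. Your route is the Chang--Finn/Galdi one that the paper mentions as an alternative: representation formula, Taylor expansion giving $v(x)=E(x)m+O(|x|^{-2})$ with $m$ the net force, and then the weighted comparison. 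Your final step is sound and is the genuinely new (weighted) ingredient: since $E(\omega)=\tfrac{1}{8\pi}(I+\omega\otimes\omega)$ is uniformly invertible, $m\ne 0$ forces $|v(x)|\ge c|x|^{-1}$ for large $|x|$, and with $q=3/(1-\gamma)$ the exponent $\gamma q-q+2=-1$ produces the logarithmic divergence contradicting $(1+|x|)^{\gamma}v\in L^{q}(D)$; the same computation also rules out a nonzero constant term in the expansion. The price of your route is that the steps you delegate to the literature are where the real work sits, and they need a word of care in the weighted setting: for $\gamma<0$ the hypothesis only bounds $\|v\|_{L^{q}}$ on the annulus $\{\rho<|x|<2\rho\}$ by $C\rho^{-\gamma}$, so before invoking the representation formula you should first upgrade this (using that $v$ is biharmonic and $\pi$ harmonic outside a ball, plus local Stokes estimates) to pointwise boundedness and to the normalization $\pi\to\mathrm{const}$, and the vanishing of the boundary integrals over $\partial B_\rho$ is then obtained along a suitable sequence $\rho_j\to\infty$ rather than for all $\rho$. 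With those standard caveats your proof closes; it yields more information (the leading profile and the identification of its coefficient with the net force) at the cost of more machinery than the argument the paper cites.
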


We are now in a position to prove Theorem \ref{thmoptimal}.\\
\noindent{\bf Proof of Theorem \ref{thmoptimal}.} \quad 
By the assertion 3 and 6 of Theorem \ref{lqlrd2}, 
we have (\ref{gradlqlrdlarge3}) 
for $t>0,f\in L^{q}_{(1+|x|)^{\alpha q}}(D)$ 
provided (\ref{rcritical}) and have (\ref{lqlrddual2})  
for $t>0,f\in L^{q}_{(1+|x|)^{-\alpha q}}(D)$ 
provided (\ref{alphar2}). 
It remains to prove the optimality of (\ref{rcritical}) and 
(\ref{alphar2}). 
We suppose (\ref{gradlqlrdlarge3}) for $3/(3-\alpha)<q\leq r<q_0$
with $q_0>3/(1-\alpha)$. 
For $F\in C_0^\infty(\overline{D})^{3\times 3},$ 
we apply the assertion 1 of Lemma \ref{lemvintegral} 
with $F(t)\equiv F,$ then the solution (\ref{vrep}) 
satisfies $(1+|x|)^{-\alpha}v(t)\in L^s(D)$ 
for $t\in\R$ and $s\in (p_0,3/\alpha)$, where 
\begin{align}\label{p0alpha}
\frac{1}{p_0}=\frac{2}{3}-\frac{1}{q_0}>\frac{1+\alpha}{3}. 
\end{align} 
Since the solution $v(t)$ has the same period as the one of $F(t)$ 
whenever $F(t)$ is periodic and 
since $F(t)\equiv F$ can be regarded 
as a periodic function with arbitrary period, 
$v(t)$ is a periodic solution with arbitrary period, 
namely $v(t)$ is a stationary solution. 
This observation together with (\ref{p0alpha}) 
yields that the problem (\ref{sta0}) 
always admits a solution $v$ satisfying 
$(1+|x|)^{-\alpha}v\in L^{3/(1+\alpha)}(D)$. 
But, due to Lemma \ref{lemnet}, this summability is possible only 
in a special case, which leads to contradiction.  
Similarly, if we suppose (\ref{lqlrddual2}) for $1<q\leq r<q_0$ 
with $q_0>3/(1+\alpha),$ then 
due to $p_0<3/(1-\alpha)$, 
the assertion 2 of Lemma \ref{lemvintegral} implies that 
the problem (\ref{sta0}) always admits a solution $v$ 
satisfying $(1+|x|)^{\alpha}v\in L^{3/(1-\alpha)}(D)$, 
which leads to contradiction. The proof is complete.
\qed

\begin{rmk}\label{rmks0optimal}
Let $1<q<\infty,0\leq \alpha<3/q$ and 
let $0\leq \varepsilon<\alpha/2$. 
By using the assertion 2 of Lemma \ref{lemvintegral}, 
we find that it is impossible to have 
\begin{align}\label{qinftyepsilon}
\|(1+|x|)^{-\alpha} S_{0}(t)P_{\R^3}f\|_{\infty,\R^3}
\leq Ct^{-\frac{3}{2q}+\varepsilon}\|(1+|x|)^{-\alpha}f\|_{q,\R^3}
\end{align}
for $t\geq 1$; therefore, the decay rate $t^{-3/(2q)+\alpha/2}$ is 
optimal in the sense that better rate $t^{-3/(2q)+\varepsilon}$ 
with some $0\leq \varepsilon<\alpha/2$ is impossible.
In fact, if we assume (\ref{qinftyepsilon}), 
then (\ref{sminusa2}) with $a=0,\beta=0$ is replaced by 
\begin{align*}
\|S_{0}(t)g_-\|_{W^{3,q}(D_R)}
\leq Ct^{-\frac{3}{2q}}(1+t)^{\varepsilon}
\sum_{i=0,2}
\|(1+|x|)^{-\alpha}\nabla^ig_-\|_{q,\R^3},
\end{align*}
thus the same argument as in the proof of 
the assertion 3 of Proposition \ref{local3} yields  
\begin{align*}
\|\partial_t
e^{-tA}P_Df\|_{q,D_R}+\|e^{-tA}P_Df\|_{W^{2,q}(D_R)}
\leq Ct^{-\frac{3}{2q}+\varepsilon}
\|(1+|x|)^{-\alpha}f\|_{q,D}
\end{align*}
for all $t\geq 3$ and $f\in L^q_{(1+|x|)^{-\alpha q}}(D)$.
From this estimate, we carry out 
the same calculation as in the proof of Theorem \ref{lqlrd2} 
to get (\ref{lqlrddual2})
for $t>0,f\in L^{q}_{(1+|x|)^{-\alpha q}}(D)$ whenever 
(\ref{alphar}) and 
\begin{align*}
1<q\leq r\leq \frac{3}{1+2\varepsilon}
\end{align*}
are fulfilled. Then the assertion 2 of Lemma \ref{lemvintegral}
with $q_0=3/(1+2\varepsilon)$ asserts that 
the problem (\ref{sta0}) always admits a solution $v$ satisfying
$(1+|x|)^{\alpha}v\in L^{3/(1-\alpha)}(D)$, which leads to 
contradiction.
\end{rmk}

\begin{rmk}\label{rmkoseen}

For the relation between summability of the solution 
\begin{align}\label{vrep2}
&v(t)=\int_{-\infty}^t e^{-(t-\tau)A_a}P_D\nabla\cdot F(\tau)\,d\tau
\end{align}
to the problem 
\begin{align*}
\frac{dv}{dt}+A_av=P_D\nabla\cdot F(t),\quad t\in\R
\end{align*}
and the $L^q$-$L^r$ estimates 
of the first derivative of the Oseen semigroup, 
we have the following. 
\begin{enumerate}
\item Given $a_0>0$, we assume $a\in(0,a_0]$. 
We suppose {\bf (A)} and 
the estimate 
\begin{align}\label{oseenhom}
\|\nabla e^{-tA_{-a}}P_Df\|_{r,D}
\leq Ct^{-\frac{3}{2}(\frac{1}{q}-\frac{1}{r})-\frac{1}{2}}
\|f\|_{q,D}
\end{align}
for $1<q\leq r<q_0$. 
Then the solution (\ref{vrep2}) is of class (\ref{plusalphas}) 
for all $s\in(p_0,\infty),$ where $p_0$ is defined by (\ref{p0def}).
\item Given $a_0>0$, we assume $a\in(0,a_0]$. 
Let $0<\alpha<1$ and $q_0>1$ satisfy $\alpha<3/q_0<2-\alpha$. 
We define $p_1$ by
\begin{align*}
\frac{1}{p_1}=\frac{2-\alpha}{3}-\frac{1}{q_0},
\end{align*} 
then $1<p_1<\infty$. We suppose {\bf (A)} and 
the estimate (\ref{lqlrddual3}) for $1<q\leq r<q_0$ and $t\geq 1$.  
Then the solution (\ref{vrep2}) is of class (\ref{plusalphas}) 
for all $s\in(p_1,\infty).$
\item Given $a_0>0$, we assume $a\in(0,a_0]$. 
Let $\alpha,\beta,\varepsilon>0$ and $q_0>1$ satisfy 
$\alpha+\beta<\min\{1,3/q_0\},\beta<1/q_0$ and 
\begin{align*} 
\frac{\max\{\alpha,2\beta\}+\beta}{3}<\frac{1}{q_0}< 
\frac{4-\alpha-\max\{\alpha,2\beta\}-4\varepsilon}{6},  
\end{align*} 
where we also assume $\alpha+8\beta<4$ if $\alpha\leq 2\beta$. 
We define $p_2$ by 
\begin{align*}
\frac{1}{p_2}=\frac{2}{3}-\frac{\alpha}{6}-\frac{\beta}{3}
-\frac{2}{3}\varepsilon-\frac{1}{q_0}\qquad 
\left(\text{resp.}~\frac{1}{p_2}=\frac{2}{3}-\frac{\alpha}{3}
-\frac{2}{3}\varepsilon-\frac{1}{q_0}\right)
\end{align*} 
if $\alpha\leq 2\beta$ (resp. $\alpha\geq 2\beta$), then 
$1<p_2<\infty$. We suppose {\bf (A)} and 
the estimate (\ref{lqlrddual10}) for $1<q\leq r<q_0$ and $t\geq 1$. 
Then the solution (\ref{vrep2}) is of class 
\begin{align*}
v(t)\in L^s_{(1+|x|)^{\alpha s}(1+|x|-x_1)^{\beta s},\sigma}(D)
\quad {\rm for~}t\in\R,\qquad 
\sup_{t\in\R}\|(1+|x|)^{\alpha}
(1+|x|-x_1)^{\beta}v(t)\|_{s,D}<\infty
\end{align*}
for all $s\in(p_2,\infty)$. 
\end{enumerate}
We in fact conclude the assertions from 
the proof of Lemma \ref{lemvintegral}, in which 
(\ref{sp0q0})--(\ref{sr}) are replaced by 
\begin{align*}
-\frac{3}{2}\left(\frac{1}{s'}-\frac{1}{q_0}\right)-\frac{1}{2}
<-\frac{3}{2}\left(1-\frac{1}{p_0}-\frac{1}{q_0}\right)-\frac{1}{2}
=-1,\quad
-\frac{3}{2}\left(\frac{1}{s'}-\frac{1}{r}\right)-\frac{1}{2}<-1
\end{align*}
or
\begin{align*}
-\frac{3}{2}\left(\frac{1}{s'}-\frac{1}{q_0}\right)-\frac{1}{2}
+\frac{\alpha}{2}
<-\frac{3}{2}\left(1-\frac{1}{p_1}-\frac{1}{q_0}\right)-\frac{1}{2}
+\frac{\alpha}{2}=-1,\quad
-\frac{3}{2}\left(\frac{1}{s'}-\frac{1}{r}\right)-\frac{1}{2}
+\frac{\alpha}{2}<-1
\end{align*}
or
\begin{align*}
-\frac{3}{2}\left(\frac{1}{s'}-\frac{1}{q_0}\right)&-\frac{1}{2}
+\frac{\alpha}{4}+\max\left\{\frac{\alpha}{4},
\frac{\beta}{2}\right\}+\varepsilon\\
&<-\frac{3}{2}\left(1-\frac{1}{p_2}-\frac{1}{q_0}\right)-\frac{1}{2}
+\frac{\alpha}{4}+\max\left\{\frac{\alpha}{4},
\frac{\beta}{2}\right\}+\varepsilon
=-1,\\
-\frac{3}{2}\left(\frac{1}{s'}-\frac{1}{r}\right)&-\frac{1}{2}
+\frac{\alpha}{4}+\max\left\{\frac{\alpha}{4},
\frac{\beta}{2}\right\}+\varepsilon<-1.
\end{align*}
Taking into account that 
stationary solution $v$ to the problem 
\begin{align*}
-\Delta v+a\partial_{x_1}v+\nabla\pi=\nabla\cdot F,\quad 
\nabla\cdot v=0,
\quad x\in D,\qquad 
v|_{\partial D}=-ae_1
\end{align*}
possesses decay structure 
$v=O((1+|x|)^{-1}(1+|x|-x_1)^{-1})$, 
Lemma \ref{finite} together with the assertion 1 suggests that 
(\ref{oseenhom}) may be possible for $1<q\leq r\leq 6$. 
This was conjectured by Hishida \cite[Section 5]{hishida2011}. 
Furthermore, from the assertion 2 and 3, 
it may be possible that 
(\ref{lqlrddual3}) for 
$1<q\leq r\leq 6/(1+\alpha)$ and that 
(\ref{lqlrddual10}) for 
$1<q\leq r\leq 6/(1+2\alpha-2\beta-4\varepsilon)$ 
(resp. $1<q\leq r\leq 6/(1+\alpha-4\varepsilon)$) 
if $\alpha\leq 2\beta$ (resp. $\alpha\geq 2\beta$). 
\end{rmk}

\section{Proof of Theorem \ref{stabilitythm} 
and Theorem \ref{attainthm}}
\quad We first collect the $L^q$-$L^r$ estimates of the 
Oseen semigroup, which will be used to 
prove Theorem \ref{stabilitythm} and Theorem \ref{attainthm}, 
in particular make use of the assertion 3 and 5 below 
in the proof of Theorem \ref{attainthm}. 
Based on the summability (\ref{sum1}) and (\ref{sum2}) below, 
we apply the assertion 3 instead 
of (\ref{gradlqlrdlarge5}) to treat $f_1,f_2$ 
(defined by (\ref{f1})--(\ref{f2})), see Remark \ref{f1rmk} below. 
\begin{prop}\label{proplqlr}
Given $a_0>0$ arbitrarily, we assume $a\in(0,a_0]$. 
\begin{enumerate}
\item Let $1<q\leq r\leq \infty ~(q\ne\infty)$ 
and let $\gamma,\delta\geq 0$ satisfy 
\begin{align*}
\delta<1-\frac{1}{q},\quad 
\gamma+\delta<3\left(1-\frac{1}{q}\right).
\end{align*} 
We set
\begin{align}\label{rhotildedef}
\widetilde{\rho}(x)
=(1+|x|)^{\gamma q}(1+|x|-x_1)^{\delta q}.
\end{align}
For every multi-index $k~(|k|\leq 1)$, there exists a constant 
$C=C(D,a_0,q,r,\gamma,\delta,k)$, independent of $a$, such that 
\begin{align}\label{lqlrdsmallq}
\|(1+|x|)^\gamma(1+|x|-x_1)^\delta\partial^k_{x}e^{-tA_a}P_Df\|_{r,D}
\leq 
Ct^{-\frac{3}{2}(\frac{1}{q}-\frac{1}{r})-\frac{|k|}{2}}
\|(1+|x|)^\gamma(1+|x|-x_1)^\delta f\|_{q,D}
\end{align}
for all $t\leq 3$ and $f\in L^{q}_{\widetilde{\rho}}(D)$.
\item 
Let $1<q\leq r\leq\infty~(q\ne\infty)$ 
and $\gamma,\delta\geq 0$ satisfy 
\begin{align}
\delta<\min\left\{1-\frac{1}{q},
\frac{1}{3}\right\},\quad 
\gamma+\delta<\min\left\{3\left(1-\frac{1}{q}\right),1\right\}.
\label{alphabeta6}
\end{align} 
Then there exists a constant 
$C(D,a_0,q,r,\gamma,\delta)$, 
independent of $a$, such that 
\begin{align}
\|(1+|x|)^\gamma(1+|x|-x_1)^\delta e^{-tA_a}P_Df\|_{r,D}
\leq 
C\sum_{i=1}^4
t^{-\frac{3}{2}(\frac{1}{q}-\frac{1}{r})+\widetilde{\eta}_i}
\|(1+|x|)^{\widetilde{\gamma}_i}
(1+|x|-x_1)^{\widetilde{\delta}_i} f\|_{q,D}
\label{lqlrdlargeq}
\end{align}
for all $t\geq 3$, $f\in L^{q}_{\widetilde{\rho}}(D)$, 
where $\widetilde{\rho}$ is given by $($\ref{rhotildedef}$)$ and 
$\widetilde{\gamma}_i,\widetilde{\delta}_i,\widetilde{\eta}_i$ 
are defined by
\begin{align}
&(\widetilde{\gamma}_1,\widetilde{\gamma}_2,
\widetilde{\gamma}_3,\widetilde{\gamma}_4)=(\gamma,0,\gamma,0),
\quad
(\widetilde{\delta}_1,\widetilde{\delta}_2,\widetilde{\delta}_3,
\widetilde{\delta}_4)=(\delta,\delta,0,0)
\label{gammadeltatilde}\\
&(\widetilde{\eta}_1,\widetilde{\eta}_2,\widetilde{\eta}_3,
\widetilde{\eta}_4)=
\left(0,\gamma,
\displaystyle\frac{\delta}{2},
\gamma+\frac{\delta}{2}\right).\label{etatilde}
\end{align}
\item 
Let $1<q_i\leq 3~(i=1,2,3,4)$ and $\gamma,\delta\geq 0$ satisfy 
\begin{align}
&q_4\leq q_i\leq q_1\quad(i=2,3),\notag\\
&\gamma<\min\left\{3\left(1-\frac{1}{q_3}\right),1\right\},
\quad \delta<\min\left\{1-\frac{1}{q_2},
\frac{1}{3}\right\},\quad 
\gamma+\delta<\min\left\{3\left(1-\frac{1}{q_1}\right),1\right\}.
\label{alphabeta7}
\end{align} 
We set
\begin{align*}
\widetilde{\rho}_1(x)=(1+|x|)^{\gamma q_1}
(1+|x|-x_1)^{\delta q_1},\quad 
\widetilde{\rho}_2(x)=(1+|x|-x_1)^{\delta q_2},\quad 
\widetilde{\rho}_3(x)=(1+|x|)^{\gamma q_3}.
\end{align*}
Then there exists a constant 
$C(D,a_0,q_1,q_2,q_3,q_4,\gamma,\delta)$, 
independent of $a$, such that 
\begin{align}
\|(1+|x|)^\gamma(1+|x|-x_1)^\delta e^{-tA_a}P_Df\|_{3,D}
\leq 
C\sum_{i=1}^4
t^{-\frac{3}{2q_i}+\frac{1}{2}+\widetilde{\eta}_i}
\|(1+|x|)^{\widetilde{\gamma}_i}
(1+|x|-x_1)^{\widetilde{\delta}_i} f\|_{q_i,D}
\label{lqlrdlarge2}
\end{align}
for all $t\geq 3$, $f\in \displaystyle\bigcap^3_{i=1}
L^{q_i}_{\widetilde{\rho}_i}(D)\cap L^{q_4}(D)$, 
where
$\widetilde{\gamma}_i,
\widetilde{\delta}_i,\widetilde{\eta}_i$ are given by
$($\ref{gammadeltatilde}$)$--$($\ref{etatilde}$)$.
\item 
Let $1<q\leq 3$ 
and $\gamma,\delta\geq 0$ satisfy $($\ref{alphabeta6}$)$.
Then there exists a constant 
$C(D,a_0,q,\gamma,\delta)$, 
independent of $a$, such that 
\begin{align}
\|(1+|x|)^\gamma(1+|x|-x_1)^\delta \nabla e^{-tA_a}P_Df\|_{3,D}
\leq 
C\sum_{i=1}^4
t^{-\frac{3}{2q}+\widetilde{\eta}_i}
\|(1+|x|)^{\widetilde{\gamma}_i}
(1+|x|-x_1)^{\widetilde{\delta}_i} f\|_{q,D}
\label{lqlrdlargeq2}
\end{align}
for all $t\geq 3$, $f\in L^{q}_{\widetilde{\rho}}(D)$, 
where $\widetilde{\rho},\widetilde{\gamma}_i,
\widetilde{\delta}_i,\widetilde{\eta}_i$ are given by
$($\ref{rhotildedef}$)$, 
$($\ref{gammadeltatilde}$)$--$($\ref{etatilde}$).$
\item Let $\gamma,\delta\geq 0$ satisfy $\delta<2/3$ and 
$\gamma+\delta<2$. 
Then there exists a constant $C=C(a_0,\gamma,\delta)$, 
independent of $a$, such that 
\begin{align}
\|(1+|x|)^{\gamma}(1+|x|-x_1)^{\delta}
e^{-tA_a}P_D\div F\|_{3,D}
\leq Ct^{-\frac{1}{2}}
(1+t)^{\gamma+\frac{\delta}{2}}
\|(1+|x|)^{\gamma}(1+|x|-x_1)^{\delta} F\|_{3,D}
\label{lqlrddiv02}
\end{align}
for all $t>0$ and $F\in L^{3}_{(1+|x|)^{3\gamma}
(1+|x|-x_1)^{3\delta }}(D).$
\end{enumerate}
\end{prop}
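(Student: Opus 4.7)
The plan is to derive each of the five assertions of Proposition \ref{proplqlr} as a specialization of the weighted $L^q$-$L^r$ estimates of the Oseen semigroup that have already been established in Theorems \ref{lqlrd0}, \ref{lqlrd}, and \ref{thmdual0}; the proposition functions as a ``working version'' of those abstract estimates, packaged in the precise form needed for the nonlinear analysis of Sections~8. Since $\gamma,\delta\geq 0$, the hypotheses on the weights are automatically stronger than the Muckenhoupt-type conditions (\ref{alphabetaq}), so every application of the earlier theorems is legitimate.

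First I would dispose of the small-time estimate (assertion~1) by applying the assertion~2 of Theorem \ref{lqlrd0} with $\alpha=\gamma$ and $\beta=\delta$. For the large-time estimates of the semigroup itself, assertion~2 is obtained from the assertion~1 of Theorem \ref{lqlrd} with the diagonal choice $q_1=q_2=q_3=q_4=q$: under this choice the conditions (\ref{qr0})--(\ref{alphabeta2}) reduce exactly to (\ref{alphabeta6}), and because $a>0$ the exponents $\eta_i$ in (\ref{gammadeltaeta}) coincide with the $\widetilde{\eta}_i$ of (\ref{etatilde}). Assertion~3 is an equally direct reading of the same assertion~1 of Theorem \ref{lqlrd} with $r=3$, keeping the four exponents $q_i$ distinct; the conditions (\ref{alphabeta7}) together with $q_i\leq 3$ imply (\ref{qr0})--(\ref{alphabeta2}).

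For assertion~5 I would invoke the assertion~2 of Theorem \ref{thmdual0} with $q=r=3$ and $\alpha=\gamma,\beta=\delta$: the hypotheses $\delta<2/3$ and $\gamma+\delta<2$ translate to $\beta<1-1/r$ and $\alpha+\beta<3(1-1/r)$, while the condition $1/q-1/r<1/3$ is trivial, and the factor $(1+t)^{\gamma+\delta/2}$ is absorbed into $t^{-1/2}(1+t)^{\gamma+\delta/2}$ as in (\ref{lqlrddiv0}).

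The only step that requires a little care is assertion~4, where one must produce the four-term gradient estimate in $L^3$ for exponent $q\leq 3$. The cleanest route is to repeat, with $r=3$ and $\nabla$ in place of the identity, the proof of the assertion~2 of Theorem \ref{lqlrd}: introduce the cutoff $\zeta$ and the auxiliary pair $(w,\pi)$ as in (\ref{uvpdef})--(\ref{kdef}), use Duhamel (\ref{duhamelgradv}), and split the integral into $\int_0^{t/2}+\int_{t/2}^{t-1}+\int_{t-1}^t$. The initial datum contribution is handled by the assertion~1 of Theorem \ref{lqlrd} with $r=3$, while the four terms in the forcing $L$ are estimated using (\ref{pkappa}) and the assertion~1 of Proposition \ref{local3} with four different choices of interior Lebesgue exponent, exactly as in (\ref{salqlr2})--(\ref{gradt-1tot}). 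The estimate near the boundary comes from the local gradient bound (\ref{graddr}). The main bookkeeping obstacle is verifying that for each of the four weight combinations in (\ref{gammadeltatilde}) one can select an auxiliary exponent $\widetilde{\lambda}<3/2$ satisfying both $\widetilde{\lambda}<q$ and the relevant Muckenhoupt condition; this is possible precisely because (\ref{alphabeta6}) leaves room below $1$ and $3(1-1/q)$, and because $q\leq 3$ makes the decay exponents integrable at $\tau=t$.
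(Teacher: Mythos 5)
Your proposal is correct and coincides with what the paper intends: Proposition \ref{proplqlr} is stated without a separate proof precisely because each assertion is a specialization of the earlier results — assertion 1 from Theorem \ref{lqlrd0}, assertions 2 and 3 from the assertion 1 of Theorem \ref{lqlrd} (diagonal choice $q_i=q$, resp. $r=3$ with distinct $q_i$), and assertion 5 from the assertion 2 of Theorem \ref{thmdual0} with $q=r=3$. For assertion 4 you do not actually need to repeat the cut-off/Duhamel argument: when $\gamma,\delta>0$ one has $3<\min\{3/(1-\gamma-\delta),\,3/(1-\tfrac{3}{2}\gamma)\}$, so the assertion 2 of Theorem \ref{lqlrd} with $q_i=q$, $r=3$ applies directly, while the degenerate cases $\gamma=0$ or $\delta=0$ are covered by the assertions 3 and 4 of that theorem with $q_1=q_2=q$ (and in your re-derivation the initial-datum term $\nabla S_a(t)w(0)$ is controlled by the whole-space estimate (\ref{lqlrr3large}) rather than by the assertion 1 of Theorem \ref{lqlrd}, a minor attribution slip that does not affect the argument).
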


\noindent In view of Proposition \ref{proplqlr}, it is 
reasonable to set 
\begin{align}\label{XYtdef}
[[v]]_{\gamma,\delta,t}:=\sum_{i=1}^4\big(
[v]_{3,\widetilde{\gamma}_i,\widetilde{\delta}_i,t}+
[v]_{\infty,\widetilde{\gamma}_i,\widetilde{\delta}_i,t}+
[\nabla v]_{3,\widetilde{\gamma}_i,\widetilde{\delta}_i,t}\big),\quad 
[[v]]_{t}:=[v]_{3,t}+[v]_{\infty,t}+[\nabla v]_{3,t},
\end{align}
for $\gamma,\delta\geq 0$ and $t\in(0,\infty]$, 
where $\widetilde{\gamma}_i,\widetilde{\delta}_i$ are 
given by (\ref{gammadeltatilde}) and 
\begin{align*}
&[v]_{q,\gamma,\delta,t}:=\sup_{0<\tau<t}
\tau^{\frac{1}{2}-\frac{3}{2q}}(1+\tau)^{-\gamma-\frac{\delta}{2}}
\|(1+|x|)^\gamma(1+|x|-x_1)^\delta v(\tau)\|_{q,D},\\
 &[\nabla v]_{q,\gamma,\delta,t}:=\sup_{0<\tau<t}
\tau^{1-\frac{3}{2q}}(1+\tau)^{-\gamma-\frac{\delta}{2}}
\|(1+|x|)^\gamma(1+|x|-x_1)^\delta \nabla v(\tau)\|_{q,D},\\
&[v]_{q,t}=[v]_{q,0,0,t},\quad 
[\nabla v]_{q,t}=[\nabla v]_{q,0,0,t}.
\end{align*}

In order to construct a solution in anisotropically weighted 
$L^q$ space, we extend the estimates by 
Enomoto and Shibata \cite[Lemma 3.1.]{ensh2005} and by 
the present author \cite[Lemma 3.3]{takahashi2021} 
to the following ones. 
\begin{lem}\label{closelem}
\begin{enumerate}
\item Let $\gamma,\delta$ satisfy 
\begin{align*}
\gamma\geq 0,\quad 0\leq\delta<\frac{1}{3},\quad \gamma+\delta<1 
\end{align*}
and let $\mu_i>0~(i=1,2,3,4)$ satisfy 
\begin{align}\label{mui}
\delta<\frac{1-\mu_i}{3},\qquad 
\gamma+\delta<1-\mu_i\qquad (i=1,3)
\end{align} 
and $($\ref{mu1}$)$. 
Suppose $0<a^{1/4}<\kappa_1$, where $\kappa_1$ is a constant in 
Proposition \ref{propsta} with $(\ref{staclass2})$--$(\ref{mu1})$,  
$($\ref{mui}$)$ and 
$u_s$ denotes the stationary solution obtained 
in Proposition \ref{propsta}.
Let $\psi$ be a function on $\R$ satisfying $(\ref{psidef})$ 
and set $M=\max_{t\in\R}|\psi'(t)|$.  
Given $u,v$ 
fulfilling $[[u]]_{\gamma,\delta,\infty},[[v]]_{\infty}<\infty,$ 
we set 
\begin{align*}
&E_1(u,v)(t)=\int_0^t e^{-(t-\tau)A_a}
P_D[u\cdot\nabla v](\tau)\,d\tau,\quad
E_2(u)(t)=\int_0^t 
e^{-(t-\tau)A_a}P_D[\psi(\tau)u\cdot\nabla u_s]\,d\tau,\\
&E_3(u)(t)=\int_0^t 
e^{-(t-\tau)A_a}P_D[\psi(\tau)u_s\cdot\nabla u]\,d\tau,\quad 
E_4(u)(t)=\int_0^t 
e^{-(t-\tau)A_a}P_D[(1-\psi(\tau))a
\partial_{x_1}u]\,d\tau.
\end{align*} 
Then there exists a constant $C$ independent of 
$u,v,\psi,a$ and $t$ such that 
\begin{align}
&[E_1(u,v)]_{3,\gamma,\delta,t}+[E_1(u,v)]_{\infty,\gamma,\delta,t}
+[\nabla E_1(u,v)]_{3,\gamma,\delta,t}
\leq C\left(\sum_{i=1}^4
[u]_{3,\widetilde{\gamma}_i,\widetilde{\delta}_i,t}^{\frac{1}{2}}
[u]_{\infty,\widetilde{\gamma}_i,
\widetilde{\delta}_i,t}^{\frac{1}{2}}\right)
[\nabla v]_{3,t},\label{i1est}\\
&[E_2(u)]_{3,\gamma,\delta,t}+[E_2(u)]_{\infty,\gamma,\delta,t}
+[\nabla E_2(u)]_{3,\gamma,\delta,t}\notag\\
&\quad \leq C\left(\sum_{i=1}^4
[u]_{\infty,\widetilde{\gamma}_i,\widetilde{\delta}_i,t}\right)
\big(\|\nabla u_s\|_{\frac{3}{2+\mu_3},D}
+\|\nabla u_s\|_{\frac{3}{2},D}+
\|\nabla u_s\|_{\frac{3}{2-\mu_4},D}\big),\label{i2est}\\
&[E_3(u)]_{3,\gamma,\delta,t}+[E_3(u)]_{\infty,\gamma,\delta,t}
+[\nabla E_3(u)]_{3,\gamma,\delta,t}\notag\\
&\quad \leq C\left(
\sum_{i=1}^4
[\nabla u]_{3,\widetilde{\gamma}_i,\widetilde{\delta}_i,t}\right)
\big(\|u_s\|_{\frac{3}{1+\mu_1},D}
+\|u_s\|_{3,D}+
\|u_s\|_{\frac{3}{1-\mu_2},D}\big)\label{i3est}\\
&[E_4(u)]_{3,\gamma,\delta,t}+[E_4(u)]_{\infty,\gamma,\delta,t}
+[\nabla E_4(u)]_{3,\gamma,\delta,t}
\leq Ca\left(
\sum_{i=1}^4
[\nabla u]_{3,\widetilde{\gamma}_i,\widetilde{\delta}_i,t}\right)
\label{i4est}
\end{align}
for $t\in(0,\infty]$, where
$\widetilde{\gamma}_i,\widetilde{\delta}_i$ are given by 
$($\ref{gammadeltatilde}$)$.
\item Let $\gamma,\delta$ satisfy 
\begin{align}\label{gammadelta02}
0\leq \gamma<\frac{1}{3},\quad 0\leq\delta<\frac{1}{3}
\end{align}
and let $\mu_i>0~(i=1,2,3,4)$ 
satisfy $($\ref{mu1}$)$--$($\ref{mu2}$)$. 
Suppose $0<a^{1/4}<\kappa_2$, where $\kappa_2$ is a constant in 
Proposition \ref{propsta} with $(\ref{staclass2})$--$($\ref{mu2}$)$ 
and $u_s$ denotes the stationary solution obtained 
in Proposition \ref{propsta}. 
Then $(1+|x|)^{\gamma}(1+|x|-x_1)^{\delta}u_s\in L^3(D)$ and 
$(1+|x|)^{\gamma}(1+|x|-x_1)^{\delta}
\nabla u_s\in L^{3/(2-\mu_4)}(D)$. 
Let $\psi$ be a function on $\R$ satisfying $(\ref{psidef})$ 
and set $M=\max_{t\in\R}|\psi'(t)|$. 
We set 
\begin{align}\label{f1f2def}
&F_1(t)=\int_0^t
e^{-(t-\tau)A_a}P_Df_1(\tau)\,d\tau,\quad 
F_2(t)=\int_0^t
e^{-(t-\tau)A_a}P_Df_2(\tau)\,d\tau,
\end{align}
where $f_1$ and $f_2$ are given by 
$($\ref{f1}$)$ and $($\ref{f2}$)$, respectively.
Then there exists a constant $C$ independent of 
$\psi,a$ and $t$ such that
\begin{align}
[F_1]_{3,\gamma,\delta,t}+[F_1]_{\infty,\gamma,\delta,t}
+[\nabla F_1]_{3,\gamma,\delta,t}
&\leq CM\|(1+|x|)^{\gamma}(1+|x|-x_1)^{\delta}u_s\|_{3,D}
\label{f1est}\\
[F_2]_{3,\gamma,\delta,t}+[F_2]_{\infty,\gamma,\delta,t}
+[\nabla F_2]_{3,\gamma,\delta,t}
&\leq C\|u_s\|_{\frac{3}{1-\mu_2},D}
\|(1+|x|)^{\gamma}(1+|x|-x_1)^{\delta}
\nabla u_s\|_{\frac{3}{2-\mu_4},D}\notag\\
&\qquad+Ca\|(1+|x|)^{\gamma}(1+|x|-x_1)^{\delta}
\nabla u_s\|_{\frac{3}{2-\mu_4},D}\label{f2est}
\end{align}
for $t\in(0,\infty].$ 
\end{enumerate}
\end{lem}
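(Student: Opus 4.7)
The overall plan is to combine the anisotropically weighted $L^q$-$L^r$ bounds for $e^{-tA_a}$ in Proposition \ref{proplqlr} (and for $e^{-tA_a}P_D\div$ where convenient) with carefully chosen Hölder splittings on the integrands, in the style of Enomoto--Shibata \cite{ensh2005} and \cite{takahashi2021}, while tracking in parallel the four weight branches $(\widetilde{\gamma}_i,\widetilde{\delta}_i)_{i=1}^4$ that appear on the right of (\ref{lqlrdlargeq})--(\ref{lqlrdlargeq2}).

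For $E_1(u,v)$ I target the weighted $L^3$-in-space norm by applying (\ref{lqlrdlargeq}) with $q=3/2$, $r=3$ inside the $\tau$-integral, producing a sum over $i$ with temporal factor $(t-\tau)^{-1/2+\widetilde{\eta}_i}$ and spatial factor
\begin{align*}
\bigl\|(1+|x|)^{\widetilde{\gamma}_i}(1+|x|-x_1)^{\widetilde{\delta}_i}(u\cdot\nabla v)(\tau)\bigr\|_{3/2,D}.
\end{align*}
I move the weight onto $u$ via Hölder, then use the weighted interpolation $\|gw\|_{6}\le\|gw\|_{3}^{1/2}\|gw\|_{\infty}^{1/2}$ (valid since $w=(1+|x|)^{\widetilde{\gamma}_i}(1+|x|-x_1)^{\widetilde{\delta}_i}$ is a positive measurable function) and pair with $\|\nabla v(\tau)\|_{3,D}$; this is exactly what produces $[u]_{3,\widetilde{\gamma}_i,\widetilde{\delta}_i,t}^{1/2}[u]_{\infty,\widetilde{\gamma}_i,\widetilde{\delta}_i,t}^{1/2}[\nabla v]_{3,t}$. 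Standard beta-function computations on the $\tau$-integral, split at $\tau=t/2$, recover the growth factor $(1+t)^{\gamma+\delta/2}$ that matches the target norm; here $\delta<1/3$ and $\gamma+\delta<1$ are precisely what render every such integral finite. The $L^\infty$-in-space and weighted-gradient $L^3$-in-space targets are handled in the same spirit, switching to (\ref{lqlrdlargeq}) with $r=\infty$ or to (\ref{lqlrdlargeq2}), and invoking the short-time estimate (\ref{lqlrdsmallq}) near $\tau=t$ to avoid endpoint singularities.

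The remaining terms $E_2,E_3,E_4$ follow the same scheme. In $E_2$ and $E_3$ one factor is $u_s$ or $\nabla u_s$, which is placed in the three-tier family from (\ref{usest}); the three exponents $3/(1\pm\mu_\bullet),3$ (resp. $3/(2\pm\mu_\bullet),3/2$) are matched respectively to the singularity at $\tau=0$, the intermediate regime, and the decay for large $\tau$, so that every Hölder pairing with a weighted $[u]_{\infty,\widetilde{\gamma}_i,\widetilde{\delta}_i,t}$ or $[\nabla u]_{3,\widetilde{\gamma}_i,\widetilde{\delta}_i,t}$ factor lands in an admissible source exponent of Proposition \ref{proplqlr}; the hypothesis (\ref{mui}) is precisely this admissibility condition. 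For $E_4$, the cutoff $1-\psi(\tau)$ truncates the integral to $\tau\in[0,1]$, so (\ref{i4est}) is a single application of (\ref{lqlrdlargeq}) and (\ref{lqlrdsmallq}) with unweighted $[\nabla u]_{3,\widetilde{\gamma}_i,\widetilde{\delta}_i,t}$ data, producing the linear $a$ prefactor automatically.

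For assertion 2, the summabilities $(1+|x|)^{\gamma}(1+|x|-x_1)^{\delta}u_s\in L^3(D)$ and $(1+|x|)^{\gamma}(1+|x|-x_1)^{\delta}\nabla u_s\in L^{3/(2-\mu_4)}(D)$ follow by combining (\ref{usest}) on a bounded neighborhood of $\partial D$ with the pointwise decay (\ref{usest2}) at infinity and applying Lemma \ref{finite}; the hypothesis (\ref{gammadelta02}) is what makes the relevant integrals at infinity finite. The bound (\ref{f1est}) is then immediate because $f_1=-\psi'(\tau)u_s$ is time-localized in $[0,1]$ and has weighted $L^3$ norm bounded by $M\|(1+|x|)^{\gamma}(1+|x|-x_1)^{\delta}u_s\|_{3,D}$. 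For (\ref{f2est}), the cutoff $\psi(1-\psi)$ again restricts $\tau\in[0,1]$; inside, Hölder gives
\begin{align*}
\bigl\|(1+|x|)^{\gamma}(1+|x|-x_1)^{\delta}(u_s\cdot\nabla u_s)\bigr\|_{q,D}\le\|u_s\|_{3/(1-\mu_2),D}\bigl\|(1+|x|)^{\gamma}(1+|x|-x_1)^{\delta}\nabla u_s\bigr\|_{3/(2-\mu_4),D}
\end{align*}
with $1/q=(3-\mu_2-\mu_4)/3$, and the condition $\mu_2+\mu_4>1$ in (\ref{mu2}) ensures $q>3/2$, i.e. that Proposition \ref{proplqlr} with source $q$ and target $3$ produces strict temporal decay; the term $a\partial_{x_1}u_s$ is estimated directly in the same weighted $L^{3/(2-\mu_4)}$ norm. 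The main obstacle throughout is pure bookkeeping: for each of the three target norms and each of the four weight branches, one must choose source exponents compatible with (\ref{alphabeta6})--(\ref{alphabeta7}), verify that the Hölder pairing closes into a norm handled by Proposition \ref{proplqlr}, and check integrability of the beta-type time integrals; the hypotheses $\delta<1/3$, $\gamma+\delta<1$, (\ref{mui}), and (\ref{mu2}) are each consumed at one of these verifications.
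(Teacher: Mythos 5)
Your overall strategy (the weighted $L^q$-$L^r$ estimates of Proposition \ref{proplqlr}, H\"older splittings, the three-tier summability of $u_s$ from (\ref{usest}), time-localization for $E_4$, $F_1$, $F_2$, and Lemma \ref{finite} plus (\ref{usest2}) for the summability claims in assertion 2) is the same as the paper's, and your treatment of $E_2$, $E_3$, $E_4$ and of assertion 2 is essentially the paper's argument. However, your bookkeeping for $E_1$ contains a genuine inconsistency that, as written, breaks the proof of (\ref{i1est}). You apply (\ref{lqlrdlargeq}) with source $q=3/2$ and target $r=3$, so the spatial factor is $\|(1+|x|)^{\widetilde{\gamma}_i}(1+|x|-x_1)^{\widetilde{\delta}_i}(u\cdot\nabla v)\|_{3/2,D}$; H\"older then forces the $u$-factor into weighted $L^3$ (since $1/3+1/3=2/3$), not weighted $L^6$, so the interpolation $\|wu\|_6\le\|wu\|_3^{1/2}\|wu\|_\infty^{1/2}$ never enters and you obtain $[u]_{3,\widetilde{\gamma}_i,\widetilde{\delta}_i,t}[\nabla v]_{3,t}$ rather than the product $[u]^{1/2}_{3,\cdot}[u]^{1/2}_{\infty,\cdot}[\nabla v]_{3,t}$ claimed in (\ref{i1est}). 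Conversely, if you do use the $L^6$ norm of $wu$, the product lands in weighted $L^2$ and the source exponent must be $2$, not $3/2$.

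This is not merely cosmetic: with source $3/2$ the endpoint singularity of the semigroup estimate near $\tau=t$ is $(t-\tau)^{-\frac32(\frac23-\frac13)-\frac12}=(t-\tau)^{-1}$ for the gradient target (and likewise $(t-\tau)^{-1}$ for the $q=3/2\to L^\infty$ target), which is not integrable, and invoking the short-time estimate (\ref{lqlrdsmallq}) near $\tau=t$ does not help, since that estimate carries exactly the same exponent. The repair is the paper's choice: estimate $\|(1+|x|)^{\widetilde{\gamma}_i}(1+|x|-x_1)^{\widetilde{\delta}_i}u(\tau)\|_{6,D}\le\tau^{-1/4}(1+\tau)^{\widetilde{\gamma}_i+\widetilde{\delta}_i/2}[u]^{1/2}_{3,\cdot}[u]^{1/2}_{\infty,\cdot}$, put $u\cdot\nabla v$ in weighted $L^2$, and use the $L^2\to L^r$ estimates, which give $(t-\tau)^{-\frac14-\frac{k}{2}+\widetilde{\eta}_i}$ and hence integrable endpoint singularities for $k=0,1$ and for $r=\infty$; the resulting integrand $\tau^{-3/4}(1+\tau)^{\widetilde{\gamma}_i+\widetilde{\delta}_i/2}(t-\tau)^{-\frac14-\frac{k}{2}+\widetilde{\eta}_i}$ then reproduces the growth $(1+t)^{\gamma+\delta/2}t^{-k/2}$ exactly as needed. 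With this correction (and the analogous use of $\|\nabla u_s\|_{3/(2-\mu_4)}$, resp. $\|u_s\|_{3/(1-\mu_2)}$, near the upper endpoint for the gradient pieces of $E_2$, $E_3$, which you describe only loosely), your outline coincides with the paper's proof.
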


\begin{proof}
Since $L^\infty$ estimate is the same as the $L^3$ estimate of 
first derivative, we omit the estimate 
of $[\cdot]_{\infty,\gamma,\delta,t}$.
We first prove (\ref{i1est}). 
If $[[u]]_{\gamma,\delta,\infty}<\infty,$ 
then we find
$(1+|x|)^{\widetilde{\gamma}_i}
(1+|x|-x_1)^{\widetilde{\delta}_i} u(t)\in L^6(D)$ and 
\begin{align*}
\|&(1+|x|)^{\widetilde{\gamma}_i}
(1+|x|-x_1)^{\widetilde{\delta}_i} u(t)\|_{6,D}\leq 
t^{-\frac{1}{4}}(1+t)^{\widetilde{\gamma}+
\frac{\widetilde{\delta}}{2}}
[u]^{\frac{1}{2}}_{3,\widetilde{\gamma}_i,\widetilde{\delta}_i,t}
[u]^{\frac{1}{2}}_{\infty,\widetilde{\gamma}_i,\widetilde{\delta}_i,t}
\end{align*}
for all $t>0$, which together with (\ref{lqlrdsmallq}) yields 
\begin{align}
\int_0^t&\|(1+|x|)^\gamma(1+|x|-x_1)^\delta
\nabla ^ke^{-(t-\tau)A_a}P_D[u\cdot\nabla v](\tau)\|_{3,D}\,d\tau
\notag\\
&\leq C\int_0^t(t-\tau)^{-\frac{1}{4}-\frac{k}{2}}
\|(1+|x|)^{\gamma}(1+|x|-x_1)^{\delta} u(\tau)\|_{6,D}
\|\nabla v(\tau)\|_{3,D}\,d\tau\notag\\
&\leq C\int_0^t(t-\tau)^{-\frac{1}{4}-\frac{k}{2}}
\tau^{-\frac{3}{4}}(1+\tau)^{\gamma+\frac{\delta}{2}}\,d\tau\, 
[u]^{\frac{1}{2}}_{3,\gamma,\delta,t}
[u]^{\frac{1}{2}}_{\infty,\gamma,\delta,t}[\nabla v]_{3,t}\notag\\
&\leq Ct^{-\frac{k}{2}}
(1+t)^{\gamma+\frac{\delta}{2}}[u]^{\frac{1}{2}}_{3,\gamma,\delta,t}
[u]^{\frac{1}{2}}_{\infty,\gamma,\delta,t}[\nabla v]_{3,t}
\label{i1estleq}
\end{align}
for $t\leq 1$ and $k=0,1$. 
Furthermore, it follows from (\ref{lqlrdsmallq}), 
(\ref{lqlrdlargeq}) and (\ref{lqlrdlargeq2}) that 
\begin{align}
\int_0^t&\|(1+|x|)^\gamma(1+|x|-x_1)^\delta
\nabla ^ke^{-(t-\tau)A_a}P_D[u\cdot\nabla v](\tau)\|_{3,D}\,d\tau
\notag\\
&\leq C\sum_{i=1}^4\int_0^{t-1}
(t-\tau)^{-\frac{1}{4}-\frac{k}{2}+\widetilde{\eta}_i}
\|(1+|x|)^{\widetilde{\gamma}_i}
(1+|x|-x_1)^{\widetilde{\delta}_i} u(\tau)\|_{6,D}
\|\nabla v(\tau)\|_{3,D}\,d\tau\notag\\
&\qquad +C\int_{t-1}^t
(t-\tau)^{-\frac{1}{4}-\frac{k}{2}}
\|(1+|x|)^{\gamma}(1+|x|-x_1)^{\delta} u(\tau)\|_{6,D}
\|\nabla v(\tau)\|_{3,D}\,d\tau\notag\\
&\leq C\sum_{i=1}^4\int_0^{t-1}
(t-\tau)^{-\frac{1}{4}-\frac{k}{2}+\widetilde{\eta}_i}
\tau^{-\frac{3}{4}}
(1+\tau)^{\widetilde{\gamma}_i
+\frac{\widetilde{\delta}_i}{2}}\,d\tau\, 
[u]^{\frac{1}{2}}_{3,\widetilde{\gamma}_i,\widetilde{\delta}_i,t}
[u]^{\frac{1}{2}}_{\infty,\widetilde{\gamma}_i,
\widetilde{\delta}_i,t}[\nabla v]_{3,t}\notag\\
&\qquad +\int_{t-1}^t(t-\tau)^{-\frac{1}{4}-\frac{k}{2}}
\tau^{-\frac{3}{4}}(1+\tau)^{\gamma+\frac{\delta}{2}}\,d\tau\, 
[u]^{\frac{1}{2}}_{3,\gamma,\delta,t}
[u]^{\frac{1}{2}}_{\infty,\gamma,\delta,t}[\nabla v]_{3,t}\notag\\
&\leq C(1+t)^{\gamma+\frac{\delta}{2}}t^{-\frac{k}{2}}
\left(
\sum_{i=1}^4[u]_{3,\widetilde{\gamma}_i,
\widetilde{\delta}_i,t}^{\frac{1}{2}}
[u]_{\infty,\widetilde{\gamma}_i,
\widetilde{\delta}_i,t}^{\frac{1}{2}}\right)
[\nabla v]_{3,t}\label{i1estgeq}
\end{align}
for $t>1$ and $k=0,1$, where 
$\widetilde{\eta}_i$ is given by (\ref{etatilde}) 
and we have used 
$\widetilde{\eta}_i+\widetilde{\gamma}_i+\widetilde{\delta}_i/2
=\gamma+\delta/2$ for all $i=1,2,3,4.$ 
Combining (\ref{i1estleq}) and (\ref{i1estgeq}) 
asserts (\ref{i1est}) for all $t\in(0,\infty]$. 
By applying (\ref{lqlrdsmallq}), 
(\ref{lqlrdlargeq}) and (\ref{lqlrdlargeq2}), we have 
\begin{align}
\int_0^t&\|(1+|x|)^\gamma(1+|x|-x_1)^\delta
e^{-(t-\tau)A_a}P_D[\psi(\tau)u\cdot\nabla u_s](\tau)\|_{3,D}\,d\tau
\notag\\
&\leq C\int_0^t(t-\tau)^{-\frac{1}{2}}
\|(1+|x|)^{\gamma}(1+|x|-x_1)^{\delta} 
u(\tau)\|_{\infty,D}\|\nabla u_s\|_{\frac{3}{2},D}\,d\tau\notag\\
&\leq C\int_0^t(t-\tau)^{-\frac{1}{2}}
\tau^{-\frac{1}{2}}(1+\tau)^{\gamma+\frac{\delta}{2}}\,d\tau\, 
[u]_{\infty,\gamma,\delta,t}
\|\nabla u_s\|_{\frac{3}{2},D}
\leq C[u]_{\infty,\gamma,\delta,t}
\|\nabla u_s\|_{\frac{3}{2},D},\label{i2estleq}\\
\int_0^t&\|(1+|x|)^\gamma(1+|x|-x_1)^\delta\nabla
e^{-(t-\tau)A_a}P_D[\psi(\tau)u\cdot\nabla u_s](\tau)\|_{3,D}\,d\tau
\notag\\
&\leq C\int_0^t(t-\tau)^{-1+\frac{\mu_4}{2}}
\|(1+|x|)^{\gamma}(1+|x|-x_1)^{\delta} 
u(\tau)\|_{\infty,D}\|\nabla u_s\|_{\frac{3}{2-\mu_4},D}
\,d\tau\notag\\
&\leq C\int_0^t(t-\tau)^{-1+\frac{\mu_4}{2}}
\tau^{-\frac{1}{2}}
(1+\tau)^{\gamma+\frac{\delta}{2}}\,d\tau\, 
[u]_{\infty,\gamma,\delta,t}
\|\nabla u_s\|_{\frac{3}{2-\mu_4},D}
\leq Ct^{-\frac{1}{2}}[u]_{\infty,\gamma,\delta,t}
\|\nabla u_s\|_{\frac{3}{2-\mu_4},D}
\end{align}
for $t\leq 2$ and   
\begin{align}
\int_0^t&\|(1+|x|)^\gamma(1+|x|-x_1)^\delta
e^{-(t-\tau)A_a}P_D[\psi(\tau)u\cdot\nabla u_s](\tau)\|_{3,D}\,d\tau
\notag\\
&\leq C\sum_{i=1}^4\int_0^{t-1}
(t-\tau)^{-\frac{1}{2}+\widetilde{\eta}_i}\tau^{-\frac{1}{2}}
(1+\tau)^{\widetilde{\gamma}_i+\frac{\widetilde{\delta}_i}{2}}
\,d\tau\, [u]_{\infty,\widetilde{\gamma}_i,\widetilde{\delta}_i,t}
\|\nabla u_s\|_{\frac{3}{2},D}\notag\\
&\qquad +C\int_{t-1}^t(t-\tau)^{-\frac{1}{2}}
\tau^{-\frac{1}{2}}(1+\tau)^{\gamma+\frac{\delta}{2}}\,d\tau\, 
[u]_{\infty,\gamma,\delta,t}\|\nabla u_s\|_{\frac{3}{2},D}\notag\\
&\leq C(1+t)^{\gamma+\frac{\delta}{2}}
\left(
\sum_{i=1}^4
[u]_{\infty,\widetilde{\gamma}_i,\widetilde{\delta}_i,t}\right)
\|\nabla u_s\|_{\frac{3}{2},D},\\
\int_0^t&\|(1+|x|)^\gamma(1+|x|-x_1)^\delta
\nabla e^{-(t-\tau)A_a}
P_D[\psi(\tau)u\cdot\nabla u_s](\tau)\|_{3,D}\,d\tau
\notag\\
&\leq C\sum_{i=1}^4\int_0^{\frac{t}{2}}
(t-\tau)^{-1+\widetilde{\eta}_i}\tau^{-\frac{1}{2}}
(1+\tau)^{\widetilde{\gamma}_i+\frac{\widetilde{\delta}_i}{2}}
\,d\tau\, [u]_{\infty,\widetilde{\gamma}_i,\widetilde{\delta}_i,t}
\|\nabla u_s\|_{\frac{3}{2},D}\notag\\
&\qquad+C\sum_{i=1}^4\int_{\frac{t}{2}}^{t-1}
(t-\tau)^{-1-\frac{\mu_3}{2}+\widetilde{\eta}_i}\tau^{-\frac{1}{2}}
(1+\tau)^{\widetilde{\gamma}_i+\frac{\widetilde{\delta}_i}{2}}
\,d\tau\, [u]_{\infty,\widetilde{\gamma}_i,\widetilde{\delta}_i,t}
\|\nabla u_s\|_{\frac{3}{2+\mu_3},D}
\notag\\
&\qquad +C\int_{t-1}^t(t-\tau)^{-1+\frac{\mu_4}{2}}
\tau^{-\frac{1}{2}}(1+\tau)^{\gamma+\frac{\delta}{2}}\,d\tau\, 
[u]_{\infty,\gamma,\delta,t}
\|\nabla u_s\|_{\frac{3}{2-\mu_4},D}\notag\\
&\leq Ct^{-\frac{1}{2}}(1+t)^{\gamma+\frac{\delta}{2}}
\left(\sum_{i=1}^4
[u]_{\infty,\widetilde{\gamma}_i,\widetilde{\delta}_i,t}\right)
\big(\|\nabla u_s\|_{\frac{3}{2+\mu_3},D}
+\|\nabla u_s\|_{\frac{3}{2},D}+
\|\nabla u_s\|_{\frac{3}{2-\mu_4},D}\big)\label{i2estgeq}
\end{align}
for $t\geq 2.$ Here, we have used 
$L^{3/(2+\mu_3)}$-$L^3$ estimate of the first derivative because
the condition (\ref{alphabeta6}) with $q=3/(2+\mu_3)$ 
is accomplished by (\ref{mui}).
Collecting (\ref{i2estleq})--(\ref{i2estgeq}) yields 
(\ref{i2est}) for $t\in(0,\infty].$ 
By the same calculation, 
we can get (\ref{i3est}) for $t\in(0,\infty],$ 
where we apply $L^{3/(2+\mu_1)}$-$L^3$ estimate 
due to (\ref{mui}) with $i=1$. It follows that 
\begin{align*}
\int_0^t&\|(1+|x|)^\gamma(1+|x|-x_1)^\delta
\nabla^ke^{-(t-\tau)A_a}P_D[(1-\psi(\tau))a
\partial_{x_1}u]\|_{3,D}\,d\tau\\
&\leq Ca\int_0^t(t-\tau)^{-\frac{k}{2}}
\|(1+|x|)^\gamma(1+|x|-x_1)^\delta \nabla u(\tau)\|_{3,D}
\,d\tau\leq Ca[\nabla u]_{3,\gamma,\delta,t}
\end{align*}
for $t\leq 2,k=0,1$ and that 
\begin{align*}
\int_0^t&\|(1+|x|)^\gamma(1+|x|-x_1)^\delta
\nabla^ke^{-(t-\tau)A_a}P_D[(1-\psi(\tau))a
\partial_{x_1}u]\|_{3,D}\,d\tau\\
&\leq Ca\sum_{i=1}^4\int_0^1(t-\tau)^{-\frac{k}{2}
+\widetilde{\eta}_i}\tau^{-\frac{1}{2}}
(1+\tau)^{\widetilde{\gamma}_i+\frac{\widetilde{\delta}_i}{2}}
\,d\tau\,[\nabla u]_{3,\widetilde{\gamma}_i,\widetilde{\delta}_i,t}
\leq Cat^{-\frac{k}{2}}(1+t)^{\gamma+\frac{\delta}{2}}
\left(\sum_{i=1}^4
[\nabla u]_{3,\widetilde{\gamma}_i,\widetilde{\delta}_i,t}\right)
\end{align*}
for $t>2,k=0,1,$ which lead to (\ref{i4est}), 
thereby, we conclude the assertion 1. 
\par We next prove the assertion 2. If $\gamma$ and 
$\delta$ satisfy (\ref{gammadelta02}), then we find 
$\max\{1/(1-\delta),2/(1-\gamma)\}<3.$ By this and 
\begin{align*}
(1+|x|)^\gamma(1+|x|-x_1)^\delta |u_s(x)|\leq 
C(1+|x|)^{-(1-\gamma)}(1+|x|-x_1)^{-(1-\delta)},
\end{align*} 
Lemma \ref{finite} yields 
$(1+|x|)^\gamma(1+|x|-x_1)^\delta u_s\in L^3(D),$ from which it 
follows that 
\begin{align}
&\int_0^t\|(1+|x|)^\gamma(1+|x|-x_1)^\delta
\nabla^ke^{-(t-\tau)A_a}P_D\psi'(\tau)u_s\|_{3,D}\,d\tau\notag\\
&\leq CM\int_0^t(t-\tau)^{-\frac{k}{2}}\,d\tau\,
\|(1+|x|)^\gamma(1+|x|-x_1)^\delta u_s\|_{3,D}
\leq CM\|(1+|x|)^\gamma(1+|x|-x_1)^\delta u_s\|_{3,D}\label{f1near}
\end{align}
for $t\leq 2,k=0,1$ and that 
\begin{align*}
&\int_0^t\|(1+|x|)^\gamma(1+|x|-x_1)^\delta
\nabla^ke^{-(t-\tau)A_a}P_D\psi'(\tau)u_s\|_{3,D}\,d\tau\\
&\leq CM\sum_{i=1}^4\int_0^1(t-\tau)^{-\frac{k}{2}
+\widetilde{\eta}_i}\,d\tau\,
\|(1+|x|)^{\widetilde{\gamma}_i}
(1+|x|-x_1)^{\widetilde{\delta}_i} u_s\|_{3,D}\\
&\leq CMt^{-\frac{k}{2}}(1+t)^{\gamma+\frac{\delta}{2}}
\|(1+|x|)^\gamma(1+|x|-x_1)^\delta u_s\|_{3,D}
\end{align*}
for $t>2,k=0,1.$ Therefore, we conclude (\ref{f1est}). 
In view of $\mu_4\geq 1/2$ and (\ref{gammadelta02}), 
it turns out that 
\begin{align*}
\frac{3}{2-\mu_4}>\max\left\{\frac{1}{\frac{3}{2}-\delta},
\frac{2}{\frac{3}{2}-\gamma}\right\}=\frac{2}{\frac{3}{2}-\gamma}
\end{align*}
for all $\gamma,\delta$ satisfying (\ref{gammadelta02}), 
which together with Lemma \ref{finite} 
asserts $(1+|x|)^{\widetilde{\gamma}_i}
(1+|x|-x_1)^{\widetilde{\delta}_i}\nabla u_s\in L^{3/(2-\mu_4)}(D)$
for $i=1,2,3,4$. Due to this and (\ref{mu2}), we have 
\begin{align}
\int_0^t&\|(1+|x|)^{\gamma}
(1+|x|-x_1)^{\delta}\nabla^ke^{-(t-\tau)A_a}
P_D[\psi(\tau)(1-\psi(\tau))
(u_s\cdot\nabla u_s
+a\partial_{x_1}u_s)]\|_{3,D}\,d\tau
\nonumber\\&\leq 
C\int_0^{t}(t-\tau)^{\frac{\mu_2+\mu_4}{2}-1-\frac{k}{2}}\,d\tau\,
\|u_s\|_{\frac{3}{1-\mu_2},D}\|(1+|x|)^{\gamma}
(1+|x|-x_1)^{\delta}\nabla u_s\|_{\frac{3}{2-\mu_4},D}
\nonumber\\&\qquad+Ca
\int_0^{t}(t-\tau)^{-\frac{1}{2}+\frac{\mu_4}{2}-\frac{k}{2}}
\,d\tau\,\|(1+|x|)^{\gamma}
(1+|x|-x_1)^{\delta}\nabla u_s\|_{\frac{3}{2-\mu_4},D}\notag\\
&\leq C\|u_s\|_{\frac{3}{1-\mu_2},D}\|(1+|x|)^{\gamma}
(1+|x|-x_1)^{\delta}\nabla u_s\|_{\frac{3}{2-\mu_4},D}
+Ca\|(1+|x|)^{\gamma}(1+|x|-x_1)^{\delta}
\nabla u_s\|_{\frac{3}{2-\mu_4},D}\label{f21}
\end{align}
for $t\leq 2,k=0,1$ and 
\begin{align}
\int_0^t&\|(1+|x|)^{\gamma}
(1+|x|-x_1)^{\delta}\nabla^ke^{-(t-\tau)A_a}
P_D[\psi(\tau)\big(1-\psi(\tau)\big)
(u_s\cdot\nabla u_s
+a\partial_{x_1}u_s)]\|_{3,D}\,d\tau
\nonumber\\&\leq 
C\sum_{i=1}^4\int_0^{1}(t-\tau)^{\frac{\mu_2+\mu_4}{2}-1
-\frac{k}{2}+\widetilde{\eta}_i}\,d\tau\,
\|u_s\|_{\frac{3}{1-\mu_2},D}\|(1+|x|)^{\widetilde{\gamma}_i}
(1+|x|-x_1)^{\widetilde{\delta}_i}\nabla u_s\|_{\frac{3}{2-\mu_4},D}
\nonumber\\&\qquad+Ca\sum_{i=1}^4
\int_0^{1}(t-\tau)^{-\frac{1}{2}+\frac{\mu_4}{2}-\frac{k}{2}+
\widetilde{\eta}_i}\,d\tau
\,\|(1+|x|)^{\widetilde{\gamma}_i}
(1+|x|-x_1)^{\widetilde{\delta}_i}
\nabla u_s\|_{\frac{3}{2-\mu_4},D}\notag\\
&\leq Ct^{-\frac{k}{2}+\gamma+\frac{\delta}{2}}
\|u_s\|_{\frac{3}{1-\mu_2},D}\|(1+|x|)^{\gamma}
(1+|x|-x_1)^{\delta}\nabla u_s\|_{\frac{3}{2-\mu_4},D}\notag\\
&\qquad+Cat^{-\frac{k}{2}+\gamma+\frac{\delta}{2}}
\|(1+|x|)^{\gamma}(1+|x|-x_1)^{\delta}
\nabla u_s\|_{\frac{3}{2-\mu_4},D}\label{f22}
\end{align}
for $t\geq 2,k=0,1$, thereby, (\ref{f2est}) holds 
for $t\in(0,\infty]$. The proof is complete. 
\end{proof}

\begin{rmk}
Let $\gamma,\delta$ satisfy 
\begin{align*}
\gamma\geq 0,\quad 0<\delta<\frac{1}{3},\quad \gamma+\delta<1 
\end{align*}
and let $\mu_i>0~(i=1,2,3,4)$ satisfy (\ref{mu1}) 
and (\ref{mui}). 
Suppose $0<a^{1/4}<\kappa_1$, where $\kappa_1$ is a constant in 
Proposition \ref{propsta} with $(\ref{staclass2})$--$(\ref{mu1})$ and 
$($\ref{mui}$)$. We set 
\begin{align*}
\widetilde{E}_2(u)(t)=\int_0^t 
e^{-(t-\tau)A_a}P_D[u\cdot\nabla u_s]\,d\tau,\qquad
\widetilde{E}_3(u)(t)=\int_0^t 
e^{-(t-\tau)A_a}P_D[u_s\cdot\nabla u]\,d\tau,
\end{align*}  
then the same procedure 
as in the proof of (\ref{i2est}) and (\ref{i3est}) asserts 
\begin{align}
&[\widetilde{E}_2(u)]_{3,\gamma,\delta,t}
+[\widetilde{E}_2(u)]_{\infty,\gamma,\delta,t}
+[\nabla \widetilde{E}_2(u)]_{3,\gamma,\delta,t}\notag\\
&\quad \leq C\left(
\sum_{i=1}^4[u]_{\infty,\widetilde{\gamma}_i,
\widetilde{\delta}_i,t}\right)
\big(\|\nabla u_s\|_{\frac{3}{2+\mu_3},D}
+\|\nabla u_s\|_{\frac{3}{2},D}+
\|\nabla u_s\|_{\frac{3}{2-\mu_4},D}\big),\label{widei2est}\\
&[\widetilde{E}_3(u)]_{3,\gamma,\delta,t}
+[\widetilde{E}_3(u)]_{\infty,\gamma,\delta,t}
+[\nabla \widetilde{E}_3(u)]_{3,\gamma,\delta,t}\notag\\
&\quad \leq C\left(
\sum_{i=1}^4
[\nabla u]_{3,\widetilde{\gamma}_i,\widetilde{\delta}_i,t}\right)
\big(\|u_s\|_{\frac{3}{1+\mu_1},D}
+\|u_s\|_{3,D}+\|u_s\|_{\frac{3}{1-\mu_2},D}\big)\label{widei3est}
\end{align}
for $t\in(0,\infty]$ and $u$ 
such that $[[u]]_{\gamma,\delta,\infty}<\infty.$ 
\end{rmk}

We are now in a position to prove Theorem \ref{stabilitythm}. \\
\noindent{\bf Proof of Theorem \ref{stabilitythm}.}\quad 
Given $\alpha,\beta$ satisfying (\ref{alphabeta3}), we take 
$\mu_i~(i=1,2,3,4)$ fulfilling (\ref{mu1}) and  
\begin{align}\label{alphabetai}
\beta<\frac{1-\mu_i}{3},\quad 
\alpha+\beta<1-\mu_i\qquad (i=1,3), 
\end{align} 
and fix $\kappa_1>0$ in 
Proposition \ref{propsta} with (\ref{staclass2})--(\ref{mu1}) 
and (\ref{alphabetai}), then get the stationary solution $u_s$.  
We define the approximate solutions 
\begin{align*}
v_0(t)=e^{-tA_a}b,\quad 
v_{m+1}(t)=e^{-tA_a}b-
\int_0^t e^{-(t-\tau)A_a}P_D\Big[v_m\cdot\nabla v_m
+v_m\cdot\nabla u_s+u_s\cdot\nabla v_m\Big]d\tau
\end{align*}
for $m\geq 0.$ 
It follows from (\ref{i1est}), 
(\ref{widei2est})--(\ref{widei3est}) with 
$\gamma=\delta=0$, (\ref{usest}) and $0<a<1$ (see also 
Shibata \cite{shibata1999} and 
Enomoto and Shibata \cite{ensh2005} for the continuity and 
the condition near $t=0$) that 
\begin{align}
v_m\in Y_{0}:=\big\{v\in BC([0,\infty);L^3_{\sigma}(D))\mid
t^{\frac{1}{2}}v&\in BC((0,\infty);L^{\infty}(D)),
t^{\frac{1}{2}}\nabla v\in BC((0,\infty);L^3(D)),\nonumber\\
\lim_{t\rightarrow 0}&~t^{\frac{1}{2}}\big(
\|v(t)\|_{\infty,D}+\|\nabla v(t)\|_{3,D}\big)=0\big\}\label{y0def}
\end{align}
for $m\geq 0$ and that 
\begin{align*}
[[v_{m+1}]]_{\infty}\leq C_0\|b\|_{3,D}+C_1[[v_m]]_{\infty}^2
+C_2a^{\frac{1}{4}}[[v_m]]_{\infty}
\end{align*}
for $m\geq 0$, where $[[\cdot]]_{\infty}$ is given by 
(\ref{XYtdef}). Thus if 
\begin{align}\label{bsmall}
\|b\|_{3,D}<\frac{1}{16C_0C_1} 
\end{align}
and if 
\begin{align}\label{asmall}
a^{\frac{1}{4}}<\min\left\{\kappa_1,\frac{1}{2C_2}\right\},
\end{align}
then we have 
\begin{align*}
[[v_m]]_{\infty}\leq 4C_0\|b\|_{3,D}
\end{align*}
for $m\geq 0$ and get a solution $v\in Y_0$ with
\begin{align}
&[[v_m-v]]_{\infty}\rightarrow 0 
\quad{\rm as}~m\rightarrow\infty,\label{vmv}\\
&[[v]]_{\infty}\leq 4C_0\|b\|_{3,D}.\label{vapriori}
\end{align}
In order to prove that this solution enjoys 
(\ref{anisov})--(\ref{anisov2}) as $t\rightarrow\infty$, 
we first prove 
\begin{align}
&\|(1+|x|)^{\gamma_i}(1+|x|-x_1)^{\delta_i} v(t)\|_{q,D}
=O(t^{-\frac{1}{2}+\frac{3}{2q}+\gamma_i+\frac{\delta_i}{2}})
\label{bigo1}\\
&\|(1+|x|)^{\gamma_i}(1+|x|-x_1)^{\delta_i} \nabla v(t)\|_{3,D}
=O(t^{-\frac{1}{2}+\gamma_i+\frac{\delta_i}{2}})\label{bigo2}
\end{align} 
as $t\rightarrow\infty$.  
Applying (\ref{i1est}), (\ref{widei2est})--(\ref{widei3est}) to  
$(\gamma,\delta)=(\alpha,\beta),(0,\beta),(\alpha,0),(0,0)$ 
and summing up these lead us to  
\begin{align}
[[v_{m+1}]]_{\alpha,\beta,\infty}&\leq 
C_3\|(1+|x|)^\alpha(1+|x|-x_1)^\beta b\|_{3,D}
+C_4[[v_m]]_{\infty}[[v_m]]_{\alpha,\beta,\infty}
+C_5a^{\frac{1}{4}}[[v_m]]_{\alpha,\beta,\infty}\notag\\
&\leq C_3\|(1+|x|)^\alpha(1+|x|-x_1)^\beta b\|_{3,D}
+\big(4C_0C_4\|b\|_{3,D}
+C_5a^{\frac{1}{4}})[[v_m]]_{\alpha,\beta,\infty}
\label{vmxinfty}
\end{align}
for $m\geq 0$, where $[[\cdot]]_{\alpha,\beta,\infty}$ 
is given by (\ref{XYtdef}), thereby, we conclude 
$[[v_{m}]]_{\alpha,\beta,\infty}\leq 2C_3
\|(1+|x|)^\alpha(1+|x|-x_1)^\beta b\|_{3,D}$
for all $m\geq 0$ whenever 
\begin{align}\label{smallness}
4C_0C_4\|b\|_{3,D}+C_5a^{\frac{1}{4}}<\frac{1}{2}
\end{align}
is satisfied.  
This together with (\ref{vmv}) implies that if 
\begin{align}\label{bsmall2}
\|b\|_{3,D}<\min\left\{\frac{1}{16C_0C_1},\frac{1}{16C_0C_4}\right\}
\end{align}
and if 
\begin{align}\label{deltadef2}
a^{\frac{1}{4}}<\min\left\{\kappa_1,\frac{1}{2C_2},
\frac{1}{4C_5}\right\}=:\kappa
\end{align}
(see (\ref{bsmall}) and (\ref{asmall})),
then we get
\begin{align*}
[[v]]_{\alpha,\beta,\infty}\leq 
2C_3\|(1+|x|)^\alpha(1+|x|-x_1)^\beta b\|_{3,D},
\end{align*}
which yields (\ref{bigo1})--(\ref{bigo2}).
To derive (\ref{anisov}) with $q=3$, that is,  
\begin{align}\label{anisovn}
\|(1+|x|)^{\gamma_i}(1+|x|-x_1)^{\delta_i} v(t)\|_{3,D}
=o(t^{\gamma_i+\frac{\delta_i}{2}})
\end{align} 
as $t \rightarrow\infty$ for $i=1,2,3,4$, 
we first assume $b\in C_{0,\sigma}^\infty(D).$ 
Let $(\gamma,\delta)=(\alpha,\beta),(0,\beta),(\alpha,0),(0,0)$. 
Given $\mu_1,\mu_3$ satisfying (\ref{mu1}) and (\ref{alphabetai}), 
we set $\mu:=\min\{\mu_1,\mu_3\}<1,$ then estimates 
(\ref{lqlrdsmallq}) and (\ref{lqlrdlargeq}) yield 
\begin{align*}
\|(1+|x|)^\gamma(1+|x|-x_1)^\delta e^{-tA_a}b\|_{3,D}
&\leq C\sum_{i=1}^4t^{-\frac{\mu}{2}}
(1+t)^{\widetilde{\eta}_i}
\|(1+|x|)^{\widetilde{\gamma}_i}
(1+|x|-x_1)^{\widetilde{\delta}_i}b\|_{\frac{3}{1+\mu},D}\\
&\leq Ct^{-\frac{\mu}{2}}
(1+t)^{\gamma+\frac{\delta}{2}}
\|(1+|x|)^{\gamma}(1+|x|-x_1)^{\delta}b\|_{\frac{3}{1+\mu},D}
\end{align*} 
for $t>0$, where $\widetilde{\gamma}_i,\widetilde{\delta}_i$ and 
$\widetilde{\eta}_i$ are given by 
(\ref{gammadeltatilde}) and (\ref{etatilde}). 
Moreover, we have 
\begin{align}
\|&(1+|x|)^\gamma(1+|x|-x_1)^\delta E_1(v,v)(t)\|_{3,D}\notag\\
&\leq Ct^{-\frac{\mu}{2}}(1+t)^{\gamma+\frac{\delta}{2}}
[\nabla v]_{3,t}\sum_{i=1}^4\sup_{0<\tau<t}\tau^{\frac{\mu}{2}}
(1+\tau)^{-\widetilde{\gamma}_i-\frac{\widetilde{\delta}_i}{2}}
\|(1+|x|)^{\widetilde{\gamma}_i}
(1+|x|-x_1)^{\widetilde{\delta}_i}v(\tau)\|_{3,D},\label{e1mu}\\
&\|(1+|x|)^\gamma(1+|x|-x_1)^\delta \widetilde{E}_2(v)(t)\|_{3,D}
\leq C(1+t)^{-\frac{\mu_3}{2}+\gamma+\frac{\delta}{2}}
(\|\nabla u_s\|_{\frac{3}{2},D}+\|\nabla u_s\|_{\frac{3}{2+\mu_3},D})
\sum_{i=1}^4[v]_{\infty,\widetilde{\gamma}_i,\widetilde{\delta}_i,t},
\label{e2tildemu}\\
&\|(1+|x|)^\gamma(1+|x|-x_1)^\delta \widetilde{E}_3(v)(t)\|_{3,D}
\leq C(1+t)^{-\frac{\mu_1}{2}+\gamma+\frac{\delta}{2}}
(\|u_s\|_{3,D}+\|u_s\|_{\frac{3}{1+\mu_1},D})\sum_{i=1}^4
[\nabla v]_{3,\widetilde{\gamma}_i,\widetilde{\delta}_i,t}
\label{e3tildemu}
\end{align}
for $t>0$ due to 
\begin{align}
&\|(1+|x|)^\gamma(1+|x|-x_1)^\delta \widetilde{E}_2(v)(t)\|_{3,D}
\leq C(1+t)^{\gamma+\frac{\delta}{2}}
\|\nabla u_s\|_{\frac{3}{2},D}
\sum_{i=1}^4[v]_{\infty,\widetilde{\gamma}_i,\widetilde{\delta}_i,t},
\notag\\
&\|(1+|x|)^\gamma(1+|x|-x_1)^\delta \widetilde{E}_2(v)(t)\|_{3,D}
\leq Ct^{-\frac{\mu_3}{2}}(1+t)^{\gamma+\frac{\delta}{2}}
\|\nabla u_s\|_{\frac{3}{2+\mu_3},D}
\sum_{i=1}^4[v]_{\infty,\widetilde{\gamma}_i,\widetilde{\delta}_i,t},
\notag\\
&\|(1+|x|)^\gamma(1+|x|-x_1)^\delta \widetilde{E}_3(v)(t)\|_{3,D}
\leq C(1+t)^{\gamma+\frac{\delta}{2}}
\|u_s\|_{3,D}\sum_{i=1}^4
[\nabla v]_{3,\widetilde{\gamma}_i,\widetilde{\delta}_i,t},
\notag\\
&\|(1+|x|)^\gamma(1+|x|-x_1)^\delta \widetilde{E}_3(v)(t)\|_{3,D}
\leq Ct^{-\frac{\mu_1}{2}}(1+t)^{\gamma+\frac{\delta}{2}}
\|u_s\|_{\frac{3}{1+\mu_1},D}\sum_{i=1}^4
[\nabla v]_{3,\widetilde{\gamma}_i,\widetilde{\delta}_i,t}\notag
\end{align}  
for $t>0$, thereby,
\begin{align*}
\sup_{0<\tau<t}&\tau^{\frac{\mu}{2}}
(1+\tau)^{-\gamma-\frac{\delta}{2}}
\|(1+|x|)^{\gamma}
(1+|x|-x_1)^{\delta}v(\tau)\|_{3,D}\\
&\leq C\|(1+|x|)^{\gamma}
(1+|x|-x_1)^{\delta}b\|_{\frac{3}{1+\mu},D}\\
&\qquad+C[\nabla v]_{3,t}
\sum_{i=1}^4\sup_{0<\tau<t}\tau^{\frac{\mu}{2}}
(1+\tau)^{-\widetilde{\gamma}_i-\frac{\widetilde{\delta}_i}{2}}
\|(1+|x|)^{\widetilde{\gamma}_i}
(1+|x|-x_1)^{\widetilde{\delta}_i}v(\tau)\|_{3,D}\\
&\qquad+C(\|\nabla u_s\|_{\frac{3}{2},D}
+\|\nabla u_s\|_{\frac{3}{2+\mu_3},D})
\sum_{i=1}^4[v]_{\infty,\widetilde{\gamma}_i,
\widetilde{\delta}_i,t}
+C(\|u_s\|_{3,D}+\|u_s\|_{\frac{3}{1+\mu_1},D})
\sum_{i=1}^4[\nabla v]_{3,\widetilde{\gamma}_i,\widetilde{\delta}_i,t}
\\&\leq C\|(1+|x|)^{\gamma}
(1+|x|-x_1)^{\delta}b\|_{\frac{3}{1+\mu},D}\\
&\qquad+C[[v]]_{\infty}
\sum_{i=1}^4\sup_{0<\tau<t}\tau^{\frac{\mu}{2}}
(1+\tau)^{-\widetilde{\gamma}_i-\frac{\widetilde{\delta}_i}{2}}
\|(1+|x|)^{\widetilde{\gamma}_i}
(1+|x|-x_1)^{\widetilde{\delta}_i}v(\tau)\|_{3,D}\\
&\qquad+C(\|\nabla u_s\|_{\frac{3}{2},D}
+\|\nabla u_s\|_{\frac{3}{2+\mu_3},D}+
\|u_s\|_{3,D}+\|u_s\|_{\frac{3}{1+\mu_1},D})
[[v]]_{\alpha,\beta,\infty}
\end{align*}
for $t>0.$ 
Summing up this with 
$(\gamma,\delta)=(\alpha,\beta),(0,\beta),(\alpha,0),(0,0)$ 
and using (\ref{vapriori}) imply 
\begin{align*}
\sum_{i=1}^4&\sup_{0<\tau<t}\tau^{\frac{\mu}{2}}
(1+\tau)^{-\gamma_i-\frac{\delta_i}{2}}
\|(1+|x|)^{\gamma_i}
(1+|x|-x_1)^{\delta_i}v(\tau)\|_{3,D}\\
&\leq 4C_0C_6\|b\|_{3,D}
\sum_{i=1}^4\sup_{0<\tau<t}\tau^{\frac{\mu}{2}}
(1+\tau)^{-\gamma_i-\frac{\delta_i}{2}}
\|(1+|x|)^{\gamma_i}
(1+|x|-x_1)^{\delta_i}v(\tau)\|_{3,D}+C_7
\end{align*}
for $t>0,$ where $\gamma_i,\delta_i$ are 
given by (\ref{gammadeltaeta}) and 
$C_7=C_7(b,\alpha,\beta,[[v]]_{\alpha,\beta,\infty},
u_s,\mu_1,\mu_3)>0$ 
is independent of $t$. If we assume 
\begin{align}\label{epsilondef}
\|b\|_{3,D}<\min\left\{\frac{1}{16C_0C_1},\frac{1}{16C_0C_4},
\frac{1}{4C_0C_6}\right\}=:\varepsilon
\end{align}
(see (\ref{bsmall}) and (\ref{bsmall2})), then we have 
\begin{align*}
\sum_{i=1}^4\sup_{0<\tau<t}\tau^{\frac{\mu}{2}}
(1+\tau)^{-\gamma_i-\frac{\delta_i}{2}}
\|(1+|x|)^{\gamma_i}
(1+|x|-x_1)^{\delta_i}v(\tau)\|_{3,D}
\leq \frac{C_7}{1-4C_0C_6\|b\|_{3,D}}
\end{align*}
for all $t>0$, which yields 
\begin{align}\label{aa}
\|(1+|x|)^{\gamma_i}(1+|x|-x_1)^{\delta_i} v(t)\|_{3,D}
\leq Ct^{-\frac{\mu}{2}}
(1+t)^{\gamma_i+\frac{\delta_i}{2}}
\end{align}
for $t>0,i=1,2,3,4$ if $b\in C_{0,\sigma}^\infty(D)$. 
We in particular conclude (\ref{anisovn}) 
if $b\in C_{0,\sigma}^\infty(D)$. 
Let $b\in L^3_{(1+|x|)^{3\alpha}(1+|x|-x_1)^{3\beta},\sigma}(D)$ 
satisfy $\|b\|_{3,D}<\varepsilon.$ 
Since $C_{0,\sigma}^\infty(D)$ is dense 
in $L^3_{(1+|x|)^{3\alpha}(1+|x|-x_1)^{3\beta},\sigma}(D)$, we 
can take $\{b_m\}_{m=1}^\infty\subset C_{0,\sigma}^\infty(D)$ so 
that 
\begin{align}\label{bm}
\|(1+|x|)^\alpha(1+|x|-x_1)^\beta(b_m-b)\|_{3,D}\leq 
\frac{\varepsilon-\|b\|_{3,D}}{m},
\end{align}  
which implies $\|b_m\|_{3,D}<\varepsilon$ for $m\geq 1$. We thus  
get a solution $\widetilde{v}_m$ to the problem (\ref{NS42}) 
with $b=b_m$, then the same 
calculation as in (\ref{vmxinfty}) asserts 
\begin{align*}
\widetilde{v}_m(t)-v(t)&=e^{-tA_a}(b_m-b)
-E_1(\widetilde{v}_m,\widetilde{v}_m-v)
-E_1(\widetilde{v}_m-v,v)
-\widetilde{E}_2(\widetilde{v}_m-v)
-\widetilde{E}_3(\widetilde{v}_m-v),
\\
[[\widetilde{v}_m-v]]_{\alpha,\beta,\infty}&\leq 
C_3\|(1+|x|)^\alpha(1+|x|-x_1)^\beta (b_m-b)\|_{3,D}
+C_4([[\widetilde{v}_m]]_{\infty}+[[v]]_{\infty})
[[\widetilde{v}_m-v]]_{\alpha,\beta,\infty}
\\
&\qquad +C_5a^{\frac{1}{4}}[[\widetilde{v}_m-v
]]_{\alpha,\beta,\infty}\notag
\\
&\leq C_3\|(1+|x|)^\alpha(1+|x|-x_1)^\beta (b_m-b)\|_{3,D}
\\
&\qquad+\big(4C_0C_4(\|b_m\|_{3,D}+\|b\|_{3,D})+
C_5a^{\frac{1}{4}}\big)[[\widetilde{v}_m-v]]_{\alpha,\beta,\infty}. 
\end{align*} 
If (\ref{deltadef2}) and (\ref{epsilondef}) are fulfilled, then 
in view of $\|b_m\|_{3,D}<\varepsilon\leq 1/(16C_0C_4)$, 
it follows from (\ref{bm}) that 
\begin{align*}
[[\widetilde{v}_m-v]]_{\alpha,\beta,\infty}\leq
\frac{4C_3(\varepsilon-\|b\|_{3,D})}{m},
\end{align*} 
which implies 
\begin{align*}
\lim_{m\rightarrow\infty}\limsup_{t\rightarrow\infty} 
(1+t)^{-\gamma_i-\frac{\delta_i}{2}}\|(1+|x|)^{\gamma_i}
(1+|x|-x_1)^{\delta_i}(\widetilde{v}_m-v)(t)\|_{3,D}=0.
\end{align*}
Moreover, due to $b_m\in C_{0,\sigma}^\infty(D)$, 
we see from (\ref{aa}) that 
\begin{align*}
(1+t)^{-\gamma_i-\frac{\delta_i}{2}}\|(1+|x|)^{\gamma_i}
(1+|x|-x_1)^{\delta_i}\widetilde{v}_m(t)\|_{3,D}\leq C
t^{-\frac{\mu}{2}}\rightarrow 0
\end{align*}
as $t\rightarrow\infty$, thus get 
(\ref{anisovn}) if $b\in L_{(1+|x|)^{3\alpha}
(1+|x|-x_1)^{3\beta},\sigma}^3(D).$ 
\par We next prove 
\begin{align}\label{anisovinftygrad}
\|(1+|x|)^{\gamma_i}(1+|x|-x_1)^{\delta_i} v(t)\|_{\infty,D}+
\|(1+|x|)^{\gamma_i}(1+|x|-x_1)^{\delta_i} \nabla v(t)\|_{3,D}
=o(t^{-\frac{1}{2}+\gamma_i+\frac{\delta_i}{2}})
\end{align} 
as $t\rightarrow\infty$ for $i=1,2,3,4$. 
In view of (\ref{anisovn}), 
it suffices to prove 
\begin{align}
\|&(1+|x|)^\alpha(1+|x|-x_1)^\beta
v(t)\|_{\infty,D}+\|(1+|x|)^\alpha(1+|x|-x_1)^\beta
\nabla v(t)\|_{3,D}\notag\\
&\leq Ct^{-\frac{1}{2}}\sum_{i=1}^4t^{\eta_i}
\left\|(1+|x|)^{\gamma_i}(1+|x|-x_1)^{\delta_i}
v\left(\frac{t}{2}\right)\right\|_{3,D},
\label{inftygrad3alphabeta}\\
\|&(1+|x|)^\alpha
v(t)\|_{\infty,D}+\|(1+|x|)^\alpha\nabla v(t)\|_{3,D}
\leq Ct^{-\frac{1}{2}}
\left\|(1+|x|)^{\alpha}
v\left(\frac{t}{2}\right)\right\|_{3,D}
+Ct^{-\frac{1}{2}+\alpha}
\left\|v\left(\frac{t}{2}\right)\right\|_{3,D},
\label{inftygrad3alphabeta2}\\
\|&(1+|x|-x_1)^\beta v(t)\|_{\infty,D}+\|(1+|x|-x_1)^\beta
\nabla v(t)\|_{3,D}\notag\\
&\leq Ct^{-\frac{1}{2}}
\left\|(1+|x|-x_1)^{\beta}
v\left(\frac{t}{2}\right)\right\|_{3,D}
+Ct^{-\frac{1}{2}+\frac{\beta}{2}}
\left\|
v\left(\frac{t}{2}\right)\right\|_{3,D}\label{inftygrad3alphabeta3}
\end{align}
for $t>0$, where $\gamma_i,\delta_i,\eta_i$ are gived by
(\ref{gammadeltaeta}). We derive (\ref{inftygrad3alphabeta}) by 
following the similar argument to 
Enomoto and Shibata \cite{ensh2005}. 
For $t>T>0,$ we have 
\begin{align}\label{Tt}
v(t)=e^{-(t-T)A_a}v(T)-\int_T^t e^{-(t-\tau)A_a}P_D
\big[v\cdot\nabla v
+v\cdot\nabla u_s+u_s\cdot\nabla v\big]\,d\tau.
\end{align}
Let $(\gamma,\delta)=(\alpha,\beta),(0,\beta),(\alpha,0),(0,0)$. 
By the same calculation as in the proof of Lemma \ref{closelem}, 
we get 
\begin{align}
\int_T^t&\|(1+|x|)^\gamma(1+|x|-x_1)^\delta
e^{-(t-\tau)A_a}P_D[v\cdot\nabla v
+v\cdot\nabla u_s+u_s\cdot\nabla v](\tau)\|_{\infty,D}\notag\\&
\qquad +\|(1+|x|)^\gamma(1+|x|-x_1)^\delta
\nabla e^{-(t-\tau)A_a}P_D[v\cdot\nabla v
+v\cdot\nabla u_s+u_s\cdot\nabla v](\tau)\|_{3,D}\,d\tau
\notag\\
&\leq C(t-T)^{-\frac{1}{2}}t^{\gamma+\frac{\delta}{2}}\Big\{
\big(\sup_{T<\tau\leq t}\|v(\tau)\|_{3,D}\big)^{\frac{1}{2}}\,
\big(\sup_{T<\tau\leq t}(\tau-T)^{\frac{1}{2}}
\|v(\tau)\|_{\infty,D}\big)^{\frac{1}{2}}\notag\\
&\hspace{5cm}+\big(\|u_s\|_{\frac{3}{1+\mu_1},D}
+\|u_s\|_{3,D}+
\|u_s\|_{\frac{3}{1-\mu_2},D}\big)
\Big\}\sum_{i=1}^4
[\nabla v]_{3,\widetilde{\gamma}_i,\widetilde{\delta}_i,T,t}
\notag\\
&\qquad +C(t-T)^{-\frac{1}{2}}t^{\gamma+\frac{\delta}{2}}
\big(\|\nabla u_s\|_{\frac{3}{2+\mu_3},D}
+\|\nabla u_s\|_{\frac{3}{2},D}+
\|\nabla u_s\|_{\frac{3}{2-\mu_4},D}\big)
\sum_{i=1}^4
[v]_{\infty,\widetilde{\gamma}_i,\widetilde{\delta}_i,T,t},
\label{inftygrad3gammadelta}
\end{align}
where 
\begin{align*}
&[\nabla v]_{3,\gamma,\delta,T,t}:=
\sup_{T<\tau\leq t}(\tau-T)^{\frac{1}{2}}
\tau^{-\gamma-\frac{\delta}{2}}\|(1+|x|)^{\gamma}
(1+|x|-x_1)^{\delta}\nabla v(\tau)\|_{3,D},\\
&[v]_{\infty,\gamma,\delta,T,t}:=
\sup_{T<\tau\leq t}(\tau-T)^{\frac{1}{2}}
\tau^{-\gamma-\frac{\delta}{2}}
\|(1+|x|)^{\gamma}(1+|x|-x_1)^{\delta}v(\tau)\|_{\infty,D}.
\end{align*}
Applying (\ref{inftygrad3gammadelta}) with 
$(\gamma,\delta)=(\alpha,\beta),(0,\beta),(\alpha,0),(0,0)$ 
to (\ref{Tt}) and summing up these yield 
\begin{align*}
&\sum_{i=1}^4(t-T)^{\frac{1}{2}}
t^{-\gamma_i-\frac{\delta_i}{2}}
\big(\|(1+|x|)^{\gamma_i}
(1+|x|-x_1)^{\delta_i} v(t)\|_{\infty,D}+
\|(1+|x|)^{\gamma_i}(1+|x|-x_1)^{\delta_i}\nabla v(t)\|_{3,D}\big)\\
&\leq \sum_{i=1}^4(t-T)^{\frac{1}{2}}
t^{-\gamma_i-\frac{\delta_i}{2}}\big(\|(1+|x|)^{\gamma_i}
(1+|x|-x_1)^{\delta_i}e^{-(t-T)A_a}v(T)\|_{\infty,D}\\
&\hspace{7cm}+
\|(1+|x|)^{\gamma_i}(1+|x|-x_1)^{\delta_i}
\nabla e^{-(t-T)A_a}v(T)\|_{3,D}\big)\\
&\qquad+(C_4[[v]]_{\infty}+
C_5a^{\frac{1}{4}})\sum_{i=1}^4
\big([v]_{\infty,\gamma_i,\delta_i,T,t}
+[\nabla v]_{3,\gamma_i,\delta_i,T,t}\big)
\end{align*} 
for $t>T,$ thereby, it follows from 
(\ref{vapriori}), (\ref{smallness}) that 
\begin{align}
&\sum_{i=1}^4\big([v]_{\infty,\gamma_i,\delta_i,T,t}+
[\nabla v]_{3,\gamma_i,\delta_i,T,t}\big)\notag\\
&\leq 2\sum_{i=1}^4
\sup_{T<\tau\leq t}\Big\{(\tau-T)^{\frac{1}{2}}
\tau^{-\gamma_i-\frac{\delta_i}{2}}\big(\|(1+|x|)^{\gamma_i}
(1+|x|-x_1)^{\delta_i}\nabla e^{-(\tau-T)A_a}v(T)\|_{3,D}\notag\\
&\hspace{7cm}+\|(1+|x|)^{\gamma_i}
(1+|x|-x_1)^{\delta_i}e^{-(\tau-T)A_a}v(T)\|_{\infty,D}\big)\Big\}
\label{vesttT}
\end{align}
for $t>T.$ By taking the relation 
$t^{1/2}/\sqrt{2}\leq (t-T)^{1/2}$ for $t\geq 2T$ and 
$(\gamma_1,\delta_1)=(\alpha,\beta)$ into account and by applying 
(\ref{lqlrdsmallq}), (\ref{lqlrdlargeq}), (\ref{lqlrdlargeq2}) 
to (\ref{vesttT}), we obtain 
\begin{align*}
&\frac{1}{\sqrt{2}}\,t^{\frac{1}{2}-\alpha-\frac{\beta}{2}}
\big(\|(1+|x|)^{\alpha}
(1+|x|-x_1)^{\beta} v(t)\|_{\infty,D}+
\|(1+|x|)^{\alpha}(1+|x|-x_1)^{\beta}\nabla v(t)\|_{3,D}\big)\\
&\leq \sum_{i=1}^4\big([v]_{\infty,\gamma_i,\delta_i,T,t}+
[\nabla v]_{3,\gamma_i,\delta_i,T,t}\big)\\
&\leq 2\sum_{i=1}^4T^{-\gamma_i-\frac{\delta_i}{2}}
\sup_{T<\tau\leq t}(\tau-T)^{\frac{1}{2}}
\big\{\|(1+|x|)^{\gamma_i}
(1+|x|-x_1)^{\delta_i}e^{-(\tau-T)A_a}v(T)\|_{\infty,D}\\
&\hspace{7cm}+
\|(1+|x|)^{\gamma_i}(1+|x|-x_1)^{\delta_i}
\nabla e^{-(\tau-T)A_a}v(T)\|_{3,D}\big\}\\
&\leq C\Big[T^{-\alpha-\frac{\beta}{2}}
\|(1+|x|)^{\alpha}
(1+|x|-x_1)^{\beta}v(T)\|_{3,D}
+\big\{T^{-\alpha-\frac{\beta}{2}}(t-T)^\alpha
+T^{-\frac{\beta}{2}}\big\}\|(1+|x|-x_1)^{\beta}v(T)\|_{3,D}\\
&\quad\qquad +\big\{T^{-\alpha-\frac{\beta}{2}}(t-T)^\frac{\beta}{2}
+T^{-\alpha}\big\}\|(1+|x|)^{\alpha}v(T)\|_{3,D}\\
&\quad\qquad+\big\{T^{-\alpha-\frac{\beta}{2}}
(t-T)^{\alpha+\frac{\beta}{2}}+T^{-\alpha}(t-T)^{\alpha}
+T^{-\frac{\beta}{2}}(t-T)^{\frac{\beta}{2}}+1\big\}\|v(T)\|_{3,D}
\Big]
\end{align*}
for $t\geq 2T.$ We thus conclude (\ref{inftygrad3alphabeta}) 
by taking $T=t/2$. Similarly, by 
applying (\ref{inftygrad3gammadelta}) with 
$(\gamma,\delta)=(\alpha,0),(0,0)$ 
(resp. $(\gamma,\delta)=(0,\beta),(0,0)$) to (\ref{Tt}) 
and by summing up these, we have 
(\ref{inftygrad3alphabeta2}) (resp. (\ref{inftygrad3alphabeta3})). 
From (\ref{anisovn}) and (\ref{anisovinftygrad}), 
we complete the proof. \qed

In addition to Lemma \ref{closelem}, 
we make use of the assertion 3 and 5 of Proposition \ref{proplqlr} 
to complete the proof of Theorem \ref{attainthm}. 
\\
\noindent{\bf Proof of Theorem \ref{attainthm}.}\quad 
Given $\alpha,\beta$ satisfying (\ref{alphabeta5}), we take 
$\mu_i~(i=1,2,3,4)$ fulfilling
(\ref{mu1})--(\ref{mu2}) and 
\begin{align}\label{alphabetai2}
\beta<\frac{1-\mu_i}{3}\qquad (i=1,3),
\end{align} 
and fix $\kappa_2$ in 
Proposition \ref{propsta} with (\ref{staclass2})--(\ref{mu2}) and 
(\ref{alphabetai2}), then get the stationary solution $u_s$.
We define
\begin{align*}
&w_0(t)=0,\\
&w_{m+1}(t)=\int_0^t e^{-(t-\tau)A_a}P_D\big[-w_m\cdot\nabla w_m
-\psi(\tau)w_m\cdot\nabla u_s-\psi(\tau)u_s\cdot\nabla w_m
\nonumber\\
&\hspace{7cm}+(1-\psi(\tau))a\partial_{x_1}w_m
+f_1(\tau)+f_2(\tau)\big]d\tau
\end{align*}
for $m\geq 0$. We know from Lemma \ref{closelem} 
with $\gamma=\delta=0$ (see also Galdi, Heywood 
and Shibata \cite{gahesh1997}, 
the present author \cite{takahashi2021} 
for the continuity and the condition near $t=0$) 
that $w_m\in Y_0$ together with 
\begin{align}
&[[w_{m}]]_{\infty}\leq \widetilde{C}_1[[w_{m-1}]]_{\infty}^2
+\widetilde{C}_2a^\frac{1}{4}
[[w_{m-1}]]_{\infty}+\widetilde{C}_3(M+1)a^\frac{1}{4},\notag\\
&[[w_{m+1}-w_m]]_{\infty}\leq 
\{\widetilde{C}_1([[w_m]]_{\infty}+[[w_{m-1}]]_{\infty})+
\widetilde{C}_2a^\frac{1}{4}\}
[[w_m-w_{m-1}]]_{\infty}\nonumber
\end{align}
for all $m\geq 1$, 
where $Y_0$ and $[[\cdot]]_{\infty}$ are given by 
(\ref{y0def}) and (\ref{XYtdef}), respectively.
Thus if 
\begin{align}\label{asmall2}
(M+1)a^\frac{1}{4} <\min\left\{\kappa_2,
\frac{1}{2\widetilde{C}_2},
\frac{1}{16\widetilde{C}_1\widetilde{C}_3}\right\},
\end{align}
then we have 
\begin{align}
[[w_{m}]]_{\infty}\leq 4\widetilde{C}_3(M+1)a^\frac{1}{4}
\label{wmapriori}
\end{align}
for $m\geq 0$ and get a solution $w\in Y_0$ with 
\begin{align}
&[[w_m-w]]_{\infty}\rightarrow 0 
\quad{\rm as}~m\rightarrow\infty,\label{wmw}
\\
&[[w]]_{\infty}\leq 4\widetilde{C}_3(M+1)a^\frac{1}{4}.
\label{wapriori}
\end{align}
In order to prove that this solution enjoys 
(\ref{anisow})--(\ref{anisow2}) as $t\rightarrow\infty$, 
we first prove 
\begin{align}
&\|(1+|x|)^{\gamma_i}(1+|x|-x_1)^{\delta_i} w(t)\|_{q,D}
=O(t^{-\frac{1}{2}+\frac{3}{2q}+\gamma_i+\frac{\delta_i}{2}})
\label{bigo3}\\
&\|(1+|x|)^{\gamma_i}(1+|x|-x_1)^{\delta_i} \nabla w(t)\|_{3,D}
=O(t^{-\frac{1}{2}+\gamma_i+\frac{\delta_i}{2}})\label{bigo4}
\end{align} 
as $t\rightarrow\infty$.
In view of (\ref{alphabeta5}), (\ref{alphabetai2}), 
$0<\mu_1<1/2,0<\mu_3<1/4$, see (\ref{mu1}), we find 
\begin{align*}
\beta<\frac{1-\mu_i}{3},\quad 
\alpha+\beta<\frac{2-\mu_i}{3}<1-\mu_i\qquad (i=1,3), 
\end{align*}
thus apply (\ref{i1est})--(\ref{i4est}), 
(\ref{f1est})--(\ref{f2est}) with 
$(\gamma,\delta)=(\alpha,\beta),(0,\beta),(\alpha,0),(0,0)$  
and (\ref{wmapriori}) to obtain 
\begin{align*}
[[w_{m+1}]]_{\alpha,\beta,\infty}&\leq 
\widetilde{C}_4[[w_m]]_{\infty}[[w_m]]_{\alpha,\beta,\infty}
+\widetilde{C}_5a^{\frac{1}{4}}[[w_m]]_{\alpha,\beta,\infty}
+\widetilde{C}_6\\
&\leq(4\widetilde{C}_3\widetilde{C}_4+\widetilde{C}_5)
(M+1)a^{\frac{1}{4}}[[w_m]]_{\alpha,\beta,\infty}+\widetilde{C}_6
\end{align*}
for $m\geq 0$, where $[[\cdot]]_{\alpha,\beta,\infty}$ 
is given by (\ref{XYtdef}) and 
$\widetilde{C}_6=\widetilde{C}_6(u_s,a,M,\mu_2,\mu_4)>0$ is 
independent of $t$. From this, if 
\begin{align}
(M+1)a^{\frac{1}{4}}<\frac{1}
{2(4\widetilde{C}_3\widetilde{C}_4+\widetilde{C}_5)},\notag
\end{align} 
then we have  
$[[w_{m}]]_{\alpha,\beta,\infty}\leq 2\widetilde{C}_6$ 
for all $m\geq 0$, which together with (\ref{wmw}) 
implies 
\begin{align}\label{wapriori2}
[[w]]_{\alpha,\beta,\infty}\leq 2\widetilde{C}_6
\end{align}
if 
\begin{align}\label{assump2}
(M+1)a^\frac{1}{4} <\min\left\{\kappa_2,
\frac{1}{2\widetilde{C}_2},
\frac{1}{16\widetilde{C}_1\widetilde{C}_3},\frac{1}
{2(4\widetilde{C}_3\widetilde{C}_4+\widetilde{C}_5)}
\right\}
\end{align}
(see (\ref{asmall2})). We thus get (\ref{bigo3})--(\ref{bigo4}). 
We finally derive (\ref{anisow})--(\ref{anisow2}) by the 
similar argument due to the present author \cite{takahashi2021}.   
Since we find 
\begin{align*}
w(t)=e^{-(t-T)A_a}w(T)-\int_T^t e^{-(t-\tau)A_a}P_D\big[w\cdot\nabla w
+w\cdot\nabla u_s+u_s\cdot\nabla w\big]\,d\tau
\end{align*}
for $t>T\geq 1$, the same calculation as in the proof of 
(\ref{inftygrad3alphabeta})--(\ref{inftygrad3alphabeta3}) yields
\begin{align}
\|&(1+|x|)^\alpha(1+|x|-x_1)^\beta
w(t)\|_{\infty,D}+\|(1+|x|)^\alpha(1+|x|-x_1)^\beta
\nabla w(t)\|_{3,D}\notag\\
&\leq Ct^{-\frac{1}{2}}\sum_{i=1}^4t^{\eta_i}
\left\|(1+|x|)^{\gamma_i}(1+|x|-x_1)^{\delta_i}
w\left(\frac{t}{2}\right)\right\|_{3,D},
\label{inftygrad3alphabetaw1}\\
\|&(1+|x|)^\alpha
w(t)\|_{\infty,D}+\|(1+|x|)^\alpha\nabla w(t)\|_{3,D}
\leq Ct^{-\frac{1}{2}}
\left\|(1+|x|)^{\alpha}
w\left(\frac{t}{2}\right)\right\|_{3,D}
+Ct^{-\frac{1}{2}+\alpha}
\left\|w\left(\frac{t}{2}\right)\right\|_{3,D},
\label{inftygrad3alphabetaw2}\\
\|&(1+|x|-x_1)^\beta w(t)\|_{\infty,D}+\|(1+|x|-x_1)^\beta
\nabla w(t)\|_{3,D}\notag\\
&\leq Ct^{-\frac{1}{2}}
\left\|(1+|x|-x_1)^{\beta}
w\left(\frac{t}{2}\right)\right\|_{3,D}
+Ct^{-\frac{1}{2}+\frac{\beta}{2}}
\left\|w\left(\frac{t}{2}\right)\right\|_{3,D}
\label{inftygrad3alphabetaw3}
\end{align}
for $t\geq 2,$ where $\gamma_i,\delta_i,\eta_i$ are 
given by (\ref{gammadeltaeta}). Therefore, 
for each $i=1,2,3,4$, we prove by induction that 
\begin{align}\label{sharpl3}
\|(1+|x|)^{\gamma_i}(1+|x|-x_1)^{\delta_i}w(t)\|_{3,D}
=O(t^{-\sigma_k+\gamma_i+\frac{\delta_i}{2}})
\end{align} 
for $k\geq 1$, where 
$\sigma_k:=\min\{(k\mu)/2,1/4-\varepsilon\},\mu:=\min\{\mu_1,\mu_3\}$ 
and $\varepsilon>0$ is arbitrarily fixed. 
This implies (\ref{anisow}) with $q=3$, which combined with 
(\ref{inftygrad3alphabetaw1})--(\ref{inftygrad3alphabetaw3}) 
completes the proof. 
Let $(\gamma,\delta)=(\alpha,\beta),(0,\beta),(\alpha,0),(0,0).$ 
We use (\ref{lqlrddiv02}) with $F=e_1\otimes w$ to obtain 
\begin{align}
\int_0^t&\|(1+|x|)^\gamma(1+|x|-x_1)^\delta
e^{-(t-\tau)A_a}P_D[
(1-\psi(\tau))
a\partial_{x_1}w]\|_{3,D}d\tau\notag\\
&\leq Ca\int_0^{\min\{1,t\}}(t-\tau)^{-\frac{1}{2}}
(1+t-\tau)^{\gamma+\frac{\delta}{2}}\|(1+|x|)^\gamma
(1+|x|-x_1)^\delta w(\tau)\|_{3,D}\,d\tau\notag\\
&\leq Ca\int_0^{\min\{1,t\}}(t-\tau)^{-\frac{1}{2}}
(1+t-\tau)^{\gamma+\frac{\delta}{2}}\cdot
2^{\gamma+\frac{\delta}{2}}\,d\tau\,
[w]_{3,\gamma,\delta,t}
\leq Ca(1+t)^{-\frac{1}{2}+\gamma+\frac{\delta}{2}}
[w]_{3,\gamma,\delta,t}\label{div}
\end{align}
for $t>0.$ In view of (\ref{alphabeta5}), we 
have 
\begin{align*}
&\max\left\{\frac{1}{1-\beta},\frac{2}{1-\alpha}\right\}=
\frac{2}{1-\alpha},\qquad 
\max\left\{\frac{1}{1-\beta},2 \right\}=2,
\\
&
\max\left\{\frac{1}{\frac{3}{2}-\beta},
\frac{2}{\frac{3}{2}-\alpha}\right\}=
\frac{2}{\frac{3}{2}-\alpha},\qquad 
\max\left\{\frac{1}{\frac{3}{2}-\beta},\frac{4}{3}\right\}=
\frac{4}{3},
\end{align*}
thus it follows from Lemma \ref{finite} and (\ref{usest2}) that 
\begin{align*}
&(1+|x|)^\alpha(1+|x|-x_1)^\beta u_s
\in L^{q_1}(D)\quad \text{for}~q_1>\frac{2}{1-\alpha},\quad 
(1+|x|-x_1)^\beta u_s
\in L^{q_2}(D)\quad \text{for}~ q_2>2,
\\
&(1+|x|)^\alpha(1+|x|-x_1)^\beta \nabla u_s
\in L^{r_1}(D)\quad \text{for}~r_1>\frac{4}{3-2\alpha},\quad  
(1+|x|-x_1)^\beta \nabla u_s
\in L^{r_2}(D)\quad \text{for}~r_2>\frac{4}{3}. 
\end{align*}
This yields 
\begin{align}
&(1+|x|)^\gamma u_s,(1+|x|)^\gamma(1+|x|-x_1)^\delta u_s
\in L^{\frac{2}{1-\gamma-\nu_1}}(D),\quad 
u_s,(1+|x|-x_1)^\delta u_s
\in L^{\frac{2}{1-\nu_1}}(D),\label{sum1}
\\
&(1+|x|)^\gamma\nabla u_s,(1+|x|)^\gamma(1+|x|-x_1)^\delta \nabla u_s
\in L^{\frac{4}{3-2\gamma-\nu_2}}(D),\quad 
\nabla u_s,(1+|x|-x_1)^\delta \nabla u_s
\in L^{\frac{3}{2}}(D)\label{sum2}
\end{align}
for all $(\gamma,\delta)=(\alpha,\beta),(0,\beta),(\alpha,0),(0,0)$,  
where we have taken $\nu_1,\nu_2$ so that 
$0<\nu_1<(4\varepsilon)/3,0<\nu_2<(8\varepsilon)/3.$ 
From (\ref{sum1})--(\ref{sum2}) together with 
\begin{align*}
\min\left\{3\left(1-\frac{1-\gamma-\nu_1}{2}\right),1\right\}=1,\quad 
\min\left\{3\left(1-\frac{3-2\gamma-\nu_2}{4}\right),1\right\}
>\frac{2}{3}
\end{align*}
and (\ref{alphabeta5}), 
the condition (\ref{alphabeta7}) is accomplished by 
\begin{align*}
(q_1,q_2,q_3,q_4)=&\left(\frac{2}{1-\gamma-\nu_1},\frac{2}{1-\nu_1},
\frac{2}{1-\gamma-\nu_1},\frac{2}{1-\nu_1}\right),
\left(\frac{4}{3-2\gamma-\nu_2},\frac{3}{2},
\frac{4}{3-2\gamma-\nu_2},\frac{3}{2}\right).
\end{align*}
We then apply (\ref{lqlrdlarge2}) to obtain 
\begin{align*}
\int_0^t&\|(1+|x|)^\gamma(1+|x|-x_1)^\delta
e^{-(t-\tau)A_a}P_Df_1(\tau)\|_{3,D}d\tau\\
&\leq CM\int_0^{1}\Big[
(t-\tau)^{-\frac{1}{4}+\frac{3}{4}\nu_1+\gamma+\frac{\delta}{2}}
\|u_s\|_{\frac{2}{1-\nu_1},D}+
(t-\tau)^{-\frac{1}{4}+\frac{3}{4}\nu_1+\gamma}
\|(1+|x|-x_1)^\delta u_s\|_{\frac{2}{1-\nu_1},D}\\
&\qquad\qquad\qquad+(t-\tau)^{-\frac{1}{4}
+\frac{3}{4}\nu_1+\frac{3}{4}\gamma+\frac{\delta}{2}}
\|(1+|x|)^\gamma u_s\|_{\frac{2}{1-\gamma-\nu_1},D}\\
&\qquad\qquad\qquad+(t-\tau)^{-\frac{1}{4}+\frac{3}{4}\gamma
+\frac{3}{4}\nu_1}
\|(1+|x|)^\gamma(1+|x|-x_1)^\delta u_s\|_{\frac{2}{1-\gamma-\nu_1},D}
\Big]\,d\tau
\\
&\leq CMt^{-\frac{1}{4}+\varepsilon+\gamma+\frac{\delta}{2}}
\big(\|(1+|x|-x_1)^\delta u_s\|_{\frac{2}{1-\nu_1},D}+
\|(1+|x|)^\gamma
(1+|x|-x_1)^\delta u_s\|_{\frac{2}{1-\gamma-\nu_1},D}\big),
\\
\int_0^t&\|(1+|x|)^\gamma(1+|x|-x_1)^\delta
e^{-(t-\tau)A_a}P_D[\psi(\tau)(1-\psi(\tau))
a\partial_{x_1}u_s]\|_{3,D}\,d\tau
\\
&\leq Ca\int_0^{1}
\Big[(t-\tau)^{-\frac{1}{2}
+\gamma+\frac{\delta}{2}}
\|\nabla u_s\|_{\frac{3}{2},D}+
(t-\tau)^{-\frac{1}{2}+\gamma}
\|(1+|x|-x_1)^\delta\nabla u_s\|_{\frac{3}{2},D}
\\
&\qquad\qquad\qquad+(t-\tau)^{-\frac{5}{8}
+\frac{3}{8}\nu_2+\frac{3}{4}\gamma+\frac{\delta}{2}}
\|(1+|x|)^\gamma \nabla u_s\|_{\frac{4}{3-2\gamma-\nu_2},D}
\\
&\qquad\qquad\qquad+(t-\tau)^{-\frac{5}{8}+\frac{3}{8}\nu_2
+\frac{3}{4}\gamma}
\|(1+|x|)^\gamma(1+|x|-x_1)^\delta\nabla 
u_s\|_{\frac{4}{3-2\gamma-\nu_2},D}\Big]\,d\tau
\\
&\leq Cat^{-\frac{1}{2}+\gamma+\frac{\delta}{2}}\big(
\|(1+|x|-x_1)^\delta\nabla u_s\|_{\frac{3}{2},D}+
\|(1+|x|)^\gamma(1+|x|-x_1)^\delta
u_s\|_{\frac{4}{3-2\gamma-\nu_2},D}\big)
\end{align*}
for $t\geq 2$, which combined with (\ref{f1near})--(\ref{f21}) yield 
\begin{align}
\int_0^t&\|(1+|x|)^\gamma(1+|x|-x_1)^\delta
e^{-(t-\tau)A_a}P_Df_1(\tau)\|_{3,D}d\tau\notag\\
&\leq CM(1+t)^{-\frac{1}{4}+\varepsilon+\gamma+\frac{\delta}{2}}
\big(\|(1+|x|)^\gamma(1+|x|-x_1)^\delta u_s\|_{3,D}+
\|(1+|x|-x_1)^\delta u_s\|_{\frac{2}{1-\nu_1},D}\notag\\
&\hspace{6cm}+\|(1+|x|)^\gamma(1+|x|-x_1)
^\delta u_s\|_{\frac{2}{1-\gamma-\nu_1},D}\big),\label{f12}\\
\int_0^t&\|(1+|x|)^\gamma(1+|x|-x_1)^\delta
e^{-(t-\tau)A_a}P_D[\psi(\tau)(1-\psi(\tau))
a\partial_{x_1}u_s]\|_{3,D}
d\tau\notag\\
&\leq Ca(1+t)^{-\frac{1}{2}+\gamma+\frac{\delta}{2}}
\big(\|(1+|x|)^{\gamma}(1+|x|-x_1)^{\delta}
\nabla u_s\|_{\frac{3}{2-\mu_4},D}
+\|(1+|x|-x_1)^\delta\nabla u_s\|_{\frac{3}{2},D}
\notag
\\
&\hspace{6cm}
+\|(1+|x|)^\gamma(1+|x|-x_1)^\delta
u_s\|_{\frac{4}{3-2\gamma-\nu_2},D}\big)\label{div2}
\end{align}
for $t>0.$ Moreover, by the same calculation as in 
(\ref{e1mu})--(\ref{e3tildemu}), 
(\ref{f21})--(\ref{f22}), we have 
\begin{align}
&\|(1+|x|)^\gamma(1+|x|-x_1)^\delta E_1(w,w)(t)\|_{3,D}\notag\\
&\qquad \leq Ct^{-\frac{\mu}{2}}(1+t)^{\gamma+\frac{\delta}{2}}
[\nabla w]_{3,t}\sum_{i=1}^4\sup_{0<\tau<t}\tau^{\frac{\mu}{2}}
(1+\tau)^{-\gamma_i-\frac{\delta_i}{2}}
\|(1+|x|)^{\gamma_i}
(1+|x|-x_1)^{\delta_i}w(\tau)\|_{3,D}\\
&\|(1+|x|)^\gamma(1+|x|-x_1)^\delta E_2(w)(t)\|_{3,D}\notag\\
&\qquad\leq C(1+t)^{-\frac{\mu_3}{2}+\gamma+\frac{\delta}{2}}
(\|\nabla u_s\|_{\frac{3}{2},D}+\|\nabla u_s\|_{\frac{3}{2+\mu_3},D})
\sum_{i=1}^4[w]_{\infty,\widetilde{\gamma}_i,\widetilde{\delta}_i,t},
\\
&\|(1+|x|)^\gamma(1+|x|-x_1)^\delta E_3(w)(t)\|_{3,D}
\leq C(1+t)^{-\frac{\mu_1}{2}+\gamma+\frac{\delta}{2}}
(\|u_s\|_{3,D}+
\|u_s\|_{\frac{3}{1+\mu_1},D})\sum_{i=1}^4
[\nabla w]_{3,\widetilde{\gamma}_i,\widetilde{\delta}_i,t},
\label{e32}\\
\int_0^t&\|(1+|x|)^\gamma(1+|x|-x_1)^\delta
e^{-(t-\tau)A_a}P_D[\psi(\tau)(1-\psi(\tau))
u_s\cdot\nabla u_s]\|_{3,D}\,d\tau\notag\\&\leq 
C(1+t)^{\frac{\mu_2+\mu_4}{2}-1+\gamma+\frac{\delta}{2}}
\|u_s\|_{\frac{3}{1-\mu_2},D}\|(1+|x|)^{\gamma}
(1+|x|-x_1)^{\delta}\nabla u_s\|_{\frac{3}{2-\mu_4},D}\notag\\
&\leq 
C(1+t)^{-\frac{1}{4}+\varepsilon+\gamma+\frac{\delta}{2}}
\|u_s\|_{\frac{3}{1-\mu_2},D}\|(1+|x|)^{\gamma}
(1+|x|-x_1)^{\delta}\nabla u_s\|_{\frac{3}{2-\mu_4},D}
\label{f23}
\end{align}
for $t>0$, where we have used $(\mu_2+\mu_4)/2-1\leq -1/4$ 
due to (\ref{mu1}). 
Summing up (\ref{div}), (\ref{f12})--(\ref{f23}) with 
$(\gamma,\delta)=(\alpha,\beta),(0,\beta),(\alpha,0),(0,0)$
as well as using (\ref{wapriori}) lead to 
\begin{align*}
\sum_{i=1}^4&\sup_{0<\tau<t}\tau^{\frac{\mu}{2}}
(1+\tau)^{-\gamma_i-\frac{\delta_i}{2}}
\|(1+|x|)^{\gamma_i}
(1+|x|-x_1)^{\delta_i}w(\tau)\|_{3,D}\\
&\leq \widetilde{C}_7[\nabla w]_{3,t}
\sum_{i=1}^4\sup_{0<\tau<t}\tau^{\frac{\mu}{2}}
(1+\tau)^{-\gamma_i-\frac{\delta_i}{2}}
\|(1+|x|)^{\gamma_i}
(1+|x|-x_1)^{\delta_i}w(\tau)\|_{3,D}+\widetilde{C}_8\\
&\leq 4\widetilde{C}_3\widetilde{C}_7(M+1)a^\frac{1}{4}
\sum_{i=1}^4\sup_{0<\tau<t}\tau^{\frac{\mu}{2}}
(1+\tau)^{-\gamma_i-\frac{\delta_i}{2}}
\|(1+|x|)^{\gamma_i}
(1+|x|-x_1)^{\delta_i}w(\tau)\|_{3,D}+\widetilde{C}_8
\end{align*}
for $t>0,$ where $\widetilde{C}_8=\widetilde{C}_8(a,M,\alpha,\beta,w,
u_s,\mu_1,\mu_2,\mu_3,\mu_4)>0$ is independent of $t$ and  
$\mu=\min\{\mu_1,\mu_3\}$. 
We thus get 
\begin{align*}
\|(1+|x|)^{\gamma_i}(1+|x|-x_1)^{\delta_i}w(t)\|_{3,D}
\leq Ct^{-\frac{\mu}{2}}(1+t)^{\gamma_i+\frac{\delta_i}{2}}
\end{align*} 
for $t>0,i=1,2,3,4$ if 
\begin{align*}
(M+1)a^\frac{1}{4} <\min\left\{\kappa_2,
\frac{1}{2\widetilde{C}_2},
\frac{1}{16\widetilde{C}_1\widetilde{C}_3},\frac{1}
{2(4\widetilde{C}_3\widetilde{C}_4+\widetilde{C}_5)},
\frac{1}{4\widetilde{C}_3\widetilde{C}_7}
\right\}=:\widetilde{\kappa}
\end{align*}
(see (\ref{assump2})) is fulfilled, 
which implies (\ref{sharpl3}) with $k=1.$ 
\par 
Let $k\geq 2$ and suppose (\ref{sharpl3}) with 
$k-1$ for $i=1,2,3,4.$ 
Given $(\gamma,\delta)=(\alpha,\beta),(0,\beta),(\alpha,0),(0,0)$, 
by taking (\ref{wapriori2}) (near $t=0$),  
(\ref{inftygrad3alphabetaw1})--(\ref{inftygrad3alphabetaw3}) 
into account, we set 
\begin{align*}
&I_{k-1}(w):=
\sup_{\tau>0}\tau^{\frac{1}{2}}(1+\tau)^{\sigma_{k-1}}
\|\nabla w(\tau)\|_{3,D}<\infty,\\
&J_{k-1,i}(w):=\sup_{\tau>0}
\big(1+\tau)^{\sigma_{k-1}-\widetilde{\gamma}_i
-\frac{\widetilde{\delta}_i}{2}}
\|(1+|x|)^{\widetilde{\gamma}_i}
(1+|x|-x_1)^{\widetilde{\delta}_i}w(\tau)\|_{3,D}\\
&\hspace{3cm}+\sup_{\tau>0}\tau^{\frac{1}{2}}
(1+\tau)^{\sigma_{k-1}-\widetilde{\gamma}_i
-\frac{\widetilde{\delta}_i}{2}}
\big(\|(1+|x|)^{\widetilde{\gamma}_i}
(1+|x|-x_1)^{\widetilde{\delta}_i}w(\tau)\|_{\infty,D}\\
&\hspace{8cm}+
\|(1+|x|)^{\widetilde{\gamma}_i}
(1+|x|-x_1)^{\widetilde{\delta}_i}\nabla w(\tau)\|_{3,D}\big)<\infty
\end{align*} 
for $i=1,2,3,4$, where $\widetilde{\gamma}_i,
\widetilde{\delta}_i$ are given by (\ref{gammadeltatilde}).
 We use these to see
\begin{align}
\int_0^t&\|(1+|x|)^\gamma(1+|x|-x_1)^\delta
e^{-(t-\tau)A_a}P_D[w\cdot\nabla w](\tau)\|_{3,D}\,d\tau
\notag\\
&\leq C\sum_{i=1}^4\int_0^{t-1}
(t-\tau)^{-\frac{1}{2}+\widetilde{\eta}_i}
\tau^{-\frac{1}{2}}(1+\tau)^{-2\sigma_{k-1}+\widetilde{\gamma}_i
+\frac{\widetilde{\delta}_i}{2}}\,d\tau\notag\\
&\qquad\qquad\times I_{k-1}(w)\big(\sup_{\tau>0}
(1+\tau)^{\sigma_{k-1}-\widetilde{\gamma}_i
-\frac{\widetilde{\delta}_i}{2}}
\|(1+|x|)^{\widetilde{\gamma}_i}
(1+|x|-x_1)^{\widetilde{\delta}_i}w(\tau)\|_{3,D}\big)
\notag\\
&\qquad +C\int_{t-1}^t
(t-\tau)^{-\frac{1}{2}}\tau^{-\frac{1}{2}}(1+\tau)^{-2\sigma_{k-1}
+\gamma+\frac{\delta}{2}}\,d\tau\,I_{k-1}(w)
J_{k-1,4}(w)\notag\\
&\leq Ct^{-2\sigma_{k-1}+\gamma+\frac{\delta}{2}}
I_{k-1}(w)\sum_{i=1}^4J_{k-1,i}(w)\leq 
Ct^{-\sigma_{k}+\gamma+\frac{\delta}{2}}
I_{k-1}(w)\sum_{i=1}^4J_{k-1,i}(w),\notag\\
\int_0^t&\|(1+|x|)^\gamma(1+|x|-x_1)^\delta
e^{-(t-\tau)A_a}P_D[w\cdot\nabla u_s](\tau)\|_{3,D}\,d\tau
\notag\\
&\leq C\sum_{i=1}^4\int_0^{t-1}
(t-\tau)^{-\frac{1+\mu_3}{2}+\widetilde{\eta}_i}
\tau^{-\frac{1}{2}}(1+\tau)^{-\sigma_{k-1}+\widetilde{\gamma}_i
+\frac{\widetilde{\delta}_i}{2}}\,d\tau\,
\|\nabla u_s\|_{\frac{3}{2+\mu_3},D}\notag\\
&\qquad\qquad\times\sup_{\tau>0}\tau^{\frac{1}{2}}
(1+\tau)^{\sigma_{k-1}-\widetilde{\gamma}_i
-\frac{\widetilde{\delta}_i}{2}}
\|(1+|x|)^{\widetilde{\gamma}_i}
(1+|x|-x_1)^{\widetilde{\delta}_i}w(\tau)\|_{\infty,D}
\notag\\
&\qquad +C\int_{t-1}^t
(t-\tau)^{-\frac{1+\mu_3}{2}}
\tau^{-\frac{1}{2}}(1+\tau)^{-\sigma_{k-1}
+\gamma+\frac{\delta}{2}}\,d\tau\,
\|\nabla u_s\|_{\frac{3}{2+\mu_3},D}J_{k-1,4}(w)\notag\\
&\leq Ct^{-\frac{\mu_3}{2}-\sigma_{k-1}+\gamma+\frac{\delta}{2}}
\|\nabla u_s\|_{\frac{3}{2+\mu_3},D}
\sum_{i=1}^4J_{k-1,i}(w)
\leq Ct^{-\sigma_{k}+\gamma+\frac{\delta}{2}}
\|\nabla u_s\|_{\frac{3}{2+\mu_3},D}
\sum_{i=1}^4J_{k-1,i}(w)
\notag
\end{align}
and 
\begin{align}
\int_0^t&\|(1+|x|)^\gamma(1+|x|-x_1)^\delta
e^{-(t-\tau)A_a}P_D[u_s\cdot\nabla w](\tau)\|_{3,D}\,d\tau
\notag\\
&\leq C\sum_{i=1}^4\int_0^{t-1}
(t-\tau)^{-\frac{1+\mu_1}{2}+\widetilde{\eta}_i}
\tau^{-\frac{1}{2}}(1+\tau)^{-\sigma_{k-1}+\widetilde{\gamma}_i
+\frac{\widetilde{\delta}_i}{2}}\,d\tau\,
\|u_s\|_{\frac{3}{1+\mu_1},D}\notag\\
&\qquad\qquad\times\sup_{\tau>0}\tau^{\frac{1}{2}}
(1+\tau)^{\sigma_{k-1}-\widetilde{\gamma}_i
-\frac{\widetilde{\delta}_i}{2}}
\|(1+|x|)^{\widetilde{\gamma}_i}
(1+|x|-x_1)^{\widetilde{\delta}_i}\nabla w(\tau)\|_{3,D}
\notag\\
&\qquad +C\int_{t-1}^t
(t-\tau)^{-\frac{1+\mu_1}{2}}
\tau^{-\frac{1}{2}}(1+\tau)^{-\sigma_{k-1}
+\gamma+\frac{\delta}{2}}\,d\tau\,
\|u_s\|_{\frac{3}{1+\mu_1},D}J_{k-1,4}(w)\notag\\
&\leq Ct^{-\frac{\mu_1}{2}-\sigma_{k-1}+\gamma+\frac{\delta}{2}}
\|u_s\|_{\frac{3}{1+\mu_1},D}
\sum_{i=1}^4J_{k-1,i}(w)
\leq Ct^{-\sigma_{k}+\gamma+\frac{\delta}{2}}
\|u_s\|_{\frac{3}{1+\mu_1},D}
\sum_{i=1}^4J_{k-1,i}(w)
\notag
\end{align}
for $t\geq 2$. 
From these, (\ref{f12})--(\ref{div2}) and (\ref{f23}), we get 
(\ref{sharpl3}), which asserts (\ref{anisow}) with $q=3$.
The proof is complete.\qed

We end this paper with a comparison of the decay rate stated in 
Theorem \ref{attainthm} with the one derived by 
applying (\ref{gradlqlrdlarge5}), (\ref{gradlqlrdlarge6}).
\begin{rmk}\label{f1rmk}
We cannot derive (\ref{anisow})--(\ref{anisow2}) 
by using (\ref{gradlqlrdlarge5}). 
In fact, if $\alpha,\beta$ satisfy (\ref{alphabeta5}), 
then it follows from $\max\{1/(1-\beta),2/(1-\alpha)\}
=2/(1-\alpha)$, Lemma \ref{finite} and (\ref{usest2}) that 
$(1+|x|)^\alpha(1+|x|-x_1)^\beta u_s\in 
L^{2/(1-\alpha-\nu)}(D)$ with any small $\nu>0.$ 
Applying (\ref{gradlqlrdlarge5}) to $F_1$ implies 
\begin{align*}
\|&(1+|x|)^\alpha(1+|x|-x_1)^\beta F_1(t)\|_{3,D}\\
&\leq \int_0^t\|(1+|x|)^\alpha(1+|x|-x_1)^\beta
e^{-(t-\tau)A_a}P_Df_1(\tau)\|_{3,D}d\tau\\
&\leq CM\int_0^{1}(t-\tau)^{-\frac{3}{2}
(\frac{1-\alpha-\nu}{2}-\frac{1}{3})+\frac{\alpha}{4}
+\max\{\frac{\alpha}{4},\frac{\beta}{2}\}+\varepsilon}\,d\tau
\|(1+|x|)^\alpha(1+|x|-x_1)^\beta u_s\|_{\frac{2}{1-\alpha-\nu},D}
\\
&\leq CMt^{-\frac{1}{4}+\alpha
+\max\{\frac{\alpha}{4},\frac{\beta}{2}\}+\frac{3}{4}\nu+
\varepsilon}
\|(1+|x|)^\alpha(1+|x|-x_1)^\beta u_s\|_{\frac{2}{1-\alpha-\nu},D}
\end{align*}
for $t\geq 2$, thus we find by the same calculation as in the 
proof of Theorem \ref{attainthm} that the rate of 
$L^3_{(1+|x|)^{3\alpha}(1+|x|-x_1)^{3\beta}}$ norm of 
the solution is close to 
$-1/4+(5\alpha)/4(>-1/4+\alpha+\beta/2)$ if $\alpha>2\beta$.  
By the same reason, we employed (\ref{lqlrdlarge2}) 
instead of (\ref{gradlqlrdlarge6}) to get 
(\ref{anisow})--(\ref{anisow2}) with $\beta=0$.
\end{rmk}


\medskip
\begin{center}
\hspace{10cm}
Tomoki Takahashi\\
\hspace{10cm}
Graduate School of Mathematics,
\\
\hspace{10cm}
Nagoya University,
\\
\hspace{10cm}
Furo-cho, Chikusa-ku,
\\
\hspace{10cm}
Nagoya, 464-8602,
\\
\hspace{10cm}
Japan.
\hspace{10cm}
\\
\hspace{10cm}
e-mail: m17023c@math.nagoya-u.ac.jp
\end{center}

\end{document}